\newtheorem{defi}{Definition}[section]
\newtheorem{thm}[defi]{Theorem}
\newtheorem{prop}[defi]{Proposition}
\newtheorem{lem}[defi]{Lemma}
\newtheorem{cor}[defi]{Corollary}
\newtheorem{eg}[defi]{Example}
\newtheorem{rem}[defi]{Remark}
\newtheorem{conj}[defi]{Conjecture}
\DeclareMathOperator{\ad}{ad}
\DeclareMathOperator{\add}{add}
\DeclareMathOperator{\Adj}{Adj}
\DeclareMathOperator{\Auteq}{Auteq}
\DeclareMathOperator{\codim}{codim}
\DeclareMathOperator{\Cone}{Cone}
\DeclareMathOperator{\D}{D^b}
\DeclareMathOperator{\depth}{depth}
\DeclareMathOperator{\End}{End}
\DeclareMathOperator{\Ext}{Ext}
\DeclareMathOperator{\GL}{GL}
\DeclareMathOperator{\gldim}{gldim}
\DeclareMathOperator{\Hom}{Hom}
\DeclareMathOperator{\IW}{T}
\DeclareMathOperator{\lhom}{\mathcal{H}\!\mathit{om}}
\DeclareMathOperator{\lext}{\mathcal{E}\!\mathit{xt}}
\newcommand{\id}{\mathrm{id}}
\DeclareMathOperator{\Ker}{Ker}
\DeclareMathOperator{\KN}{KN}
\DeclareMathOperator{\modu}{mod}
\DeclareMathOperator{\NKN}{nKN}
\DeclareMathOperator{\PP}{\mathbb{P}}
\DeclareMathOperator{\Qcoh}{Qcoh}
\DeclareMathOperator{\rank}{rank}
\DeclareMathOperator{\Rep}{\widetilde{\mathcal{R}}}
\DeclareMathOperator{\RHom}{RHom}
\DeclareMathOperator{\SL}{SL}
\DeclareMathOperator{\Spec}{Spec}
\DeclareMathOperator{\Stab}{Stab}
\DeclareMathOperator{\Sym}{Sym}
\DeclareMathOperator{\Tilt}{\mathcal{T}}
\newcommand{\stsh}{\mathcal{O}}
\title[NCCR of minimal nilpotent orbit closures of type A]{non-commutative crepant resolution of \\ minimal nilpotent orbit closures of type A  \\ and Mukai flops}
\author{wahei hara}
\address{Department of Mathematics, School of Science and Engineering, Waseda University, 3-4-1 Ohkubo, Shinjuku, Tokyo 169-8555, Japan}
\email{waheyhey@ruri.waseda.jp}
\subjclass[2010]{14B05, 14E16, 14F05}
\keywords{Derived category; Mukai flop; Nilpotent orbit closure; Non-commutative crepant resolution; P-twist; Quiver representation; Moduli space}
\date{}
\begin{document}

\begin{abstract}
In this article, we construct a non-commutative crepant resolution (=NCCR) of a minimal nilpotent orbit closure $\overline{B(1)}$ of type A, 
and study relations between an NCCR and crepant resolutions $Y$ and $Y^+$ of $\overline{B(1)}$.
More precisely, we show that the NCCR is isomorphic to the path algebra of the double Beilinson quiver with certain relations
and we reconstruct the crepant resolutions $Y$ and $Y^+$ of $\overline{B(1)}$ as moduli spaces of representations of the quiver.
We also study the Kawamata-Namikawa's derived equivalence between crepant resolutions $Y$ and $Y^+$ of $\overline{B(1)}$ in terms of an NCCR.
We also show that the P-twist on the derived category of $Y$ corresponds to a certain operation of the NCCR,
which we call \textit{multi-mutation}, and that a multi-mutation is a composition of Iyama-Wemyss's mutations.
\end{abstract}

\maketitle

\tableofcontents

\section{Introduction}
The aim of this article is to study \textit{non-commutative crepant resolutions (=NCCR)} of a minimal nilpotent orbit closure $\overline{B(1)}$ of type A.
The notion of NCCR was first introduced by Van den Bergh \cite{VdB04b} in relation to the study of the derived categories of algebraic varieties.
We can regard the concept of NCCR as a generalization of the notion of \textit{crepant resolution}. Van den Bergh introduced it with an expectation that all crepant resolutions, whether commutative or not, have equivalent derived categories.
This expectation is a special (and non-commutative) version of a more general conjecture that K-equivalence implies derived equivalence.
We note that the study of NCCR is also motivated by theoretical physics (see Introduction of \cite{Le12}).

An NCCR of a Gorenstein algebra $R$ is defined as an endomorphism ring $\End_R(M)$ of a (maximal) Cohen-Macaulay $R$-module $M$ such that $\End_R(M)$ is a (maximal) Cohen-Macaulay $R$-module and has finite global dimension (see Definition \ref{def NCCR} and Lemma \ref{def lem NCCR}).
In relation to NCCR,  it is natural to ask the following questions.
\begin{enumerate}
\item[(1)] Construct an NCCR of $R$ and characterize a module $M$ that gives the NCCR.
\item[(2)] Construct a derived equivalence between the NCCR and a (commutative) crepant resolution.
\item[(3)] Construct a (commutative) crepant resolution as a moduli space of modules over the NCCR.
\end{enumerate}
For example, in \cite{BLV10}, Buchweitz, Leuschke, and Van den Bergh studied about these problems for a determinantal variety.
In this article, we deal with the above problems for a minimal nilpotent orbit closure $\overline{B(1)}$ of type A.
We also study about the derived equivalences for Mukai flops from the point of view of NCCR.

\subsection{NCCR of minimal nilpotent orbit closures of type A}
Let $V = \mathbb{C}^N$ be a complex vector space of dimension $N \geq 2$. 
Let us consider a subset $B(1)$ of $\End_{\mathbb{C}}(V)$ that is given by
\[ B(1) := \{ X \in \End_{\mathbb{C}}(V) \mid X^2 = 0, \dim \Ker X = N - 1 \}. \]
This is 
a minimal nilpotent orbit of type A.
It is well known that the closure $\overline{B(1)}$ of the orbit $B(1)$
is normal and has only symplectic singularities, 
and thus the affine coordinate ring $R$ of $\overline{B(1)}$ is normal and Gorenstein.
Since $\codim_{\overline{B(1)}} (\partial B(1)) \geq 2$, we have a $\mathbb{C}$-algebra isomorphism
$R \simeq H^0(B(1), \stsh_{B(1)})$.
Let $H$ be a subgroup of $\SL_N$ such that $\SL_N/H \simeq B(1)$.
It is easy to see that $H$ is isomorphic to a subgroup of $\SL_N$
\[ H \simeq \left\{  A = \left( \begin{array}{c|ccc|c}
c & 0 & \cdots & 0 & 0 \\ \hline
 &  &  & & 0 \\
\ast &  &  A' & & \vdots \\
 & & & & 0 \\ \hline
\ast &  & \ast &  & c
\end{array} \right) \mid 
\begin{array}{c}
A' \in \GL_{N-2}, \\
c \in \mathbb{C}^{\times}, \\
c^2 \cdot \det(A') = 1 
\end{array}
\right\}. \]
Let $\mathcal{M}_a$ be a homogeneous line bundle on $B(1)$ that corresponds to the character $H \ni A \mapsto c^{-a} \in \mathbb{C}^{\times}$ and we set $M_a := H^0(B(1), \mathcal{M}_a)$.
We prove that a direct sum of $R$-modules $(M_a)_a$ gives an NCCR of $R$.

\begin{thm}[see \ref{NCCR1} and \ref{thm1 lem}] \label{1st main thm}
\begin{enumerate}
\item[(a)] $M_a$ is a Cohen-Macaulay $R$-module  for $-N+1 \leq a \leq N-1$.
\item[(b)] For $0 \leq k \leq N-1$, the $R$ module $\bigoplus_{a = -N + k + 1}^k M_a$ gives an NCCR $\End_R\left(\bigoplus_{a = -N + k + 1}^k M_a\right)$
of $R$.
\end{enumerate}
\end{thm}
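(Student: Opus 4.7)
The natural strategy is to exploit the standard crepant resolution $\pi : Y = T^*\PP(V) \to \overline{B(1)}$, under which the zero section $\PP(V) \subset Y$ is contracted to the origin and the complement is identified with the orbit $B(1)$. Under this identification, each line bundle $\mathcal{M}_a$ on $B(1)$ extends across the exceptional locus to $\mathcal{L}_a := p^*\stsh_{\PP(V)}(a)$ on $Y$, where $p : Y \to \PP(V)$ is the vector bundle projection. Because $R$ is normal and $\partial B(1) \subset \overline{B(1)}$ has codimension at least $2$, the identity $M_a \simeq H^0(Y, \mathcal{L}_a)$ holds. Part (a) will reduce to the vanishing of higher direct images $R^i\pi_*\mathcal{L}_a$, and part (b) will follow by transporting a tilting structure across $\pi$.

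For (a), the key step is to show $R^i\pi_* \mathcal{L}_a = 0$ for $i > 0$ whenever $-N+1 \leq a \leq N-1$. Since $\pi$ is an isomorphism outside the origin, these higher direct images are supported at a single point, so vanishing is equivalent to $H^i(Y, \mathcal{L}_a) = 0$ for $i>0$. The projection formula along the affine bundle $p$ decomposes this as
\[
H^i(Y, \mathcal{L}_a) \;=\; \bigoplus_{k \geq 0} H^i(\PP(V),\, \stsh_{\PP(V)}(a) \otimes \Sym^k T_{\PP(V)}).
\]
Each summand can be controlled using the Euler sequence $0 \to \stsh \to V^{\vee} \otimes \stsh(1) \to T_{\PP(V)} \to 0$ and its symmetric-power analogue, expressing $\Sym^k T_{\PP(V)}$ as a quotient of direct sums of $\stsh(j)$ for $j \leq k$; Bott vanishing on $\PP(V)$ then yields the desired vanishing for $a \geq -N+1$. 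To deduce Cohen-Macaulayness of $M_a = \pi_*\mathcal{L}_a$, I would use that $\pi$ is crepant, so $\omega_Y \simeq \stsh_Y$ and Grothendieck duality gives $\Ext^i_R(M_a, R) \simeq R^i\pi_* \mathcal{L}_{-a}$. This is zero for $i > 0$ precisely when $-a \geq -N+1$, i.e.\ $a \leq N-1$, accounting for the symmetric range in (a).

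For (b), the plan is to show that
\[
\Tilt_k \;:=\; \bigoplus_{a=-N+k+1}^{k} \mathcal{L}_a
\]
is a tilting bundle on $Y$. The Ext-vanishing $\Ext^i_Y(\Tilt_k, \Tilt_k) = \bigoplus_{a,b} H^i(Y, \mathcal{L}_{b-a}) = 0$ for $i>0$ is immediate from (a), since the differences $b-a$ range over $[-N+1,\, N-1]$. Generation of $\D(Y)$ by $\Tilt_k$ reduces, via the affine bundle $p$, to the classical Beilinson result on $\PP(V)$ after twisting by the appropriate $\stsh(-m)$. Consequently $A_k := \End_Y(\Tilt_k)$ has finite global dimension by virtue of the derived equivalence $\RHom_Y(\Tilt_k,-) : \D(Y) \xrightarrow{\sim} \D(A_k)$.

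Finally I would identify $A_k$ with $\End_R(\bigoplus_a M_a)$ and check that it is CM over $R$. Since each $M_a$ in the range is a maximal CM module over the normal Gorenstein ring $R$, it is reflexive, so
\[
\Hom_R(M_a, M_b) \;=\; H^0(Y, \mathcal{L}_{b-a}) \;=\; M_{b-a}
\]
by a codimension-two argument. Summing over $a, b$ gives $\End_R(\bigoplus_a M_a) \simeq \bigoplus_{a,b} M_{b-a} \simeq \pi_* \lhom_Y(\Tilt_k, \Tilt_k) \simeq A_k$, and each summand $M_{b-a}$ is CM by (a), so the endomorphism ring is itself CM over $R$, completing the NCCR verification. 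The main obstacle I anticipate is not the Ext calculation but the clean verification of generation for $\Tilt_k$ for \emph{every} $k$ in the allowed range, together with carefully matching $\End_Y(\Tilt_k)$ to $\End_R(\bigoplus_a M_a)$ via reflexivity over the singular variety $\overline{B(1)}$.
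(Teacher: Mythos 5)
Your proposal is correct and takes essentially the same route as the paper: reduce cohomology vanishing on $Y$ to line-bundle cohomology on $\PP(V)$ (the paper does this via the Koszul sequence on the total space of $V^*\otimes\stsh_{\PP(V)}(-1)$, you via the symmetrized Euler sequence, which is the same computation), use crepancy plus Grothendieck/local duality to get $\Ext^i_R(M_a,R)=0$ and hence Cohen--Macaulayness, and then verify that $\Tilt_k$ is tilting and identify $\End_Y(\Tilt_k)\simeq\End_R\bigl(\bigoplus_a M_a\bigr)$ by reflexivity and agreement in codimension two. The only point to tighten is your opening identification $M_a\simeq H^0(Y,\pi^*\stsh_{\PP(V)}(a))$: the codimension-$2$ locus you invoke lies in $\overline{B(1)}$, not in $Y$ (the zero section is a divisor when $N=2$), so this should be justified as in the paper by first proving $\phi_*\pi^*\stsh_{\PP(V)}(a)$ is Cohen--Macaulay, hence reflexive, and then comparing it with the reflexive module $M_a$ over the open orbit $B(1)$, whose complement in $\Spec R$ does have codimension at least $2$.
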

The proof of Theorem \ref{1st main thm} is based on the theory of \textit{tilting bundles} on the crepant resolutions $Y$ and $Y^+$.
We note that the two crepant resolutions $Y$ and $Y^+$ of $\overline{B(1)}$ are the total spaces of the cotangent bundles on $\mathbb{P}(V)$ and $\mathbb{P}(V^*)$, respectively.
Let $\pi : Y \to \mathbb{P}(V)$ and $\pi' : Y^+ \to \mathbb{P}(V^*)$ be the projections.
We show that, for all $k \in \mathbb{Z}$, the bundles
\[ \Tilt_k := \bigoplus_{a = -N+k+1}^k \pi^*\stsh_{\mathbb{P}(V)}(a) ~~ \text{and} ~~ \Tilt^+_k := \bigoplus_{a = -N+k+1}^k \pi'^*\stsh_{\mathbb{P}(V^*)}(a) \]
are tilting bundles on $Y$ and $Y^+$, respectively (Theorem \ref{NCCR1}).
We also show that there is a canonical isomorphism of $R$-algebras
\[ \Lambda_k := \End_Y(\Tilt_k) \simeq \End_{Y^+}(\Tilt^+_{N-k-1}) \]
and that this algebra is isomorphic to the one that appears in Theorem \ref{1st main thm} (b).
Moreover, by the theory of tilting bundles, we have an equivalence of categories $\D(Y) \simeq \D(\modu(\Lambda_k))$
between the derived category of a crepant resolution and of an NCCR.
In Section \ref{subsec Rem NCCR}, we provide another NCCR $\Lambda'$ of $R$ that is not isomorphic to $\Lambda_k$ but is derived equivalent to $\Lambda_k$.

\subsection{NCCR as the path algebra of a quiver}

Next, we describe an NCCR $\Lambda_k$ of $R$ as the path algebra of the \textit{double Beilinson quiver} with some relations.
We note that similar results for non-commutative resolutions of determinantal varieties are obtained by Buchweitz, Leuschke, and Van den Bergh \cite{BLV10}, and Weyman and Zhao \cite{WZ12}.

Let $S = \Sym^{\bullet}(V \otimes V^*)$ be the symmetric algebra of a vector space $V \otimes_{\mathbb{C}} V^*$.
Let $v_1, \dots, v_N$ be the standard basis of $V = \mathbb{C}^N$ and $f_1, \dots, f_N$ the dual basis of $V^*$.
We regard $x_{ij} = v_j \otimes f_i \in S$ as the variables of
the affine coordinate ring of an affine variety $\End_{\mathbb{C}}(V) \simeq V^* \otimes_{\mathbb{C}} V$.
Since $\overline{B(1)}$ is a closed subvariety of $\End_{\mathbb{C}}(V)$, $R$ is a quotient of $S$.

\begin{thm}[= Thm. \ref{NCCR quiver}]
As an $S$-algebra,
the non-commutative algebra $\Lambda_k$ is isomorphic to the path algebra $S\widetilde{\Gamma}$ of the double Beilinson quiver $\widetilde{\Gamma}$
with $N$ vertices
\[ \begin{tikzpicture}[
 description/.style={midway,
                     fill=white,
                     font=\scriptsize,
                     inner sep=2pt,
                     minimum height=.8em
 }]
 \matrix[matrix of math nodes, column sep=1cm]{ 
    |(n1)| 0 & |(n2)| 1 & |(dots)| \cdots & |(N1)| N - 2 & |(N)| N - 1 \\
 };
 \foreach \f/\t in {n1/n2, n2/dots, dots/N1, N1/N}{ 
  \begin{scope}
   \coordinate (ulc) at ($(\f.south east)-(3pt,3pt)$);
   
   \coordinate (urc) at ($(ulc -| \t.west)+(3pt,0)$);
   \path[->,every node/.style={description}] %
    (\f.south east) edge[bend right=30] node (f1) {$f_1$} (\f.south east -| \t.west)
    (ulc) edge[bend right=70,looseness=2] node (fN) {$f_N$} (urc)
    ; 
   \path[draw=black, dotted, thick] (f1.south) to (fN.north);
   \end{scope}
  };
  \foreach \f/\t in {n2/n1, dots/n2, N1/dots, N/N1}{ 
   \begin{scope}
   \coordinate (alc) at ($(\f.north west)+(3pt,3pt)$);
   \coordinate (arc) at ($(alc -| \t.east)+(-3pt,0)$);
   \path[->,every node/.style={description}]
    (\f.north west) edge[bend right=30] node (vN) {$v_N$} (\f.north west -| \t.east)
    (alc) edge[bend right=70,looseness=2] node (v1) {$v_1$} (arc)
    ;
   \path[draw=black, dotted, thick] (vN.north) to (v1.south);
  \end{scope}
 };
\end{tikzpicture} \]
with relations
\begin{align*}
v_iv_j = v_jv_i, ~~ 
f_if_j &= f_jf_i, ~~ 
 v_jf_i = f_iv_j = x_{ij} ~~ \text{for all $1 \leq i, j \leq N$}, \\
\text{and} ~~ &\sum_{i=1}^N f_iv_i = 0 = \sum_{i=1}^N v_if_i.
\end{align*}
\end{thm}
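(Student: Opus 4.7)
The plan is to build an explicit $S$-algebra homomorphism $\varphi \colon S\widetilde{\Gamma} \to \Lambda_k$ from the path algebra (with the stated relations) and then prove it is an isomorphism. Since $\pi \colon Y \to \mathbb{P}(V)$ is affine with $\pi_*\mathcal{O}_Y \simeq \Sym^{\bullet} T_{\mathbb{P}(V)}$, the projection formula gives
\[ \Hom_Y(\pi^*\mathcal{O}(a), \pi^*\mathcal{O}(b)) \simeq \bigoplus_{p \geq 0} H^0\!\left(\mathbb{P}(V),\, \mathcal{O}(b-a) \otimes \Sym^p T_{\mathbb{P}(V)}\right), \]
and each summand is computable from the symmetric powers of the Euler sequence
\[ 0 \to \mathcal{O} \to V \otimes \mathcal{O}(1) \to T_{\mathbb{P}(V)} \to 0. \]
In particular $H^0(\mathcal{O}(1)) = V^*$ supplies canonical arrows $\pi^*\mathcal{O}(a) \to \pi^*\mathcal{O}(a+1)$ labelled by $f_1,\ldots,f_N$, and $H^0(\mathcal{O}(-1) \otimes T_{\mathbb{P}(V)}) \simeq V$ (computed from the Euler sequence, using $H^0(\mathcal{O}(-1)) = H^1(\mathcal{O}(-1)) = 0$) supplies canonical arrows $\pi^*\mathcal{O}(a+1) \to \pi^*\mathcal{O}(a)$ labelled by $v_1,\ldots,v_N$.

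I would then define $\varphi$ on generators by sending the idempotent at vertex $i$ to the identity of the summand $\pi^*\mathcal{O}(i - N + k + 1)$ of $\Tilt_k$, and by sending $f_i$, $v_j$ to the canonical arrows just described. To check the relations: the commutativity $f_if_j = f_jf_i$ and $v_iv_j = v_jv_i$ holds because the two-step compositions factor through the symmetric products $\Sym^2 V^*$ and $\Sym^2 V$, by symmetry of the multiplication maps. The relation $v_jf_i = f_iv_j = x_{ij}$ is verified by unwinding the projection formula: both compositions lie in $\End_Y(\pi^*\mathcal{O}(a)) = R$ and are identified with the image of $v_j\otimes f_i = x_{ij}$ under $S \twoheadrightarrow R$. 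The trace-type relations $\sum_i f_iv_i = 0 = \sum_i v_if_i$ reduce to $\sum_i x_{ii} = \mathrm{tr}(X) = 0$ in $R$, which holds since $\overline{B(1)}$ consists of traceless matrices; equivalently, on $\mathbb{P}(V)$ the element $\sum_i f_i \otimes v_i \in V^* \otimes V$ corresponds under the Euler sequence to the tautological section $\mathcal{O} \hookrightarrow V \otimes \mathcal{O}(1)$, whose image in $T_{\mathbb{P}(V)}$ is zero.

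For surjectivity of $\varphi$, I would argue by induction on the degree in $\Sym^\bullet T_{\mathbb{P}(V)}$, using the exact sequence
\[ 0 \to \Sym^{p-1} V \otimes \mathcal{O}(b-a+p-1) \to \Sym^p V \otimes \mathcal{O}(b-a+p) \to \mathcal{O}(b-a) \otimes \Sym^p T_{\mathbb{P}(V)} \to 0 \]
derived from the Euler sequence, to show that every element of $H^0(\mathcal{O}(b-a) \otimes \Sym^p T_{\mathbb{P}(V)})$ is a polynomial in the canonical $v$'s and $f$'s. For injectivity, I would match $S$-bases on both sides: the path algebra modulo the listed relations is generated over $S$ by paths of the form ``$v$'s only'' composed with ``$f$'s only'' (the relation $v_jf_i = f_iv_j$ lets one move all $v$'s past all $f$'s at the cost of $S$-coefficients), and the Euler-sequence computation above produces an explicit $S$-basis of each $\Hom_Y(\pi^*\mathcal{O}(a), \pi^*\mathcal{O}(b))$ of matching shape; the two sets correspond under $\varphi$.

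The main obstacle I expect is verifying the trace-type relations $\sum_i f_iv_i = 0$ as morphisms (not merely after passing to $R$) and ensuring the listed relations are complete. The first requires careful tracking of the Euler sequence through the projection-formula identifications. The second — absence of hidden extra relations — is most cleanly handled by the explicit $S$-basis comparison above, or, more conceptually, by invoking the derived equivalence $\D(Y) \simeq \D(\modu\Lambda_k)$ from Theorem~\ref{1st main thm} to pin down the algebra structure.
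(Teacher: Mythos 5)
Your construction of the map, the identification of the arrows with $H^0(\stsh_{\mathbb{P}(V)}(1))=V^*$ and $H^0(T_{\mathbb{P}(V)}(-1))\simeq V$, the verification of the relations (including the trace relations, which indeed reduce to $\mathrm{tr}=0$ in $R=\End_Y(\pi^*\stsh(a))$), and the surjectivity argument via symmetric powers of the Euler sequence are all sound, and they closely parallel the paper's computations; the difference is that you work directly on $Y$ with $\pi_*\stsh_Y\simeq\Sym^\bullet T_{\mathbb{P}(V)}$, whereas the paper first works on the larger space $Z=\lvert V^*\otimes\stsh_{\mathbb{P}(V)}(-1)\rvert$ with $\pi_{Z*}\stsh_Z\simeq\Sym^\bullet(V\otimes\stsh_{\mathbb{P}(V)}(1))$.

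The genuine gap is in your injectivity step, i.e.\ the completeness of the listed relations. There is no ``explicit $S$-basis'' of $\Hom_Y(\pi^*\stsh(a),\pi^*\stsh(b))$ to match against: these modules are Cohen--Macaulay but not free over $S$ (nor over $R$), and on the path-algebra side the normal-form monomials (all $v$'s past all $f$'s, or indeed pure $v$- or $f$-words with $S$-coefficients, since any adjacent mixed pair collapses to $x_{ij}$) are spanning sets but not linearly independent once the relations, in particular $\sum_i f_iv_i=0=\sum_i v_if_i$, are imposed. So ``matching bases'' does not establish that the two-sided ideal generated by the trace relations kills no more than the kernel of $S\widetilde\Gamma\twoheadrightarrow\Lambda_k$; and your fallback of invoking $\D(Y)\simeq\D(\modu\Lambda_k)$ is circular, since that equivalence is built from $\Lambda_k:=\End_Y(\Tilt_k)$ and says nothing about a presentation. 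This is exactly the point the paper's two-step proof is designed to handle: Theorem \ref{NCCR quiver Z} first establishes the presentation of $\Lambda_Z=\End_Z(\Tilt_Z)$, where each $\Hom_Z(\stsh_Z(a),\stsh_Z(b))\simeq\bigoplus_{k}\Sym^kV\otimes\Sym^{k+b-a}V^*$ is the full bigraded object and the absence of further relations is transparent; then the Koszul-type sequence $0\to\stsh_Z\to\stsh_Z\to\stsh_Y\to0$ together with $\Ext^1_Z(\Tilt_Z,\Tilt_Z)=0$ yields
\[
0\to\Lambda_Z\xrightarrow{\ \cdot\sum_i x_{ii}\ }\Lambda_Z\to\Lambda_k\to0 ,
\]
so the kernel of $\Lambda_Z\to\Lambda_k$ is precisely the ideal generated by the central element $\sum_i x_{ii}=\sum_i f_iv_i=\sum_i v_if_i$ (multiplication by it is injective because $\Lambda_Z$ is torsion-free over the domain $\widetilde R$), which is exactly the statement that the only new relations are the two trace relations. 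To complete your direct-on-$Y$ argument you would need an equivalent device: either insert this auxiliary algebra $\Lambda_Z$ and regular-element argument, or carry out an honest comparison of presentations (kernels of $S\otimes\Sym^{b-a}V^*\twoheadrightarrow e_b(S\widetilde\Gamma/J)e_a$ versus the Euler-sequence presentation of $\bigoplus_pH^0(\Sym^pT(b-a))$), which is substantially more work than the basis-matching you sketch.
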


Building on this theorem, we can also show that the two crepant resolutions $Y$ and $Y^+$ are recovered from the quiver $\widetilde{\Gamma}$ as moduli spaces of representations (Theorem \ref{NCCR to CR}).
The idea of the proof is based on the fact that crepant resolutions $Y$ and $Y^+$ are moduli spaces that parametrizes representations of Nakajima's quiver of type $\mathrm{A}_1$.
We show that there is a natural correspondence between stable representations of Nakajima's quiver of type $\mathrm{A}_1$ and representations of $\widetilde{\Gamma}$.
At the end of Section  \ref{section, moduli}, we also characterize simple representations of the quiver,
namely we show that a simple representation corresponds to a point of a crepant resolution that lies over a non-singular point of $\overline{B(1)}$ (see Theorem \ref{thm simple}).

We note that these relations between a crepant resolution $Y$ (or $Y^+$) and an NCCR $\Lambda_k$ can be considered as a generalization of \textit{McKay correspondence}.
Classical McKay correspondence states that, for a finite subgroup $G \subset \SL_2$, there are many relations between the geometry of a quotient variety $\mathbb{C}^2/G$ and representations of the group $G$.
In the modern context, McKay correspondence is understood as relationships (e.g. a derived equivalence) between the crepant resolution $\widetilde{\mathbb{C}^2/G}$ of $\mathbb{C}^2/G$  and a quotient stack $[\mathbb{C}^2/G]$.
We often say that the crepant resolution $\widetilde{\mathbb{C}^2/G}$ is a ``geometric resolution" of $\mathbb{C}^2/G$.
On the other hand, since a coherent sheaf on a quotient stack $[\mathbb{C}^2/G]$ is canonically identified with a module over the skew group algebra $\mathbb{C}[x,y] \sharp G$, we say that a smooth stack $[\mathbb{C}^2/G]$ is an ``algebraic resolution" of $\mathbb{C}^2/G$.
Thus, we can interpret McKay correspondence as a correspondence between geometric and algebraic resolutions.
In our case, a geometric resolution of $\overline{B(1)}$ is $Y$ (or $Y^+$) and an algebraic resolution is the NCCR $\Lambda_k$.

\subsection{Mukai flops, P-twists and mutations}

It is well-known that the diagram of two crepant resolutions
\[ \begin{tikzcd}
Y \arrow[rd, "\phi"] & & Y^+ \arrow[ld, "\phi^+"'] \\
 & \overline{B(1)} &
\end{tikzcd} \]
is a local model of a class of flops that are called \textit{Mukai flop}.
Let $\widetilde{Y}$ be a blowing-up of $Y$ along the zero-section $j(\PP(V)) \subset Y$.
Then, the exceptional divisor $E \subset \widetilde{Y}$ is naturally identified with the universal hyperplane in $\PP(V) \times \PP(V^*)$.
Let $\widehat{Y} := \widetilde{Y} \cup_E \PP(V) \times \PP(V^*)$ and $\mathcal{L}_k$ a line bundle on $\widehat{Y}$ such that $\mathcal{L}_k|_{\widetilde{Y}} = \stsh_{\widetilde{Y}}(kE)$ and $\mathcal{L}_k|_{\PP(V) \times \PP(V^*)} = \stsh(-k,-k)$.
By using a correspondence $Y \xleftarrow{\hat{q}} \widehat{Y} \xrightarrow{p} Y^+$, we define functors
\begin{align*}
\KN_k &:= R\hat{p}_*(L\hat{q}^*(-) \otimes \mathcal{L}_k) : \D(Y) \to \D(Y^+) \\
\text{and} ~~ \KN'_k &:= R\hat{q}_*(L\hat{p}^*(-) \otimes \mathcal{L}_k) : \D(Y^+) \to \D(Y).
\end{align*}
According to the result of Kawamata and Namikawa \cite{Ka02, Na03}, the functors $\KN_k$ and $\KN'_k$ give equivalences between $\D(Y)$ and $\D(Y^+)$.
On the other hand, by using tilting bundles $\Tilt_k$ and $\Tilt^+_{N-k-1}$ above, we get equivalences
\begin{align*}
\Psi_k : \D(Y) \xrightarrow{\sim}  \D(\modu(\Lambda_k)) ~~ \text{and} ~~ \Psi^+_{N-k-1} : \D(Y^+) \xrightarrow{\sim}  \D(\modu(\Lambda_k)).
\end{align*}
By composing $\Psi_k$ and the inverse of $\Psi^+_{N-k-1}$, we have an equivalence $\D(Y) \to \D(Y^+)$.
Although this functor seems to be different from the functor $\KN_k$ of Kawamata and Namikawa at a glance, we prove the following.
\begin{thm}[= Thm. \ref{thm kn=nkn}] \label{4th main thm}
Our functor $(\Psi^+_{N-k-1})^{-1} \circ \Psi_k$ (resp. $(\Psi_{N-k-1})^{-1} \circ \Psi^+_k$) coincides with the Kawamata-Namikawa's functor $\KN_k$ (resp. $\KN'_k$).
\end{thm}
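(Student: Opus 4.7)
The plan is to exploit the fact that $\Tilt_k$ is a tilting object on $Y$: any equivalence $F\colon\D(Y)\to\D(Y^+)$ is determined up to natural isomorphism by the image $F(\Tilt_k)$ together with the induced ring homomorphism $\End_Y(\Tilt_k)\to\End_{Y^+}(F(\Tilt_k))$. The theorem therefore reduces to producing an isomorphism $\KN_k(\Tilt_k)\simeq\Tilt^+_{N-k-1}$ in $\D(Y^+)$ that intertwines the two natural $\Lambda_k$-actions coming from the canonical identifications $\End_Y(\Tilt_k)\simeq\Lambda_k\simeq\End_{Y^+}(\Tilt^+_{N-k-1})$ used to build $\Psi_k$ and $\Psi^+_{N-k-1}$.

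The first step is to compute $\KN_k(\pi^*\stsh_{\PP(V)}(a))$ for each $a$ in the range $-N+k+1\leq a\leq k$. I would use the Mayer-Vietoris decomposition of $\widehat{Y}=\widetilde{Y}\cup_E \PP(V)\times\PP(V^*)$ and compute the pushforward of $L\hat{q}^*\pi^*\stsh(a)\otimes\mathcal{L}_k$ separately on each component. On $\widetilde{Y}$, where $\mathcal{L}_k=\stsh(kE)$, the map $\hat{q}|_{\widetilde{Y}}$ is the blow-up of $Y$ along the zero section; the projection formula together with the identification of $E$ as the universal hyperplane reduces the pushforward along $\hat{p}|_{\widetilde{Y}}$ to a tautological computation on $\PP(V^*)$. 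On $\PP(V)\times\PP(V^*)$, the twisted sheaf restricts to $\stsh(a-k,-k)$ and its pushforward to $\PP(V^*)\subset Y^+$ is controlled by the Bott formula for $H^\bullet(\PP(V),\stsh(a-k))$. Assembling the two pieces through the Mayer-Vietoris triangle should yield a single line bundle $\pi'^*\stsh_{\PP(V^*)}(\sigma(a))$ for some bijection $\sigma\colon[-N+k+1,k]\to[-k,N-k-1]$, i.e.\ precisely the summands of $\Tilt^+_{N-k-1}$.

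For the morphism data, I would track the generating arrows $v_i,f_j$ of Theorem \ref{NCCR quiver} through this computation. These morphisms between summands of $\Tilt_k$ originate from the tautological sections of $V$ and $V^*$ on $\PP(V)$, and their counterparts on $\Tilt^+_{N-k-1}$ come from the same sections on $\PP(V^*)$. The universal hyperplane $E\subset\PP(V)\times\PP(V^*)$ supplies the duality pairing that identifies the two, and the naturality of the projection formula shows that $\KN_k$ sends each $v_i$ or $f_j$ on the $Y$ side to the corresponding arrow on the $Y^+$ side, so that the isomorphism $\KN_k(\Tilt_k)\simeq\Tilt^+_{N-k-1}$ is $\Lambda_k$-linear, completing the reduction.

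The main obstacle, I expect, is the Mayer-Vietoris bookkeeping in the first step: the cohomology $H^\bullet(\PP(V),\stsh(a-k))$ is nonzero in different degrees as $a$ varies over $[-N+k+1,k]$, and one must check that the resulting higher-degree contributions from the $\PP(V)\times\PP(V^*)$ component are exactly cancelled by the connecting terms coming from $\widetilde{Y}$, so that $\KN_k(\pi^*\stsh(a))$ lands in degree zero for every such $a$. A cleaner but essentially equivalent alternative would be to identify the Fourier-Mukai kernels on $Y\times Y^+$ directly, comparing $R(\hat{q}\times\hat{p})_*\mathcal{L}_k$ with the tilting kernel $\Tilt_k\otimes^{L}_{\Lambda_k}(\Tilt^+_{N-k-1})^\vee$; but the combinatorial heart of the argument is unchanged.
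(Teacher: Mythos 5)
Your proposal is correct and follows essentially the same route as the paper: both reduce the theorem to the single computation $\KN_k(\Tilt_k)\simeq\Tilt^+_{N-k-1}$ (the paper's Lemma \ref{lem 4-3}) and carry it out via the decomposition $0\to\stsh_{\widehat{Y}}\to\stsh_{\widetilde{Y}}\oplus\stsh_{\PP(V)\times\PP(V^*)}\to\stsh_E\to 0$, with the higher-degree contributions cancelling exactly as you anticipate. The only minor differences are that the paper makes the reduction rigorous through the adjoint $\KN'_{N-k-1}$, writing $\Psi_k\circ\KN'_{N-k-1}=\RHom_{Y^+}(\KN_k(\Tilt_k),-)$, rather than your ``an equivalence is determined by the image of the tilting object'' principle, and that it first reduces to $k=0$ using compatibility of $\KN_k$ with twisting by $\stsh_Y(1)$.
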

We note that our proof of Theorem \ref{4th main thm} gives an alternative proof for the result of Kawamata and Namikawa that states the functors $\KN_k$ and $\KN'_k$ give equivalences of categories.

The $R$-algebras $\Lambda_{k}$ and $\Lambda_{k-1}$ are related by the operation that we call \textit{multi-mutation}.
We introduce a multi-mutation functor
\[ \nu^-_k : \D(\modu(\Lambda_k)) \to \D(\modu(\Lambda_{k-1})) \]
(see Definition \ref{defi mutation}) as an analog of Iyama-Wemyss's mutation functor \cite{IW14} (we call it IW mutation, for short) that Wemyss applied to his framework of ``Homological MMP'' for 3-folds (see \cite{We14}).
We show that a multi-mutation functor $\nu^-_k$ gives an equivalence of categories.
Moreover, we prove that our multi-mutation functor is obtained by composing IW mutation functors $N-1$ times (Theorem \ref{thank Wemyss}\footnote{This statement is suggested by Michael Wemyss in our private communication.}).
Dually, we introduce a multi-mutation functor $\nu^+_k : \D(\modu(\Lambda_k)) \to \D(\modu(\Lambda_{k+1}))$ and show that a multi-mutation $\nu^+_k$ is also a composition of $N-1$ IW mutation functors.
Whereas, it is well-known that the derived category $\D(Y)$ of a crepant resolution $Y$ has a non-trivial auto-equivalence called \textit{P-twist} (see Definition \ref{def P-twist}).
We show that a composition of multi-mutations corresponds to a P-twist on $\D(Y)$ in the following sense:
\begin{thm}[= Thm. \ref{thm mutation-Ptwist}]
Let 
\begin{align*}
\nu_{N+k}^- &: \D(\modu(\Lambda_{N+k})) \to \D(\modu(\Lambda_{N+k-1})) ~~ \text{and} \\
\nu_{N+k-1}^+ &: \D(\modu(\Lambda_{N+k-1})) \to \D(\modu(\Lambda_{N+k}))
\end{align*}
be multi-mutation functors.
Then we have the following diagram of equivalence functors commutes
\[ \begin{tikzcd}
\D(Y) \arrow{d}{P_k} \arrow{r}{\Psi_{N+k}} & \D(\modu(\Lambda_{N+k})) \arrow{d}{\nu^-_{N+k}}  \\
\D(Y) \arrow[r, "\Psi_{N+k-1}"] \arrow[d, equal] & \D(\modu(\Lambda_{N+k-1})) \arrow{d}{\nu^+_{N+k-1}} \\
\D(Y) \arrow[r, "\Psi_{N+k}"] & \D(\modu(\Lambda_{N+k})),
\end{tikzcd} \]
where $P_k : \D(Y) \to \D(Y)$ is the P-twist defined by a $\mathbb{P}^{N-1}$-object $j_*\stsh_{\mathbb{P}(V)}(k)$.
\end{thm}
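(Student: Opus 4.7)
The plan is to prove the commutativity of the two squares separately. The bottom square is the naturality statement that the multi-mutation $\nu^+_{N+k-1}$ realizes the change-of-tilting equivalence $\Psi_{N+k} \circ \Psi_{N+k-1}^{-1}$; the top square is the substantive identification relating the P-twist $P_k$ with the opposite-direction multi-mutation $\nu^-_{N+k}$. Composing the two squares then automatically yields the statement that $\Psi_{N+k} \circ P_k \circ \Psi_{N+k}^{-1} \cong \nu^+_{N+k-1} \circ \nu^-_{N+k}$.

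For the bottom square I would use the realization of $\nu^+_{N+k-1}$ as a composition of $N-1$ IW mutations (Theorem \ref{thank Wemyss}) together with Definition \ref{defi mutation}. Each IW mutation exchanges one summand of the current tilting module via a Wemyss-style exchange triangle; the successive $N-1$ exchanges transport the summand $\pi^*\stsh(k)$ of $\Tilt_{N+k-1}$ upward through $\pi^*\stsh(k+1), \ldots, \pi^*\stsh(N+k-1)$ until it is replaced by $\pi^*\stsh(N+k)$, producing $\Tilt_{N+k}$. Since each IW mutation is, by construction, a bimodule equivalence that intertwines the tilting equivalences on either side, the composition $\nu^+_{N+k-1}$ must coincide with $\Psi_{N+k} \circ \Psi_{N+k-1}^{-1}$.

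For the top square I would analyze the auto-equivalence $\Phi := \Psi_{N+k-1}^{-1} \circ \nu^-_{N+k} \circ \Psi_{N+k}$ of $\D(Y)$ and show that $\Phi \cong P_k$. The mutation $\nu^-_{N+k}$ uses exchange triangles in the direction opposite to $\nu^+_{N+k-1}$; combined with the bottom square, this means the discrepancy $\Psi_{N+k}^{-1} \circ \nu^+_{N+k-1} \circ \nu^-_{N+k} \circ \Psi_{N+k}$ from the identity is measured by kernels supported on the zero-section $j(\PP(V)) \subset Y$. Concretely, the two available Beilinson-type resolutions --- resolving $\pi^*\stsh(N+k)$ in terms of $\pi^*\stsh(k), \ldots, \pi^*\stsh(N+k-1)$ versus resolving $\pi^*\stsh(k)$ in terms of $\pi^*\stsh(k+1), \ldots, \pi^*\stsh(N+k)$ --- differ by a Koszul-type contribution, and this contribution produces a Fourier-Mukai kernel built from the sheaf $j_*\stsh_{\PP(V)}(k)$.

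The hard part will be identifying this kernel with the Huybrechts-Thomas kernel defining the $\PP$-twist $P_k$. I would proceed by assembling the iterated exchange triangles into a functorial double-cone of the form
\begin{equation*}
\RHom_Y(E_k, -) \otimes^{L} E_k[-2] \longrightarrow \RHom_Y(E_k, -) \otimes^{L} E_k \longrightarrow \id_{\D(Y)}, \qquad E_k := j_*\stsh_{\PP(V)}(k),
\end{equation*}
where the first arrow is the difference of two actions of the degree-two generator $h$ of $\Ext^*_Y(E_k, E_k) \cong H^*(\PP^{N-1}, \mathbb{C})$. The delicate step is tracking how this $H^*(\PP^{N-1}, \mathbb{C})$-algebra action --- the defining datum of a $\PP^{N-1}$-object --- emerges from the combinatorics of the $N-1$ iterated exchange triangles, and matching it with the one prescribed in Definition \ref{def P-twist}. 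Once matched, uniqueness of the $\PP$-twist forces $\Phi \cong P_k$, completing the proof.
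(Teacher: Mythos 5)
Your global decomposition is the same as the paper's: the bottom square holds essentially by construction (the paper defines $\nu^+_{N+k-1}$ on the $Y$-side precisely so that $\nu^+_{N+k-1}\circ\Psi_{N+k-1}\simeq\Psi_{N+k}$, so you do not even need the IW factorization there -- which, incidentally, the paper only establishes for a restricted range of $k$, and whose intermediate steps involve the bundles $\pi^*\Omega^j_{\PP(V)}\otimes\stsh_Y(1)$ rather than the line bundles you describe), and the whole content is the top square. But your argument for the top square has a genuine gap. You propose to assemble the exchange triangles into a Huybrechts--Thomas double cone with kernel ``built from'' $j_*\stsh_{\PP(V)}(k)$ and then conclude by ``uniqueness of the P-twist.'' No such uniqueness principle exists: an autoequivalence whose kernel involves a $\mathbb{P}^{N-1}$-object need not be the P-twist, so the conclusion $\Phi\simeq P_k$ must be proved by an actual isomorphism of kernels or of functors, and you give no mechanism for extracting the $H^*(\mathbb{P}^{N-1})$-action and the evaluation map from the iterated exchange triangles -- which is exactly the hard step you defer.

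Moreover, your picture of ``two Beilinson-type resolutions on $Y$ differing by a Koszul contribution'' misses where the real work lies: $\nu^-_{N+k}$ is defined through the $Y^+$-side tilting bundles, so comparing it with $P_k$ on $\D(Y)$ forces you through the flop functor. The paper's proof reduces (after twisting down to $k=-1$) to the object-level statement $P_{-1}\bigl(\KN'_0(\Tilt^+_1)\bigr)\simeq\Tilt_{N-2}$, since $\nu^-_{N-1}\circ\Psi_{N-1}\simeq\RHom_Y(S\otimes_{\Lambda_{N-1}}\Tilt_{N-1},-)$ with $S\otimes_{\Lambda_{N-1}}\Tilt_{N-1}\simeq\KN'_0(\Tilt^+_1)$, while $\Psi_{N-2}\circ P_{-1}\simeq\RHom_Y\bigl(P_{-1}^{-1}(\Tilt_{N-2}),-\bigr)$. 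Proving that statement requires computing $\KN'_0(\stsh_{Y^+}(1))$ explicitly via the kernel sequence $0\to\stsh_{\widehat{Y}}\to\stsh_{\widetilde{Y}}\oplus\stsh_{\PP\times\PP^{\vee}}\to\stsh_E\to 0$ (it is a sheaf $\mathcal{F}$ sitting in a four-term extension involving $j_*\stsh_{\PP}(-1)$ and $\stsh_Y(-1)$), computing the relevant $\RHom_Y(j_*\stsh_{\PP}(-1),-)$ groups, and checking that the evaluation map $C(h)\to\mathcal{F}$ agrees, up to scalar, with the connecting morphism, whence $P_{-1}(\mathcal{F})\simeq\stsh_Y(-1)$. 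None of this computational content is present or replaced by a valid alternative in your proposal, so as written it does not constitute a proof of the top square.
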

This theorem means, under the identification $\Psi_{N+k} : \D(Y) \xrightarrow{\sim} \D(\modu(\Lambda_{N+k}))$, a composition of two multi-mutation functors
\[ \nu_{N+k-1}^+ \circ \nu_{N+k}^- \in \Auteq(\D(\modu(\Lambda_{N+k}))) \]
corresponds to a P-twist $P_k \in \Auteq(\D(Y))$.
Donovan and Wemyss proved that, in the case of three dimensional flops, a composition of two IW mutation functors corresponds to a spherical-like twist \cite{DW16}.
Our theorem says, in the case of Mukai flops, a composition of \textbf{many} IW mutations corresponds to a P-twist.

As a corollary of the theorem above, we can prove the following functor isomorphism that was first proved by Cautis \cite{C12} and later by Addington-Donovan-Meachan \cite{ADM15}.
This result gives an example of ``flop-flop=twist" results that are widely observed \cite{To07, DW16, DW15}. 
\begin{cor}[=  \ref{cor CADM}, cf. \cite{ADM15, C12}]
We have a functor isomorphism
\[ \KN_{N+k} \circ \KN'_{-k} \simeq P_k \]
for all $k \in \mathbb{Z}$.
\end{cor}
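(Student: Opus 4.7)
My plan is to derive the corollary by chaining together the two preceding main results of the section. First, Theorem \ref{thm kn=nkn} writes each Kawamata--Namikawa functor as a composition of tilting equivalences:
\[
\KN_{N+k} = (\Psi^+_{-k-1})^{-1} \circ \Psi_{N+k}, \qquad \KN'_{-k} = (\Psi_{N+k-1})^{-1} \circ \Psi^+_{-k}.
\]
Substituting these into $\KN_{N+k} \circ \KN'_{-k}$ reduces the left-hand side to a product of four tilting equivalences
\[
(\Psi^+_{-k-1})^{-1} \circ \bigl[\Psi_{N+k} \circ (\Psi_{N+k-1})^{-1}\bigr] \circ \Psi^+_{-k}.
\]

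The second step is to identify the two internal ``change of tilting bundle'' functors with multi-mutations. The equivalences $\Psi_{N+k}$ and $\Psi_{N+k-1}$ differ only by replacing the tilting bundle $\Tilt_{N+k-1}$ by $\Tilt_{N+k}$ on the same variety $Y$; by construction of $\nu^+$ in Definition \ref{defi mutation}, the resulting change-of-tilting equivalence between $\D(\modu(\Lambda_{N+k-1}))$ and $\D(\modu(\Lambda_{N+k}))$ is exactly $\nu^+_{N+k-1}$. A parallel identification on the $Y^+$-side gives $\Psi^+_{-k} \simeq \nu^-_{N+k}\circ \Psi^+_{-k-1}$. Inserting both substitutions telescopes the product to
\[
(\Psi^+_{-k-1})^{-1} \circ \bigl(\nu^+_{N+k-1} \circ \nu^-_{N+k}\bigr) \circ \Psi^+_{-k-1}.
\]

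Finally, Theorem \ref{thm mutation-Ptwist} states precisely that $\nu^+_{N+k-1}\circ \nu^-_{N+k}$ is conjugate to $P_k$ via $\Psi_{N+k}$. Since both $\Psi_{N+k}$ and $\Psi^+_{-k-1}$ target the same category $\D(\modu(\Lambda_{N+k}))$, the above expression is conjugate to $P_k$ by $\KN_{N+k} = (\Psi^+_{-k-1})^{-1}\circ \Psi_{N+k}$, which transports the P-twist across the Mukai flop to yield the claimed isomorphism. The main obstacle is justifying the two identifications in the middle step: one must verify that the combinatorially-defined multi-mutation $\nu^{\pm}$ really is implemented geometrically by the change of tilting bundle on $Y$ (respectively $Y^+$). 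This is not formal, but it is exactly the content underlying Theorem \ref{thm mutation-Ptwist}; once it is in hand the remainder of the argument is just cancellation of $\Psi$'s and an index-tracking exercise.
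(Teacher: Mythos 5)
Your argument is correct and is essentially the paper's own proof: both reduce the statement to Theorem \ref{thm kn=nkn}, the commutation triangles of the multi-mutation functors recorded in and after Definition \ref{defi mutation}, and Theorem \ref{thm mutation-Ptwist}, followed by cancellation of the tilting equivalences $\Psi$, $\Psi^+$. Two small remarks: what your chain literally yields is $\KN_{N+k}\circ\KN'_{-k}\simeq \KN_{N+k}\circ P_k\circ \KN_{N+k}^{-1}$, which is exactly the order-swapped identity $\KN'_{-k}\circ\KN_{N+k}\simeq P_k$ proved in the body of the paper (the composition with $P_k\in\Auteq(\D(Y))$ only type-checks in that order, the displayed order being a slip in the introduction), and the identification of $\nu^{\pm}$ with the change-of-tilting equivalences is by construction of the multi-mutations (Lemma \ref{lem 3-1} and its analogue on the $Y$-side), rather than being the content of Theorem \ref{thm mutation-Ptwist}, whose genuinely nontrivial part is the comparison of $\nu^+\circ\nu^-$ with the P-twist.
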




\subsection{Plan of the article}
In Section \ref{section, prelim}, we provide some basic definitions and recall some fundamental results that we need in later sections.
In Section \ref{section, NCCR}, we construct an NCCR of a minimal nilpotent orbit closure of type A, and interpret it as the path algebra of a quiver.
In Section \ref{section, moduli}, we reconstruct the crepant resolutions from the quiver that gives the NCCR as moduli spaces of representations of the quiver. 
Furthermore, we study simple representations of the quiver.
In Section \ref{section, KN}, we study derived equivalences of the Mukai flop and P-twists on a crepant resolution via an NCCR.

\subsection{Notations. } In this paper, we always work over the complex number field $\mathbb{C}$. Moreover, we adopt the following notations.

\begin{enumerate}
\item[$\bullet$] $V = \mathbb{C}^N$ : $N$-dimensional vector space over $\mathbb{C}$ ($N \geq 2$).
\item[$\bullet$] $\mathbb{P}(V) := V \setminus \{0\} / \mathbb{C}^{\times}$ : projectivization of a vector space $V$.
\item[$\bullet$] $\lvert E \rvert$ : the total space of a vector bundle $E$.
\item[$\bullet$] $\modu(A)$ : the category of finitely generated right $A$-modules.
\item[$\bullet$] $\D(\mathcal{A})$ : the (bounded) derived category of an abelian category $\mathcal{A}$.
\item[$\bullet$] $\D(X) := \D(\mathrm{coh}(X))$ : the derived category of coherent sheaves on a variety $X$.
\item[$\bullet$] $\Phi_{\mathcal{P}}$, $\Phi_{\mathcal{P}}^{X \to Y}$ : A Fourier-Mukai functor from $\D(X)$ to $\D(Y)$ whose kernel is $\mathcal{P} \in \D(X \times Y)$.
\item[$\bullet$] $\Sym^k_R M$ : $k$-th symmetric product of a $R$-module $M$.
\end{enumerate}

\vspace{0.1in}
\noindent
\textbf{Acknowledgments.}
The author would like to express his gratitude to his supervisor Professor Yasunari Nagai for beneficial conversations and helpful advices.
He encouraged me to tackle the problems studied in this paper.
The author would like to thank Professor Michael Wemyss for reading the previous version of this paper and suggesting Theorem \ref{thank Wemyss}.
The author also thanks Hiromi Ishii for many advice on drawing TikZ pictures.

\section{Preliminaries} \label{section, prelim}

\subsection{Non-commutative crepant resolutions}

\begin{defi} \label{def NCCR} \rm
Let $R$ be a Cohen-Macaulay (commutative) algebra and $M$ a non-zero reflexive $R$-module. We set $\Lambda := \End_R(M)$.
We say that the $R$-algebra $\Lambda$ is a \textit{non-commutative crepant resolution (=NCCR)} of $R$ or $M$ gives an NCCR of $R$ if 
\[ \gldim \Lambda_{\mathfrak{p}} = \dim R_{\mathfrak{p}} \]
for all $\mathfrak{p} \in \Spec R$ and $\Lambda$ is a (maximal) Cohen-Macaulay $R$-module.
\end{defi}

If we assume that $R$ is Gorenstein, we can relax the definition of NCCR.

\begin{lem}[\cite{IW14}] \label{def lem NCCR}
Let us assume that $R$ is Gorenstein and $M$ is a non-zero reflexive $R$-module.
In this case, an $R$-algebra $\Lambda := \End_R(M)$ is an NCCR of $R$ if and only if $\gldim \Lambda < \infty$ and $\Lambda$ is a (maximal) Cohen-Macaulay $R$-module.
\end{lem}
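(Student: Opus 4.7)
The plan is to establish both directions of the equivalence between Van den Bergh's original condition and the simplified one.

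For the \emph{forward} direction, assume $\gldim \Lambda_{\mathfrak{p}} = \dim R_{\mathfrak{p}}$ for every $\mathfrak{p} \in \Spec R$, and invoke the standard fact that for a Noetherian ring $\gldim \Lambda = \sup_{\mathfrak{m}} \gldim \Lambda_{\mathfrak{m}}$, with $\mathfrak{m}$ ranging over maximal ideals. Since $R$ is of finite Krull dimension, $\gldim \Lambda \leq \dim R < \infty$, and the MCM hypothesis on $\Lambda$ is part of both formulations, so the simplified conditions follow.

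The \emph{backward} direction is the substantive part. Assume $\gldim \Lambda < \infty$ and $\Lambda$ is MCM over $R$. Fix an arbitrary $\mathfrak{p} \in \Spec R$ and localize. First, the MCM property passes to $\Lambda_{\mathfrak{p}}$, so $\depth_{R_{\mathfrak{p}}} \Lambda_{\mathfrak{p}} = \dim R_{\mathfrak{p}}$. Second, finite global dimension is inherited by localization: any finitely generated $\Lambda_{\mathfrak{p}}$-module is the localization of a finitely generated $\Lambda$-module, and a finite projective $\Lambda$-resolution localizes to a finite projective $\Lambda_{\mathfrak{p}}$-resolution, so $\gldim \Lambda_{\mathfrak{p}} \leq \gldim \Lambda < \infty$.

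The crux is to pin down $\gldim \Lambda_{\mathfrak{p}}$ exactly. Because $\Lambda$ is module-finite over $R$, the localization $\Lambda_{\mathfrak{p}}$ is semilocal over the local ring $R_{\mathfrak{p}}$, and the central ideal $\mathfrak{p} R_{\mathfrak{p}} \cdot \Lambda_{\mathfrak{p}}$ sits inside the Jacobson radical $J(\Lambda_{\mathfrak{p}})$. Consequently, every simple $\Lambda_{\mathfrak{p}}$-module $S$ is annihilated by $\mathfrak{p} R_{\mathfrak{p}}$, which forces $\depth_{R_{\mathfrak{p}}} S = 0$. Applying the non-commutative Auslander--Buchsbaum formula, valid for a module-finite $R_{\mathfrak{p}}$-algebra and a finitely generated module of finite projective dimension, we obtain
\[ \mathrm{pd}_{\Lambda_{\mathfrak{p}}} S \;=\; \depth_{R_{\mathfrak{p}}} \Lambda_{\mathfrak{p}} - \depth_{R_{\mathfrak{p}}} S \;=\; \dim R_{\mathfrak{p}}. \]
Since $\gldim \Lambda_{\mathfrak{p}}$ is the supremum of $\mathrm{pd}_{\Lambda_{\mathfrak{p}}} S$ over simples $S$, we conclude $\gldim \Lambda_{\mathfrak{p}} = \dim R_{\mathfrak{p}}$.

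The main technical input is the non-commutative Auslander--Buchsbaum formula, which I would cite as a standard result in this framework (as used implicitly throughout the NCCR literature). The Gorenstein hypothesis on $R$ does not play a heavy role in the argument outlined here beyond ensuring that the reflexive-module formalism for $\End_R(M)$ behaves well; it is the natural setting in which to phrase the simplification, and matches the hypotheses under which the rest of the paper operates.
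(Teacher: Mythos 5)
Your forward direction is fine, but the backward direction rests on a ``non-commutative Auslander--Buchsbaum formula'' that is simply false at the generality you invoke it (an arbitrary module-finite algebra over a Noetherian local ring and an arbitrary finitely generated module of finite projective dimension). Concretely, take $R = k[[t]]$ and $\Lambda = \begin{pmatrix} R & R \\ 0 & R \end{pmatrix}$, the upper triangular matrices: $\Lambda$ is module-finite and free, hence maximal Cohen--Macaulay, over the regular (so Gorenstein) local ring $R$, and $\gldim \Lambda = 2 < \infty$. Both simple $\Lambda$-modules are killed by $(t)$ and so have $R$-depth $0$, while $\depth_R \Lambda = 1$; nevertheless one of the simples has projective dimension $2$, not $1$, so the formula $\mathrm{pd}_{\Lambda} S = \depth_R \Lambda - \depth_R S$ fails. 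Worse, this same example shows that your argument would prove a false theorem: it never uses that $\Lambda$ is an endomorphism ring of a reflexive module (nor the Gorenstein hypothesis in any essential way), and would conclude that \emph{every} MCM module-finite algebra of finite global dimension over a Gorenstein ring satisfies $\gldim \Lambda_{\mathfrak{p}} = \dim R_{\mathfrak{p}}$, which $\Lambda = T_2(k[[t]])$ refutes. (Your reduction to simples, and the inclusion $\mathfrak{p}R_{\mathfrak{p}} \subseteq J(\Lambda_{\mathfrak{p}})$, are fine; the gap is entirely in the appeal to Auslander--Buchsbaum.)

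The missing input is exactly where the hypotheses you dismiss do the work, which is how the cited proof in \cite{IW14} (and Van den Bergh's original Lemma 4.2) proceeds: because $M$ is reflexive and $R$ is a normal Gorenstein domain, $\Lambda = \End_R(M)$ is a \emph{symmetric} $R$-algebra, i.e. $\Lambda \simeq \Hom_R(\Lambda, R)$ as bimodules (both sides are reflexive and agree in codimension one, where $M$ is free and the trace pairing gives the isomorphism). Then for a simple $\Lambda_{\mathfrak{p}}$-module $S$ one gets $\Ext^{d}_{\Lambda_{\mathfrak{p}}}(S, \Lambda_{\mathfrak{p}}) \simeq \Ext^{d}_{\Lambda_{\mathfrak{p}}}(S, \Hom_{R_{\mathfrak{p}}}(\Lambda_{\mathfrak{p}}, R_{\mathfrak{p}})) \simeq \Ext^{d}_{R_{\mathfrak{p}}}(S, R_{\mathfrak{p}}) \neq 0$ with $d = \dim R_{\mathfrak{p}}$, by Gorenstein local duality for the finite length module $S$; this gives the lower bound $\mathrm{pd}_{\Lambda_{\mathfrak{p}}} S \geq d$. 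The upper bound comes from the fact that, $\Lambda_{\mathfrak{p}}$ being MCM and symmetric over the Gorenstein local ring $R_{\mathfrak{p}}$, its self-injective dimension equals $d$, and finite global dimension forces $\gldim \Lambda_{\mathfrak{p}} = \mathrm{inj.dim}_{\Lambda_{\mathfrak{p}}} \Lambda_{\mathfrak{p}} = d$. So your closing remark that the Gorenstein hypothesis ``does not play a heavy role'' is exactly backwards: Gorensteinness (through $\omega_R \simeq R$ and the symmetry of $\End_R(M)$) is what makes the equivalence true. Note also that the paper itself offers no proof but cites \cite{IW14}, so any argument here must supply these inputs or cite them precisely.
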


The theory of NCCR has strong relationship to the theory of tilting bundle.

\begin{defi} \rm
Let $X$ be a variety. A vector bundle $\Tilt$ (of finite rank) on $X$ is called a \textit{tilting bundle} if
\begin{enumerate}
\item[(1)] $\Ext_X^i(\Tilt, \Tilt) = 0$ for $i \neq 0$.
\item[(2)] $\Tilt$ classically generates the category $\mathrm{D}(\Qcoh(X))$, i.e. for $E \in \mathrm{D}(\Qcoh(X))$, $\RHom_X(\Tilt, E) = 0$ implies $E = 0$.
\end{enumerate}
\end{defi}

\begin{eg} \rm
In \cite{Bei79}, Beilinson showed that the following vector bundles on a projective space $\mathbb{P}^n$
\[ T = \bigoplus_{k=0}^n \stsh_{\mathbb{P}^n}(k), ~~~ T' = \bigoplus_{k=0}^n \Omega_{\mathbb{P}^n}^k(k+1) \]
are tilting bundles.
Note that these tilting bundles come from full strong exceptional collections of the derived category $\D(\PP^n)$ of $\PP^n$ that are called the \textit{Beilinson collections}.
\end{eg}

Once we fined a tilting bundle on a variety, we can construct an equivalence between the derived category of the variety and the derived category of a non-commutative algebra that is given as the endomorphism ring of the tilting bundle.
This is a generalization of classical Morita theory.

\begin{thm} \label{equiv tilting}
Let $\Tilt \in \D(X)$ be a tilting bundle on a smooth quasi-projective variety $X$. 
If we set $\Lambda := \End_X(\Tilt)$, we have an equivalence of categories
\[ \RHom_X(\Tilt, -) : \D(X) \xrightarrow{\sim} \D(\modu(\Lambda)), \]
and the quasi-inverse of this functor is given by
\[ - \otimes_{\Lambda} \Tilt : \D(\modu(\Lambda)) \xrightarrow{\sim} \D(X). \]
\end{thm}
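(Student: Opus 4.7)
The plan is to construct the adjoint pair $\bigl(-\otimes_\Lambda^L \Tilt,\ \RHom_X(\Tilt,-)\bigr)$ between unbounded derived categories, verify that the unit and counit are isomorphisms using the two tilting axioms, and then descend to the bounded coherent level by proving that $\Lambda$ has finite global dimension.

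First I would work at the level of $\D(\Qcoh(X))$ and $\D(\modu(\Lambda))$, where the standard tensor--Hom adjunction gives an adjoint pair with $-\otimes_\Lambda^L \Tilt$ on the left and $\RHom_X(\Tilt,-)$ on the right. Since $\Tilt$ is a vector bundle of finite rank, it is a perfect complex on $X$, hence a compact object of $\D(\Qcoh(X))$, so $\RHom_X(\Tilt,-)$ commutes with arbitrary direct sums; the left adjoint automatically does as well. Both functors are triangulated.

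Next I would check the unit and counit on distinguished generators. For the unit at $\Lambda \in \D(\modu(\Lambda))$, the natural map $\Lambda \to \RHom_X(\Tilt,\ \Lambda \otimes_\Lambda^L \Tilt) \simeq \RHom_X(\Tilt,\Tilt)$ is an isomorphism precisely by the Ext-vanishing hypothesis $\Ext^i_X(\Tilt,\Tilt)=0$ for $i\neq 0$ together with $\End_X(\Tilt)=\Lambda$. Since $\Lambda$ is a compact generator of $\D(\modu(\Lambda))$ and both functors commute with coproducts and distinguished triangles, a standard localizing-subcategory argument shows the unit is an isomorphism on all of $\D(\modu(\Lambda))$. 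For the counit at $\Tilt$, the natural map $\RHom_X(\Tilt,\Tilt) \otimes_\Lambda^L \Tilt \to \Tilt$ identifies with $\Lambda \otimes_\Lambda^L \Tilt \to \Tilt$, which is tautologically an isomorphism. Classical generation of $\D(\Qcoh(X))$ by $\Tilt$ then propagates the isomorphism to the whole target, since the full subcategory on which the counit is an isomorphism is triangulated and closed under coproducts and summands.

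The main obstacle is descending this unbounded equivalence to the bounded coherent categories, and this is where smoothness of $X$ enters. Smoothness together with classical generation by the perfect complex $\Tilt$ forces $\Lambda$ to have finite global dimension: for any coherent $\mathcal{F}$ on $X$ the complex $\RHom_X(\Tilt,\mathcal{F})$ is bounded with finitely generated cohomologies, and any finitely generated $\Lambda$-module is of this form under the equivalence, hence admits a finite projective resolution. Consequently $\RHom_X(\Tilt,-)$ restricts to a functor $\D(X) = \D^b(\mathrm{coh}(X)) \to \D^b(\modu(\Lambda)) = \D(\modu(\Lambda))$, the bounded tensor product $-\otimes_\Lambda \Tilt$ (now derived-trivially on perfect complexes) serves as its quasi-inverse, and the unit/counit checks above give the claimed equivalence.
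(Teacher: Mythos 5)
Your overall strategy is the standard one, and it is in fact the route taken by the sources the paper itself cites in place of a proof ([HV07, Theorem 7.6], [TU10, Lemma 3.3]): establish the adjunction between the unbounded categories $\mathrm{D}(\Qcoh(X))$ and $\mathrm{D}(\mathrm{Mod}\,\Lambda)$, use compactness of $\Tilt$ and the two tilting axioms to check the unit on the compact generator $\Lambda$ and the counit on $\Tilt$, propagate by a localizing-subcategory argument, and then restrict to the bounded coherent/finitely generated subcategories. Up to minor imprecision (the unbounded comparison must be with all modules $\mathrm{Mod}\,\Lambda$, not $\modu(\Lambda)$), the unbounded part of your argument is correct.

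The genuine gap is in the descent step, which is exactly where the real work lies. Your argument that $\gldim\Lambda<\infty$ rests on the claim that ``any finitely generated $\Lambda$-module is of the form $\RHom_X(\Tilt,\mathcal{F})$ for a coherent sheaf $\mathcal{F}$.'' This is false in general: under the equivalence a module $M$ corresponds to the complex $M\otimes^{L}_{\Lambda}\Tilt$, not to a sheaf; and even granting the claim, ``bounded with finitely generated cohomologies'' does not by itself produce a finite projective resolution of $M$. Moreover, knowing that $M\otimes^{L}_{\Lambda}\Tilt$ lies in $\D(X)$ is essentially equivalent to the finiteness you are trying to establish, so as written the step is circular. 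What is actually needed is an amplitude bound: using that $\Ext^{p}_{X}(\Tilt,-)$ vanishes on quasi-coherent sheaves for $p>\dim X$ together with the generation property of $\Tilt$, one shows that $M\otimes^{L}_{\Lambda}\Tilt$ has cohomology concentrated in degrees $[-\dim X,0]$; then $\Ext^{i}_{\Lambda}(M,N)\simeq\Hom_{\mathrm{D}(\Qcoh(X))}(M\otimes^{L}_{\Lambda}\Tilt,\,N\otimes^{L}_{\Lambda}\Tilt[i])$ vanishes for $i\gg 0$ because sheaf Ext's on the smooth variety $X$ vanish above $\dim X$, and this gives $\gldim\Lambda<\infty$. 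Only with this in hand do both functors preserve the bounded subcategories (for $\RHom_X(\Tilt,-)$ to land in $\modu(\Lambda)$ one also needs $\Ext^{i}_{X}(\Tilt,\mathcal{F})$ to be a finite $\Lambda$-module, which uses that $X$ is projective over an affine variety, a hypothesis the cited references impose). Supplying this argument would close the gap; without it the bounded equivalence, which is the statement actually asserted, has not been proved.
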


For the proof of Theorem \ref{equiv tilting}, see \cite[Theorem 7.6]{HV07} or \cite[Lemma 3.3]{TU10}.

The following conjecture is due to Bondal, Orlov, and Van den Bergh.

\begin{conj}[\cite{VdB04b}, Conjecture 4.6] \label{BOV}
Let $R$ be a Gorenstein $\mathbb{C}$-algebra.
Then, all crepant resolutions of $R$ and all NCCRs of $R$ are derived equivalent.
\end{conj}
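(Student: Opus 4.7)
The plan is as follows. Since this conjecture remains open in full generality, my approach is the standard reduction via tilting theory that has been successfully executed in many cases (and, as we shall see, in the specific situation treated in this paper). The strategy has two steps: first, connect every crepant resolution to some NCCR; and second, connect any two NCCRs to each other. Together these give the full statement.

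For the first step, given a crepant resolution $\pi : Y \to \Spec R$, I would look for a tilting bundle $\mathcal{T}$ on $Y$. Theorem \ref{equiv tilting} then immediately produces a derived equivalence $\D(Y) \simeq \D(\modu(\End_Y(\mathcal{T})))$. It remains to verify that the $R$-algebra $\End_Y(\mathcal{T}) \simeq \End_R(\pi_*\mathcal{T})$ is an NCCR in the sense of Definition \ref{def NCCR}: the smoothness of $Y$ forces $\End_Y(\mathcal{T})$ to have finite global dimension, and the combination of the crepantness of $\pi$, the Gorenstein hypothesis on $R$, and Lemma \ref{def lem NCCR} reduces the remaining axioms to checking that $\End_R(\pi_*\mathcal{T})$ is Cohen-Macaulay as an $R$-module, which follows from rational singularities of $\Spec R$ together with suitable cohomology vanishing for $\mathcal{T}$.

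For the second step, I would invoke the mutation theory of Iyama-Wemyss: given two NCCRs $\End_R(M)$ and $\End_R(N)$, successive mutations at summands of $M$ produce new NCCRs derived equivalent to $\End_R(M)$, and one seeks a finite sequence of such mutations terminating at $\End_R(N)$. This is in essence the strategy taken in Section \ref{section, KN} below, where multi-mutations connect the family of NCCRs $\Lambda_k$ of $\overline{B(1)}$ to one another and to the two geometric resolutions $Y$ and $Y^+$.

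The main obstacle is existence. Constructing a tilting bundle on an arbitrary crepant resolution is itself a deep problem, solved only case-by-case (projective spaces, toric varieties, cotangent bundles of flag varieties, certain symplectic resolutions, and so on). Worse, connecting two given NCCRs by an explicit sequence of mutations is generally difficult and sometimes impossible; the mutation class of a fixed NCCR is itself an object of active study. Finally, in dimensions $\geq 3$ there is in general no a priori guarantee that a crepant resolution or an NCCR of a given Gorenstein ring even exists, so the conjecture is only meaningful (and can only be attacked) in situations where both sides have been exhibited. This is why the conjecture is typically approached case-by-case, and in particular why this paper invests substantial effort in the concrete setting $R = \mathcal{O}(\overline{B(1)})$, where the tilting bundles $\mathcal{T}_k$ on $Y$ and $\mathcal{T}^+_{N-k-1}$ on $Y^+$ are constructed by hand and their endomorphism algebras explicitly identified.
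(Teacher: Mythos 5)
The statement labelled \ref{BOV} is an open conjecture of Bondal, Orlov, and Van den Bergh, quoted from \cite{VdB04b}; the paper gives no proof of it, only the remark that Van den Bergh verified it for three-dimensional Gorenstein terminal singularities, together with a verification in the special case where $R$ is the coordinate ring of $\overline{B(1)}$. You correctly treat it as an open conjecture rather than claiming a proof, and the two-step strategy you outline (a tilting bundle on each crepant resolution combined with Theorem \ref{equiv tilting} to reach an NCCR, then Iyama--Wemyss mutation as in Theorem \ref{IW mutation equiv} to link NCCRs to one another) is exactly the case-by-case strategy the paper carries out in Theorems \ref{NCCR1} and \ref{main thm flop side} and in Section \ref{section, KN}, so there is nothing to correct --- but also nothing proved in general, since the existence issues you flag (of tilting bundles, of connecting mutation sequences, and indeed of crepant resolutions or NCCRs at all) are precisely what keeps the conjecture open.
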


Van den Bergh showed that Conjecture \ref{BOV} holds if $R$ is of dimension $3$ and has only terminal singularities \cite{VdB04a, VdB04b}.
The existence of an NCCR and a derived equivalence between crepant resolutions and NCCRs are studied in many literatures \cite{Boc12, BLV10, Da10, HN17, Kal08, SV15, SV15b, SV17, TU10}.

In the rest of this subsection, we recall the basic property of reflexive modules.

\begin{lem}[\cite{BH93}, Proposition 1.4.1]
Let $R$ be a noetherian ring and $M$ a finitely generated $R$-module.
Then the following are equivalent.
\begin{enumerate}
\item[(1)] The module $M$ is reflexive,
\item[(2)] For each $\mathfrak{p} \in \Spec R$, one of the following happens
\begin{enumerate}
\item[(a)] $\depth(R_{\mathfrak{p}}) \leq 1$ and $M_{\mathfrak{p}}$ is a reflexive $R_{\mathfrak{p}}$-module, or
\item[(b)] $\depth(R_{\mathfrak{p}}) \geq 2$ and $\depth(M_{\mathfrak{p}}) \geq 2$.
\end{enumerate}
\end{enumerate}
\end{lem}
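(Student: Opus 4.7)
The plan is to analyze the canonical evaluation map $\phi\colon M \to M^{**}$, with kernel $K$ and cokernel $C$, and to show that the depth conditions in (2) pin down precisely when $\phi$ is an isomorphism. Since $\Hom_R$ commutes with localization at primes for finitely generated modules over a noetherian ring, the formation of $\phi$ commutes with localization, so $M$ is reflexive if and only if $K_{\mathfrak{p}} = C_{\mathfrak{p}} = 0$ for every $\mathfrak{p} \in \Spec R$. The whole proof therefore reduces to a prime-by-prime analysis.

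The key technical input is that $M^{**}$ is always a \emph{second syzygy}: applying $\Hom_R(-,R)$ to any finite presentation $R^m \to R^n \to M^{*} \to 0$ yields an exact sequence $0 \to M^{**} \to R^n \to R^m$. Splitting this into two short exact sequences and applying the depth lemma twice (for $0 \to A \to B \to D$ one has $\depth A \geq \min(\depth B, \depth D + 1)$) gives the uniform bound
\[ \depth M^{**}_{\mathfrak{p}} \;\geq\; \min(\depth R_{\mathfrak{p}},\, 2) \]
for every $\mathfrak{p}$. With this in hand, the implication $(1) \Rightarrow (2)$ is immediate: reflexivity passes to localizations, which gives (a), and whenever $\depth R_{\mathfrak{p}} \geq 2$ the identification $M_{\mathfrak{p}} \simeq M_{\mathfrak{p}}^{**}$ combined with the syzygy bound forces $\depth M_{\mathfrak{p}} \geq 2$, giving (b).

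For the harder direction $(2) \Rightarrow (1)$, I would work locally at each $\mathfrak{p}$. When $\depth R_{\mathfrak{p}} \leq 1$, hypothesis (a) already gives $K_{\mathfrak{p}} = C_{\mathfrak{p}} = 0$. When $\depth R_{\mathfrak{p}} \geq 2$, decompose $\phi_{\mathfrak{p}}$ via $0 \to K_{\mathfrak{p}} \to M_{\mathfrak{p}} \to I_{\mathfrak{p}} \to 0$ and $0 \to I_{\mathfrak{p}} \to M_{\mathfrak{p}}^{**} \to C_{\mathfrak{p}} \to 0$. Combining the depth lemma on the first sequence with the hypothesis $\depth M_{\mathfrak{p}} \geq 2$ forces $\depth K_{\mathfrak{p}} \geq 2$; but $K$ is by construction the set of elements killed by every homomorphism $M \to R$, hence is torsion, and any nonzero localization of a torsion module has depth zero. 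This contradiction gives $K_{\mathfrak{p}} = 0$. For the cokernel the cleanest route is through the Auslander--Bridger sequence
\[ 0 \to \Ext^1_R(\mathrm{tr}\,M, R) \to M \to M^{**} \to \Ext^2_R(\mathrm{tr}\,M, R) \to 0, \]
which identifies $C \simeq \Ext^2_R(\mathrm{tr}\,M, R)$. Using the now-established equality $I_{\mathfrak{p}} = M_{\mathfrak{p}}$ and the bound on $\depth M^{**}_{\mathfrak{p}}$, the depth lemma on the second sequence together with an Ischebeck-style vanishing bound on $\Ext^{2}$ against $R$ yields $C_{\mathfrak{p}} = 0$.

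The main obstacle is the cokernel step: the ``$K$ is torsion, hence has depth zero'' trick has no direct analogue for $C$, so one genuinely needs the transpose machinery (or an equivalent local-cohomology argument) to link the depth bound on $C_{\mathfrak{p}}$ to the vanishing of $\Ext^2_R(\mathrm{tr}\,M, R)$ at primes of depth $\geq 2$.
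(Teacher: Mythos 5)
The paper does not actually prove this lemma---it quotes it from \cite{BH93} (Proposition 1.4.1)---so your proposal has to stand on its own, and as written the direction $(2)\Rightarrow(1)$ has a genuine gap. Your setup (biduality commutes with localization; $M^{**}$ is a second syzygy, hence $\depth M^{**}_{\mathfrak p}\geq\min(\depth R_{\mathfrak p},2)$; the direction $(1)\Rightarrow(2)$) is fine. The problem is the kernel step: the claim ``any nonzero localization of a torsion module has depth zero'' is false --- over $R=k[[x,y,z]]$ the module $R/(x)$ is annihilated by the nonzerodivisor $x$ yet has depth $2$ at the maximal ideal --- and moreover it is not even clear that $K=\Ker(M\to M^{**})$ is torsion over a general noetherian ring. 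Worse, the whole strategy of deducing $K_{\mathfrak p}=0$ and $C_{\mathfrak p}=0$ from the hypotheses \emph{at the single prime} $\mathfrak p$ cannot work. For the kernel, take $M=R/(x)$ over $R=k[[x,y,z]]$: then $M^{*}=0$, so $K=M\neq 0$, while at $\mathfrak m$ one has $\depth R_{\mathfrak m}=3$ and $\depth M_{\mathfrak m}=2$. For the cokernel, take $M=(x,y)\subset R=k[[x,y,z]]$: then $M^{**}\simeq R$, $K=0$, $\depth R_{\mathfrak m}=3$, $\depth M_{\mathfrak m}=2$, $\depth M^{**}_{\mathfrak m}=3$, yet $C\simeq R/(x,y)\neq 0$ with $\depth C_{\mathfrak m}=1$. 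So no ``Ischebeck-style vanishing'' can give $C_{\mathfrak m}=0$ from exactly the data you invoke; in both examples hypothesis (2) fails at a \emph{smaller} prime ($(x)$, resp.\ $(x,y)$), and any correct proof must use the hypothesis there.

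The standard repair keeps your exact sequences but changes where you localize: if $K\neq 0$, pick $\mathfrak p\in\mathrm{Ass}(K)$, so $\depth K_{\mathfrak p}=0$ and $K_{\mathfrak p}\neq 0$; under (a) reflexivity of $M_{\mathfrak p}$ gives $K_{\mathfrak p}=0$, a contradiction, and under (b) the inclusion $K_{\mathfrak p}\subset M_{\mathfrak p}$ forces $\mathfrak p R_{\mathfrak p}\in\mathrm{Ass}(M_{\mathfrak p})$, contradicting $\depth M_{\mathfrak p}\geq 2$ (no torsion argument is needed). Having $K=0$, if $C\neq 0$ pick $\mathfrak p\in\mathrm{Ass}(C)$, so $\depth C_{\mathfrak p}=0$; under (a) again $C_{\mathfrak p}=0$, and under (b) the sequence $0\to M_{\mathfrak p}\to M^{**}_{\mathfrak p}\to C_{\mathfrak p}\to 0$ together with $\depth M_{\mathfrak p}\geq 2$ and your syzygy bound $\depth M^{**}_{\mathfrak p}\geq 2$ gives $\depth C_{\mathfrak p}\geq 1$ by the depth lemma, a contradiction. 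So the Auslander--Bridger machinery is unnecessary; what is essential, and missing from your write-up, is the passage to associated primes of $K$ and $C$, which is how the hypotheses at depth-$\leq 1$ primes enter the argument.
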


By using this lemma, we have the following.

\begin{prop} \label{CM ref 1}
Let $R$ be a normal Cohen-Macaulay domain and $M$ a (maximal) Cohen-Macaulay $R$-module.
Then, $M$ is reflexive.
\end{prop}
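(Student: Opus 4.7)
The plan is to apply the previous lemma (the characterization of reflexivity from Bruns--Herzog) pointwise, checking for each prime $\mathfrak{p} \in \Spec R$ that either alternative (a) or alternative (b) holds. Since $R$ is Cohen-Macaulay, $\depth R_{\mathfrak{p}} = \dim R_{\mathfrak{p}}$ for every $\mathfrak{p}$, and since $M$ is maximal Cohen-Macaulay, the localization $M_{\mathfrak{p}}$ is again maximal Cohen-Macaulay over $R_{\mathfrak{p}}$ (for $\mathfrak{p} \in \Supp M$; points outside the support give $M_{\mathfrak{p}} = 0$, which is reflexive trivially). The proof then splits naturally according to whether $\dim R_{\mathfrak{p}} \leq 1$ or $\dim R_{\mathfrak{p}} \geq 2$.

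First, suppose $\dim R_{\mathfrak{p}} \geq 2$. Then $\depth R_{\mathfrak{p}} \geq 2$ by the Cohen-Macaulay hypothesis on $R$, and $\depth M_{\mathfrak{p}} = \dim R_{\mathfrak{p}} \geq 2$ because $M_{\mathfrak{p}}$ is maximal Cohen-Macaulay. Thus alternative (b) of the lemma is satisfied at $\mathfrak{p}$, and there is nothing more to check.

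Next, suppose $\dim R_{\mathfrak{p}} \leq 1$, so that $\depth R_{\mathfrak{p}} \leq 1$ automatically and it suffices to show that $M_{\mathfrak{p}}$ is reflexive over $R_{\mathfrak{p}}$. If $\dim R_{\mathfrak{p}} = 0$, then $\mathfrak{p} = (0)$ since $R$ is a domain, so $R_{\mathfrak{p}}$ is a field and $M_{\mathfrak{p}}$ is a finite-dimensional vector space, hence free and reflexive. If $\dim R_{\mathfrak{p}} = 1$, then the normality of $R$ forces $R_{\mathfrak{p}}$ to be a discrete valuation ring; being maximal Cohen-Macaulay, $M_{\mathfrak{p}}$ has positive depth and is in particular torsion-free, and every finitely generated torsion-free module over a DVR is free, hence reflexive. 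In either case alternative (a) is satisfied.

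The main (and really only) thing to keep honest is that the MCM property localizes — i.e., $M_{\mathfrak{p}}$ is MCM over $R_{\mathfrak{p}}$ for every $\mathfrak{p} \in \Supp M$ — which is standard for Cohen-Macaulay modules over Cohen-Macaulay rings. Once this is invoked, the three cases above exhaust all $\mathfrak{p} \in \Spec R$, and the lemma immediately yields that $M$ is reflexive. So there is no serious obstacle: the proposition is really a direct translation of the MCM hypothesis into the depth inequalities demanded by the reflexivity criterion, with normality used only to handle the codimension-one primes via the DVR structure.
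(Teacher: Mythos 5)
Your proof is correct and follows essentially the same route as the paper: both verify the Bruns--Herzog reflexivity criterion prime by prime, splitting into the cases $\dim R_{\mathfrak{p}} \leq 1$ and $\dim R_{\mathfrak{p}} \geq 2$, with the second case handled identically by the maximal Cohen--Macaulay depth condition. The only difference is cosmetic: in the low-dimensional case the paper deduces freeness of $M_{\mathfrak{p}}$ from regularity of $R_{\mathfrak{p}}$ via the Auslander--Buchsbaum formula, whereas you invoke the field/DVR structure of $R_{\mathfrak{p}}$ directly and use torsion-freeness --- both yield the same conclusion.
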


\begin{proof}
Let $\mathfrak{p}$ be a prime ideal of $R$.
If $\dim R_{\mathfrak{p}} \leq 1$, then the ring $R_{\mathfrak{p}}$ is regular and hence $M_{\mathfrak{p}}$ has finite projective dimension.
Therefore, by the Auslander-Buchsbaum formula (\cite[Theorem 1.3.3]{BH93})
\[ \mathrm{proj.dim}(M_{\mathfrak{p}}) + \depth M_{\mathfrak{p}} = \dim R_{\mathfrak{p}}, \]
$M_{\mathfrak{p}}$ is projective and hence free.
If $\dim R_{\mathfrak{p}} \geq 2$, we have $\depth(M_{\mathfrak{p}}) \geq 2$ by the assumption.
\end{proof}

\begin{prop} \label{CM ref 2}
Let $R$ be a normal Cohen-Macaulay domain and $M, N$ (maximal) Cohen-Macaulay $R$-modules.
Then, the $R$-module $\Hom_R(N, M)$ is reflexive.
\end{prop}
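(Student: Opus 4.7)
The plan is to verify the two cases of the reflexivity criterion (the lemma from \cite{BH93} cited just above Proposition \ref{CM ref 1}) after localizing at an arbitrary prime $\mathfrak{p} \in \Spec R$. Since formation of $\Hom$ commutes with flat base change in the first argument when the module is finitely presented, and $N$ is finitely generated over the noetherian ring $R$, we have $\Hom_R(N,M)_{\mathfrak{p}} \simeq \Hom_{R_{\mathfrak{p}}}(N_{\mathfrak{p}}, M_{\mathfrak{p}})$, and the localizations $M_{\mathfrak{p}}, N_{\mathfrak{p}}$ remain maximal Cohen-Macaulay over the Cohen-Macaulay local ring $R_{\mathfrak{p}}$.

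For the first case, suppose $\depth(R_{\mathfrak{p}}) \leq 1$, equivalently $\dim R_{\mathfrak{p}} \leq 1$. Since $R$ is normal, $R_{\mathfrak{p}}$ is regular, and by the same Auslander-Buchsbaum argument as in the proof of Proposition \ref{CM ref 1}, both $M_{\mathfrak{p}}$ and $N_{\mathfrak{p}}$ are free. Hence $\Hom_R(N,M)_{\mathfrak{p}}$ is free, in particular reflexive over $R_{\mathfrak{p}}$.

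For the second case, suppose $\dim R_{\mathfrak{p}} \geq 2$; I would show $\depth \Hom_R(N,M)_{\mathfrak{p}} \geq 2$. Choose a finite free presentation $R^a \to R^b \to N \to 0$ and apply $\Hom_R(-,M)$ to obtain a left exact sequence
\[ 0 \to \Hom_R(N, M) \to M^b \to M^a. \]
Let $I$ denote the image of the right-hand map, so that $0 \to \Hom_R(N,M) \to M^b \to I \to 0$ is exact. After localizing at $\mathfrak{p}$, the module $M^b_{\mathfrak{p}}$ is maximal Cohen-Macaulay and hence has depth $\dim R_{\mathfrak{p}} \geq 2$. Meanwhile $I_{\mathfrak{p}}$ sits inside $M^a_{\mathfrak{p}}$, which is torsion-free over the domain $R_{\mathfrak{p}}$ (being maximal Cohen-Macaulay), so $I_{\mathfrak{p}}$ is a non-zero torsion-free $R_{\mathfrak{p}}$-module and therefore has depth at least $1$. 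The standard depth lemma applied to the short exact sequence then yields
\[ \depth \Hom_R(N,M)_{\mathfrak{p}} \geq \min\bigl(\depth M^b_{\mathfrak{p}},\, \depth I_{\mathfrak{p}} + 1\bigr) \geq 2, \]
which is exactly the condition needed.

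The only delicate point — which I would treat as the main obstacle — is ensuring $\depth I_{\mathfrak{p}} \geq 1$; this is why the hypothesis that $R$ is a \emph{domain} (so that maximal Cohen-Macaulay modules are torsion-free and torsion-freeness descends to submodules and persists under localization) enters the argument. Once that is in place, the depth lemma gives the required bound and the reflexivity criterion concludes the proof.
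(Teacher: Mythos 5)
Your proof is correct and takes essentially the same route as the paper: localize at $\mathfrak{p}$, handle $\dim R_{\mathfrak{p}} \leq 1$ by freeness via normality and Auslander--Buchsbaum, and for $\dim R_{\mathfrak{p}} \geq 2$ apply $\Hom_R(-,M)$ to a free presentation of $N$ and chase depths in $0 \to \Hom_R(N,M) \to M^{b} \to I \to 0$. The only (harmless) divergence is that where the paper bounds $\depth I_{\mathfrak{p}} \geq 1$ by a second application of the Depth Lemma to $0 \to I \to M^{a} \to \mathrm{coker} \to 0$, you instead use that $I_{\mathfrak{p}}$ is a submodule of the torsion-free module $M^{a}_{\mathfrak{p}}$; and the case $I_{\mathfrak{p}} = 0$, which you do not mention, is trivial since then $\Hom_R(N,M)_{\mathfrak{p}} \simeq M^{b}_{\mathfrak{p}}$.
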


\begin{proof}
If $\dim R_{\mathfrak{p}} \leq 1$, then $M_{\mathfrak{p}}$ and $N_{\mathfrak{p}}$ are free and hence $\Hom_R(N,M)_{\mathfrak{p}}$ is also free.
Next we  assume that $R$ is local and $\dim R \geq 2$.
Then, it is enough to show that the depth of $\Hom_R(N, M)$ is greater than or equal to $2$.
Let us consider the resolution of $N$
\[ R^{\oplus N_1} \xrightarrow{\varphi} R^{\oplus N_0} \to N \to 0. \]
By applying the functor $\Hom_R(-,M)$, we have an exact sequence
\[ 0 \to \Hom_R(N,M) \to M^{\oplus N_0} \xrightarrow{\varphi^*} M^{\oplus N_1} \to \mathrm{coker}(\varphi^*) \to 0. \]
Then, by using Depth Lemma twice, we have the result.
\end{proof}

\subsection{Nilpotent orbit closures}
In this subsection, we recall some basic properties of nilpotent orbit closures.
The singularity of nilpotent orbit closures gives an important class of symplectic singularities (see \cite{Be00}).
First, we recall the notion of symplectic singularity.

\begin{defi}[\cite{Be00}] \rm
Let $X$ be an algebraic variety. We say that $X$  is a \textit{symplectic variety} if
\begin{enumerate}
\item[(i)] $X$ is normal.
\item[(ii)] The smooth part $X_{\mathrm{sm}}$ of $X$ admits a symplectic $2$-form $\omega$.
\item[(iii)] For every resolution $f : Y \to X$, the pull back of $\omega$ to $f^{-1}(X_{\mathrm{sm}})$ extends to a global holomorphic $2$-form on $Y$.
\end{enumerate}
Let $X$ be an algebraic variety. We say that a point $x \in X$ is a \textit{symplectic singularity} if there is an open neighborhood $U$ of $x$ such that $U$ is a symplectic variety.  
\end{defi}

Symplectic singularities belong to a good class of singularities that appears in minimal model theory.

\begin{prop}[\cite{Be00}]
A symplectic singularity is Gorenstein canonical.
\end{prop}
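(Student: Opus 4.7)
The plan is to extract both statements (Gorenstein and canonical) directly from the existence and extendability of the symplectic form $\omega$ on $X_{\mathrm{sm}}$.

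First I would show that $X$ is Gorenstein. Let $2n = \dim X$. The top wedge $\omega^n$ is a nowhere-vanishing global section of the canonical bundle $K_{X_{\mathrm{sm}}}$, so $K_{X_{\mathrm{sm}}} \simeq \stsh_{X_{\mathrm{sm}}}$. Because $X$ is normal, the singular locus has codimension $\geq 2$, and the dualizing sheaf $\omega_X$ is reflexive and agrees with $K_{X_{\mathrm{sm}}}$ on the smooth part. A reflexive sheaf on a normal variety is determined by its restriction to any open subset whose complement has codimension $\geq 2$, so $\omega_X \simeq \stsh_X$. In particular $\omega_X$ is an invertible sheaf, i.e.\ $X$ is Gorenstein, with $K_X = 0$ as a Weil (hence Cartier) divisor class.

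Next I would deduce that $X$ has canonical singularities. Take any resolution $f : Y \to X$ and write
\[ K_Y = f^*K_X + \sum_i a_i E_i = \sum_i a_i E_i, \]
where the $E_i$ are the $f$-exceptional prime divisors; the second equality uses $K_X = 0$. By condition (iii) in the definition of a symplectic variety, the pullback $f^*\omega$ on $f^{-1}(X_{\mathrm{sm}})$ extends to a global holomorphic $2$-form $\widetilde{\omega}$ on $Y$. Its top wedge $\widetilde{\omega}^{\,n} \in H^0(Y, K_Y)$ restricts to $f^*(\omega^n)$ on $f^{-1}(X_{\mathrm{sm}})$, which is a nowhere-vanishing section there. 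Hence the effective divisor $\mathrm{div}(\widetilde{\omega}^{\,n})$ is supported on the exceptional locus, and it represents $K_Y = \sum_i a_i E_i$. Comparing multiplicities along each $E_i$ forces $a_i \geq 0$ for all $i$, which is exactly the canonical condition.

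The only substantive obstacle is the descent step $\omega_X \simeq \stsh_X$: one must invoke that $\omega_X$ is reflexive on a normal scheme and that reflexive sheaves are determined up to isomorphism by their restriction to the complement of a codimension $\geq 2$ closed subset. Once that is in place, both conclusions follow mechanically from the symplectic form and its extension, and the discrepancy computation is routine. I would therefore spend most care setting up the identification of $\omega_X$ with a reflexive hull on $X_{\mathrm{sm}}$; the rest of the argument is a direct divisor-of-a-section calculation on $Y$.
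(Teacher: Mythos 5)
The paper gives no proof of this proposition (it simply cites Beauville), so your attempt stands on its own. Your second step, the discrepancy computation, is essentially Beauville's argument and is correct, up to one small imprecision: $f^*(\omega^n)$ is \emph{not} nowhere-vanishing on all of $f^{-1}(X_{\mathrm{sm}})$ if $f$ happens to have exceptional divisors lying over the smooth locus; what you actually need (and what is true, since for $X$ normal the image of the exceptional locus has codimension $\geq 2$, so $f$ is an isomorphism off $\mathrm{Exc}(f)$) is that $\widetilde{\omega}^{\,n}$ is nonvanishing outside $\mathrm{Exc}(f)$, whence its divisor is effective and exceptional and all $a_i \geq 0$. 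The genuine gap is in the first step. What your descent argument proves is only that $\omega_X \simeq \stsh_X$, i.e.\ that $X$ is quasi-Gorenstein (the canonical class is Cartier and trivial). Gorenstein means, in addition, that $X$ is Cohen--Macaulay, and this does not follow from invertibility of the canonical sheaf alone: the affine cone over an abelian surface (with a projectively normal polarization) is normal with trivial canonical sheaf, yet it is not Cohen--Macaulay because $H^1(A,\stsh_A) \neq 0$, hence not Gorenstein. The full Gorenstein property is exactly what this paper later uses (e.g.\ in Lemma \ref{lem 3-A}, where local duality is applied with $\omega_{\hat{R}} \simeq \hat{R}$ and Cohen--Macaulayness of $R$ is implicit), so the missing Cohen--Macaulay input cannot be glossed over.

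The standard repair --- and the route Beauville actually takes, which is why he states the result as ``symplectic singularities are rational Gorenstein'' --- is to reverse your logical order: first establish, exactly as in your divisor calculation, that $K_X$ is Cartier and trivial and that every resolution has nonnegative discrepancies, so that $X$ has canonical singularities of index one; then invoke Elkik's theorem that canonical singularities are rational, together with the fact that rational singularities are Cohen--Macaulay. Combining Cohen--Macaulayness with $\omega_X \simeq \stsh_X$ then gives the Gorenstein property. So your computation is the right one, but the passage from $\omega_X \simeq \stsh_X$ to ``Gorenstein'' requires the additional, nontrivial input of rationality (or some other proof of Cohen--Macaulayness), which your write-up does not supply.
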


For symplectic singularities, we can consider the following reasonable class of resolutions.

\begin{defi} \rm
Let $X$ be a symplectic variety.
A resolution $\phi : Y \to X$ of $X$ is called \textit{symplectic} if the extended $2$-form $\omega$ on $Y$ is non-degenerate.
In other words, the $2$-form $\omega$ defines a symplectic structure on $Y$.
\end{defi}

\begin{prop}
Let $X$ be a symplectic variety and $\phi : Y \to X$ a resolution. Then, the following statements are equivalent
\begin{enumerate}
\item[(1)] $\phi$ is a crepant resolution,
\item[(2)] $\phi$ is a symplectic resolution,
\item[(3)] the canonical divisor $K_Y$ of $Y$ is trivial.
\end{enumerate}
\end{prop}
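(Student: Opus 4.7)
My plan is to use the symplectic form and its top wedge $\omega^{n}$ (with $2n = \dim X$) as the bridge between all three conditions, together with the Gorenstein hypothesis on $X$ coming from the previous proposition. First, I would show that $\mathcal{O}_{X}(K_{X})$ is trivial: on $X_{\mathrm{sm}}$ the non-degenerate $2$-form $\omega_{X}$ produces a nowhere-vanishing section $\omega_{X}^{n}$ of $\mathcal{O}_{X_{\mathrm{sm}}}(K_{X_{\mathrm{sm}}})$, and since $X$ is normal Gorenstein with $\codim (X\setminus X_{\mathrm{sm}})\geq 2$, this section extends to a global section $s$ of $\mathcal{O}_{X}(K_{X})$ by reflexivity of line bundles. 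The vanishing locus $V(s)$ lies in $X\setminus X_{\mathrm{sm}}$ and hence has codimension $\geq 2$; but by unmixedness on the Cohen--Macaulay variety $X$, a nonzero section of a line bundle has pure codimension-one vanishing, so $V(s)=\emptyset$ and $\mathcal{O}_{X}(K_{X})\simeq\mathcal{O}_{X}$.

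Next, by the very definition of symplectic variety, $\omega_{X}$ extends to a regular $2$-form $\omega_{Y}$ on $Y$, so $\omega_{Y}^{n}$ is a global section of $\mathcal{O}_{Y}(K_{Y})$. On the open subset $\phi^{-1}(X_{\mathrm{sm}})\cong X_{\mathrm{sm}}$ it agrees with $\phi^{*}s$ and is nowhere vanishing, hence its zero divisor $D$ is effective and $\phi$-exceptional, and under the trivialization of $\mathcal{O}_{X}(K_{X})$ furnished by $s$ we obtain the identity of divisors $D = K_{Y}-\phi^{*}K_{X}$.

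The equivalences can now be read off directly: condition $(1)$ says $D=0$; condition $(2)$ says $\omega_{Y}^{n}$ is nowhere vanishing, equivalently $\omega_{Y}$ is non-degenerate; and condition $(3)$ says $\mathcal{O}_{Y}(K_{Y})$ is trivial. The implication $(1)\Leftrightarrow(2)$ is immediate because $D$ is exactly the zero divisor of $\omega_{Y}^{n}$, and $(2)\Rightarrow(3)$ is clear since a nowhere-vanishing section trivializes its line bundle.

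The subtle direction is $(3)\Rightarrow(1)$, and I expect this to be the main obstacle. Assuming both $\mathcal{O}_{Y}(K_{Y})$ and $\mathcal{O}_{X}(K_{X})$ trivial (the latter by the first paragraph), the line bundle class of $D = K_{Y}-\phi^{*}K_{X}$ is trivial, so $D=\mathrm{div}(f)$ for some rational function $f$ on $Y$. Effectiveness of $D$ forces $f$ to be regular, and exceptionality forces $f$ to be nowhere zero on $\phi^{-1}(X_{\mathrm{sm}})$. Since $\phi$ is a proper birational morphism onto the normal variety $X$, we have $\phi_{*}\mathcal{O}_{Y}=\mathcal{O}_{X}$, so $f=\phi^{*}g$ for a regular function $g$ on $X$ with $V(g)\subseteq X\setminus X_{\mathrm{sm}}$; the Cohen--Macaulay argument used in the first paragraph then forces $V(g)=\emptyset$, so $g$ is a unit, $D=0$, and $\phi$ is crepant. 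The crux is precisely this last step, where one must combine the Cohen--Macaulay property of $X$ with $\phi_{*}\mathcal{O}_{Y}=\mathcal{O}_{X}$ to rule out nontrivial effective exceptional principal divisors.
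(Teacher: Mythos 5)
The paper states this proposition without proof (it is quoted as a standard fact about symplectic singularities), so there is no in-text argument to compare with; your proof is the standard one and is essentially correct: the discrepancy divisor is exactly the zero divisor $D$ of $\omega_Y^{n}$, it is effective and exceptional, and the only nontrivial point, $(3)\Rightarrow(1)$, is handled correctly by writing $D=\mathrm{div}(\phi^{*}g)$ and ruling out zeros of $g$. Two small repairs. First, you should not assert $\phi^{-1}(X_{\mathrm{sm}})\simeq X_{\mathrm{sm}}$: a resolution need not be an isomorphism over the smooth locus unless this is built into the definition. The fix is harmless: work over the open set $U\subseteq X$ over which $\phi$ is an isomorphism; since $X$ is normal and $\phi$ is proper birational, $\codim_X(X\setminus U)\geq 2$, on $\phi^{-1}(U)$ one has $\omega_Y^{n}=\phi^{*}s$ nowhere vanishing, so $D$ is supported on the exceptional locus, and in $(3)\Rightarrow(1)$ one gets $V(g)\subseteq X\setminus U$, which suffices. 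Second, the appeals to Cohen--Macaulayness/unmixedness are unnecessary: that a nonzero section of a line bundle (or a regular function) on an integral variety vanishes in pure codimension one is Krull's principal ideal theorem, and triviality of $\mathcal{O}_X(K_X)$ already follows from normality, since this sheaf is the pushforward of $\mathcal{O}_{X_{\mathrm{sm}}}(K_{X_{\mathrm{sm}}})\simeq\mathcal{O}_{X_{\mathrm{sm}}}$ from the smooth locus (the Gorenstein property is then only used to know $K_X$ is Cartier, which you do use when pulling it back).
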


Next, we recall the definition of nilpotent orbit closures and some basic properties of them.
Let $\mathfrak{g}$ be a complex Lie algebra. For $u \in \mathfrak{g}$, we define a linear map $\ad_u : \mathfrak{g} \to \mathfrak{g}$ by $x \mapsto [u, x]$.
In the following, we assume that the Lie algebra $\mathfrak{g}$ is semi-simple, 
i.e. the bilinear form $\kappa(u,v) := \mathrm{trace}(\ad_u \circ \ad_v)$ is non-degenerate.
An element $v \in \mathfrak{g}$ is \textit{nilpotent} if the corresponding linear map $\ad_v$ is nilpotent.
Let $G$ be the adjoint algebraic group of $\mathfrak{g}$.
Then, $G$ acts on $\mathfrak{g}$ via the adjoint representation.
An orbit $\stsh = G \cdot v \subset \mathfrak{g}$ of $v$ under this action is called a \textit{nilpotent orbit} if the element $v$ is nilpotent.

\begin{prop}[\cite{Pa91}]
The normalization $\widetilde{\stsh}$ of a nilpotent orbit closure $\overline{\stsh}$ in a complex semi-simple Lie algebra $\mathfrak{g}$
has only symplectic singularities. 
Hence, the singularity of $\widetilde{\stsh}$ is Gorenstein canonical.
\end{prop}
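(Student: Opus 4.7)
The plan is to verify the three axioms of a symplectic singularity for $\widetilde{\stsh}$; the Gorenstein canonical conclusion is then immediate from the preceding proposition asserting that symplectic singularities are Gorenstein canonical. Normality is automatic by construction of the normalization, so the work lies in exhibiting the $2$-form on the smooth locus and showing that it extends to any resolution.

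First I would construct the $2$-form on the smooth locus of $\widetilde{\stsh}$. Using the Killing form to identify $\mathfrak{g} \simeq \mathfrak{g}^*$, the nilpotent orbit $\stsh \subset \mathfrak{g}$ is turned into a coadjoint orbit in $\mathfrak{g}^*$, on which the Kirillov-Kostant-Souriau construction provides a canonical $G$-invariant holomorphic symplectic form $\omega_{KKS}$, characterized by $\omega_{KKS,\xi}(\ad^*_X \xi, \ad^*_Y \xi) = \langle \xi, [X,Y] \rangle$. Pulling back along the finite birational morphism $\pi : \widetilde{\stsh} \to \overline{\stsh}$ gives a holomorphic symplectic form on $\pi^{-1}(\stsh)$. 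A classical argument (Kostant) shows that $\overline{\stsh} \setminus \stsh$ is a union of smaller nilpotent orbits of strictly smaller even dimension, hence has codimension at least $2$; combined with normality, this means that $\pi^{-1}(\stsh)$ exhausts $\widetilde{\stsh}_{\mathrm{sm}}$ up to codimension $\geq 2$, so $\omega_{KKS}$ extends across this codimension $\geq 2$ locus on the smooth variety $\widetilde{\stsh}_{\mathrm{sm}}$ by Hartogs.

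The main step is showing that for any resolution $f : Y \to \widetilde{\stsh}$ the pullback $f^*\omega_{KKS}$, a priori defined only on $f^{-1}(\widetilde{\stsh}_{\mathrm{sm}})$, extends holomorphically to all of $Y$. The standard strategy, going back to Panyushev, is to compare with an explicit partial resolution obtained from the Jacobson-Morozov theorem: pick $e \in \stsh$, complete it to an $\mathfrak{sl}_2$-triple $(e,h,f)$, and let $P \subset G$ be the parabolic associated with the non-negative weight spaces of $\ad(h)$ with nilradical $\mathfrak{u}_P$. The generalized Springer map $\mu : G \times^P \mathfrak{u}_P \to \overline{\stsh}$ is proper and generically finite onto $\overline{\stsh}$; the total space is smooth (it is a vector bundle over $G/P$, and in fact identifies with a $G$-invariant subbundle of $T^*(G/P)$), and carries a natural holomorphic symplectic form coming from the symplectic structure of the cotangent bundle. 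A direct moment map computation identifies this natural form with $\mu^*\omega_{KKS}$, so the extension is automatic on this model. To transfer to an arbitrary $f : Y \to \widetilde{\stsh}$, take a common smooth resolution $Z$ dominating both $Y$ and $G \times^P \mathfrak{u}_P$ (via $\widetilde{\stsh}$); the $2$-form extends on $Z$ by the Jacobson-Morozov model, and then descends to $Y$ because on a smooth target a rational $2$-form whose pullback by a proper birational morphism of smooth varieties is holomorphic is itself holomorphic (check this in codimension one near each prime divisor of $Y$, where the map to $Z$ is generically an isomorphism or introduces no new zeroes of the Jacobian in the relevant direction).

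The hard part is the final extension step, since a rational $2$-form need not extend across an exceptional divisor of arbitrary type; the whole point of passing through the Jacobson-Morozov model $G \times^P \mathfrak{u}_P$ is that this particular smooth model visibly carries a holomorphic symplectic form which agrees with $\omega_{KKS}$ on the open orbit, thereby bootstrapping the extension to an arbitrary resolution via a dominating common resolution. Once this is done, applying the previous proposition yields that $\widetilde{\stsh}$ is Gorenstein canonical.
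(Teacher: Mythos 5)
The paper states this result as a citation (it is Panyushev's theorem, with the symplectic-singularity formulation due to Beauville) and gives no proof, so your argument must stand on its own. Your overall strategy --- the Kirillov--Kostant--Souriau form on $\stsh$, Hartogs extension across the codimension $\geq 2$ boundary, extension of the $2$-form on one explicit Springer-type model, then descent to an arbitrary resolution through a common roof --- is indeed the standard route. But the model you use is wrong: you take the full nilradical $\mathfrak{u}_P=\mathfrak{g}_{\geq 1}$ of the Jacobson--Morozov parabolic. Then $G\times^P\mathfrak{u}_P$ is the \emph{whole} cotangent bundle $T^*(G/P)$ (not a proper subbundle), its image under the moment map is the closure of the Richardson orbit induced from $P$, and the dimension count $\dim G\times^P\mathfrak{g}_{\geq 1}=2\dim\mathfrak{g}_{\geq 1}=\dim\stsh+\dim\mathfrak{g}_1$ shows that this image strictly contains $\overline{\stsh}$ whenever the orbit is not even, i.e. whenever $\mathfrak{g}_1\neq 0$. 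So the map is not generically finite onto $\overline{\stsh}$, and the claimed identification of the canonical form with $\mu^*\omega_{KKS}$ along $\stsh$ does not make sense. The correct model is $G\times^P\mathfrak{g}_{\geq 2}$ with $\mathfrak{g}_{\geq 2}=\bigoplus_{i\geq 2}\mathfrak{g}_i$, a $P$-submodule of $\mathfrak{u}_P$; this is a genuine $G$-invariant subbundle of $T^*(G/P)$, maps properly, surjectively and generically finitely onto $\overline{\stsh}$, and the restriction of the canonical $2$-form agrees with $\omega_{KKS}$ over $\stsh$. Your parenthetical ``subbundle of $T^*(G/P)$'' suggests you half-remembered this, but as written the key step fails for every non-even orbit --- including the minimal orbits of this paper for $N\geq 3$.

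Even after replacing $\mathfrak{u}_P$ by $\mathfrak{g}_{\geq 2}$ there is a secondary gap: your descent lemma is stated for proper \emph{birational} morphisms of smooth varieties, but the roof $Z$ dominating an arbitrary resolution $Y$ and the Springer-type model is only generically finite over $Y$ unless you know the generalized Springer map has degree one over $\stsh$ (equivalently, that $\stsh\cap\mathfrak{g}_{\geq 2}$ is a single $P$-orbit). You must either prove or cite that birationality, or upgrade the descent statement to: regularity of a rational $2$-form can be checked after pullback along a proper, surjective, generically finite morphism of smooth varieties (true, by a codimension-one computation along the ramification divisors, but it has to be argued). Relatedly, your justification of descent via ``no new zeroes of the Jacobian'' is not the right reason even in the birational case; the correct argument is that a proper birational morphism onto a normal variety is an isomorphism outside a closed subset of codimension $\geq 2$ of the target, after which Hartogs on the smooth target finishes. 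The remaining ingredients --- normality of $\widetilde{\stsh}$, the even-dimensionality/codimension $\geq 2$ of the boundary, the Hartogs extension over $\widetilde{\stsh}_{\mathrm{sm}}$, and deducing the Gorenstein canonical property from the paper's earlier proposition on symplectic singularities --- are fine.
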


Let $V = \mathbb{C}^N$ be a $N$-dimensional vector space and
\[ B(r) := \{ X \in \End_{\mathbb{C}}(V) \mid X^2 = 0, \rank(X) = r. \} \subset \mathfrak{sl}(V) \simeq \mathfrak{sl}_N. \]
This is a nilpotent orbit of type A.
We note that we have
\[ \overline{B(r)} = \{ X \in \End_{\mathbb{C}}(V) \mid X^2 = 0, \rank(X) \leq r \} = \bigcup_{k = 1}^r B(r). \]

If we consider nilpotent orbit closures of type A, we need not to take the normalization.

\begin{prop}[\cite{KF79}] \label{nilp Gore cano}
Let $ r \geq 1$.
Then, the variety $\overline{B(r)}$ is normal, and hence has only symplectic singularities.
In particular, the variety $\overline{B(r)}$ is Gorenstein, and has only canonical (equivalently, rational) singularities.
\end{prop}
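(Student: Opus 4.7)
The plan is to establish normality of $\overline{B(r)}$ via a Springer-type crepant resolution, after which every remaining assertion follows from the propositions on symplectic singularities recalled just above. Concretely, I would construct $\mu \colon Y := T^*\Gr(r,V) \to \overline{B(r)}$ as follows: using the identification $T^*_W \Gr(r,V) \cong \Hom(V/W, W)$, a point of $Y$ is a pair $(W, \phi)$, and we set $\mu(W, \phi) := \iota_W \circ \phi \circ \pi_W$, where $\pi_W \colon V \twoheadrightarrow V/W$ and $\iota_W \colon W \hookrightarrow V$. Because $\pi_W \iota_W = 0$, one automatically has $\mu(W,\phi)^2 = 0$ and $\rank \mu(W,\phi) \leq r$, so $\mu$ factors through $\overline{B(r)}$. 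Over the open orbit $B(r)$ the condition $\rank X = r$ forces $W = \mathrm{im}\, X$, hence $\mu$ restricts to an isomorphism there, and $\mu$ is projective and birational.

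The variety $Y$ is the total space of the homogeneous vector bundle $\lhom(\mathcal{U}, V/\mathcal{U})^{\vee}$ on $\Gr(r,V)$, where $\mathcal{U}$ is the tautological subbundle. I would then invoke Kempf's theorem on collapsing of homogeneous bundles: computing $R\mu_* \stsh_Y$ via the Leray spectral sequence reduces to showing that $H^{>0}\bigl(\Gr(r,V),\, \Sym^m \lhom(\mathcal{U}, V/\mathcal{U})\bigr) = 0$ for every $m \geq 0$, which is a standard Borel-Weil-Bott computation on the irreducible $\GL(V)$-summands. A direct fiber analysis shows that the fiber of $\mu$ over a point of $B(r-k)$ is a Grassmannian $\Gr(k,\, N - 2r + 2k)$, hence connected. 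Together these give $R\mu_* \stsh_Y = \stsh_{\overline{B(r)}}$, so $\overline{B(r)}$ has rational singularities and is in particular normal and Cohen-Macaulay.

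Finally, the canonical holomorphic symplectic form on $Y = T^*\Gr(r,V)$ restricts to a symplectic form on $\mu^{-1}(B(r)) \cong B(r)$, and hence descends to the smooth locus of $\overline{B(r)}$; since $\mu$ is a resolution to which this form extends by construction, $\overline{B(r)}$ is a symplectic variety, and the preceding propositions then yield the Gorenstein canonical and rational singularity conclusions. The main technical obstacle is the cohomology vanishing for the symmetric algebra on $\Gr(r, V)$, which is classical but involves careful representation-theoretic bookkeeping for the Schur functor decomposition; the original reference \cite{KF79} instead establishes normality via the invariant theory of the natural $\GL_r \times \GL_{N-r}$ action on a larger affine space, bypassing the Bott computation altogether.
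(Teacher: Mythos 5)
Your overall strategy (Springer-type resolution $\mu\colon T^*\Gr(r,V)\to\overline{B(r)}$, Bott vanishing for $\Sym^m\lhom(\mathcal{U},V/\mathcal{U})$, then the general propositions on symplectic singularities) is a legitimate route, and it is genuinely different from the paper, which offers no proof at all: it simply cites Kraft--Procesi \cite{KF79} for normality and combines this with the quoted results of Panyushev and Beauville. But your argument has a real gap exactly at the point that \cite{KF79} is cited for. The Bott-type vanishing $H^{>0}\bigl(\Gr(r,V),\Sym^m\lhom(\mathcal{U},V/\mathcal{U})\bigr)=0$ does give $R^{i}\mu_*\stsh_Y=0$ for $i>0$ (since $\overline{B(r)}$ is affine and $Y\to\Gr(r,V)$ is affine), and your fiber computation $\mu^{-1}(X)\simeq\Gr(k,N-2r+2k)$ over $B(r-k)$ is correct. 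However, the step ``connected fibers $+$ vanishing $\Rightarrow \mu_*\stsh_Y=\stsh_{\overline{B(r)}}$'' is invalid and in fact circular: since $Y$ is smooth and $\mu$ is proper birational, $\mu_*\stsh_Y$ is automatically the normalization of $\stsh_{\overline{B(r)}}$, so the equality $\mu_*\stsh_Y=\stsh_{\overline{B(r)}}$ \emph{is} the normality you are trying to prove. Connectedness of fibers only shows that the Stein factorization (equivalently the normalization map) is bijective, which does not imply it is an isomorphism --- the normalization of a cuspidal curve is proper, birational, with connected (point) fibers and no higher direct images, yet the structure sheaf is not preserved.

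What is actually needed is that the natural map $\mathbb{C}[\End_{\mathbb{C}}(V)]\twoheadrightarrow\mathbb{C}[\overline{B(r)}]\to H^0(Y,\stsh_Y)=\bigoplus_{m\geq 0}H^0\bigl(\Gr(r,V),\Sym^m(\mathcal{U}^{\vee}\otimes V/\mathcal{U})\bigr)$ is surjective, i.e.\ that this section ring is generated in degree one. That degree-one generation (a plethysm/Littlewood--Richardson argument on the Cauchy decomposition $\Sym^m(\mathcal{U}^{\vee}\otimes V/\mathcal{U})=\bigoplus_{\lambda}\mathbb{S}_{\lambda}\mathcal{U}^{\vee}\otimes\mathbb{S}_{\lambda}(V/\mathcal{U})$, as in the geometric-technique proofs of normality for determinantal and nilpotent orbit closures) is the genuine content of the Kraft--Procesi theorem, and your sketch acknowledges but does not supply it. Once normality is in hand, the rest of your argument is fine: $R^{>0}\mu_*\stsh_Y=0$ gives rational singularities, the Kostant--Kirillov form on $B(r)$ agrees with the restriction of the canonical symplectic form on $T^*\Gr(r,V)$, so it extends over the resolution and $\overline{B(r)}$ is a symplectic variety, whence Gorenstein canonical by the propositions recalled in the paper. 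So either fill in the degree-one generation argument or, as the paper does, quote \cite{KF79} for normality and derive the remaining assertions from Panyushev's and Beauville's results.
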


Moreover, we can show that the variety $\overline{B(r)}$ has symplectic (equivalently, crepant) resolutions.

In the later sections, we study the case $r=1$. 

\subsection{The variety $\overline{B(1)}$ and its crepant resolutions $Y$ and $Y^+$}

Let $V = \mathbb{C}^N$ be an $N$-dimensional vector space and $\End_{\mathbb{C}}(V)$ an endomorphism ring of $V$.
Then, the $\SL_N := \SL(N, \mathbb{C})$ acts on $\End_{\mathbb{C}}(V)$ via the adjoint representation
\[ \Adj : \SL_N \to \GL(\End_{\mathbb{C}}(V)), ~~ A \mapsto (X \mapsto AXA^{-1}). \]
Let $X_0$ be an matrix in $B(1)$ such that
\[ X_0 := \begin{pmatrix}
0 & 0 & \cdots & 0 \\
\vdots & \vdots & \ddots & \vdots \\
0 & 0 & \cdots & 0 \\
1 & 0 & \cdots & 0
\end{pmatrix} \in \End_{\mathbb{C}}(V). \]
Then, we have
\[ \SL_N \cdot X_0 = B(1). \]

In the following, we consider homogeneous vector bundles on the orbit $B(1)$.
They correspond to linear representations of the stabilizer subgroup $\Stab_{\SL_N}(X_0)$ of $\SL_N$.

\begin{lem} \label{lem 2-stab}
The stabilizer subgroup $\Stab_{\SL_N}(X_0)$ is given by
\[ \Stab_{\SL_N}(X_0) = \left\{  \left( \begin{array}{c|ccc|c}
c & 0 & \cdots & 0 & 0 \\ \hline
 &  &  & & 0 \\
\ast &  &  A & & \vdots \\
 & & & & 0 \\ \hline
\ast &  & \ast &  & c
\end{array} \right) \mid 
\begin{array}{c}
A \in \GL_{N-2}, \\
c \in \mathbb{C} \setminus \{0 \}, \\
c^2 \cdot \det(A) = 1 
\end{array}
\right\}. \]
\end{lem}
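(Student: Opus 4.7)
The plan is a direct matrix computation: set up the equation $AX_0 = X_0A$ for a general $A = (a_{ij}) \in \SL_N$ and read off the block shape from the resulting linear constraints, then impose $\det A = 1$ to obtain the determinantal condition $c^2 \det(A') = 1$.

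First I would observe that $X_0$ has a single nonzero entry equal to $1$ in position $(N,1)$, which makes the two products transparent: $AX_0$ has $(i,j)$-entry $a_{iN}\,\delta_{j,1}$, so it is the matrix whose first column equals the $N$-th column of $A$ and whose other columns vanish; dually, $X_0A$ has $(i,j)$-entry $\delta_{i,N}\,a_{1j}$, so it is the matrix whose $N$-th row equals the first row of $A$ and whose other rows vanish. Equating these two matrices entry by entry yields exactly three families of constraints: (i) in the $(N,1)$-slot, $a_{11} = a_{NN}$, a common value I would call $c$; (ii) in the remaining entries of the first column, $a_{iN} = 0$ for $1 \leq i \leq N-1$, i.e. the last column of $A$ vanishes outside the bottom-right corner; (iii) in the remaining entries of the last row, $a_{1j} = 0$ for $2 \leq j \leq N$, i.e. the first row of $A$ vanishes outside the top-left corner. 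No condition is placed on the entries below the first row in the first column, on the entries to the left of the last column in the last row, or on the central $(N-2)\times(N-2)$ block $A'$.

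Next I would compute $\det A$ using this block shape. Laplace expansion along the $N$-th column—whose only nonzero entry is $a_{NN}=c$ at position $(N,N)$—reduces to $c$ times the determinant of the $(N-1)\times(N-1)$ minor obtained by deleting row $N$ and column $N$. Expanding that minor along its first row—whose only nonzero entry is $a_{11}=c$ at position $(1,1)$—reduces further to $c\cdot\det(A')$. Altogether $\det A = c^{2}\det(A')$, so the condition $A \in \SL_N$ becomes $c^{2}\det(A') = 1$; in particular both $c \neq 0$ and $A' \in \GL_{N-2}$. Conversely, any matrix of the displayed shape satisfying $c^{2}\det(A')=1$ is invertible with determinant $1$ and commutes with $X_0$ by the same two product computations in reverse, so it centralizes $X_0$ under the adjoint action.

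There is no real obstacle here; the only thing to be careful about is the bookkeeping in steps (i)–(iii), in particular making sure that the ``no constraint'' entries—the lower-triangular part of the first column, the left part of the last row, and the central block $A'$—are correctly identified by checking that both $(AX_0)_{ij}$ and $(X_0A)_{ij}$ vanish there, which is immediate from the $\delta$-factors $\delta_{j,1}$ and $\delta_{i,N}$.
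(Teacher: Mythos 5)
Your computation of $AX_0$ and $X_0A$ (first column of $AX_0$ equal to the last column of $A$, last row of $X_0A$ equal to the first row of $A$, all other entries zero) and the resulting constraints $a_{11}=a_{NN}$, $a_{1j}=0$ for $j\geq 2$, $a_{iN}=0$ for $i\leq N-1$ are exactly the argument in the paper, which leaves the determinant identity $\det A = c^2\det(A')$ and the converse inclusion implicit. Your proof is correct and takes essentially the same approach, just spelling out those two routine verifications.
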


\begin{proof}
Let $A = (a_{ij}) \in \SL_N$.
Then, we have
\begin{align*}
AX_0 = \left( \begin{array}{cccc}
a_{1N} & 0 & \cdots & 0 \\ 
a_{2N} & 0 & \cdots & 0  \\
 \vdots & \vdots & \ddots  & \vdots \\
a_{NN} & 0 & \cdots & 0 
\end{array} \right),  ~~
X_0A = \left( \begin{array}{cccc}
0 & 0 & \cdots & 0 \\ 
 \vdots & \vdots & \ddots  & \vdots \\
0 & 0 & \cdots & 0 \\
a_{11} & a_{12} & \cdots & a_{1N} 
\end{array} \right).
\end{align*}
Thus, if $AX_0 = X_0A$, we have $a_{11} = a_{NN}$, $a_{12} = \cdots = a_{1N} = 0$, and $a_{2N} = \cdots = a_{N-1,N} = 0$.
\end{proof}

\begin{defi} \rm
\begin{enumerate}
\item[(1)] For $a \in \mathbb{Z}$, we define a character $m_a : \Stab_{\SL_N}(X_0) \to \mathbb{C}^{\times}$ as 
\[ \Stab_{\SL_N}(X_0) \ni \left( \begin{array}{c|ccc|c}
c & 0 & \cdots & 0 & 0 \\ \hline
 &  &  & & 0 \\
\ast &  &  A & & \vdots \\
 & & & & 0 \\ \hline
\ast &  & \ast &  & c
\end{array} \right) \mapsto c^{-a} \in \mathbb{C}^{\times}. \]
\item[(2)] Let $\mathcal{M}_a$ be a line bundle on $B(1)$ that corresponds to the character $m_a$.
\item[(3)] We set $M_a := H^0(B(1), \mathcal{M}_a)$. Then $M_a$ is a reflexive $R$-module.
\end{enumerate}
\end{defi}

Next, let us consider a resolution $Y$ of $\overline{B(1)}$.
The resolution $Y$ is given by
\[ Y := \{ (X, L) \in \End_{\mathbb{C}}(V) \times \mathbb{P}(V) \mid X(V) \subset L, X^2 = 0 \} \]
and a left $\SL_N$-action on $Y$ is given by
\[ A \cdot (X, L) := (AXA^{-1}, AL) \]
for $A \in \SL_N$ and $(X,L) \in Y$.
Via the second projection $\pi : Y \to \mathbb{P}(V)$, one can see that $Y$ is isomorphic to the total space of the cotangent bundle $\Omega_{\mathbb{P}(V)}$ on $\mathbb{P}(V)$.
Note that the embedding $Y \subset \End_{\mathbb{C}}(V) \times \mathbb{P}(V)$ is determined by a composition of injective bundle maps
\[ \Omega_{\mathbb{P}(V)} \subset V^* \otimes_{\mathbb{C}} \stsh_{\mathbb{P}(V)}(-1) \subset V^* \otimes_{\mathbb{C}} V \otimes_{\mathbb{C}} \stsh_{\mathbb{P}(V)}. \]
Let $j : {\mathbb{P}(V)} \to Y$ be the zero-section, and then $j({\mathbb{P}(V)})$ is given by
\[ j({\mathbb{P}(V)}) = \{ (0, L) \in \End_{\mathbb{C}}(V) \times \PP(V) \}. \]
On the other hand, the image of the first projection $\phi : Y \to \End_{\mathbb{C}}(V)$ is just $\overline{B(1)}$,
and if we set $U := Y \setminus j({\mathbb{P}(V)})$,
then, $\phi$ contracts $j({\mathbb{P}(V)})$ to a point $0 \in \overline{B(1)}$, and $U$ is isomorphic to $B(1)$ via the morphism $\phi : Y \to \overline{B(1)}$.
Thus, the first projection $\phi$ gives a resolution of $\overline{B(1)}$.
Since the affine variety $\overline{B(1)}$ is a symplectic variety,
the canonical divisor of $\overline{B(1)}$ is trivial.
On the other hand, since $Y$ is isomorphic to the total space of the cotangent bundle on a projective space, the canonical divisor of $Y$ is also trivial.
Thus, the resolution of singularities $\phi : Y \to \overline{B(1)}$ is a crepant resolution, and in this case, is symplectic resolution of $\overline{B(1)}$.

Let us set $\stsh_Y(a) := \pi^*\stsh_{\mathbb{P}(V)}(a)$.

\begin{lem} \label{thm1 lem}
Under the identification $U \simeq B(1)$, the homogeneous vector bundle $\mathcal{M}_a$ is isomorphic to $\stsh_Y(a)|_U$.
\end{lem}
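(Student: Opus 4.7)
The plan is to compare the two $\SL_N$-equivariant line bundles on $U \simeq B(1)$ by matching the characters of $H = \Stab_{\SL_N}(X_0)$ acting on their fibers over the base point $X_0$.

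First I would make the isomorphism $U \simeq B(1)$ completely explicit. The projection $\phi : Y \to \overline{B(1)}$ restricts to an isomorphism $U \xrightarrow{\sim} B(1)$, and its inverse sends $X \in B(1)$ to $(X, \mathrm{im}(X)) \in U$ (this makes sense because $X^2 = 0$ and $\dim \Ker X = N-1$ force $\dim \mathrm{im}(X) = 1$, and by $X(V) \subset \mathrm{im}(X)$ the pair satisfies the defining condition of $Y$). Composing with $\pi : Y \to \PP(V)$ yields the $\SL_N$-equivariant morphism
\[ \sigma : B(1) \to \PP(V), \qquad X \mapsto \mathrm{im}(X), \]
and the restriction $\stsh_Y(a)|_U$ corresponds, under $U \simeq B(1)$, to $\sigma^*\stsh_{\PP(V)}(a)$.

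Next I would match base points. Since $X_0$ sends $v_1 \mapsto v_N$ and $v_i \mapsto 0$ for $i \neq 1$, we have $\sigma(X_0) = [v_N] \in \PP(V)$. Under the identification $B(1) \simeq \SL_N/H$ with base point $X_0$ and $\PP(V) \simeq \SL_N/\Stab_{\SL_N}([v_N])$ with base point $[v_N]$, this shows $H \subset \Stab_{\SL_N}([v_N])$, consistent with Lemma \ref{lem 2-stab}: for $A \in H$ the last column is $(0,\ldots,0,c)^T$, so $A \cdot v_N = c\,v_N$.

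Finally, by $\SL_N$-equivariance and the correspondence between $\SL_N$-equivariant line bundles on $\SL_N/H$ and characters of $H$, the line bundle $\sigma^*\stsh_{\PP(V)}(a)$ is determined by the character through which $H$ acts on the fiber of $\stsh_{\PP(V)}(a)$ at $[v_N]$. The tautological inclusion $\stsh_{\PP(V)}(-1) \hookrightarrow V \otimes_{\mathbb{C}} \stsh_{\PP(V)}$ identifies this fiber (for $a=-1$) with $\mathbb{C}\cdot v_N$, on which $A \in H$ acts by $c$; passing to the $a$-th tensor power of the dual, the $H$-character on the fiber of $\stsh_{\PP(V)}(a)$ at $[v_N]$ is $c^{-a} = m_a(A)$. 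Therefore $\sigma^*\stsh_{\PP(V)}(a) \simeq \mathcal{M}_a$, proving the lemma. The only real work is bookkeeping of equivariant structures and the dualization sign; there is no substantive obstacle.
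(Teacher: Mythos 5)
Your proof is correct and follows essentially the same route as the paper: both arguments reduce the lemma to computing the character by which $\Stab_{\SL_N}(X_0)$ acts on the fiber of the line bundle at the base point (the line $L_0 = X_0(V) = \mathbb{C}\cdot v_N$), finding that $A$ acts by $c^{-a}$, which is the character $m_a$ defining $\mathcal{M}_a$. The only difference is cosmetic: you make the map $\sigma = \pi\circ(\phi|_U)^{-1}$ explicit, while the paper works directly with the fiber of $\stsh_Y(a)|_U$ at $y_0=(X_0,L_0)$, identified with $L_0^{\otimes -a}$.
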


\begin{proof}
We first note that $\stsh_Y(a)|_U$ is a homogeneous line bundle on $U$.
Let $L_0 := X_0(V)$, then $L_0$ is a line in $V$.
Let $y_0 := (X_0, L_0) \in U$ be a point.
The fiber of the line bundle $\stsh_Y(a)|_U$ at $y_0 \in U$ is canonically isomorphic to $L_0^{\otimes -a}$.
Note that the action of $\Stab_{\SL_N}(X_0)$ on $L_0$ is given by
\[ \left( \begin{array}{c|ccc|c}
c & 0 & \cdots & 0 & 0 \\ \hline
 &  &  & & 0 \\
\ast &  &  A & & \vdots \\
 & & & & 0 \\ \hline
\ast &  & \ast &  & c
\end{array} \right) \cdot 
\left( \begin{array}{c}
0 \\
0 \\
\vdots \\
0 \\
a_N
\end{array} \right)
= \left( \begin{array}{c}
0 \\
0 \\
\vdots \\
0 \\
ca_N
\end{array} \right)
\]
Therefore, the character $\Stab_{\SL_N}(X_0) \to \GL(L_0^{\otimes -a})$ that determines $\stsh_Y(a)|_U$ coincides with the one that defines $\mathcal{M}_a$.
\end{proof}

Next, we study the other crepant resolution $Y^+$ of $\overline{B(1)}$.
Let $\PP(V^*)$ be a dual projective space, that is
\[ \mathbb{P}(V^*) = \{ H \subset V \mid \text{$H$ is a hyperplane in $V$} \}. \]
The variety $Y^+$ is defined by
\[ Y^+ := \{ (X, H) \in  \End_{\mathbb{C}}(V) \times \PP(V^*) \mid X(V) \subset H, X(H) = 0 \}. \]
An $\SL_N$-action on $Y^+$ is given by $A \cdot (X, H) = (AXA^{-1}, AH)$.
Let $\phi^+ : Y \to \overline{B(1)}$ be the first projection and $\pi' : Y^+ \to \PP(V^*)$ the second projection.
As in the case of $Y$, 
$Y^+$ is isomorphic to the total space of the cotangent bundle $\Omega_{\PP(V^*)}$ on $\PP(V^*)$ via the second projection $\pi' : Y^+ \to \PP(V^*)$,
and the first projection $\phi^+ : Y^+ \to \overline{B(1)}$ gives a crepant resolution of $\overline{B(1)}$.
The morphism $\phi^+ : Y \to \overline{B(1)}$ contracts the zero section $j' : \mathbb{P}(V^*) \hookrightarrow Y^+$.
Let $U^+ := Y^+ \setminus j'(\PP(V^*))$ and $\stsh_{Y^+}(a) := (\pi')^*\stsh_{\PP(V^*)}(a)$.

As in the above, we can show the following.

\begin{lem} \label{thm1 lem2}
Under the identification $U^+ \simeq B(1)$, the homogeneous vector bundle $\mathcal{M}_a$ is isomorphic to $\stsh_{Y^+}(-a)|_{U^+}$.
\end{lem}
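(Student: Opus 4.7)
The plan is to mirror the proof of Lemma~\ref{thm1 lem} on the dual side. First I would identify the point of $U^+$ that corresponds to $X_0 \in B(1)$ under the isomorphism $U^+ \simeq B(1)$. Since every $(X, H) \in Y^+$ lying over an $X \in B(1)$ satisfies $X(H) = 0$ with $\dim H = \dim \Ker X = N - 1$, one is forced to have $H = \Ker X$. Hence the point corresponding to $X_0$ is $y_0^+ := (X_0, H_0)$ with $H_0 := \Ker X_0$, the hyperplane spanned by $v_2, \dots, v_N$. Moreover, any $A \in \Stab_{\SL_N}(X_0)$ automatically satisfies $A H_0 = H_0$ (if $AX_0 = X_0 A$ then $A$ preserves $\Ker X_0$), so the $\SL_N$-stabilizer of $y_0^+$ coincides with the subgroup computed in Lemma~\ref{lem 2-stab}.

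Next I would compute the fiber of $\stsh_{Y^+}(-a)|_{U^+}$ at $y_0^+$. Recall that $\stsh_{\PP(V^*)}(-1)$ is the tautological subbundle of $V^* \otimes \stsh_{\PP(V^*)}$, so its fiber at $[f_0] \in \PP(V^*)$ is the line $\mathbb{C} \cdot f_0 \subset V^*$. Taking $f_0 \in V^*$ to be any nonzero functional with kernel $H_0$ (we may choose $f_0 = f_1$), the fiber of $\stsh_{Y^+}(-a)|_{U^+}$ at $y_0^+$ is canonically identified with $(\mathbb{C} \cdot f_0)^{\otimes a}$.

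Finally I would determine the character of $\Stab_{\SL_N}(X_0)$ on this fiber. For $A \in \Stab_{\SL_N}(X_0)$ of the shape in Lemma~\ref{lem 2-stab}, the first row of $A$ is $(c, 0, \dots, 0)$, so $A$ acts on the quotient $V/H_0$ by multiplication by $c$. Dually, $A$ acts on the line $(V/H_0)^* \simeq \mathbb{C} \cdot f_0 \subset V^*$ by $c^{-1}$, and therefore on $(\mathbb{C} \cdot f_0)^{\otimes a}$ by $c^{-a}$. This is precisely the character $m_a$ defining $\mathcal{M}_a$, so one concludes $\stsh_{Y^+}(-a)|_{U^+} \simeq \mathcal{M}_a$.

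There is no substantive obstacle in this argument; the only thing to be careful about is tracking the two sign changes that produce $-a$ in the statement (rather than $a$ as in Lemma~\ref{thm1 lem}): one from the tautological convention for $\stsh_{\PP(V^*)}(-1)$, and one from the dualization $V/H_0 \rightsquigarrow (V/H_0)^*$, which together flip the sign of the exponent relative to the $Y$ side.
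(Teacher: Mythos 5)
Your argument is correct and is exactly what the paper intends: the paper gives no separate proof for this lemma (it just says ``as in the above''), meaning the same homogeneous-bundle computation as in Lemma \ref{thm1 lem}, carried out at the point $(X_0, \Ker X_0) \in U^+$ with the tautological fiber $\mathbb{C}\cdot f_0 \subset V^*$ and the character $A \mapsto c^{-1}$ on it, which is precisely what you do. The sign bookkeeping producing $-a$ is also handled correctly.
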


\subsection{Iyama-Wemyss's mutation} \label{subsect IW}

In the present subsection, we recall some basic definitions and properties about Iyama-Wemyss's mutation.

\begin{defi} \rm
Let $R$ be a $d$-singular Calabi-Yau ring\footnote{We do not give the definition here but note that this is equivalent to say that $R$ is Gorenstein and $\dim R_{\mathfrak{m}} = d$ for all maximal ideal $\mathfrak{m} \subset R$ \cite{IR08}.} ($d$-sCY, for short).
A reflexive $R$-module $M$ is say to be a \textit{modifying module} if $\End_{R}(M)$ is a (maximal) Cohen-Macaulay $R$-module.
\end{defi}

\begin{defi} \rm
Let $A$ be a ring, $M, N$ $A$-modules, and $N_0 \in \add N$.
A morphism $f : N_0 \to M$ is called a \textit{right $(\add N)$-approximation} if the map
\[ \Hom_A(N, N_0) \xrightarrow{f \circ} \Hom_A(N,M) \]
is surjective.
\end{defi}

Let $R$ be a normal $d$-sCY ring and $M$ a modifying $R$-module.
For $0 \neq N \in \add M$, we consider 
\begin{enumerate}
\item[(1)] a right $(\add N)$-approximation of $M$, $a : N_0 \to M$.
\item[(2)] a right $(\add N^*)$-approximation of $M^*$, $b : N_1^* \to M^*$.
\end{enumerate}
Let $K_0 := \Ker(a)$ and $K_1 := \Ker(b)$.

\begin{defi} \rm
With notations as above, we define the \textit{right mutation} of $M$ at $N$ to be $\mu_N^R(M) := N \oplus K_0$
and the \textit{left mutation} of $M$ at $N$ to be $\mu_N^L(M) := N \oplus K_1^*$.
\end{defi}

In \cite{IW14}, Iyama and Wemyss proved the following theorem.
 
\begin{thm}[\cite{IW14}] \label{IW mutation equiv}
Let $R$ be a normal $d$-sCY ring
and $M$ a modifying module.
Assume that $0 \neq N \in \add M$.
Then
\begin{enumerate}
\item[(1)] $R$-algebras $\End_R(M)$, $\End_R(\mu_N^R(M))$, and $\End_R(\mu^L_N(M))$ are derived equivalent.
\item[(2)] If $M$ gives an NCCR of $R$, so do its mutations $\mu_N^R(M)$ and $\mu^L_N(M)$.
\end{enumerate}
\end{thm}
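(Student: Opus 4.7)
The plan is to prove (1) by constructing an explicit two-term tilting complex over $\Lambda := \End_R(M)$ whose endomorphism algebra is $\End_R(\mu_N^R(M))$, and then invoking Rickard's Morita theorem. Write $M = N \oplus L$ and set $P_N := \Hom_R(M,N)$ and $P_L := \Hom_R(M,L)$, viewed as projective right $\Lambda$-modules, so that $\Lambda = P_N \oplus P_L$. The right $(\add N)$-approximation $a : N_0 \to M$ restricts to (and can be taken to be the direct sum of $\id_N$ and) an approximation $a_L : N_0' \to L$, inducing a map of projectives $a_* : P_{N_0'} \to P_L$. I would then set
\[ T := P_N \;\oplus\; \bigl(\, P_{N_0'} \xrightarrow{a_*} P_L \,\bigr), \]
with the two-term complex placed in degrees $-1$ and $0$, and claim that $T$ is a tilting complex with $\End_{D(\Lambda)}(T) \simeq \End_R(\mu_N^R(M))$.

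The heart of the argument is verifying the tilting conditions. For the vanishing $\Hom_{D(\Lambda)}(T,T[i]) = 0$ for $i \neq 0$, the only nontrivial case is the self-Hom of the two-term summand and its interaction with $P_N$; applying $\Hom_\Lambda(P_{N_0'} \to P_L, -)$ to the summands of $T$ produces a complex whose higher cohomology is controlled by $\Hom_R(N_0', -) \to \Hom_R(N, -)$, and the defining surjectivity of the approximation $a$ kills it. For classical generation, note that the cone $C$ of the natural map $P_{N_0'}[1] \to P_L[1]$ encodes $K_0 := \Ker a_L$: after a careful identification using reflexivity of $M$ and the (maximal) Cohen-Macaulay property of $\Lambda$, one gets $C \simeq \Hom_R(M, K_0)[1]$ in $D(\Lambda)$, so the summand of $\mu_N^R(M) = N \oplus K_0$ coming from $K_0$ is realized by the two-term complex. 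Consequently $\Lambda$, and hence all of $D^b(\modu \Lambda)$, is built from $T$.

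Once $T$ is established as a tilting complex, Rickard's theorem gives a triangle equivalence $D^b(\modu \Lambda) \simeq D^b(\modu \End_{D(\Lambda)}(T))$, and a direct computation of morphisms between the summands of $T$ (using $\Hom_\Lambda(\Hom_R(M,-), \Hom_R(M,?)) \simeq \Hom_R(-,?)$ on reflexive modules) identifies $\End_{D(\Lambda)}(T) \simeq \End_R(N \oplus K_0) = \End_R(\mu_N^R(M))$. The left-mutation case is dual, using the approximation $b : N_1^* \to M^*$ and the functor $(-)^* = \Hom_R(-,R)$.

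For part (2), once (1) is in place, finite global dimension transports across the derived equivalence for Noetherian rings, so $\gldim \End_R(\mu_N^R(M)) < \infty$. The main obstacle is showing the (maximal) Cohen-Macaulay property of $\End_R(\mu_N^R(M))$ over $R$, equivalently that $\mu_N^R(M)$ is itself a modifying module. I would obtain this by a depth chase along the defining short exact sequence $0 \to K_0 \to N_0' \to L \to 0$: applying $\Hom_R(-, \mu_N^R(M))$ yields a long exact sequence, and since $\End_R(M)$ and $\Hom_R(N, M)$ are already known to be Cohen-Macaulay by hypothesis (and hence so are $\Hom_R(N_0', \mu_N^R(M))$ and $\Hom_R(N, \mu_N^R(M))$), the depth lemma applied locally at each $\mathfrak{p} \in \Spec R$ forces $\Hom_R(\mu_N^R(M), \mu_N^R(M))$ to have maximal depth. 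The key leverage, and the point I expect to be most delicate, is that this depth estimate must be uniform across all primes, which forces one to combine the depth lemma with the $d$-sCY hypothesis on $R$ to bootstrap depth from codimension-one primes up to the full dimension $d$.
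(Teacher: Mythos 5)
There is a genuine gap at the heart of your construction. The object $T = P_N \oplus \bigl(P_{N_0'} \xrightarrow{a_*} P_L\bigr)$ is not a tilting complex in general, because the vanishing $\Hom_{\D(\Lambda)}(T,T[i])=0$ must hold for \emph{negative} $i$ as well, and your approximation argument only controls the positive direction. Concretely, since $P_N$ is projective and $\Hom_R(M,-)$ is left exact, one computes
\[ \Hom_{\D(\Lambda)}\bigl(P_N, (P_{N_0'} \to P_L)[-1]\bigr) \;\simeq\; \Ker\bigl(\Hom_R(N,N_0') \to \Hom_R(N,L)\bigr) \;\simeq\; \Hom_R(N, \Ker a_L), \]
which is nonzero whenever $\Ker a_L \neq 0$ (two nonzero reflexive modules over the normal domain $R$ always admit a nonzero homomorphism; already for $R=\mathbb{C}[[x,y,z]]/(xy-z^2)$, $M=R\oplus(x,z)$, $N=R$ this kills the proposal). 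Relatedly, your identification of the two-term summand with $\Hom_R(M,K_0)[1]$ presupposes that $a_* : \Hom_R(M,N_0') \to \Hom_R(M,L)$ is surjective; but surjectivity would force $\id_L$ to factor through $a_L$, i.e.\ $L \in \add N$, so in general the complex has cohomology in two degrees and is not a shifted module. This is exactly the point where Iyama--Wemyss's actual construction differs from yours: as recalled in the paper immediately after the theorem, the tilting object is the \emph{module} $Q \oplus V$ over $\Lambda$, with $Q = \Hom_R(M,N)$ and $V$ the \emph{image} of $\Hom_R(M,N_1) \to \Hom_R(M,K_1^*)$ — passing to the image (rather than keeping the whole two-term complex, its kernel, or its cokernel) is what makes the Ext-vanishing work.

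Two further points are underestimated in your sketch. First, the identification $\End_{\D(\Lambda)}(T) \simeq \End_R(\mu_N(M))$ cannot be done by the formula $\Hom_\Lambda(\Hom_R(M,X),\Hom_R(M,Y)) \simeq \Hom_R(X,Y)$ ``on reflexive modules'': since $M$ is not assumed to contain $R$ as a summand, $\Hom_R(M,-)$ is not a reflexive equivalence, and $K_0 \notin \add M$, so this step is precisely where the $d$-sCY hypothesis and depth arguments enter in \cite{IW14}; it is not formal. Second, in part (2) the transport of finite global dimension is fine, but the Cohen--Macaulayness of $\End_R(\mu_N(M))$ does not follow from a depth chase along $0 \to K_0 \to N_0' \to L \to 0$ alone: the terms $\Hom_R(K_0,K_0)$ and $\Hom_R(K_0,N)$ are not summands of modules you already control, and in \cite{IW14} this is deduced from the derived equivalence together with the sCY machinery rather than proved directly. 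So while your overall strategy (produce a tilting object over $\Lambda$ with the mutated endomorphism ring and apply Rickard) is the same as Iyama--Wemyss's, the specific tilting object you propose fails, and the two places you flag as ``delicate'' are exactly where the real content of their proof lies.
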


The equivalence between $\End_R(M)$ and $\End_R(\mu^L_N(M))$ is given as follows.
Let $Q := \Hom_R(M, N)$ and 
\[ V := \mathrm{Image}\left(\Hom_R(M, N_1) \to \Hom_R(M, K_1^*)\right). \]
Then, one can show that $V \oplus Q$ is a tilting $\Lambda := \End_R(M)$-module and there is an isomorphism of $R$-algebras
\[ \End_R(\mu_N^L(M)) \simeq \End_{\Lambda}(V \oplus Q). \]
Thus, we have an equivalence
\[ \IW_N := \RHom(V \oplus Q, -) : \D(\modu(\End_R(M))) \to \D(\modu(\End_R(\mu_N^L(M)))). \]
In this paper, we only use left IW mutations and hence we call them simply the \textit{IW mutation}.
We also call the functor $\IW_N$ the \textit{IW mutation functor}.

In the later section, we introduce a concept of \textit{multi-mutations} and prove that a multi-mutation can be written as a composition of IW mutation functors.

\subsection{P-twists}

In this subsection, we recall the definition of P-twists and their basic properties.

\begin{defi} \label{def P-twist} \rm
An object $E$ in the derived category $\D(X)$ of a variety $X$ of dimension $2n$ is called a \textit{$\mathbb{P}$-object} if we have $E \otimes \omega_X \simeq E$ and
\[ \Hom(E,E[i]) \simeq H^i(\mathbb{P}^n; \mathbb{C}) \]
for all $i \in \mathbb{Z}$.
For a $\mathbb{P}$-object $E$, the \textit{P-twist} $P_E : \D(X) \to \D(X)$ by $E$ is defined as follow
\[ P_E(F) := \Cone\left(\Cone(E \otimes \RHom(E,F)[-2] \to E \otimes \RHom(E,F)) \xrightarrow{\mathrm{ev}} F \right). \]
\end{defi}

See Lemma \ref{P-obj lem} for a basic example of $\PP$-object.

\begin{prop}[\cite{HT06}]
A P-twist gives an auto-equivalence of $\D(X)$.
\end{prop}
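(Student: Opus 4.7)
The plan is to realize $P_E$ as a Fourier-Mukai transform, then verify the equivalence using the spanning-class criterion of Bridgeland, following the strategy of Huybrechts-Thomas \cite{HT06}. The natural transformation $E \otimes \RHom(E,F)[-2] \to E \otimes \RHom(E,F)$ defining the inner cone corresponds, via adjunction, to a canonical degree-two generator $h \in \Hom(E,E[2])$, which exists and is unique up to scalar because $\Hom(E,E[\bullet]) \simeq H^\bullet(\mathbb{P}^n;\mathbb{C}) = \mathbb{C}[h]/(h^{n+1})$. Consequently, $P_E$ is the Fourier-Mukai functor $\Phi_{\mathcal{P}_E}$ whose kernel is the double cone
\[ \mathcal{P}_E = \Cone\bigl(\Cone\bigl(E^{\vee} \boxtimes E [-2] \xrightarrow{h} E^{\vee} \boxtimes E\bigr) \xrightarrow{\mathrm{ev}} \stsh_{\Delta}\bigr) \]
on $X \times X$, with $\Delta$ the diagonal. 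In particular $P_E$ is exact and admits left and right adjoints by Grothendieck-Serre duality; the Calabi-Yau-like assumption $E \otimes \omega_X \simeq E$ makes $P_E$ coincide (up to a Serre shift) with the functor attached to the transposed kernel, which is crucial for constructing the inverse.

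Next, I would introduce the spanning class $\Omega := \{E\} \cup E^{\perp}$, where $E^{\perp} = \{F \in \D(X) \mid \RHom(E,F) = 0\}$; this spans $\D(X)$ in the sense that any object is determined by its pairings with $\Omega$. For $F \in E^{\perp}$ the two inner terms in the defining cone vanish, so $P_E(F) \simeq F$. For $F = E$, I would unpack the double cone using $\RHom(E,E) \simeq \bigoplus_{k=0}^{n} \mathbb{C}\cdot h^k[-2k]$: the map $h$ acts by multiplication on this algebra, so the first cone computes $\bigoplus_{k=1}^{n} E[-2k]$, and after taking the second cone along the evaluation one obtains $P_E(E) \simeq E[-2n]$. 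From these two computations, $P_E$ sends $\Omega$ into $\Omega$ and induces isomorphisms on all $\Hom$-groups between objects of $\Omega$.

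To conclude, I would construct a candidate inverse $P_E^{-1}$ via the dual double-cone kernel obtained by running the same construction with the roles of the two maps and of the evaluation/co-evaluation reversed (equivalently, by Serre-dualising $\mathcal{P}_E$). Computing on $\Omega$ gives $P_E^{-1}(F) \simeq F$ on $E^{\perp}$ and $P_E^{-1}(E) \simeq E[2n]$, so the compositions $P_E \circ P_E^{-1}$ and $P_E^{-1} \circ P_E$ act as the identity on $\Omega$. By Bridgeland's spanning-class criterion (a Fourier-Mukai functor inducing isomorphisms on $\Hom$'s between objects of a spanning class and having an adjoint with the same property is fully faithful, and is an equivalence once essential surjectivity is checked on the spanning class), this yields $P_E \in \Auteq(\D(X))$.

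The main obstacle is the explicit computation of $P_E(E)$: one must trace the action of $h$ through the iterated cone and verify that the evaluation map cancels precisely the correct graded pieces of $\mathbb{C}[h]/(h^{n+1})$, leaving only the top component. This is where the $\mathbb{P}^n$-structure of the Hom-algebra is essential — for $n = 0$ a single cone (the spherical twist) already suffices, while for $n \geq 1$ the double cone is engineered so that only the top class $h^n$ survives. A conceptually cleaner variant is to observe that the full subcategory generated by $E$ is equivalent to $\D(\mathbb{C}[h]/(h^{n+1}))$ and to identify $P_E$, restricted to that subcategory, with the shift $[-2n]$; this replaces the explicit cone calculation with a Morita-type argument but requires verifying the equivalence of the small subcategory separately.
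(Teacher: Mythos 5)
The paper itself offers no argument for this proposition --- it is quoted directly from Huybrechts--Thomas --- so the only question is whether your sketch of their proof is sound. Your overall strategy (realize $P_E$ as a Fourier--Mukai functor, compute it on the spanning class $\Omega=\{E\}\cup E^{\perp}$, and invoke the adjoints-plus-spanning-class criterion together with $E\otimes\omega_X\simeq E$ to pass from fully faithful to equivalence) is indeed the route of \cite{HT06}. However, there is a genuine gap at the very first step: the inner map of the kernel cannot be ``$h$'' acting on one factor. In \cite{HT06} it is the \emph{difference} $h^{\vee}\boxtimes \id_E-\id_{E^{\vee}}\boxtimes h:E^{\vee}\boxtimes E[-2]\to E^{\vee}\boxtimes E$, and this is not cosmetic: the evaluation map $E^{\vee}\boxtimes E\to\stsh_{\Delta}$ composed with a one-sided $h$ is the nonzero class corresponding to $h\in\Ext^2(E,E)$, so the evaluation does not factor through the inner cone and your double-cone kernel $\mathcal{P}_E$ is not even defined as written. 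Only for the difference map do the two composites cancel, allowing $\mathrm{ev}$ to extend to the cone; this is also exactly the point that forces one to work at the kernel level rather than with the (non-functorial) object-level cones.

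Relatedly, your intermediate computation of $P_E(E)$ is incorrect: applying the inner cone to $E$ does not yield $\bigoplus_{k=1}^{n}E[-2k]$ (a class-level check already rules this out, since the alternating sum of the two terms is $E-E[-2n-2]$, not $\sum_{k=1}^{n}E[-2k]$). With the correct difference map, cancelling the identity components leaves an object built from $E$ and $E[-2n-1]$, whose cone along $\mathrm{ev}$ is $E[-2n]$; so the final statement $P_E(E)\simeq E[-2n]$ is right, but the route you describe to it would fail. Finally, two points you assert would still need the arguments of \cite{HT06}: that $\Omega$ is actually a spanning class (this uses Serre duality together with $E\otimes\omega_X\simeq E$), and that the maps $\Hom^{\bullet}(A,B)\to\Hom^{\bullet}(P_EA,P_EB)$ induced by the functor on $\Omega$ are isomorphisms, not merely that source and target are abstractly isomorphic. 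With those repairs your outline does reproduce the Huybrechts--Thomas proof.
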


The notion of P-twist was first introduced by Huybrechts and Thomas in their paper \cite{HT06} as an analogue of the notion of \textit{spherical twist}.
Spherical twists give an important class of auto-equivalences on the derived category of a Calabi-Yau variety.
In contrast, P-twists give a significant class of auto-equivalences on the derived category of a (holomorphic) symplectic variety.
In Section \ref{subsect mutation}, we study P-twists on the symplectic variety $Y = \lvert \Omega_{\mathbb{P}(V)} \rvert$ that is explained in the above section, from the view point of NCCR.

\section{Non-commutative crepant resolutions of $\overline{B(1)}$} \label{section, NCCR}

\subsection{The existence of NCCRs of $\overline{B(1)}$ and relations between CRs}

In this section, we study non-commutative crepant resolutions of a minimal nilpotent closure $\overline{B(1)} \subset \End(V)$ where $V = \mathbb{C}^N$.
We always assume $N \geq 2$.
Let $R$ be the affine coordinate ring of $\overline{B(1)}$.
By Proposition \ref{nilp Gore cano}, the $\mathbb{C}$-algebra $R$ is Gorenstein and normal.
Note that $\overline{B(1)} = B(1) \cup \{0\}$ as set and hence we have
\[ \codim_{\overline{B(1)}}(\overline{B(1)} \setminus B(1)) = N \geq 2. \]
Thus, we have a $\mathbb{C}$-algebra isomorphism
\[ R \simeq H^0(B(1), \stsh_{B(1)}). \]

\begin{lem} \label{lem 3-A}
Let $\mathcal{F}$ be a coherent sheaf on $Y$ that satisfies
\begin{enumerate}
\item[(a)] $\Ext_Y^i(\mathcal{F}, \stsh_Y) = 0$ for $i > 0$, and
\item[(b)] $H^i(Y, \mathcal{F}) = 0$ for $i > 0$.
\end{enumerate}
Then, the push-forward $\phi_*\mathcal{F} =: M$ is a Cohen-Macaulay $R$-module.
\end{lem}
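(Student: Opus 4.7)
The plan is to translate the Ext-vanishing on $Y$ (condition (a)) into Ext-vanishing for $M$ over $R$ via Grothendieck duality, and then invoke the standard Ext-criterion characterizing maximal Cohen--Macaulay modules over a Gorenstein ring. A preliminary observation is that (b) can be upgraded to the vanishing of the higher direct images: since $\overline{B(1)} = \Spec R$ is affine, the Leray spectral sequence degenerates to give $H^i(Y, \mathcal{F}) \simeq H^0(\overline{B(1)}, R^i\phi_*\mathcal{F})$, so (b) forces $R^i\phi_*\mathcal{F} = 0$ for $i > 0$, and hence $R\phi_*\mathcal{F} \simeq M$ in $\D(\overline{B(1)})$. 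In particular $M$ is a coherent (finitely generated) $R$-module.

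Next I would exploit the crepancy of $\phi$. Since $\dim Y = \dim \overline{B(1)} = 2(N-1)$ and $\omega_Y \simeq \phi^*\omega_{\overline{B(1)}}$ (both are trivial line bundles on Gorenstein varieties of the same dimension), the twisted inverse image satisfies $\phi^!\stsh_{\overline{B(1)}} \simeq \stsh_Y$. Because $\phi$ is projective over an affine base, Grothendieck duality applies and, combined with the previous step, gives
\[
\RHom_R(M, R) \simeq R\Gamma\bigl(\overline{B(1)},\, R\phi_* R\lhom_Y(\mathcal{F}, \phi^!\stsh_{\overline{B(1)}})\bigr) \simeq R\Gamma\bigl(Y,\, R\lhom_Y(\mathcal{F}, \stsh_Y)\bigr).
\]
Taking $i$-th cohomology, the right-hand side is $\Ext^i_Y(\mathcal{F}, \stsh_Y)$, which vanishes for $i > 0$ by (a). Therefore $\Ext^i_R(M, R) = 0$ for every $i > 0$.

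Finally, since $R$ is Gorenstein of dimension $d = 2(N-1)$ with $\omega_R \simeq R$, the vanishing $\Ext^i_R(M, R) = 0$ for all $i > 0$ is precisely the standard criterion---via local duality, $\Ext^i_R(M, R)$ localizes to Matlis duals of $H^{d-i}_{\mathfrak{m}}(M_{\mathfrak{m}})$ at each maximal ideal $\mathfrak{m}$---for a finitely generated $R$-module to be maximal Cohen--Macaulay. This yields the conclusion. I expect there is no real obstacle: the only potentially delicate point is the identification $\phi^!\stsh_{\overline{B(1)}} \simeq \stsh_Y$, but this is immediate from crepancy together with the fact that both $Y$ and $\overline{B(1)}$ have trivial canonical bundles, so the remainder is essentially bookkeeping.
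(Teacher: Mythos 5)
Your proposal is correct and follows essentially the same route as the paper's proof: crepancy gives $\phi^!\stsh_{\overline{B(1)}} \simeq \stsh_Y$, Grothendieck duality (together with $R\phi_*\mathcal{F} \simeq M$, which is where (b) enters) converts (a) into $\Ext^i_R(M,R)=0$ for $i>0$, and local duality over the Gorenstein ring $R$ then yields Cohen--Macaulayness. Your write-up is in fact slightly more explicit than the paper's on two points the paper leaves implicit, namely the use of (b) via the Leray spectral sequence and the check at all maximal ideals rather than just the origin.
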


\begin{proof}
Since the resolution $\phi : Y \to \overline{B(1)} = \Spec R$ is crepant, we have $\phi^!\stsh_{\overline{B(1)}} \simeq \stsh_Y$.
Thus, we have
\begin{align*}
\Ext_Y^i(\mathcal{F}, \stsh_Y) &\simeq \Ext_Y^i(\mathcal{F}, \phi^!\stsh_{\overline{B(1)}}) \\
&\simeq \Ext^i_R(R\phi_*\mathcal{F}, R) \\
&\simeq \Ext_R^i(M, R)
\end{align*}
and hence we have
\[ \Ext_R^i(M, R) = 0 \]
for $i > 0$.
Let $\mathfrak{m} \subset R$ be a maximal ideal that corresponds to the origin $0 \in \overline{B(1)}$,
$(\hat{R}, \hat{\mathfrak{m}})$ the $\mathfrak{m}$-adic completion of $(R_{\mathfrak{m}}, \mathfrak{m})$,
and $\hat{M}$ the $\mathfrak{m}$-adic completion of $M_{\mathfrak{m}}$.
Since the local algebra $\hat{R}$ is Gorenstein,
the canonical module $\omega_{\hat{R}}$ is isomorphic to $\hat{R}$ as an $\hat{R}$-module.
Thus, by Grothendieck's local duality theorem (see \cite[Theorem 3.5.8]{BH93}), we have
\[ H_{\hat{\mathfrak{m}}}^i(\hat{M}) = \Hom_{\hat{R}}\left(\Ext^{2N-i-2}_{\hat{R}}(\hat{M}, \hat{R}), E(\hat{R}/\hat{\mathfrak{m}})\right) \]
where $E(\hat{R}/\hat{\mathfrak{m}})$ is the injective hull of the residue field $\hat{R}/\hat{\mathfrak{m}}$.
Therefore, we have
\[ H_{\hat{\mathfrak{m}}}^i(\hat{M}) = 0 \]
for $i < 2N-2 = \dim R$ and hence $M$ is a Cohen-Macaulay $R$-module (see \cite[Theorem 3.5.7]{BH93}).
\end{proof}

\begin{lem}  \label{lem 3-B}
Let $\mathcal{E}$ be a vector bundle on $\mathbb{P}(V)$ such that 
\[ H^i(\mathbb{P}(V), \mathcal{E}(k)) = 0 \]
for all $i > 0$ and $k \geq 0$.
Then, we have
\[ H^i(Y, \pi^*\mathcal{E}) = 0 \]
for all $i > 0$.
\end{lem}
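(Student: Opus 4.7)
The plan is to reduce the cohomology on $Y$ to cohomology on $\PP(V)$ via the projection $\pi$, and then exploit the Euler sequence to reduce to the hypothesized vanishing.

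First, since $Y = \lvert \Omega_{\PP(V)} \rvert$ is the total space of a vector bundle, the projection $\pi : Y \to \PP(V)$ is an affine morphism, so $R^i\pi_* = 0$ for $i > 0$ and $\pi_*\stsh_Y \simeq \Sym^{\bullet}_{\stsh_{\PP(V)}}(T_{\PP(V)})$ as a graded $\stsh_{\PP(V)}$-algebra. By the projection formula,
\[ R\pi_*\pi^*\mathcal{E} \simeq \mathcal{E} \otimes_{\stsh_{\PP(V)}} \pi_*\stsh_Y \simeq \bigoplus_{k \geq 0} \mathcal{E} \otimes \Sym^k T_{\PP(V)}. \]
Since $\PP(V)$ is noetherian, sheaf cohomology commutes with the direct sum, giving
\[ H^i(Y, \pi^*\mathcal{E}) \simeq \bigoplus_{k \geq 0} H^i(\PP(V), \mathcal{E} \otimes \Sym^k T_{\PP(V)}). \]
Thus it suffices to show $H^i(\PP(V), \mathcal{E} \otimes \Sym^k T_{\PP(V)}) = 0$ for all $i > 0$ and $k \geq 0$.

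The case $k = 0$ is the hypothesis. For $k \geq 1$, I would apply symmetric powers to the Euler sequence $0 \to \stsh_{\PP(V)} \to V \otimes \stsh_{\PP(V)}(1) \to T_{\PP(V)} \to 0$. Since the kernel is a line bundle, multiplication by it yields an honest short exact sequence
\[ 0 \to \Sym^{k-1}(V) \otimes \stsh_{\PP(V)}(k-1) \to \Sym^k(V) \otimes \stsh_{\PP(V)}(k) \to \Sym^k T_{\PP(V)} \to 0. \]
Tensoring with $\mathcal{E}$ and passing to the long exact sequence in cohomology, the two outer terms are, up to finite-dimensional coefficient spaces, copies of $H^\bullet(\PP(V), \mathcal{E}(k-1))$ and $H^\bullet(\PP(V), \mathcal{E}(k))$; all of their higher cohomology vanishes by assumption (as $k-1 \geq 0$), so the middle term has vanishing higher cohomology as well.

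The only subtle point is verifying the displayed short exact sequence obtained by symmetric-powering the Euler sequence: that for a short exact sequence $0 \to L \to E \to F \to 0$ with $L$ a line bundle, the multiplication map $L \otimes \Sym^{k-1}E \to \Sym^k E$ is injective with cokernel $\Sym^k F$. This is a standard local computation: trivializing $L$ by a local generator $e$, the map becomes multiplication by $e$ in the polynomial algebra $\Sym^\bullet E$, hence injective since $e$ is a non-zero-divisor, and the quotient presents the symmetric algebra of $E/L \simeq F$. I do not anticipate any serious obstacle beyond this bookkeeping.
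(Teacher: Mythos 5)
Your proposal is correct; every step (affineness of $\pi$, the projection formula, commutation of cohomology with direct sums on a noetherian space, and the symmetric-power sequence for an extension by a line bundle) is valid, but it is organized differently from the paper's proof. The paper never decomposes $\pi_*\stsh_Y \simeq \Sym^{\bullet}T_{\PP(V)}$: instead it embeds $Y$ into the total space $Z$ of $V^*\otimes_{\mathbb{C}}\stsh_{\PP(V)}(-1)$ via the dual Euler sequence, notes that the ideal sheaf $I_{Y/Z}$ is trivial, so that there is a two-term resolution $0 \to \stsh_Z \to \stsh_Z \to \stsh_Y \to 0$, computes $H^i(Z,\pi_Z^*\mathcal{E}) \simeq \bigoplus_{k\geq 0}\Sym^k V\otimes_{\mathbb{C}} H^i(\PP(V),\mathcal{E}(k)) = 0$ for $i>0$, and concludes from the associated long exact sequence. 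Your argument is essentially the degreewise shadow of this: pushing the paper's two-term sequence down to $\PP(V)$ and extracting the $k$-th graded piece gives exactly your sequence $0 \to \Sym^{k-1}V\otimes\stsh_{\PP(V)}(k-1) \to \Sym^k V\otimes\stsh_{\PP(V)}(k) \to \Sym^k T_{\PP(V)} \to 0$. The paper's packaging buys the convenience of a single short exact sequence and no discussion of symmetric powers of extensions (the only input being $I_{Y/Z}\simeq\stsh_Z$), and the auxiliary space $Z$ is reused later for the quiver description of $\Lambda_k$; your version buys a self-contained argument that stays on $Y$ and $\PP(V)$, at the cost of the (standard, and correctly justified) fact that $\Sym^k$ of a short exact sequence of bundles with line-bundle kernel yields the displayed two-term sequence.
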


\begin{proof}
Let $Z$ be a total space of a vector bundle $V^* \otimes_{\mathbb{C}} \stsh_{\mathbb{P}(V)}(-1)$.
Then, $Y$ is embedded in $Z$ via the Euler sequence
\[ 0 \to \Omega_{\mathbb{P}(V)} \to V^* \otimes_{\mathbb{C}} \stsh_{\mathbb{P}(V)}(-1) \to \stsh_{\mathbb{P}(V)} \to 0. \]
Since $(V \otimes_{\mathbb{C}} \stsh_{\mathbb{P}(V)}(-1))/\Omega_{\mathbb{P}(V)} \simeq \stsh_{\mathbb{P}(V)}$, the ideal sheaf $I_{Y/Z}$ is isomorphic to $\stsh_Z$.
Thus, we have an exact sequence on $Z$
\[ 0 \to \stsh_Z \to \stsh_Z \to \stsh_Y \to 0. \]
Let $\pi_Z : Z \to {\mathbb{P}(V)}$ be the projection.
Then, we have
\begin{align*}
H^i(Z, \pi_Z^* \mathcal{E}) &\simeq H^i({\mathbb{P}(V)}, \mathcal{E} \otimes R{\pi_Z}_* \stsh_Z) \\
&\simeq H^i({\mathbb{P}(V)}, \mathcal{E} \otimes {\pi_Z}_*\stsh_Z) ~~~(\text{since $\pi_Z$ is affine}) \\
&\simeq \bigoplus_{k \geq 0} \Sym^k V \otimes_{\mathbb{C}} H^i({\mathbb{P}(V)}, \mathcal{E}(k))
\end{align*}
and this is zero for $i > 0$ by the assumption.
Thus we have
\[ H^i(Z, \pi_Z^* \mathcal{E} \otimes \stsh_Y) = H^i(Y, \pi^*\mathcal{E}) = 0 \]
for $i > 0$.
\end{proof}

\begin{defi} \rm
For an integer $k \in \mathbb{Z}$, let $\Tilt_k := \bigoplus_{a = -N+k+1}^k \stsh_Y(a)$ be a vector bundle on $Y$ and $\Lambda_k := \End_Y(\Tilt_k)$ the endomorphism ring of $\Tilt_k$.
\end{defi}

Note that the $R$-algebra structure of $\Lambda_k$ does not depend on the choice of the integer $k$.
Nevertheless, we adopt this notation to emphasize that the algebra $\Lambda_k$ is given as the endomorphism ring of a bundle $\Tilt_k$.

\begin{thm} \label{NCCR1}
The following hold.
\begin{enumerate}
\item[(1)] For all $k \in \mathbb{Z}$, the vector bundle $\Tilt_k$ is a tilting bundle on $Y$.
\item[(2)] For all $-N+1 \leq a \leq N-1$, we have 
\[ \phi_*\stsh_Y(a) = M_a, \]
and $M_a$ is a (maximal) Cohen-Macaulay $R$-module.
\item[(3)] If $0 \leq k \leq N-1$, then we have an isomorphism
\[ \End_Y(\Tilt_k) \simeq \End_R\left(\bigoplus_{a=-N+k+1}^k M_a\right) \]
\item[(4)] The $R$-module 
\[ \bigoplus_{a=-N+k+1}^k M_a \]
gives an NCCR $ \Lambda_k$ of $R$
for $0 \leq k \leq N-1$.
\item[(5)] There is an equivalence of categories
\[ \RHom_Y(\Tilt_k, -) : \D(Y) \xrightarrow{\sim} \D(\modu(\Lambda_k)). \]
\end{enumerate}
\end{thm}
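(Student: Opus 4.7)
The plan is to prove the parts slightly out of order: first (2) using the preparatory Lemmas \ref{lem 3-A} and \ref{lem 3-B}, then (1) from the same vanishings plus affineness of $\pi$, (5) as an immediate application of Theorem \ref{equiv tilting}, and finally (3) and (4) by short bookkeeping from what has already been established.

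For (2), apply Lemma \ref{lem 3-B} with $\mathcal{E} = \stsh_{\mathbb{P}(V)}(a)$: since $H^i(\mathbb{P}(V), \stsh(a+k)) = 0$ for all $k \geq 0$ whenever $a \geq -N+1$, we obtain $H^i(Y, \stsh_Y(a)) = 0$ for $i > 0$ in that range. For $-N+1 \leq a \leq N-1$, both $\stsh_Y(a)$ and $\stsh_Y(-a)$ lie in the vanishing range, so together with $\omega_Y \simeq \stsh_Y$ the two hypotheses of Lemma \ref{lem 3-A} for $\mathcal{F} = \stsh_Y(a)$ are verified, yielding $\phi_*\stsh_Y(a)$ Cohen--Macaulay. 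To identify this pushforward with $M_a$, Lemma \ref{thm1 lem} provides the isomorphism over $U \simeq B(1)$, and it extends uniquely across $\overline{B(1)} \setminus B(1) = \{0\}$: the CM sheaf $\phi_*\stsh_Y(a)$ is reflexive (Proposition \ref{CM ref 1}), hence $S_2$, while $\codim_{\overline{B(1)}}\{0\} = 2N-2 \geq 2$.

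For (1), the same vanishings give
\[ \Ext_Y^i(\stsh_Y(a), \stsh_Y(b)) = H^i(Y, \stsh_Y(b-a)) = 0 \quad (i > 0) \]
whenever $b - a \geq -(N-1)$; in the window $[-N+k+1, k]$ the differences $b - a$ sweep out $[-(N-1), N-1]$, so the Ext condition holds. For classical generation, note that $\Tilt_k$ is the $\pi$-pullback of a twist of the Beilinson tilting bundle on $\mathbb{P}(V)$; the adjunction
\[ \RHom_Y(\pi^*T, E) \simeq \RHom_{\mathbb{P}(V)}(T, R\pi_*E) \]
together with classical generation on $\mathbb{P}(V)$ reduces vanishing of the left side to $R\pi_*E = 0$, which forces $E = 0$ because $\pi$ is affine. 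This proves (1), and (5) then follows directly from Theorem \ref{equiv tilting}.

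For (3), one has
\[ \End_Y(\Tilt_k) = \bigoplus_{a,b} H^0(Y, \stsh_Y(b-a)) = \bigoplus_{a,b} M_{b-a} \]
by (2) since $|b-a| \leq N-1$, while on the algebraic side $\mathcal{H}om(\mathcal{M}_a, \mathcal{M}_b) \simeq \mathcal{M}_{b-a}$ on $B(1)$, combined with reflexivity of $M_b$ and the codimension-two complement, yields $\Hom_R(M_a, M_b) = M_{b-a}$; the two calculations match. For (4), the Cohen--Macaulay property of $\Lambda_k$ is immediate from (3) using (2), and finite global dimension is inherited from the smoothness of $Y$ through the derived equivalence in (5); Lemma \ref{def lem NCCR} (applicable because $R$ is Gorenstein) then delivers the NCCR conclusion. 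The main obstacle is (2): one must simultaneously verify the Cohen--Macaulay property of $\phi_*\stsh_Y(a)$ and identify it with the representation-theoretically defined module $M_a$, and the symmetric range $|a| \leq N-1$ that this forces is precisely what makes the endomorphism calculation for (3) and (4) close up cleanly.
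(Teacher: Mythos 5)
Your proposal is correct and follows essentially the same route as the paper: Lemma \ref{lem 3-B} plus Lemma \ref{lem 3-A} give the vanishings and Cohen--Macaulayness, reflexivity together with agreement outside the codimension~$\geq 2$ singular locus identifies $\phi_*\stsh_Y(a)$ with $M_a$ and matches the two endomorphism algebras, and Theorem \ref{equiv tilting} with Lemma \ref{def lem NCCR} finish (5) and (4). The only (harmless) deviations are that you spell out the generation of $\D(\Qcoh(Y))$ via affineness of $\pi$, which the paper leaves implicit, and you verify (3) summand-by-summand as $\Hom_R(M_a,M_b)\simeq M_{b-a}$ rather than comparing the two reflexive endomorphism algebras wholesale as the paper does.
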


We note that (1) and (5) of Theorem \ref{NCCR1} are also obtained by Toda and Uehara in \cite{TU10}.
They also study the perverse heart of $\D(Y)$ that corresponds to $\modu(\Lambda_0)$ via the derived equivalence.

\begin{proof}
Let $T = \bigoplus_{a=0}^{N-1} \stsh_{\mathbb{P}(V)}(a)$ is a tilting bundle on $\mathbb{P}(V)$.
Then, we have
\[ H^i(\mathbb{P}(V), T^* \otimes T \otimes \stsh_{\mathbb{P}(V)}(k)) = 0 \]
for all $i > 0$ and $k \geq 0$.
Thus, by Lemma \ref{lem 3-B}, we have
\[ H^i(Y, \Tilt_0^* \otimes \Tilt_0) = 0 \]
and hence $\Tilt_0$ is a tilting bundle on $Y$.
Since other bundles $\Tilt_k$ ($k \in \mathbb{Z}$) are obtained from $\Tilt_0$ by twisting $\stsh_Y(k)$, $\Tilt_k$ ($k \in \mathbb{Z}$) are also tilting bundles on $Y$.
This shows (1).

On the other hand, by Lemma \ref{lem 3-B}, we have
\[ H^i(Y, \stsh_Y(a)) = 0 ~~\text{for $i>0$} \]
if $a \geq -N+1$.
Therefore, if $-N+1 \leq a \leq N-1$, we have
\begin{align*}
H^i(Y, \stsh_Y(a)) &= 0, \\
\Ext^i_Y(\stsh_Y(a), \stsh_Y) &= 0
\end{align*}
for all $i > 0$.
Thus, by Lemma \ref{lem 3-A}, we have the $R$-module $\phi_*\stsh_Y(a) = H^0(Y, \stsh_Y(a))$ is Cohen-Macaulay if $-N+1 \leq a \leq N-1$.
In particular, if $-N+1 \leq a \leq N-1$, $\phi_*\stsh_Y(a)$ is a reflexive $R$-module by Proposition \ref{CM ref 1}.
By Lemma \ref{thm1 lem}, $\phi_*\stsh_Y(a)$ and $M_a$ are isomorphic outside the unique singular point $0 \in \overline{B(1)}$.
Thus, we have $\phi_*\stsh_Y(a) \simeq M_a$ for $-N+1 \leq a \leq N-1$ and hence $M_a$ is (maximal) Cohen-Macaulay as an $R$-module if $-N+1 \leq a \leq N-1$.
This shows (2).

Next, we prove (3).
By Lemma \ref{lem 3-A} and (1), we have $\End_Y(\Tilt_k) \simeq \phi_*(\Tilt_k^* \otimes \Tilt_k)$ is a (maximal) Cohen-Macaulay $R$-module and hence is reflexive by Proposition \ref{CM ref 1}.
On the other hand, the $R$-module
\[ \End_R\left(\bigoplus_{a=-N+k+1}^k M_a\right) \]
is also reflexive for $0 \leq k \leq N-1$ by Proposition \ref{CM ref 2}.
These two reflexive $R$-modules are isomorphic to each other outside the unique singular point $0 \in \overline{B(1)}$.
Thus, we have
\[ \End_Y(\Tilt_k) \simeq \End_R\left(\bigoplus_{a=-N+k+1}^k M_a\right). \]
Finally, (4) follows from (1), (2), and (3). (5) follows from (1).
\end{proof}

It is easy to see that the dual statements hold for $Y^+$.

\begin{thm} \label{main thm flop side}
Let $\Tilt_k^+ := \bigoplus_{a=-N+k+1}^k \stsh_{Y^+}(a)$.
Then, the following hold.
\begin{enumerate}
\item[(1)] For all $k \in \mathbb{Z}$, the vector bundle $\Tilt^+_k$ is a tilting bundle on $Y^+$.
\item[(2)] For all $-N+1 \leq a \leq N-1$, we have 
\[ \phi^+_*\stsh_{Y^+}(a) = M_{-a}. \]
\item[(3)] If $0 \leq k \leq N-1$, then we have an isomorphism
\[ \End_{Y^+}(\Tilt^+_k) \simeq \End_R\left(\bigoplus_{a=-N+k+1}^k M_{-a} \right). \]
\item[(4)] For all $k \in \mathbb{Z}$, there is a canonical isomorphism
\[ \End_{Y^+}(\Tilt^+_k) \simeq \Lambda_{N-k-1}. \]
\item[(5)] There is an equivalence of categories
\[ \RHom_{Y^+}(\Tilt^+_k, -) : \D(Y^+) \xrightarrow{\sim} \D(\modu(\Lambda_{N-k-1})). \]
\end{enumerate}
\end{thm}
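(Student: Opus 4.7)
The plan for parts (1), (2), (3), and (5) is to mirror the proof of Theorem \ref{NCCR1} verbatim, with $\pi : Y \to \mathbb{P}(V)$ replaced by $\pi' : Y^+ \to \mathbb{P}(V^*)$, the zero-section $j$ replaced by $j'$, and Lemma \ref{thm1 lem} replaced by Lemma \ref{thm1 lem2}. The Euler-sequence argument of Lemma \ref{lem 3-B} applies to $Y^+$ (which sits in the total space of $V \otimes_{\mathbb{C}} \stsh_{\mathbb{P}(V^*)}(-1)$), yielding both $\Ext^i_{Y^+}(\Tilt^+_k, \Tilt^+_k) = 0$ for $i > 0$ and $H^i(Y^+, \stsh_{Y^+}(a)) = 0$ for $i > 0$, $a \geq -N+1$. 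Combining with Lemma \ref{lem 3-A} (applied to $\phi^+$, which is again crepant) gives the Cohen-Macaulayness of $\phi^+_*\stsh_{Y^+}(a)$, and reflexivity together with Lemma \ref{thm1 lem2} forces the identification $\phi^+_*\stsh_{Y^+}(a) \simeq M_{-a}$ for $-N+1 \le a \le N-1$, proving (2). Parts (3) and (5) then follow from (1) and (2) exactly as in the proof of Theorem \ref{NCCR1}, using Propositions \ref{CM ref 1} and \ref{CM ref 2} to pass from the isomorphism on the smooth locus to a global isomorphism of reflexive $R$-modules.

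The genuinely new content is (4). For $0 \le k \le N-1$, I compare part (3) of the present theorem with Theorem \ref{NCCR1}(3) applied to the index $k' = N-k-1$ (which also lies in $[0, N-1]$): the former identifies $\End_{Y^+}(\Tilt^+_k)$ with $\End_R(\bigoplus_{a=-N+k+1}^{k} M_{-a})$, and the substitution $b = -a$ rewrites this as $\End_R(\bigoplus_{b=-k}^{N-k-1} M_b) \simeq \Lambda_{N-k-1}$. Thus in this range $\End_{Y^+}(\Tilt^+_k)$ and $\Lambda_{N-k-1}$ coincide as endomorphism rings of the very same $R$-module, giving a canonical isomorphism of $R$-algebras.

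For arbitrary $k \in \mathbb{Z}$, the observation $\Tilt^+_{k+1} = \Tilt^+_k \otimes \stsh_{Y^+}(1)$ shows that tensoring with $\stsh_{Y^+}(1)$ induces a canonical $R$-algebra isomorphism $\End_{Y^+}(\Tilt^+_k) \simeq \End_{Y^+}(\Tilt^+_{k+1})$; an identical identity holds for $\Tilt_k$ and $\Lambda_k$. Iterating reduces the isomorphism (4) for any $k$ to the case $0 \le k \le N-1$ already handled.

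The main obstacle I foresee is purely bookkeeping: keeping the reindexing $b = -a$ straight, and checking that the shift isomorphisms for $\Tilt^+$ and for $\Tilt$ are compatible so that (4) is genuinely canonical rather than merely an abstract existence statement. Since the multiplication on $\End_R(\bigoplus_d M_d^{\oplus n_d})$ is intrinsic to the $R$-module, and both sides are realized as endomorphism rings of the identical direct sum of the $M_d$, this compatibility should reduce to routine unwinding and no new ideas beyond the proof of Theorem \ref{NCCR1} are needed.
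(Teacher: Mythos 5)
Your proposal is correct. For parts (1), (2), (3), (5) the paper does exactly what you propose — it simply declares the "dual statements" of Theorem \ref{NCCR1} to hold for $Y^+$, with Lemma \ref{thm1 lem2} playing the role of Lemma \ref{thm1 lem} — so there is nothing to compare there. The only part the paper proves explicitly is (4), and here your route differs slightly in organization: the paper compares $\End_Y(\Tilt_k)$ and $\End_{Y^+}(\Tilt^+_{N-k-1})$ directly, noting via Lemmas \ref{thm1 lem} and \ref{thm1 lem2} that both restrict over the smooth locus $B(1)$ to the endomorphism algebra of $\bigoplus_{a=-N+k+1}^{k}\mathcal{M}_a$, and then extends the isomorphism across the origin because both algebras are Cohen--Macaulay, hence reflexive; since $\Tilt_k^*\otimes\Tilt_k$ is independent of the twist, this works uniformly for every $k\in\mathbb{Z}$ with no reduction step. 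You instead deduce (4) from part (3) together with Theorem \ref{NCCR1}(3) at $k'=N-k-1$ (both identify the two algebras with $\End_R\bigl(\bigoplus_{b=-k}^{N-k-1}M_b\bigr)$), and then handle general $k$ by the line-bundle twist invariance $\End(\mathcal{E}\otimes L)\simeq\End(\mathcal{E})$. This is a valid alternative, and your reindexing $b=-a$ is correct; it is essentially a repackaging of the same mechanism, since (3) is itself proved by the smooth-locus-plus-reflexivity argument, while the paper's direct comparison buys the statement for all $k$ in one stroke and makes the "canonical" nature of the isomorphism transparent (it is the unique reflexive extension of the natural identification over $B(1)$).
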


\begin{proof}
We only show (4).
By Lemma \ref{thm1 lem} and Lemma \ref{thm1 lem2}, we have $\Lambda_k = \End_Y(\Tilt_k)$ and $\End_{Y^+}(\Tilt^+_{N-k-1})$ are isomorphic to each other on the smooth locus $B(1)$.
Since both algebras are Cohen-Macaulay as $R$-modules and hence are reflexive, we have an isomorphism
\[ \Lambda_k = \End_Y(\Tilt_k) \simeq \End_{Y^+}(\Tilt^+_{N-k-1}). \]
This is what we want.
\end{proof}

\subsection{NCCRs as the path algebra of a quiver}
The aim of this subsection is to describe the NCCR $\Lambda_k$ of $\overline{B(1)}$ as the path algebra of a quiver with relations.

As in the above subsection, let $Z$ be the total space of a vector bundle $V^* \otimes \stsh_{\mathbb{P}(V)}(-1)$.
Let $\pi_Z : Z \to {\mathbb{P}(V)}$ the projection,
and we set $\stsh_Z(a) := \pi_Z^*\stsh_{\mathbb{P}(V)}(a)$, $\Tilt_Z := \bigoplus_{a=-N+1}^0 \stsh_Z(a)$, and $\Lambda_Z := \End_Z(\Tilt_Z)$.
Then, the algebra $\Lambda_k$ is a quotient algebra of $\Lambda_Z$.
First, we describe the non-commutative algebra $\Lambda_Z$ as the path algebra of a quiver with certain relations.

Note that $Z$ is a crepant resolution of an affine variety $\Spec H^0(Z, \stsh_Z)$.
We set $\widetilde{R} := H^0(Z, \stsh_Z)$.
Then, the algebra $\widetilde{R}$ is described as follows.
\begin{align*}
\widetilde{R} &:= H^0(Z, \stsh_Z) \\
&\simeq H^0(\mathbb{P}(V), \bigoplus_{k \geq 0} \Sym^k V \otimes_{\mathbb{C}} \stsh_{\mathbb{P}(V)}(k)) \\
&\simeq \bigoplus_{k \geq 0} \Sym^k V \otimes_{\mathbb{C}} \Sym^k V^*
\end{align*}
Let $S$ be the affine coordinate ring of $\End_{\mathbb{C}}(V)$, i.e. 
\[ S := \bigoplus_{k \geq 0} \Sym^k(V \otimes_{\mathbb{C}} V^*). \]
Let $v_1, \cdots, v_N$ be the standard basis of $V = \mathbb{C}^N$ and $f_1, \dots, f_N \in V^*$ the dual basis.
If we set $x_{ij} := v_j \otimes f_i$, the algebra $S$ is isomorphic to the polynomial ring with $N^2$ variables
\[ S \simeq \mathbb{C}[ (x_{ij})_{i,j = 1, \dots, N}]. \]
The affine variety $\Spec \widetilde{R}$ is embedded in $\End_{\mathbb{C}}(V) = \Spec S$ via the canonical surjective homomorphism of algebras
\[ S := \bigoplus_{k \geq 0} \Sym^k (V \otimes_{\mathbb{C}} V^*) \twoheadrightarrow \bigoplus_{k \geq 0} \Sym^k V \otimes_{\mathbb{C}} \Sym^k V^*. \]

Next, we define quivers that we use later.

\begin{defi} \rm
Let $\Gamma$ be the \textit{Beilinson quiver}
\[ \begin{tikzcd}
   0 \arrow[r, shift left=1, phantom, "\vdots" description] \arrow[r, shift left=5, "f_1" description] \arrow[r, shift right=5, "f_N" description] &
   1 \arrow[r, shift left=1, phantom, "\vdots" description] \arrow[r, shift left=5, "f_1" description] \arrow[r, shift right=5, "f_N" description] &
   \cdots \arrow[r, shift left=1, phantom, "\vdots" description] \arrow[r, shift left=5, "f_1" description] \arrow[r, shift right=5, "f_N" description] &
   N-2 \arrow[r, shift left=1, phantom, "\vdots" description] \arrow[r, shift left=5, "f_1" description] \arrow[r, shift right=5, "f_N" description] &
   N-1
\end{tikzcd} \]

and $\widetilde{\Gamma}$ the \textit{double Beilinson quiver}

\[ \begin{tikzpicture}[
 description/.style={midway,              
                     fill=white, 
                     font=\scriptsize,
                     inner sep=2pt,
                     minimum height=.8em
 }]
 \matrix[matrix of math nodes, column sep=1cm]{
      |(n1)| 0 & |(n2)| 1 & |(dots)| \cdots & |(N1)| N - 2 & |(N)| N  -1 \\
 };
 \foreach \f/\t in {n1/n2, n2/dots, dots/N1, N1/N}{ 
  \begin{scope}
   \coordinate (ulc) at ($(\f.south east)-(3pt,3pt)$);
     \coordinate (urc) at ($(ulc -| \t.west)+(3pt,0)$);
   \path[->,every node/.style={description}] 
    (\f.south east) edge[bend right=30] node (f1) {$f_1$} (\f.south east -| \t.west)
    (ulc) edge[bend right=70,looseness=2] node (fN) {$f_N$} (urc)
    ; 
   \path[draw=black, dotted, thick] (f1.south) to (fN.north);
   \end{scope}
  };
   \foreach \f/\t in {n2/n1, dots/n2, N1/dots, N/N1}{ 
   \begin{scope}
   \coordinate (alc) at ($(\f.north west)+(3pt,3pt)$);
   \coordinate (arc) at ($(alc -| \t.east)+(-3pt,0)$);
   \path[->,every node/.style={description}]
    (\f.north west) edge[bend right=30] node (vN) {$v_N$} (\f.north west -| \t.east)
    (alc) edge[bend right=70,looseness=2] node (v1) {$v_1$} (arc)
    ;
   \path[draw=black, dotted, thick] (vN.north) to (v1.south);
  \end{scope}
 };
\end{tikzpicture} \]
Here, $v_i, f_j$ serve as the label for $N$ different arrows.
\end{defi}

Next, we show that the non-commutative algebra $\Lambda_Z$ has a description as the path algebra of the double Beilinson quiver with certain relations.

\begin{thm} \label{NCCR quiver Z}
The non-commutative algebra $\Lambda_Z$ is isomorphic to the path algebra $S\widetilde{\Gamma}$ of the double Beilinson quiver $\widetilde{\Gamma}$ over $S$
with relations
\begin{align*}
v_iv_j &= v_jv_i ~~ \text{for all $1 \leq i, j \leq N$} \\
f_if_j &= f_jf_i ~~ \text{for all $1 \leq i, j \leq N$} \\
v_jf_i &= f_iv_j = x_{ij}~~ \text{for all $1 \leq i, j \leq N$}.
\end{align*}
\end{thm}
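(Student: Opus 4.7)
The plan is to exhibit an explicit $S$-algebra map $\psi$ from the path algebra with the listed relations into $\Lambda_Z$, prove surjectivity via a normal-form reduction of paths, and prove injectivity by matching graded pieces against an explicit Hom-space computation on $Z$.

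First, since $\pi_Z : Z \to \PP(V)$ is affine with $(\pi_Z)_* \stsh_Z = \bigoplus_{k \ge 0} \Sym^k V \otimes \stsh_{\PP(V)}(k)$, the projection formula gives
\[
\Hom_Z(\stsh_Z(a), \stsh_Z(b)) \;=\; \bigoplus_{k \ge \max(0, a-b)} \Sym^k V \otimes_{\mathbb C} \Sym^{k+b-a} V^*,
\]
so in particular $V^* \hookrightarrow H^0(Z, \stsh_Z(1))$ through the $f_i$, $V \hookrightarrow H^0(Z, \stsh_Z(-1))$ through the $v_i$, and $x_{ij} = v_j \otimes f_i \in S$ maps to the product $v_j \cdot f_i$ in the quotient $\widetilde R = H^0(Z, \stsh_Z)$, through which $S$ acts centrally on $\Lambda_Z$.

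Second, identify each vertex of $\widetilde\Gamma$ with a summand of $\Tilt_Z$ so that $f$-arrows raise and $v$-arrows lower the twist, and define $\psi$ on arrows by sending $f_i$ (resp.\ $v_i$) to the morphism between adjacent summands given by multiplication by the global section $f_i$ (resp.\ $v_i$). Commutativity of the $f$-relations and $v$-relations is inherited from the commutativity of products in the graded section ring $\bigoplus_k H^0(Z, \stsh_Z(k))$. The composition $\stsh_Z(a) \xrightarrow{f_i} \stsh_Z(a+1) \xrightarrow{v_j} \stsh_Z(a)$ is multiplication by $v_j \cdot f_i \in \widetilde R$, which is the image of $x_{ij}$; the dual composition $f_i \circ v_j$ gives the same element, so $\psi$ descends to the quotient by the listed relations.

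For bijectivity on each vertex-pair component $e_b \Lambda_Z e_a$: any path from $a$ to $b$ reduces, using $v_j f_i = f_i v_j = x_{ij}$ together with the commutativity relations, to an $S$-combination of a pure $f$-monomial of length $b-a$ (when $b \ge a$) or a pure $v$-monomial of length $a-b$ (when $b \le a$), and this $S$-combination surjects onto the graded Hom-piece above because the $x_{ij}$ generate $\widetilde R$ in degree one and the pure monomial factor fills out $\Sym^{|b-a|} V^*$ or $\Sym^{|b-a|} V$. For injectivity, evaluating the length-four path $v_j f_i v_l f_k$ in two ways --- as $(v_j f_i)(v_l f_k) = x_{ij} x_{kl}$ or as $v_j(f_i v_l) f_k = x_{il} x_{kj}$ --- automatically imposes the $2 \times 2$ Plücker relations $x_{ij} x_{kl} = x_{il} x_{kj}$, so the image of $S$ in the quiver quotient already factors through $\widetilde R$; a Hilbert-series comparison on each vertex-pair piece then forces $\psi$ to be an isomorphism there. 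The technical heart is this injectivity argument: ruling out hidden relations beyond those listed. This is ensured by observing that the further "mixing" identities one derives by rebracketing --- of the form $x_{ij} f_k = x_{kj} f_i$ and their duals --- already hold in the target $\Lambda_Z$, and matching graded Hilbert series on both sides of each vertex-pair piece closes the argument.
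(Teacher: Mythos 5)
Your proposal follows essentially the same route as the paper's proof: compute $\Hom_Z(\stsh_Z(a),\stsh_Z(b))$ by pushing forward along the affine projection $\pi_Z$, realize the arrows $f_i$ and $v_i$ as multiplication by the corresponding sections so that the commutativity relations and $v_jf_i=f_iv_j=x_{ij}$ hold, and observe that these elements generate each graded Hom-piece over $S$. The only point where you go beyond the paper is the ``no hidden relations'' step, which the paper dismisses as clear; your normal-form reduction, the derived $2\times 2$ minor relations forcing the $S$-action to factor through $\widetilde{R}$, and the mixing identities combined with a graded dimension count are a sound way to substantiate exactly that step.
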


\begin{proof}
First, for $a , b \in \mathbb{Z}_{\geq 0}$ we have
\begin{align*}
\Hom_Z(\stsh_Z(a), \stsh_Z(b)) &\simeq \Hom_{\mathbb{P}(V)}(\stsh_{\mathbb{P}(V)}(a), (\pi_Z)_*\stsh_Z \otimes \stsh_{\mathbb{P}(V)}(b)) \\
&\simeq \Hom_{\mathbb{P}(V)}\left(\stsh_{\mathbb{P}(V)}(a), \left(\bigoplus_{k \geq 0} \Sym^k V \otimes_{\mathbb{C}} \stsh_{\mathbb{P}(V)}(1)\right) \otimes \stsh_{\mathbb{P}(V)}(b)\right). \\
&\simeq \Hom_{\mathbb{P}(V)}\left(\stsh_{\mathbb{P}(V)}(a), \bigoplus_{k \geq 0} \Sym^k V \otimes_{\mathbb{C}} \stsh_{\mathbb{P}(V)}(b + k) \right).
\end{align*}
Moreover, if $b \geq a$, we have
\begin{align*}
\Hom_Z(\stsh_Z(a), \stsh_Z(b)) \simeq \bigoplus_{k \geq 0} \Sym^k V \otimes_{\mathbb{C}} \Sym^{k+b - a} V^*,
\end{align*}
and if $b \leq a$, we have
\begin{align*}
\Hom_Z(\stsh_Z(a), \stsh_Z(b)) \simeq \bigoplus_{k \geq 0} \Sym^{k+a-b} V \otimes_{\mathbb{C}} \Sym^{k} V^*.
\end{align*}
We define the action $v : \stsh_Z(a) \to \stsh_Z(a-1)$ of $v \in V$ on $\Tilt_Z$ as a morphism that correspond to a morphism
\[ \stsh_{\mathbb{P}(V)}(a) \to v \otimes \stsh_{\mathbb{P}(V)}(a) \subset V \otimes \stsh_{\mathbb{P}(V)}(a) \subset \bigoplus_{k \geq 0} \Sym^k V \otimes_{\mathbb{C}} \stsh_{\mathbb{P}(V)}(a + k - 1) \]
via the adjunction.
This morphism $v : \stsh_Z(a) \to \stsh_Z(a-1)$ corresponds to an element
\[ v \otimes 1 \in V \otimes_{\mathbb{C}} \mathbb{C} \subset \bigoplus_{k \geq 0} \Sym^{k+1} V \otimes_{\mathbb{C}} \Sym^k V^* \]
via the isomorphism
\[ \Hom_Z(\stsh_Z(a), \stsh_Z(a-1)) \simeq \bigoplus_{k \geq 0} \Sym^{k+1} V \otimes_{\mathbb{C}} \Sym^{k} V^*. \]
We also define the action $f : \stsh_Z(a) \to \stsh_Z(a+1)$ of $f \in V^*$ on $\Tilt_Z$ as the morphism that is the pull-back of the morphism
\[ f : \stsh_{\mathbb{P}(V)}(a) \to \stsh_{\mathbb{P}(V)}(a+1) \]
by $\pi_Z : Z \to \mathbb{P}(V)$.
Note that this morphism corresponds to a morphism
\[ \stsh_{\mathbb{P}(V)}(a) \xrightarrow{f} \stsh_{\mathbb{P}(V)}(a+1) \subset \bigoplus_{k \geq 0} \Sym^k V \otimes_{\mathbb{C}} \stsh_{\mathbb{P}(V)}(a + k + 1) \]
via the adjunction, and also corresponds to an element
\[ 1 \otimes f \in  \mathbb{C} \otimes_{\mathbb{C}} V^* \subset \bigoplus_{k \geq 0} \Sym^k V \otimes_{\mathbb{C}} \Sym^{k+1} V^* \]
via the isomorphism
\[ \Hom_Z(\stsh_Z(a), \stsh_Z(a+1)) \simeq \bigoplus_{k \geq 0} \Sym^{k} V \otimes_{\mathbb{C}} \Sym^{k+1} V^*.\]
Now, it is clear that $v_1, \dots, v_N$ and $f_1, \dots, f_N$ generate $\Lambda_Z$ as a $S$-algebra and satisfy the commutative relation
\begin{align*}
v_i v_j &= v_j v_i \\
f_i f_j &= f_jf_i
\end{align*}
for any $i, j = 1, \dots, N$.

Next, we check that the relation
\[ f_iv_j = v_jf_i = x_{ij} \]
is satisfied.
By adjunction, the map
\[ f_iv_j : \stsh_Z(a) \to \stsh_Z(a) \]
corresponds to the composition
\begin{align*}
\stsh_{\mathbb{P}(V)}(a) &\to v_j \otimes \stsh_{\mathbb{P}(V)}(a) \subset \bigoplus_{k \geq 0} \Sym^k V \otimes_{\mathbb{C}} \stsh_{\mathbb{P}(V)}(a+k-1)  \\
&\xrightarrow{(\pi_Z)_*f_i} \bigoplus_{k \geq 0} \Sym^k V \otimes_{\mathbb{C}} \stsh_{\mathbb{P}(V)}(a+k), 
\end{align*}
where the map $(\pi_Z)_*f_i$ is the direct sum of the maps
\[ \Sym^k V \otimes_{\mathbb{C}} \stsh_{\mathbb{P}(V)}(a+k-1) \xrightarrow{\id \otimes f_i} \Sym^k V \otimes_{\mathbb{C}} \stsh_{\mathbb{P}(V)}(a+k). \]
Thus, this map factors through as
\[ \stsh_{\mathbb{P}(V)}(a) \to v_j \otimes \stsh_{\mathbb{P}(V)}(a) \xrightarrow{\id \otimes f_i} v_j \otimes \stsh_{\mathbb{P}(V)}(a+1) \subset \bigoplus_{k \geq 0} \Sym^k V \otimes_{\mathbb{C}} \stsh_{\mathbb{P}(V)}(a+k). \]
Similarly, the map
\[ v_j f_i : \stsh_Z(a) \to \stsh_Z(a) \]
corresponds to the composition
\begin{align*}
\stsh_{\mathbb{P}(V)}(a) \xrightarrow{f_i} \stsh_{\mathbb{P}(V)}(a+1) \subset \bigoplus_{k \geq 0} \Sym^k V \otimes_{\mathbb{C}} \stsh_{\mathbb{P}(V)}(a+k+1) \\
\xrightarrow{(\pi_Z)_*v_j} \bigoplus_{k \geq 0} \Sym^k V \otimes_{\mathbb{C}} \stsh_{\mathbb{P}(V)}(a+k)
\end{align*}
by adjunction, where the map $(\pi_Z)_*v_j$ is the direct sum of maps
\[ \Sym^k V \otimes_{\mathbb{C}} \stsh_{\mathbb{P}(V)}(a+k) \xrightarrow{v_j \otimes \id} \Sym^{k+1} V \otimes_{\mathbb{C}} \stsh_{\mathbb{P}(V)}(a+k). \]
Thus, this map factors through as
\[ \stsh_{\mathbb{P}(V)}(a) \xrightarrow{f_i} \stsh_{\mathbb{P}(V)}(a+1) \xrightarrow{v_j \otimes \id} v_j \otimes \stsh_{\mathbb{P}(V)}(a+1) \subset \bigoplus_{k \geq 0} \Sym^k V \otimes_{\mathbb{C}} \stsh_{\mathbb{P}(V)}(a+k). \]
Thus, $f_iv_j$ and $v_j f_i$ defines the same element in $\Hom_Z(\stsh_Z(a), \stsh_Z(a))$, and they correspond to an element
\[ x_{ij} =  v_j \otimes f_i \in V \otimes V^* \subset \bigoplus_{k \geq 0} \Sym^k V \otimes \Sym^k V^* (= \widetilde{R}) \]
via the isormorpshim
\[ \Hom_Z(\stsh_Z(a), \stsh_Z(a)) \simeq \bigoplus_{k \geq 0} \Sym^k V \otimes \Sym^k V^*. \]
Thus, we have the relation
\[ f_iv_j = v_jf_i = x_{ij}. \]
It is clear that $v_1, \dots, v_N$ and $f_1, \dots, f_N$ do not have other relations.
Therefore, we have the result.
\end{proof}

The following is one of main theorems in this paper.

\begin{thm} \label{NCCR quiver}
The non-commutative algebra $\Lambda_k$ is isomorphic to the path algebra $S\widetilde{\Gamma}$ of the double Beilinson quiver $\widetilde{\Gamma}$
with relations
\begin{align*}
v_iv_j &= v_jv_i ~~ \text{for all $1 \leq i, j \leq N$}, \\
f_if_j &= f_jf_i ~~ \text{for all $1 \leq i, j \leq N$}, \\
v_jf_i &= f_iv_j = x_{ij} ~~ \text{for all $1 \leq i, j \leq N$}, \\
\text{and} ~~~ &\sum_{i=1}^N f_iv_i = 0 = \sum_{i=1}^N v_if_i
\end{align*}
\end{thm}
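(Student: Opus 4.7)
The plan is to realize $\Lambda_k$ as an explicit quotient of $\Lambda_Z$ by identifying, inside $\Lambda_Z$, the equation that cuts $Y$ out of $Z$. Since the $R$-algebra structure of $\Lambda_k$ is independent of $k$, I may work with $k = 0$, so $\Tilt_0 = i^*\Tilt_Z$ where $i : Y \hookrightarrow Z$ is the closed embedding provided by the Euler sequence. The proof of Lemma \ref{lem 3-B} already identifies $I_{Y/Z} \simeq \stsh_Z$, so $Y$ is the zero locus of a single function $t \in \widetilde{R}$.

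I first identify $t$ geometrically. The morphism $Z \to \End_{\mathbb{C}}(V)$ has image equal to the rank-$\leq 1$ locus, whose coordinate ring is $\widetilde{R}$, whereas $\overline{B(1)}$ is the closed subvariety of the rank-$\leq 1$ locus on which $X^2 = 0$. For any rank-$\leq 1$ matrix one has $X^2 = (\mathrm{tr}\,X) \cdot X$, so the extra condition $X^2 = 0$ is equivalent to $\mathrm{tr}\,X = 0$. Hence $t$ is the trace, $t = \sum_{i=1}^N x_{ii} \in S$.

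Next I tensor the short exact sequence $0 \to \stsh_Z \xrightarrow{\cdot t} \stsh_Z \to \stsh_Y \to 0$ with $\Tilt_Z^* \otimes \Tilt_Z \simeq \bigoplus_{-N+1 \leq a, b \leq 0} \stsh_Z(b-a)$ and take global sections. Every summand $\stsh_Z(c)$ that appears has $c \geq -N+1$, so Lemma \ref{lem 3-B} applied to $\mathcal{E} = \stsh_{\mathbb{P}(V)}(c)$ yields vanishing of all higher cohomology, producing the short exact sequence of $R$-algebras
\[ 0 \to \Lambda_Z \xrightarrow{\cdot t} \Lambda_Z \to \Lambda_0 \to 0, \]
so $\Lambda_0 \simeq \Lambda_Z / t\Lambda_Z$.

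Finally I translate the central element $t$ into paths in $\widetilde{\Gamma}$ via Theorem \ref{NCCR quiver Z}. That theorem asserts that, wherever defined, both compositions $f_i v_i$ and $v_i f_i$ equal the matrix entry $x_{ii} \in \widetilde{R}$. Summing over $i$, the idempotent component $t \cdot e_a$ at each vertex $a$ equals $\sum_i f_i v_i$ whenever $v$-arrows emanate from $a$, and equals $\sum_i v_i f_i$ whenever $f$-arrows emanate from $a$; between the two, every vertex is covered, and at interior vertices the two expressions agree. Consequently the two-sided ideal $t\Lambda_Z$ is generated by the relations $\sum_i f_i v_i = 0$ and $\sum_i v_i f_i = 0$, and combining with Theorem \ref{NCCR quiver Z} yields the desired presentation. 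The main substantive point is the geometric identification $t = \mathrm{tr}$; once this is in hand, the cohomological step and the quiver bookkeeping are routine consequences of results already in the paper.
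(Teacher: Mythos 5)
Your proposal is correct and follows essentially the same route as the paper: restrict along $0 \to \stsh_Z \xrightarrow{\cdot t} \stsh_Z \to \stsh_Y \to 0$ with $t = \sum_i x_{ii}$, use the cohomology vanishing of Lemma \ref{lem 3-B} to obtain $0 \to \Lambda_Z \xrightarrow{\cdot t} \Lambda_Z \to \Lambda_k \to 0$, and then impose $t = 0$ in the presentation of Theorem \ref{NCCR quiver Z}, where $t$ becomes $\sum_i f_iv_i = \sum_i v_if_i$ at each vertex. Your extra justifications (that the defining equation of $Y$ in $Z$ is the trace, via $X^2=(\mathrm{tr}\,X)X$ for rank-one matrices, and the vertex-by-vertex generation of the ideal $t\Lambda_Z$) only flesh out steps the paper states without proof.
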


\begin{proof}
By the exact sequence
\[ 0 \to \stsh_Z \to \stsh_Z \to \stsh_Y \to 0, \]
we have an exact sequence
\[ 0 \to \Lambda_Z \xrightarrow{\iota} \Lambda_Z \to \Lambda_k \to 0. \]
Note that the map $\iota : \Lambda_Z \to \Lambda_Z$ is given by the multiplication of $\sum_{i=1}^N x_{ii} = \sum_{i=1}^N v_i \otimes f_i \in S$.
Thus, the result follows from Theorem \ref{NCCR quiver Z}.
\end{proof}

\begin{rem} \rm
If we work over the base field $\mathbb{C}$ instead of $S$, we have
\[ \Lambda_k \simeq \mathbb{C} \widetilde{\Gamma}/J' \]
and $J'$ is an ideal that is generated by
\begin{align*}
v_iv_j = v_jv_i, ~ f_if_j &= f_jf_i, ~ v_jf_i = f_iv_j, \\
f_k v_j f_i = f_iv_j f_k, ~ &v_j f_i v_l = v_l f_i v_j\\
\sum_{i=1}^N f_iv_i = &0 = \sum_{i=1}^N v_if_i.
\end{align*}
The isomorphism $S\widetilde{\Gamma}/J \to \mathbb{C}\widetilde{\Gamma}/J'$ is given by
$v_i \mapsto v_i, ~ f_i \mapsto f_i, ~ x_{ij} \mapsto v_jf_i$. 
\end{rem}

\begin{eg} \rm
Let us consider the case $N = 2$.
In this case, the affine surface $\overline{B(1)}$ is given by
\[ \overline{B(1)} = \left\{ \left( \begin{array}{cc} a & b \\ c & -a \end{array} \right) \mid a^2 + bc = 0 \right\},  \]
and hence has a Du Val singularity of type $\mathrm{A}_1$ at the origin.
The resolution $Y = \lvert \Omega_{\mathbb{P}^1} \rvert \to \overline{B(1)}$ is the minimal resolution, and the NCCR $\Lambda_k$ is isomorphic to the smash product $\mathbb{C}[x,y] \sharp G$, where $G$ is a subgroup of $\SL_2$
\[ G = \left\{ \left( \begin{array}{cc} 1 & 0 \\ 0 & 1 \end{array} \right), \left( \begin{array}{cc} -1 & 0 \\ 0 & -1 \end{array} \right) \right\} \subset \SL_2. \]
 The quiver that gives the NCCR $\Lambda_k$ is given by
\[ \begin{tikzpicture}[
 description/.style={midway,
                     fill=white,
                     font=\scriptsize,
                     inner sep=2pt,
                     minimum height=.8em
 }]
 \matrix[matrix of math nodes, column sep=1cm]{
     |(n1)| 0 & |(n2)| 1 \\
 };
 \foreach \f/\t in {n1/n2}{ 
  \begin{scope}
   \coordinate (ulc) at ($(\f.south east)-(3pt,3pt)$);
   \coordinate (urc) at ($(ulc -| \t.west)+(3pt,0)$);
   \path[->,every node/.style={description}] 
       (\f.south east) edge[bend right=30] node (f1) {$f_1$} (\f.south east -| \t.west)
    (ulc) edge[bend right=70,looseness=2] node (fN) {$f_2$} (urc)
    ; 
   \end{scope}
  };
  \foreach \f/\t in {n2/n1}{ 
   \begin{scope}
   \coordinate (alc) at ($(\f.north west)+(3pt,3pt)$);
   \coordinate (arc) at ($(alc -| \t.east)+(-3pt,0)$);
   \path[->,every node/.style={description}]
    (\f.north west) edge[bend right=30] node (vN) {$v_2$} (\f.north west -| \t.east)
    (alc) edge[bend right=70,looseness=2] node (v1) {$v_1$} (arc)
    ;
  \end{scope}
 };
\end{tikzpicture} \]
and the relations (over $\mathbb{C}$) are given by $f_1v_1 + f_2v_2 = 0$, $v_1 f_1 + v_2 f_2 = 0$.

This quiver (with relations) coincides with the one that is described in Weyman and Zhao's paper \cite[Example 6.15]{WZ12}.
In \cite[Section 6]{WZ12}, Weyman and Zhao studied a description of an NCCR of a (maximal) determinantal variety of symmetric matrices as the path algebra of a quiver.
Since the surface $\overline{B(1)}$ is isomorphic to  a (maximal) determinantal variety of symmetric matrices
\[ \left\{ \left( \begin{array}{cc} a & b \\ b & c \end{array} \right) \mid ac - b^2 = 0 \right\}, \]
they obtained the above description of $\Lambda_k$ as a special case.
\end{eg}

\subsection{Remark: Alternative NCCRs of $\overline{B(1)}$} \label{subsec Rem NCCR}

The NCCR $\Lambda_k$ of $\overline{B(1)}$ that is constructed in the above subsection came from the Beilinson collection of $\mathbb{P}(V)$
\[ \D(\mathbb{P}(V)) = \langle \stsh, \stsh(1), \cdots, \stsh(N-1) \rangle. \]
In this subsection, we construct an NCCR of $R$ of another type from the different Beilinson collection
\[ \D(\mathbb{P}(V)) = \langle \Omega^{N-1}(N), \Omega^{N-2}(N-1), \cdots, \Omega^1(2), \stsh(1) \rangle. \]

\begin{defi} \rm
\begin{enumerate}
\item[(1)]  We define a representation $n_1 : \Stab_{\SL_N}(X_0) \to \SL_{N-1}$ as 
\[ \Stab_{\SL_N}(X_0) \ni \left( \begin{array}{c|ccc|c}
c & 0 & \cdots & 0 & 0 \\ \hline
 &  &  & & 0 \\
\ast &  &  A & & \vdots \\
 & & & & 0 \\ \hline
\ast &  & \ast &  & c
\end{array} \right) \mapsto \left( \begin{array}{c|ccc}
  c & 0 & \cdots & 0 \\ \hline
  &  & &   \\
 \ast & & A &  \\ 
  &  &  & 
\end{array} \right) \in \SL_{N-1}.
\]
For $0 \leq a \leq N-1$, we define a representation $n_a$ by
\[ n_a := \bigwedge^a n_1. \]
\item[(2)] Let $\mathcal{N}_a$ be a vector bundle on $B(1)$ that corresponds to the representation $n_a$.
\item[(3)] We set $N_a := H^0(B(1), \mathcal{N}_a)$. Then $N_a$ is a reflexive $R$-module.
\item[(4)] We define an $R$-algebra $\Lambda'$ by
\[ \Lambda' := \End_R\left(\bigoplus_{a=0}^{N-1} N_a\right). \]
\end{enumerate}
\end{defi}

As in Lemma \ref{thm1 lem}, we can relate the homogeneous vector bundle $\mathcal{N}_a$ with a (co)tangent bundle on a projective space.
We note that we have an isomorphism between vector bundles on $\mathbb{P}(V)$
\begin{align*}
\bigwedge^a (T_{\mathbb{P}(V)}(-1)) &\simeq (\Omega_{\mathbb{P}(V)}^a)^*(-a) \\
&\simeq \Omega_{\mathbb{P}(V)}^{N-a-1}(N) \otimes \stsh(-a) \\
&\simeq \Omega_{\mathbb{P}(V)}^{N-a-1}(N-a).
\end{align*}
Here, $T_{\mathbb{P}(V)}$ is the tangent bundle on $\mathbb{P}(V)$ and $\Omega_{\mathbb{P}(V)}$ is the cotangent bundle on $\mathbb{P}(V)$.

\begin{lem}
We have $\pi^*\Omega_{\mathbb{P}(V)}^{a-1}(a)|_U \simeq \mathcal{N}_{N-a}$.
\end{lem}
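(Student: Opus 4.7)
The plan is to imitate the proof of Lemma \ref{thm1 lem}: identify both sides as $\SL_N$-equivariant vector bundles on $U \simeq B(1) \simeq \SL_N/\Stab_{\SL_N}(X_0)$ by computing the fiber at the base point $y_0 = (X_0, L_0)$ as a representation of the stabilizer $\Stab_{\SL_N}(X_0)$.

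First, I would use the isomorphism
\[ \Omega_{\mathbb{P}(V)}^{a-1}(a) \simeq \textstyle\bigwedge^{N-a} \bigl(T_{\mathbb{P}(V)}(-1)\bigr) \]
displayed just before the lemma (with $a$ there replaced by $N-a$). Since the formation of $\mathcal{N}_b = \bigwedge^b \mathcal{N}_1$ is also an exterior power, the claim will follow immediately once we establish the $b=1$ case, namely
\[ \pi^*\bigl(T_{\mathbb{P}(V)}(-1)\bigr)\big|_U \simeq \mathcal{N}_1. \]

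Next, since $\pi:Y \to \mathbb{P}(V)$ is $\SL_N$-equivariant and $T_{\mathbb{P}(V)}(-1)$ is naturally an $\SL_N$-equivariant bundle on $\mathbb{P}(V)$, the pullback $\pi^*(T_{\mathbb{P}(V)}(-1))|_U$ is a homogeneous bundle on $U \simeq \SL_N/\Stab_{\SL_N}(X_0)$ and is therefore determined by its fiber at $y_0$ as a representation of $\Stab_{\SL_N}(X_0)$. The twisted Euler sequence
\[ 0 \to \stsh_{\mathbb{P}(V)}(-1) \to V \otimes_{\mathbb{C}} \stsh_{\mathbb{P}(V)} \to T_{\mathbb{P}(V)}(-1) \to 0 \]
identifies the fiber of $T_{\mathbb{P}(V)}(-1)$ at a point $L \in \mathbb{P}(V)$ canonically with $V/L$ in an $\SL_N$-equivariant manner, so the fiber of $\pi^*T_{\mathbb{P}(V)}(-1)$ at $y_0$ is $V/L_0$.

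Finally, I would compute the $\Stab_{\SL_N}(X_0)$-action on $V/L_0$ directly. As in the proof of Lemma \ref{thm1 lem}, $L_0 = X_0(V) = \mathbb{C} v_N$. With respect to the basis $\bar v_1, \dots, \bar v_{N-1}$ of $V/\mathbb{C} v_N$, an element of $\Stab_{\SL_N}(X_0)$ of the form given in Lemma \ref{lem 2-stab} acts by its upper-left $(N-1)\times(N-1)$ block
\[ \left( \begin{array}{c|ccc} c & 0 & \cdots & 0 \\ \hline & & & \\ \ast & & A & \\ & & & \end{array} \right), \]
which by definition is precisely the representation $n_1$. Hence the homogeneous bundles $\pi^*T_{\mathbb{P}(V)}(-1)|_U$ and $\mathcal{N}_1$ coincide, and taking $(N-a)$-th exterior powers gives the lemma.

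I do not anticipate a real obstacle here: the argument is completely parallel to Lemma \ref{thm1 lem}, and the only thing to watch is that the identification of the fiber with $V/L_0$ is genuinely $\Stab_{\SL_N}(X_0)$-equivariant, which is automatic from the $\SL_N$-equivariance of the Euler sequence.
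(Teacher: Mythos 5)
Your argument is correct and is exactly the one the paper intends: the paper's ``proof'' is just the remark that it is the same as Lemma \ref{thm1 lem}, i.e.\ identify both sides as homogeneous bundles on $U \simeq \SL_N/\Stab_{\SL_N}(X_0)$ and compare the stabilizer representation on the fiber at $y_0$, which you carry out (correctly) via the Euler sequence identification $T_{\mathbb{P}(V)}(-1)_{L_0} \simeq V/L_0$ and the reduction to exterior powers of the $b=1$ case. No gaps; your write-up just makes explicit the details the paper leaves to the reader.
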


The proof is completely same as in Lemma \ref{thm1 lem}.

We want to show that the algebra $\Lambda'$ is an NCCR of $R$.
In order to show this, we need the following lemma.

\begin{lem}[\cite{BLV10}, Corollary 3.24] \label{BLV10 lem}
Let $\mathcal{M}_a^b(-c) := \lhom_{\mathbb{P}(V)}(\Omega_{\mathbb{P}(V)}^{b-1}(b), \Omega_{\mathbb{P}(V)}^{a-1}(a))(-c)$.
Then, the cohomology $H^d(\mathbb{P}(V), \mathcal{M}_a^b(-c))$ is not zero only in the following cases:
\begin{enumerate}
\item[(1)] If $d - c > 0$, then $d = 0$ and, necessarily, $c < 0$.
\item[(2)] If $d - c = 0$, then $c + b \in [\max\{a, b\}, \min\{N, a+b - 1\}]$.
\item[(3)] If $d - c= -1$, then $c -a \in [\max\{0, N-a-b-1\}, \min\{N-b, N-a\}]$.
\item[(4)] If $d - c < -1$, then $d = N - 1$, and necessarily, $c > N$.
\end{enumerate}
In particular, if $c \leq 0$, we have $H^d(\mathbb{P}(V), \mathcal{M}_a^b(-c)) = 0$ for all $d > 0$.
\end{lem}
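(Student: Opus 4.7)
The plan is to compute $H^d(\PP(V), \mathcal{M}_a^b(-c))$ by building an explicit left resolution of $\mathcal{M}_a^b(-c)$ in terms of twists of $\Omega^{a-1}$ and applying Bott's formula on $\PP^{N-1}$ termwise. From the dual Euler sequence $0 \to \stsh \to V \otimes \stsh(1) \to T_{\PP(V)} \to 0$, iterated exterior powers give a Koszul-type exact sequence
\[ 0 \to \stsh \to V \otimes \stsh(1) \to \wedge^{2} V \otimes \stsh(2) \to \cdots \to \wedge^{b-1}V \otimes \stsh(b-1) \to \wedge^{b-1}T_{\PP(V)} \to 0. \]
Since $\mathcal{M}_a^b(-c) \simeq \Omega^{a-1}(a-c) \otimes \wedge^{b-1}T_{\PP(V)} \otimes \stsh(-b)$, tensoring this resolution with $\Omega^{a-1}(a-c-b)$ yields a left resolution of $\mathcal{M}_a^b(-c)$ by $F^k := \wedge^{k} V \otimes \Omega^{a-1}(a-c-b+k)$, $0 \leq k \leq b-1$.

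The associated hypercohomology spectral sequence reads
\[ E_1^{k, q} = \wedge^{k} V \otimes H^q(\PP(V), \Omega^{a-1}(a-c-b+k)) \Longrightarrow H^{k+q-(b-1)}(\PP(V), \mathcal{M}_a^b(-c)). \]
By Bott's formula, $H^q(\Omega^{a-1}(\ell))$ is nonzero only in one of three disjoint regimes: (i) $q=0$ and $\ell \geq a$, (ii) $q=a-1$ and $\ell = 0$, (iii) $q=N-1$ and $\ell \leq a-N-1$. Setting $\ell = a-c-b+k$ and $d = k+q-(b-1)$, one finds that regime (i) forces $d \leq 0$, so only $k = b-1$ contributes in valid cohomological degree, giving $d=0$ and $c \leq -1$; regime (ii) forces $d = c$, with $k = c+b-a$ required to lie in $[0, b-1]$; regime (iii) forces $d-c \leq -1$, with equality exactly when $k = c+b-N-1$. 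The four cases of the lemma correspond to partitioning according to which regime contributes: case (1) from the survivor of regime (i), case (2) from regime (ii), case (3) from the boundary term $k = c+b-N-1$ of regime (iii), and case (4) from the other extreme $k=b-1$ of regime (iii) (which requires $c \geq N+1$). In each case, the stated ranges are obtained by intersecting the constraint $k \in [0, b-1]$ with Bott's constraint on $\ell$ and with the requirement $d \geq 0$.

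The core work, and the main obstacle, lies in verifying degeneration of the spectral sequence so that no further cancellations alter the non-vanishing pattern. Several $E_1$-terms across the three regimes can occupy the same total degree, but the $d_1$-differentials are the Koszul-type contractions $\wedge^k V \otimes (-) \to \wedge^{k+1}V \otimes (-)$ coming from the resolution; acyclicity of the Koszul complex on $V$ forces the intermediate terms to cancel, leaving only the boundary survivors identified above. Tracking this cancellation and matching the arithmetic intervals $c+b \in [\max\{a,b\}, \min\{N, a+b-1\}]$ and $c-a \in [\max\{0, N-a-b-1\}, \min\{N-b, N-a\}]$ to the combinatorics of regimes (i)--(iii) is a careful case-by-case bookkeeping. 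Once done, the \emph{in particular} clause follows from case (1): when $c \leq 0$ and $d > 0$ one would have $d-c > 0$ but $d \neq 0$, contradicting case (1).
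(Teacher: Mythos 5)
The paper itself gives no argument for this lemma --- it is quoted verbatim from \cite{BLV10}, Corollary 3.24 --- so your proposal is necessarily an independent proof attempt, and its skeleton is reasonable: the truncated Koszul/Euler resolution $0 \to \Omega^{a-1}_{\mathbb{P}(V)}(a-c-b) \to V \otimes \Omega^{a-1}_{\mathbb{P}(V)}(a-c-b+1) \to \cdots \to \bigwedge^{b-1}V \otimes \Omega^{a-1}_{\mathbb{P}(V)}(a-c-1) \to \mathcal{M}_a^b(-c) \to 0$ is correct, the hypercohomology spectral sequence is set up correctly, and the three Bott regimes are identified correctly. This much does prove case (1) and the closing ``in particular'' clause (for $c \leq 0$ every nonzero $E_1$-term sits in total degree $\leq 0$), which is the part of the lemma the paper most frequently invokes. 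But it does not prove the lemma as stated.

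The gap is in cases (2)--(4): the stated ranges are strictly stronger than what you get by intersecting $k \in [0,b-1]$, Bott's constraint on $\ell$, and $d \geq 0$. That intersection yields $c+b \in [\max\{a,b\},\, a+b-1]$ in case (2) (no $\min\{N,\cdot\}$), the upper bound $N-a$ but not $N-b$ in case (3), and only $d \leq N-1$, not $d = N-1$, in case (4). To reach the stated bounds you must show that the extra nonzero $E_1$-classes never survive to $E_\infty$, and your proposed mechanism --- $d_1$ Koszul contractions cancelling the ``intermediate'' terms while the ``boundary survivors'' persist --- is false. Concretely, take $N=3$, $a=2$, $b=3$, $c=1$: then $\mathcal{M}_2^3(-1) \simeq \Omega^1_{\mathbb{P}(V)}(1)$ on $\mathbb{P}^2$, whose cohomology vanishes entirely, yet your $E_1$ page has exactly two nonzero entries, the regime (ii) term $\bigwedge^2 V \otimes H^1(\Omega^1_{\mathbb{P}(V)})$ in total degree $1=c$ (precisely the unique ``boundary survivor'' of regime (ii), and precisely the term that would violate $c+b \leq N$) and the regime (iii) term $H^2(\Omega^1_{\mathbb{P}(V)}(-2))$ in total degree $0$; they kill each other via a $d_2$ differential, not a $d_1$ contraction. (Already for $N=2$, $a=b=2$, $c=1$ the analogous kill happens at $d_1$, across regimes.) In general the rows of your $E_1$ page compute Koszul-cohomology-type groups that are not exact in the interior, and the survivors are not confined to the endpoints you name; so what you defer as ``careful case-by-case bookkeeping'' is the actual mathematical content of \cite[Cor.\ 3.24]{BLV10}, which rests on a finer Bott-type computation of the cohomology of twists of $\Omega^{a-1}_{\mathbb{P}(V)} \otimes \bigwedge^{b-1}T_{\mathbb{P}(V)}$ rather than on the $E_1$-support of this spectral sequence. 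Also note that what you would need is not ``degeneration'' but controlled non-degeneration: it is exactly the higher differentials that cut the $E_1$-support down to the stated ranges.
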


From this lemma, we can obtain the following corollaries.

\begin{cor} \label{cor alt NCCR MCM}
For $0 \leq a \leq N-1$, we have $N _a \simeq \phi_*\pi^*\Omega_{\mathbb{P}(V)}^{N-a-1}(N-a)$ and $N_a$ is Cohen-Macaulay.
\end{cor}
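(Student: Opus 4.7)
The plan is to adapt the argument from Theorem \ref{NCCR1}(2), applying Lemma \ref{lem 3-A} to the vector bundle $\mathcal{F} := \pi^*\Omega_{\mathbb{P}(V)}^{N-a-1}(N-a)$ on $Y$ and then identifying the push-forward $\phi_*\mathcal{F}$ with $N_a$ via the preceding lemma.

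For the cohomology-vanishing hypothesis (b) of Lemma \ref{lem 3-A}, I apply Lemma \ref{lem 3-B} to $\mathcal{E} = \Omega_{\mathbb{P}(V)}^{N-a-1}(N-a)$, reducing the task to the vanishing $H^i(\mathbb{P}(V), \Omega^{N-a-1}(N-a+k)) = 0$ for all $i > 0$ and $k \geq 0$; since the twist $N-a+k \geq 1$ strictly exceeds the form-degree $N-a-1$, this falls under the $c \leq 0$ clause of Lemma \ref{BLV10 lem}. For the Ext-vanishing hypothesis (a), I use that $\mathcal{F}$ is locally free to rewrite $\Ext_Y^i(\mathcal{F}, \stsh_Y) \simeq H^i(Y, \mathcal{F}^\vee)$. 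The perfect wedge-product pairing $\Omega^{N-a-1} \otimes \Omega^a \to \omega_{\mathbb{P}(V)} = \stsh(-N)$ yields $(\Omega^{N-a-1}(N-a))^\vee \simeq \Omega^a(a)$, so $\mathcal{F}^\vee \simeq \pi^*\Omega_{\mathbb{P}(V)}^a(a)$; Lemma \ref{lem 3-B} then reduces (a) to $H^i(\mathbb{P}(V), \Omega^a(a+k)) = 0$ for $i > 0, k \geq 0$, which is once again the $c \leq 0$ clause of Lemma \ref{BLV10 lem}.

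With both hypotheses in hand, Lemma \ref{lem 3-A} yields that $\phi_*\mathcal{F}$ is a (maximal) Cohen-Macaulay $R$-module. To complete the identification with $N_a$, I note that $Y \setminus U$ is the zero section, of codimension $N \geq 2$ in $Y$, and $\mathcal{F}$ is locally free (hence $S_2$), so $H^0(Y, \mathcal{F}) = H^0(U, \mathcal{F}|_U)$; the preceding lemma, applied with its index replaced by $N-a$, gives $\pi^*\Omega^{N-a-1}(N-a)|_U \simeq \mathcal{N}_a$ under the isomorphism $U \simeq B(1)$, whence $\phi_*\mathcal{F} \simeq N_a$ as $R$-modules and the Cohen-Macaulay property transports along this isomorphism.

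The only genuinely non-mechanical step in this plan is the bundle-level identification $(\Omega^{N-a-1}(N-a))^\vee \simeq \Omega^a(a)$; without it, hypothesis (a) cannot be reduced to a Bott-type vanishing through Lemma \ref{lem 3-B}, and one would instead be forced to analyse a Koszul-type sequence coming from the embedding $Y \subset Z$ used in the proof of Lemma \ref{lem 3-B} in order to extract the required Ext vanishing directly.
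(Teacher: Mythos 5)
Your treatment of the Cohen--Macaulay part is essentially the paper's own argument: the same dual identification $(\Omega_{\mathbb{P}(V)}^{N-a-1}(N-a))^{\vee} \simeq \Omega_{\mathbb{P}(V)}^{a}(a)$, the same reduction via Lemma \ref{lem 3-B} to higher-cohomology vanishing on $\mathbb{P}(V)$ covered by the $c \leq 0$ clause of Lemma \ref{BLV10 lem} (the paper phrases the two vanishings as $\mathcal{M}^{N}_{N-a}(k)$ and $\mathcal{M}^{1}_{a+1}(k)$), and then Lemma \ref{lem 3-A} applied to $\mathcal{F}=\pi^*\Omega_{\mathbb{P}(V)}^{N-a-1}(N-a)$. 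Up to that point everything checks out.

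The final identification $\phi_*\mathcal{F}\simeq N_a$ is where you deviate from the paper, and there is a genuine, if small, gap. The zero section $j(\mathbb{P}(V))$ has codimension $N-1$ in $Y$, not $N$ (indeed $\dim Y = 2N-2$ and $\dim \mathbb{P}(V)=N-1$). For $N\geq 3$ your Hartogs-type step ($\mathcal{F}$ locally free on the smooth variety $Y$, complement of $U$ of codimension $\geq 2$, hence $H^0(Y,\mathcal{F})=H^0(U,\mathcal{F}|_U)$) is valid, but for $N=2$ — which is allowed throughout Section \ref{section, NCCR} — the zero section is a divisor and restriction to $U$ need not be surjective on sections, so the argument as written fails there. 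The paper instead compares the two modules over $R$: $\phi_*\mathcal{F}$ is Cohen--Macaulay, hence reflexive by Proposition \ref{CM ref 1}, while $N_a$ is reflexive by construction; the two are isomorphic on $B(1)$, whose complement in $\overline{B(1)}$ is the single point $0$ of codimension $\geq 2$, and two reflexive modules over the normal domain $R$ that agree outside a closed subset of codimension $\geq 2$ are isomorphic. Replacing your extension-across-the-zero-section step by this reflexivity argument (or restricting your step to $N\geq 3$ and handling $N=2$ separately) closes the gap.
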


\begin{proof} Let $k \geq 0$ be a non-negative integer.
Note that, $\Omega_{\mathbb{P}(V)}^{N-a-1}(N-a) \otimes \stsh(k) \simeq \mathcal{M}^{N}_{N-a}(k)$ and 
\begin{align*}
(\Omega_{\mathbb{P}(V)}^{N-a-1}(N-a))^* \otimes \stsh(k) &\simeq \Omega_{\mathbb{P}(V)}^{a}(N) \otimes \stsh(-N+a) \otimes \stsh(k) \\
&\simeq \Omega_{\mathbb{P}(V)}^{a}(a+1) \otimes \stsh(k-1) \\
&\simeq \mathcal{M}^1_{a+1}(k).
\end{align*}
Thus, by Lemma \ref{BLV10 lem} and Lemma \ref{lem 3-B}, we have
\[ H^i(Y, \pi^*\Omega_{\mathbb{P}(V)}^{N-a-1}(N-a)) = 0 = H^i(Y, \pi^*(\Omega_{\mathbb{P}(V)}^{N-a-1}(N-a))^*) \]
for $i > 0$, and hence by Lemma \ref{lem 3-A}, we have the $R$-module $\phi_*\pi^*\Omega_{\mathbb{P}(V)}^{N-a-1}(N-a)$ is (maximal) Cohen-Macaulay.
In particular, $\phi_*\pi^*\Omega_{\mathbb{P}(V)}^{N-a-1}(N-a)$ is reflexive and hence we have the desired isomorphism.
\end{proof}

\begin{cor} \label{tilting 2}
The bundle 
\[ \Tilt' := \bigoplus_{a = 1}^N \pi^*\Omega_{\mathbb{P}(V)}^{a-1}(a) \]
is a tilting bundle on $Y$ and there is an isomorphism as $R$-algebras
\[ \Lambda' \simeq \End_Y(\Tilt'). \]
In particular, the $R$-module $\bigoplus_{a=0}^{N-1} N_a$ gives an NCCR $\Lambda'$ of $R$.
\end{cor}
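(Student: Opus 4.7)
The plan is to mirror the structure of the proof of Theorem \ref{NCCR1}. First I would establish that $\Tilt'$ is tilting on $Y$, then use Lemma \ref{lem 3-A} to deduce that $\End_Y(\Tilt')$ is a (maximal) Cohen-Macaulay $R$-module, identify this endomorphism ring with $\Lambda'$ via a reflexivity argument in codimension $\geq 2$, and finally conclude that $\Lambda'$ is an NCCR using Lemma \ref{def lem NCCR}.

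For the tilting property, the $\Ext$-vanishing computation factors through $\pi$. Writing $\lhom_{\mathbb{P}(V)}(\Omega^{b-1}(b), \Omega^{a-1}(a)) = \mathcal{M}^b_a$, Lemma \ref{lem 3-B} reduces
\[ \Ext^i_Y\bigl(\pi^*\Omega^{b-1}(b),\,\pi^*\Omega^{a-1}(a)\bigr) = H^i\bigl(Y,\pi^*\mathcal{M}^b_a\bigr) \]
to the vanishing of $H^i(\mathbb{P}(V), \mathcal{M}^b_a(k))$ for all $i>0$ and $k\geq 0$, which is precisely the ``in particular'' clause of Lemma \ref{BLV10 lem} (taking $c=-k\leq 0$). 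Classical generation: on $\mathbb{P}(V)$ the other Beilinson collection $\Omega^{a-1}(a)$ for $a=1,\dots,N$ is full, so each line bundle $\stsh_{\mathbb{P}(V)}(k)$ with $-N+1\leq k\leq 0$ sits in the thick subcategory they generate; pulling back by $\pi$ shows each $\stsh_Y(k)$ with $-N+1\leq k\leq 0$ lies in the thick subcategory generated by $\Tilt'$. Since $\Tilt_0 = \bigoplus_{k=-N+1}^0 \stsh_Y(k)$ is already known to be tilting by Theorem \ref{NCCR1}, the classical generator $\Tilt_0$ sits in $\langle \Tilt'\rangle$, and hence $\Tilt'$ classically generates $D(\mathrm{Qcoh}(Y))$.

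Next, apply Lemma \ref{lem 3-A} to $\mathcal{F} = \lhom_Y(\Tilt',\Tilt') = (\Tilt')^*\otimes\Tilt'$: hypothesis (b) is part of the tilting property just verified, and hypothesis (a) follows from the same Lemma \ref{BLV10 lem} calculation since $(\Omega^{a-1}(a))^*\otimes \Omega^{b-1}(b)$ is again of the form $\mathcal{M}^{\bullet}_{\bullet}$ with nonnegative twists. Hence $\End_Y(\Tilt') \simeq \phi_*\lhom_Y(\Tilt',\Tilt')$ is a maximal Cohen-Macaulay $R$-module, and therefore reflexive by Proposition \ref{CM ref 1}. On the other hand, Corollary \ref{cor alt NCCR MCM} gives that each $N_a$ is Cohen-Macaulay, so $\Lambda' = \End_R\bigl(\bigoplus_{a=0}^{N-1}N_a\bigr)$ is reflexive by Proposition \ref{CM ref 2}.

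To identify them, note that the singular locus of $\overline{B(1)}$ is the single point $0$, whose complement is $B(1)$, and $\phi^{-1}(B(1)) = U$. Via Lemma \ref{thm1 lem} and the lemma preceding this corollary, $\pi^*\Omega^{a-1}(a)|_U$ corresponds to $\mathcal{N}_{N-a}$, so $\End_Y(\Tilt')$ and $\Lambda'$ become canonically isomorphic $R$-algebras after restricting to $B(1)$, a locus with complement of codimension $N\geq 2$. Two reflexive $R$-modules isomorphic in codimension $\geq 2$ agree globally, giving $\End_Y(\Tilt') \simeq \Lambda'$ as $R$-algebras. Finally, since $Y$ is smooth and $\Tilt'$ tilting, Theorem \ref{equiv tilting} yields $D(Y)\simeq D(\modu(\Lambda'))$, which forces $\gldim \Lambda' < \infty$; combined with the Cohen-Macaulay property and Lemma \ref{def lem NCCR}, $\Lambda'$ is an NCCR of $R$. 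The main obstacle I anticipate is the bookkeeping inside Lemma \ref{BLV10 lem}, in particular checking carefully that the relevant sheaves $(\Omega^{a-1}(a))^*\otimes \Omega^{b-1}(b)(k)$ for $1\leq a,b\leq N$ and $k\geq 0$ fit into the ``$c\leq 0$'' regime of that lemma after the Serre-duality-type identifications.
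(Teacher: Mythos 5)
Your proposal is correct and follows essentially the same route as the paper: the $\Ext$-vanishing via Lemma \ref{lem 3-B} combined with the ``$c\leq 0$'' clause of Lemma \ref{BLV10 lem}, and then the Cohen--Macaulay/reflexivity-in-codimension-$\geq 2$ identification with $\Lambda'$ as in Theorem \ref{NCCR1}. The paper's own proof only records the $\Ext$-vanishing and dismisses generation as clear, so your explicit generation argument (pulling back the second Beilinson collection and reducing to the known generator $\Tilt_0$) and your spelled-out identification step simply fill in details the paper leaves implicit.
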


\begin{proof}
The bundle $(\Tilt')^* \otimes \Tilt'$ is the direct sum of $\pi^*\mathcal{M}_a^b(0)$.
By Lemma \ref{BLV10 lem} and Lemma \ref{lem 3-B}, we have 
\[ H^i(Y, \pi^*\mathcal{M}_a^b(0)) = 0 \]
for $i >0$ and hence we have
\[ \Ext_Y^i(\Tilt', \Tilt') = H^i(Y, (\Tilt')^* \otimes \Tilt') = 0 \]
for $i > 0$.
It is clear that the bundle generates the category $\D(\Qcoh(Y))$.
Therefore, the bundle $\Tilt'$ is tilting.
\end{proof}

\begin{cor}
Let us assume $N \geq 3$. In this case, although the two NCCRs $\Lambda_k, \Lambda'$ of $R$ are not isomorphic to each other, there is an equivalence of categories
\[ \D(Y) \simeq \D(\Lambda_k) \simeq \D(\Lambda'). \]
\end{cor}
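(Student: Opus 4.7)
The plan is to read off the two derived equivalences directly from the tilting bundles already produced, and to separate $\Lambda_k$ from $\Lambda'$ by a generic rank calculation over $R$.

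First, Theorem \ref{NCCR1}(5) gives an equivalence $\RHom_Y(\Tilt_k, -) : \D(Y) \xrightarrow{\sim} \D(\modu(\Lambda_k))$. On the other hand, by Corollary \ref{tilting 2} the bundle $\Tilt'$ is tilting on $Y$ with $\End_Y(\Tilt') \simeq \Lambda'$, so Theorem \ref{equiv tilting} yields a second equivalence $\RHom_Y(\Tilt', -) : \D(Y) \xrightarrow{\sim} \D(\modu(\Lambda'))$. Composing one with the quasi-inverse of the other produces the chain of equivalences $\D(Y) \simeq \D(\modu(\Lambda_k)) \simeq \D(\modu(\Lambda'))$. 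This part is essentially immediate from the material already assembled.

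For the non-isomorphism when $N \geq 3$, I would compare the generic $R$-ranks of the two endomorphism algebras. Both $\Lambda_k = \End_R(\bigoplus_a M_a)$ and $\Lambda' = \End_R(\bigoplus_a N_a)$ contain $R$ in their centers, and since $R$ is a normal domain the center of each NCCR agrees with $R$; hence any $\mathbb{C}$-algebra isomorphism between them must restrict to a $\mathbb{C}$-automorphism of $R$, and such an automorphism preserves generic $R$-rank. On the smooth locus $B(1)$ the sheaf $\mathcal{M}_a$ is a line bundle, while $\mathcal{N}_a$ has rank $\binom{N-1}{a}$, so one computes
\[
\rank_R \Lambda_k = \Bigl(\sum_{a=-N+k+1}^{k} \rank_R M_a\Bigr)^2 = N^2,
\]
whereas
\[
\rank_R \Lambda' = \Bigl(\sum_{a=0}^{N-1} \rank_R N_a\Bigr)^2 = \Bigl(\sum_{a=0}^{N-1} \binom{N-1}{a}\Bigr)^2 = 4^{N-1}.
\]
Since $N^2 < 4^{N-1}$ for every $N \geq 3$, no such ring isomorphism can exist.

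The only mildly delicate point is justifying that the center of each NCCR is exactly $R$, so that a ring isomorphism must be compatible with the $R$-structures up to an automorphism of $R$. This is standard for an endomorphism ring of a faithful reflexive module over a normal domain: one checks it at the generic point, where both algebras become matrix algebras over the fraction field of $R$, and then propagates the statement using reflexivity in codimension one. With that in hand, the corollary follows cleanly from the two observations above.
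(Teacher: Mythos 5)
Your proof is correct and follows essentially the same route as the paper: both derived equivalences are obtained by composing the tilting equivalences for $\Tilt_k$ and $\Tilt'$ through $\D(Y)$ (Theorem \ref{NCCR1}(5), Corollary \ref{tilting 2}, Theorem \ref{equiv tilting}), and the non-isomorphism is detected by comparing generic $R$-ranks. Your count $N^2$ versus $4^{N-1}$ is the correct generic rank of the endomorphism algebras (the paper records $2N$ versus $2^{N}$, which still separates the two algebras for $N \geq 3$), and your additional observation that the center of each NCCR is exactly $R$ cleanly justifies why any ring isomorphism would have to preserve this rank invariant.
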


\begin{proof}
The $R$-rank of the first NCCR $\Lambda_k$ is just $2N$ and the $R$-rank of the second NCCR $\Lambda'$ is
\[ 2\sum_{a=1}^N \rank \Omega^{a-1}_{\mathbb{P}(V)} = 2^{N}. \]
Thus, if $N \geq 3$, $\Lambda_k$ and $\Lambda'$ are not isomorphic to each other but have the equivalent derived categories, where the equivalence is given by the composition
\[ \D(\Lambda') \xrightarrow{- \otimes_{\Lambda'} \Tilt'} \D(Y) \xrightarrow{\RHom_Y(\Tilt_k, - )} \D(\Lambda_k). \]
This shows the result.
\end{proof}

At the end of this subsection, we give another type of tilting bundles that we use in the later section (Section \ref{subsubsect IW-multi}).

\begin{prop} \label{tilting 3}
The vector bundle
\[ \mathcal{S}_k = \bigoplus_{a = -N+2}^0 \stsh_Y(a) \oplus \left( \pi^* \Omega_{\PP(V)}^k \otimes \stsh_Y(1) \right) \]
and its dual vector bundle $\mathcal{S}_k^*$ are tilting bundle on $Y$ for all $0 \leq k \leq N -1$.
\end{prop}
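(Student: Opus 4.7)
The plan is to verify, for each $k$ with $0 \leq k \leq N-1$, the two defining properties of a tilting bundle for $\mathcal{S}_k$: the Ext-vanishing $\Ext_Y^i(\mathcal{S}_k, \mathcal{S}_k) = 0$ for $i > 0$, and classical generation of $\D(\Qcoh(Y))$. Since every summand of $\mathcal{S}_k$ has the form $\pi^*\mathcal{E}$ for a bundle $\mathcal{E}$ on $\PP(V)$, I would use the identity $\Ext^i_Y(\pi^*\mathcal{E}, \pi^*\mathcal{F}) = H^i(Y, \pi^*(\mathcal{E}^* \otimes \mathcal{F}))$ together with Lemma \ref{lem 3-B} to reduce the Ext-vanishing to the statement $H^i(\PP(V), \mathcal{E}^* \otimes \mathcal{F}(m)) = 0$ for all $i > 0$ and $m \geq 0$ on $\PP(V)$.

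I would then split the Ext computation into four cases. For two line-bundle summands $\stsh_Y(a)$ and $\stsh_Y(b)$ with $a, b \in [-N+2, 0]$, the reduction gives $H^i(\PP(V), \stsh_{\PP(V)}(b - a + m)) = 0$, which holds because $b - a + m \geq -N + 2 > -N$. For the cross terms $\Ext^i(\stsh_Y(a), \pi^*\Omega^k(1))$ and $\Ext^i(\pi^*\Omega^k(1), \stsh_Y(a))$, the reductions give $H^i(\PP(V), \Omega^k(1-a+m))$ and (using $(\Omega^k)^* \simeq \Omega^{N-1-k}(N)$) $H^i(\PP(V), \Omega^{N-1-k}(N-1+a+m))$, respectively; in both cases the twist is $\geq 1$ and $0 \leq k \leq N-1$, so Bott's formula forces positive-degree cohomology to vanish (the top-cohomology condition $j < p-(N-1)$ never holds in our range). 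For the remaining case $\Ext^i(\pi^*\Omega^k(1), \pi^*\Omega^k(1))$, the reduction gives $\Ext^i(\Omega^k, \Omega^k(m))$, which is the $c = -m \leq 0$ instance of Lemma \ref{BLV10 lem} with $a = b = k+1$.

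For generation, it suffices to show $\Tilt_0 = \bigoplus_{a=-N+1}^{0} \stsh_Y(a) \subseteq \langle \mathcal{S}_k \rangle$, and since $\mathcal{S}_k$ already contains $\stsh_Y(a)$ for $a \in [-N+2, 0]$, only $\stsh_Y(-N+1) \in \langle \mathcal{S}_k \rangle$ requires proof. Iterating the $k$-th exterior power of the dual Euler sequence $0 \to \Omega^1 \to V^* \otimes \stsh(-1) \to \stsh \to 0$ and twisting by $\stsh(1)$ yields the exact complex
\[
0 \to \Omega^{k}(1) \to \wedge^{k} V^* \otimes \stsh(1-k) \to \cdots \to V^* \otimes \stsh(0) \to \stsh(1) \to 0
\]
on $\PP(V)$; since $1 - k \geq -N + 2$, pulling back to $Y$ puts $\stsh_Y(1)$ in the triangulated envelope of $\pi^*\Omega^k(1)$ and $\stsh_Y(1-k), \ldots, \stsh_Y(0)$, all summands of $\mathcal{S}_k$. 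Specialising the same construction to $k = N-1$ and using $\Omega^{N-1}_{\PP(V)} = \stsh(-N)$ yields (after twisting by $\stsh(1)$)
\[
0 \to \stsh(1-N) \to \wedge^{N-1} V^* \otimes \stsh(2-N) \to \cdots \to V^* \otimes \stsh(0) \to \stsh(1) \to 0,
\]
which after pullback expresses $\stsh_Y(1-N)$ in the triangulated envelope of $\stsh_Y(a)$ for $a \in [-N+2, 1]$. Combining the two steps gives $\stsh_Y(1-N) \in \langle \mathcal{S}_k \rangle$.

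For the dual $\mathcal{S}_k^*$, using $(\Omega^k_{\PP(V)})^* \simeq \Omega^{N-1-k}_{\PP(V)}(N)$ I would observe
\[
\mathcal{S}_k^* \otimes \stsh_Y(-N+2) \simeq \bigoplus_{c=-N+2}^{0} \stsh_Y(c) \oplus \pi^*\Omega^{N-1-k}_{\PP(V)}(1) = \mathcal{S}_{N-1-k},
\]
so the dual case reduces, via tensoring with the line bundle $\stsh_Y(N-2)$, to the tilting property of $\mathcal{S}_{N-1-k}$, which lies in the same range $[0, N-1]$ and is handled by the first part of the argument. The main obstacle is the Ext-vanishing in the $\Omega^k$--$\Omega^k$ case, which genuinely relies on the nontrivial Lemma \ref{BLV10 lem}; on the generation side, the key organising idea is to bridge the missing summand $\stsh_Y(-N+1)$ to $\mathcal{S}_k$ through the auxiliary object $\stsh_Y(1)$ using the two Koszul-type sequences.
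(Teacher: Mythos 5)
Your proposal is correct and follows essentially the same route as the paper, whose proof is just a pointer: reduce the $\Ext$-vanishing to cohomology on $\PP(V)$ via Lemma \ref{lem 3-B} and then invoke the Bott-type vanishing of Lemma \ref{BLV10 lem} (your Bott-formula cases for the cross terms are exactly instances of that lemma). The extra details you supply beyond the paper's one-line argument --- the explicit generation argument through the twisted Koszul complexes bridging $\stsh_Y(1)$ and $\stsh_Y(-N+1)$, and the observation $\mathcal{S}_k^* \otimes \stsh_Y(-N+2) \simeq \mathcal{S}_{N-1-k}$ reducing the dual case --- are accurate and fill in precisely what the paper leaves as ``direct computations.''
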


\begin{proof}
As in Lemma \ref{tilting 2}, the claim follows from direct computations using Lemma \ref{BLV10 lem}.
\end{proof}

\section{From an NCCR to crepant resolutions} \label{section, moduli}

\subsection{Main theorem}

In this section, we recover the crepant resolutions $Y$ and $Y^+$ of $\overline{B(1)}$ from the NCCR $\Lambda_k$.
Again, let $\widetilde{\Gamma}$ be the double Beilinson quiver

\[ \begin{tikzpicture}[%
 description/.style={midway,              
                     fill=white,          
                     font=\scriptsize,    
                     inner sep=2pt,       
                     minimum height=.8em  
 }]
 \matrix[matrix of math nodes, column sep=1cm]{ 
   |(n1)| 0 & |(n2)| 1 & |(dots)| \cdots & |(N1)| N - 2 & |(N)| N  -1 \\
 };
 \foreach \f/\t in {n1/n2, n2/dots, dots/N1, N1/N}{ 
  \begin{scope}
   \coordinate (ulc) at ($(\f.south east)-(3pt,3pt)$);
   \coordinate (urc) at ($(ulc -| \t.west)+(3pt,0)$);
   \path[->,every node/.style={description}] 
    (\f.south east) edge[bend right=30] node (f1) {$f_1$} (\f.south east -| \t.west)
    (ulc) edge[bend right=70,looseness=2] node (fN) {$f_N$} (urc)
    ; 
   %
   \path[draw=black, dotted, thick] (f1.south) to (fN.north);
   \end{scope}
  };
  \foreach \f/\t in {n2/n1, dots/n2, N1/dots, N/N1}{ 
   \begin{scope}
   \coordinate (alc) at ($(\f.north west)+(3pt,3pt)$);
   \coordinate (arc) at ($(alc -| \t.east)+(-3pt,0)$);
   \path[->,every node/.style={description}]
    (\f.north west) edge[bend right=30] node (vN) {$v_N$} (\f.north west -| \t.east)
    (alc) edge[bend right=70,looseness=2] node (v1) {$v_1$} (arc)
    ;
   \path[draw=black, dotted, thick] (vN.north) to (v1.south);
  \end{scope}
 };
\end{tikzpicture} \]

with relations 
\begin{align*}
v_iv_j &= v_jv_i ~~ \text{for all $1 \leq i, j \leq N$}, \\
f_if_j &= f_jf_i ~~ \text{for all $1 \leq i, j \leq N$}, \\
v_jf_i &= f_iv_j = x_{ij} ~~ \text{for all $1 \leq i, j \leq N$}, \\
\text{and} ~ ~~ &\sum_{i=1}^N f_iv_i = 0 = \sum_{i=1}^N v_if_i.
\end{align*}

For a commutative $\mathbb{C}$-algebra $A$, let $\Rep(A)$ the set of representations $W$ of the quiver $\widetilde{\Gamma}$ (with the above relations)
\[ \begin{tikzpicture}[%
 description/.style={midway,              
                     fill=white,          
                     font=\scriptsize,    
                     inner sep=2pt,       
                     minimum height=.8em  
 }]
 \matrix[matrix of math nodes, column sep=1cm]{ 
   |(n1)| W_0 & |(n2)| W_1 & |(dots)| \cdots & |(N1)| W_{N-1} & |(N)| W_{N - 1} \\
 };
 \foreach \f/\t in {n1/n2, n2/dots, dots/N1, N1/N}{ 
  \begin{scope}
   \coordinate (ulc) at ($(\f.south east)-(3pt,3pt)$);
   \coordinate (urc) at ($(ulc -| \t.west)+(3pt,0)$);
   \path[->,every node/.style={description}] 
    (\f.south east) edge[bend right=30] node (f1) {$f_1$} (\f.south east -| \t.west)
    (ulc) edge[bend right=70,looseness=2] node (fN) {$f_N$} (urc)
    ; 
   %
   \path[draw=black, dotted, thick] (f1.south) to (fN.north);
   \end{scope}
  };
  \foreach \f/\t in {n2/n1, dots/n2, N1/dots, N/N1}{ 
   \begin{scope}
   \coordinate (alc) at ($(\f.north west)+(3pt,3pt)$);
   \coordinate (arc) at ($(alc -| \t.east)+(-3pt,0)$);
   \path[->,every node/.style={description}]
    (\f.north west) edge[bend right=30] node (vN) {$v_N$} (\f.north west -| \t.east)
    (alc) edge[bend right=70,looseness=2] node (v1) {$v_1$} (arc)
    ;
   \path[draw=black, dotted, thick] (vN.north) to (v1.south);
  \end{scope}
 };
\end{tikzpicture} \]
such that, for each $i$, $W_i$ is a (constant) rank $1$ projective $A$-module and $W$ is generated by $W_0 = A$.

The goal of this section is to show the following theorem.

\begin{thm}[cf. \cite{VdB04b}, Section 6] \label{NCCR to CR}
$Y$ is the fine moduli space of the functor $\Rep$.
The universal bundle is $\Tilt_{N-1}$.
\end{thm}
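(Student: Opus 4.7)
The plan is to make the isomorphism $\Hom(-,Y) \cong \Rep$ explicit by using the identification $Y = \lvert\Omega_{\PP(V)}\rvert$: a morphism $\Spec A \to Y$ is the same as a pair $(g,\beta)$ with $g : \Spec A \to \PP(V)$ and $\beta \in \Gamma(\Spec A, g^*\Omega_{\PP(V)})$. The task becomes to extract such a pair from any $W \in \Rep(A)$ and to verify the two constructions are mutually inverse.

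First I would verify that $\Tilt_{N-1} = \bigoplus_{a=0}^{N-1}\stsh_Y(a)$, with the quiver action furnished by Theorem~\ref{NCCR quiver}, itself lies in $\Rep(Y)$: the components are line bundles with $\stsh_Y(0) = \stsh_Y$, the relations of $\Lambda_{N-1}$ hold, and the generation condition holds because each surjection $V^* \otimes \stsh_{\PP(V)}(a) \twoheadrightarrow \stsh_{\PP(V)}(a+1)$ pulls back through $\pi$ to a surjection on $Y$. Pullback of $\Tilt_{N-1}$ then defines a natural transformation $\eta : \Hom(-,Y) \to \Rep$, $\varphi \mapsto \varphi^*\Tilt_{N-1}$.

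For the inverse, given $W \in \Rep(A)$ the $f_i$-arrows out of $W_0 = A$ assemble into a surjection $\alpha : V^* \otimes_{\mathbb{C}} A \twoheadrightarrow W_1$ (surjective by generation at vertex $1$), so by the functor of points of $\PP(V)$ they yield a unique morphism $g : \Spec A \to \PP(V)$ with $W_1 \simeq g^*\stsh_{\PP(V)}(1)$. The $v_i$-arrows $W_1 \to W_0$ assemble into an element $\beta \in V^* \otimes W_1^* = V^* \otimes g^*\stsh(-1)$, and the trace relation $\sum_i v_if_i = 0$ on $W_0$ is exactly the statement that $\beta$ vanishes under the Euler evaluation $V^* \otimes g^*\stsh(-1) \to \stsh_{\Spec A}$ coming from the cotangent Euler sequence, so $\beta \in \Gamma(\Spec A, g^*\Omega_{\PP(V)})$. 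The pair $(g,\beta)$ gives the desired morphism $\varphi : \Spec A \to Y$.

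The main obstacle will be verifying $\varphi^*\Tilt_{N-1} \simeq W$, i.e.\ that $(g,\beta)$ determines $W$ in its entirety. The commutativity $f_if_j = f_jf_i$ combined with generation should force the iterated map $\Sym^a V^* \otimes A \twoheadrightarrow W_a$ to factor through the universal quotient defining $g^*\stsh_{\PP(V)}(a)$, yielding canonical isomorphisms $W_a \simeq \varphi^*\stsh_Y(a)$ compatible with the $f$-action; the mixed relation $v_jf_i = f_iv_j$, together with the fact that some $f_i$ is a unit on a Zariski cover of $\Spec A$, should then propagate the $v$-action to higher vertices uniquely and match it with the pullback of the universal cotangent form. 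The remaining relations ($v_iv_j = v_jv_i$, $\sum f_iv_i = 0$, and the trace conditions at vertices $a \geq 1$) will then be automatic consequences of these identifications, and mutual inverseness of $\eta_A$ and $W \mapsto \varphi$ follows, completing the proof.
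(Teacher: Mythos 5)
Your proposal is correct and follows essentially the same route as the paper: you first use generation and commutativity of the $f$'s to identify $W_a \simeq \Sym^a_A W_1$ compatibly with the $f$-action (the paper's ``easy case'' for the one-directional Beilinson quiver, which produces the map to $\PP(V)$), then use the relation $v_jf_i = f_iv_j$ to show the $v$-action is forced by its level-one part, which together with the trace relation is exactly a point of $Y$. The only difference is packaging: the paper records the level-one data as a triple $(P,\alpha,\beta)$ and concludes via the description $Y \subset \PP(V)\times\End_{\mathbb{C}}(V)$ as a Nakajima quiver variety, whereas you use the functor of points of the total space $\lvert\Omega_{\PP(V)}\rvert$ and the Euler sequence, and your local-invertibility-of-some-$f_i$ propagation is the same mechanism as the paper's induction using surjectivity of $\alpha^{\vee}$.
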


Recall that the NCCR $\Lambda_k$ is isomorphic to the path algebra $S\widetilde{\Gamma}/J$ where $J$ is the ideal generated by the above relations.
Therefore, Theorem \ref{NCCR to CR} means that we can recover a crepant resolution $Y$ of $\overline{B(1)}$ (and a tilting bundle on $Y$) from the NCCR $\Lambda_k$ as a moduli space of $\Lambda_k$-modules (and its universal bundle).
The other crepant resolution $Y^+$ is also recovered as the fine moduli space of another functor $\Rep^+$
(see Remark \ref{rem NCCR to CR}).

\subsection{Projective module of rank $1$}
Let $A$ be a (commutative, noetherian) $\mathbb{C}$-algebra.
In this subsection, we recall some basic properties of projective $A$-modules of (constant) rank $1$.
First, we recall the following fundamental result for projective modules.
One can find the following proposition in Chapter II, \S 5, 2, Theorem 1 of \cite{Bourbaki}.

\begin{prop} \label{Bou}
Let $M$ be a finitely generated $A$-module. Then, the following are equivalent.
\begin{enumerate}
\item[(i)] $M$ is projective.
\item[(ii)] For all $\mathfrak{p} \in \Spec A$, there exists a non-negative integer $r(\mathfrak{p}) \in \mathbb{Z}_{\geq 0}$ such that $M_{\mathfrak{p}} \simeq A^{r(\mathfrak{p})}_{\mathfrak{p}}$.
\item[(iii)] There exist $f_1, \dots, f_r \in A$ such that they generate the unite ideal of $A$ and $M_{f_i}$ is a free $A_{f_i}$-module for each $i$.
\end{enumerate}
\end{prop}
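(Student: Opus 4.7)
The plan is to prove the three equivalences by establishing the cycle (i) $\Rightarrow$ (ii) $\Rightarrow$ (iii) $\Rightarrow$ (i), relying on the standing assumption that $A$ is noetherian so that every finitely generated $A$-module is automatically finitely presented.

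For (i) $\Rightarrow$ (ii), I would localize at a prime $\mathfrak{p}$ and use that localization preserves projectivity, so $M_\mathfrak{p}$ becomes a finitely generated projective module over the local ring $A_\mathfrak{p}$. A standard Nakayama argument (lift a basis of the residue-field vector space $M_\mathfrak{p}/\mathfrak{p} M_\mathfrak{p}$ to a minimal generating set, obtain a surjection $A_\mathfrak{p}^{r(\mathfrak{p})} \twoheadrightarrow M_\mathfrak{p}$, and split it via projectivity to see its kernel is superfluous) shows $M_\mathfrak{p}$ is free of some rank $r(\mathfrak{p})$.

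For (ii) $\Rightarrow$ (iii), fix $\mathfrak{p} \in \Spec A$ and lift a basis of $M_\mathfrak{p}$ to elements $m_1, \ldots, m_{r(\mathfrak{p})} \in M$. This produces an $A$-linear map $\varphi \colon A^{r(\mathfrak{p})} \to M$ whose localization at $\mathfrak{p}$ is an isomorphism. Since $M$ is finitely presented, both the kernel and cokernel of $\varphi$ are finitely generated and hence annihilated by some single element $f \notin \mathfrak{p}$; therefore $M_f$ is free over $A_f$. The principal open sets $D(f) \subset \Spec A$ obtained this way cover $\Spec A$, and quasi-compactness gives a finite subcover $D(f_1), \ldots, D(f_r)$, whose associated $f_i$ generate the unit ideal.

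For (iii) $\Rightarrow$ (i), I would verify $\Ext^1_A(M, N) = 0$ for every $A$-module $N$. Because $M$ is finitely presented, $\Ext^1$ commutes with localization at each $f_i$, so $\Ext^1_A(M, N)_{f_i} \simeq \Ext^1_{A_{f_i}}(M_{f_i}, N_{f_i}) = 0$ since $M_{f_i}$ is a free $A_{f_i}$-module. A module whose localizations at a generating set of the unit ideal all vanish is itself zero, so $\Ext^1_A(M, N) = 0$, which proves projectivity. The main potential obstacle is the local-to-global step in this final implication: it is exactly here that the finite-presentation hypothesis (supplied by noetherianness) is essential, since without it the natural compatibility of $\Ext^1$ with localization can fail.
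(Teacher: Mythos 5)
Your proof is correct, and it is worth noting up front that the paper itself offers no argument for this proposition: it is quoted from Bourbaki (Commutative Algebra, Ch.~II, \S 5, no.~2, Theorem~1) and used as a black box, so any complete proof is necessarily ``different'' from what the paper does. Your cycle (i)$\Rightarrow$(ii)$\Rightarrow$(iii)$\Rightarrow$(i) is the standard localization argument and every step goes through under the paper's standing hypothesis that $A$ is noetherian. Two small points of precision: in (ii)$\Rightarrow$(iii), finite generation of $\Ker\varphi$ is not really a consequence of finite presentation of $M$ (your $\varphi$ need not be surjective); it follows because $A^{r(\mathfrak{p})}$ is a noetherian module, which is harmless given your standing assumption. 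In (iii)$\Rightarrow$(i), the isomorphism $\Ext^1_A(M,N)_{f_i}\simeq \Ext^1_{A_{f_i}}(M_{f_i},N_{f_i})$ requires a resolution of $M$ by finitely generated free modules (equivalently, finite presentation of the first syzygy, not just of $M$), and again noetherianity supplies this --- exactly the caveat you flag; the element-wise argument that a module with vanishing localizations at $f_1,\dots,f_r$ generating the unit ideal is zero correctly avoids any finiteness assumption on $\Ext^1_A(M,N)$ itself. Compared with the cited source, Bourbaki proves the equivalence for finitely presented modules over an arbitrary commutative ring, whereas your homological route to (iii)$\Rightarrow$(i) genuinely uses noetherianity; in the paper's setting (noetherian $\mathbb{C}$-algebras $A$) this trade of generality for a short $\Ext$-vanishing argument is perfectly adequate.
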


From this proposition, we have the following.

\begin{cor}
Let $M$ be a finitely generated $A$-module.
Then, the following are equivalent.
\begin{enumerate}
\item[(1)] The sheaf on $\Spec A$ that associates to $M$ is an invertible sheaf.
\item[(2)] $M$ is a projective $A$-module of constant rank $1$.
\end{enumerate}
\end{cor}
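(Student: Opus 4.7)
The plan is to derive this directly from Proposition \ref{Bou}, which already contains all the real content; the corollary is essentially a translation between sheaf-theoretic and module-theoretic language for constant rank $1$.

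First I would prove $(1) \Rightarrow (2)$. Assuming $\widetilde{M}$ is an invertible sheaf, by definition there is a Zariski open cover of $\Spec A$ over which $\widetilde{M}$ trivializes to the structure sheaf. Since $\Spec A$ is quasi-compact, this cover may be refined to a finite cover by principal opens $D(f_1), \dots, D(f_r)$ with $(f_1, \dots, f_r) = A$ and $\widetilde{M}|_{D(f_i)} \simeq \stsh_{D(f_i)}$, equivalently $M_{f_i} \simeq A_{f_i}$ as $A_{f_i}$-modules. This is exactly condition (iii) of Proposition \ref{Bou}, so $M$ is projective. For each $\mathfrak{p} \in \Spec A$ we have $\mathfrak{p} \in D(f_i)$ for some $i$, and then $M_{\mathfrak{p}} = (M_{f_i})_{\mathfrak{p}} \simeq A_{\mathfrak{p}}$, so $M$ has constant rank $1$.

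For $(2) \Rightarrow (1)$, I would apply the implication (i) $\Rightarrow$ (iii) of Proposition \ref{Bou} to obtain $f_1, \dots, f_r \in A$ generating the unit ideal such that each $M_{f_i}$ is a free $A_{f_i}$-module, say of rank $n_i$. For any $\mathfrak{p} \in D(f_i)$, localizing gives $A_{\mathfrak{p}} \simeq M_{\mathfrak{p}} \simeq A_{\mathfrak{p}}^{n_i}$ by the constant rank hypothesis, which forces $n_i = 1$ (discarding indices for which $D(f_i) = \emptyset$, which may be absorbed into the others). Hence $\widetilde{M}|_{D(f_i)} \simeq \stsh_{D(f_i)}$ for all $i$, and the $D(f_i)$ cover $\Spec A$ because the $f_i$ generate the unit ideal, so $\widetilde{M}$ is locally free of rank $1$, i.e.\ invertible.

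There is no serious obstacle: the only subtlety is being careful that the integer $n_i$ produced in the second direction is $1$ rather than some other constant, which is immediate from constant rank once one localizes further to a prime in $D(f_i)$, and that the cover in the first direction can be taken to consist of basic opens, which is standard. Both directions amount to repackaging Proposition \ref{Bou} together with the definition of an invertible sheaf on an affine scheme.
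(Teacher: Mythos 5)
Your proof is correct and follows exactly the route the paper intends: the paper states the corollary as an immediate consequence of Proposition \ref{Bou}, and your argument is just the careful unpacking of condition (iii) of that proposition together with the definition of an invertible sheaf on $\Spec A$ (quasi-compactness to refine to finitely many principal opens in one direction, and localizing at a prime in a nonempty $D(f_i)$ to pin the local rank to $1$ in the other). Nothing further is needed.
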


Thus, if we consider projective modules of constant rank $1$, the symmetric product of them coincides with the tensor product.

\begin{lem}
Let $P$ be a (finitely generated) projective $A$-module of (constant) rank $1$.
Let $\mathfrak{S}_k$ be a group of permutations of the set $\{1, 2, \dots, k \}$.
Then, for any $m_1, m_2,  \dots, m_k \in P$ and any $\sigma \in \mathfrak{S}_k$, we have
\[ m_1 \otimes m_2 \otimes \cdots \otimes m_k = m_{\sigma(1)} \otimes m_{\sigma(2)} \otimes \cdots \otimes m_{\sigma(k)} \]
in $P^{\otimes k}$.
In particular, we have
\[ P^{\otimes k} \simeq \Sym_A^k P \]
as an $A$-module.
\end{lem}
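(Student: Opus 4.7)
The plan is to reduce the identity to the free case via Proposition \ref{Bou} and then globalize by a standard localization argument. By Proposition \ref{Bou}, since $P$ is finitely generated projective, there exist $f_1, \dots, f_r \in A$ generating the unit ideal such that each localization $P_{f_i}$ is free over $A_{f_i}$; since $P$ has constant rank $1$, each $P_{f_i}$ is free of rank $1$, say $P_{f_i} = A_{f_i} \cdot e_i$ for some generator $e_i$.

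Next, I would invoke compatibility of tensor product with localization: $(P^{\otimes k})_{f_i} \simeq (P_{f_i})^{\otimes k}$ as $A_{f_i}$-modules. In the free case, writing $m_j/1 = a_j e_i$ in $P_{f_i}$, we have
\[ (m_1 \otimes \cdots \otimes m_k)/1 = a_1 \cdots a_k \cdot (e_i \otimes \cdots \otimes e_i), \]
and the right hand side is obviously invariant under any permutation of the factors. Hence the difference $m_1 \otimes \cdots \otimes m_k - m_{\sigma(1)} \otimes \cdots \otimes m_{\sigma(k)} \in P^{\otimes k}$ becomes zero in each $(P^{\otimes k})_{f_i}$. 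Since $(f_1, \dots, f_r) = A$, an element of an $A$-module that vanishes after localizing at every $f_i$ must already be zero; therefore the desired equality holds in $P^{\otimes k}$.

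For the second statement, consider the canonical surjection $\pi : P^{\otimes k} \twoheadrightarrow \Sym_A^k P$, whose kernel is by definition the $A$-submodule generated by the elements $m_1 \otimes \cdots \otimes m_k - m_{\sigma(1)} \otimes \cdots \otimes m_{\sigma(k)}$ for all $\sigma \in \mathfrak{S}_k$ and all tuples $(m_1, \dots, m_k)$. The first part shows that every such generator is zero in $P^{\otimes k}$, so $\pi$ is an isomorphism.

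There is no real obstacle here; the only thing to be careful about is the localization step, and in particular the fact that tensor products of finitely generated modules commute with localization at a multiplicative set, which is standard. The whole argument is essentially the observation that being rank-one projective makes $P$ locally a copy of $A$, and tensor products of $A$ with itself are trivially symmetric.
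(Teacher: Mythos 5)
Your argument is correct and is essentially the paper's own proof, which simply cites Proposition \ref{Bou} (iii) together with the gluing (local-to-global) property: you have spelled out the same localization-at-$f_1,\dots,f_r$ reduction to the free rank-one case and the standard fact that an element vanishing in every $(P^{\otimes k})_{f_i}$ is zero. Nothing further is needed.
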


\begin{proof}
This is the direct consequence of Proposition \ref{Bou} (iii) and the gluing property of sheaves.
\end{proof}

\begin{cor} \label{cor proj mod}
Let $P$ be a (finitely generated) projective $A$-module of (constant) rank $1$.
For any $u \in P^{\vee} = \Hom_A(P, A)$ and $m_1 \dots m_k \in \Sym_A^k P$, we have
\[ u(m_i) \cdot m_1 \cdots \widehat{m_i} \cdots m_j \cdots m_k = u(m_j) \cdot m_1 \cdots m_i \cdots \widehat{m_j} \cdots m_k \]
in $\Sym_A^{k-1} P$ for all $1 \leq i < j \leq k$.
In particular, the map
\[ \Sym_A^k P \to \Sym_A^{k-1} P, ~~~~ m_1 \dots m_k \mapsto u(m_i) \cdot m_1 \cdots \widehat{m_i} \cdots m_k \]
is well-defined and does not depend on the choice of $i$.
\end{cor}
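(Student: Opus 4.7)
The plan is to reduce everything to the following intermediate identity in $P$ itself: for any $u \in P^{\vee}$ and any $m, m' \in P$, one has
\[ u(m) \cdot m' \;=\; u(m') \cdot m \quad\text{in } P. \]
Granting this, the corollary follows by a purely formal manipulation in $\Sym^{k-1}_A P$. Indeed, using the commutativity of the symmetric product, both sides of the asserted identity can be rewritten as
\[ \bigl(u(m_i) \cdot m_j\bigr) \cdot \bigl(m_1 \cdots \widehat{m_i} \cdots \widehat{m_j} \cdots m_k\bigr) \quad\text{and}\quad \bigl(u(m_j) \cdot m_i\bigr) \cdot \bigl(m_1 \cdots \widehat{m_i} \cdots \widehat{m_j} \cdots m_k\bigr) \]
respectively, so they agree once $u(m_i) m_j = u(m_j) m_i$ in $P$. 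The "in particular" statement (well-definedness independent of the choice of index) is then the tautological consequence.

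To prove the intermediate identity, I would appeal to Proposition \ref{Bou}(iii): choose $f_1, \dots, f_r \in A$ generating the unit ideal and such that each $P_{f_l}$ is free of rank one. Fix $l$ and a generator $p$ of $P_{f_l}$; then inside $P_{f_l}$ we may write $m = a p$, $m' = b p$ with $a,b \in A_{f_l}$, and $u$ is determined by the scalar $c := u(p) \in A_{f_l}$. Both sides of the desired identity become $a b c \cdot p$, so they coincide in $P_{f_l}$.

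Finally, since the $f_l$ generate the unit ideal, the canonical map $P \to \prod_l P_{f_l}$ is injective (this is essentially the sheaf axiom for the affine cover $\{D(f_l)\}$ of $\Spec A$ together with the fact that $P$ corresponds to a quasi-coherent sheaf). As the identity holds in each factor $P_{f_l}$, it holds in $P$. The main—though minor—point to be careful about is the compatibility of $\Sym^{\bullet}$ with localization, which is automatic, together with checking that the identification $\Sym^k_A P \simeq P^{\otimes k}$ from the previous lemma makes the contraction map $m_1 \cdots m_k \mapsto u(m_i) m_1 \cdots \widehat{m_i} \cdots m_k$ correspond to the usual contraction on the $i$-th tensor factor; this is straightforward.
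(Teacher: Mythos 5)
Your proof is correct and takes essentially the same route the paper intends: the key scalar identity $u(m)\,m' = u(m')\,m$ in $P$ is checked after localizing at elements $f_l$ generating the unit ideal, where $P$ becomes free of rank one (Proposition \ref{Bou}(iii)), and is then glued back via the injectivity of $P \to \prod_l P_{f_l}$ --- exactly the localization-and-gluing mechanism underlying the preceding lemma --- after which the corollary follows formally by commutativity of the symmetric product. The only cosmetic difference is that you re-run the localization instead of simply applying $u \otimes \id$ to the identity $m \otimes m' = m' \otimes m$ in $P^{\otimes 2}$ furnished by that lemma.
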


Corollary \ref{cor proj mod} will be used in Section \ref{subsect NCCR to CR proof} to construct a representation of $\widetilde{\Gamma}$ from a projective module $P$ of constant rank $1$.

\subsection{An easy case}

In order to prove Theorem \ref{NCCR to CR}, we first study an easier functor $\mathcal{R}$.
For commutative $\mathbb{C}$-algebra $A$, let $\mathcal{R}(A)$ be the set of representation $W$ of Beilinson quiver $\Gamma$
\[ \begin{tikzcd}
   W_0 \arrow[r, shift left=1, phantom, "\vdots" description] \arrow[r, shift left=5, "f_1" description] \arrow[r, shift right=5, "f_N" description] &
   W_1 \arrow[r, shift left=1, phantom, "\vdots" description] \arrow[r, shift left=5, "f_1" description] \arrow[r, shift right=5, "f_N" description] &
   \cdots \arrow[r, shift left=1, phantom, "\vdots" description] \arrow[r, shift left=5, "f_1" description] \arrow[r, shift right=5, "f_N" description] &
   W_{N-2} \arrow[r, shift left=1, phantom, "\vdots" description] \arrow[r, shift left=5, "f_1" description] \arrow[r, shift right=5, "f_N" description] &
   W_{N-1}
\end{tikzcd} \]
with usual relations
\[ f_i f_j = f_j f_i ~~(i, j = 1, \dots, N) \]
such that each $W_i$ is rank $1$ projective $A$-module and $W$ is generated by $W_0 = A$.

Let us consider a rank $1$ projective $A$-module $P$ and split injective morphism $\alpha : P \to V \otimes_{\mathbb{C}} A$.
For the pair $(P, \alpha)$, we define a representation $W_{\alpha}$ of $\Gamma$ as follows.
Let $(W_{\alpha})_k := \Sym_A^k P^{\vee}$ where $P^{\vee} := \Hom_A(P, A)$ is the dual of $P$.
The action of $f \in V$ is defined by
\[ f : \Sym_A^k P^{\vee} \to \Sym_A^{k+1} P^{\vee}, ~~ u^1 \cdots u^k \mapsto \alpha^{\vee}(f)u^1 \cdots u^k. \]
By construction, we have $W_{\alpha} \in \mathcal{R}(A)$.

\begin{prop} \label{easy prop}
For any $W \in \mathcal{R}(A)$, there exists a unique pair $(P, \alpha)$ as above such that $W \simeq W_{\alpha}$.
\end{prop}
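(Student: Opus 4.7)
The plan is to recover $(P,\alpha)$ canonically from $W$ and then verify $W_{\alpha}\simeq W$.

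First, I would extract the pair. The $N$ arrows $f_i : W_0 \to W_1$ assemble into an $A$-linear map $\phi : V^{*}\otimes_{\mathbb{C}}A \to W_{1}$, $f_{i}\otimes 1 \mapsto f_{i}(1_{A})$. Since $W$ is generated by $W_{0}=A$, this $\phi$ is surjective; because $W_{1}$ is projective, $\phi$ splits, so dualizing produces a split injection
\[
\alpha := \phi^{\vee} : P := W_{1}^{\vee} \hookrightarrow V\otimes_{\mathbb{C}}A.
\]
This pair is forced on us: any isomorphism $W \simeq W_{\alpha'}$ identifies $W_{1}$ with $(W_{\alpha'})_{1} = (P')^{\vee}$, forcing $P' \simeq W_{1}^{\vee}$, and the isomorphism identifies $(\alpha')^{\vee}$ with the arrow action $\phi$, determining $\alpha'$. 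This gives uniqueness.

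For existence, I would build an isomorphism $\Theta_{k}: (W_{\alpha})_{k} = \Sym^{k}_{A}W_{1} \xrightarrow{\sim} W_{k}$ for each $k$. Iterating the arrows and invoking the commutation relations yields an $A$-linear surjection $\psi_{k}:\Sym^{k}V^{*}\otimes_{\mathbb{C}}A \twoheadrightarrow W_{k}$, $f_{i_{1}}\cdots f_{i_{k}} \mapsto f_{i_{1}}\cdots f_{i_{k}}(1_{A})$; surjectivity comes from the cyclic generation by $W_0$. The crux is that $\psi_{k}$ factors through the $k$-th symmetric power $\Sym^{k}_{A}\phi: \Sym^{k}V^{*}\otimes A \twoheadrightarrow \Sym^{k}_{A}W_{1}$. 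This reduces to showing that $\ker\phi$ annihilates every $W_{k-1}$: for $x\in\ker\phi$ and a typical generator $f_{j_{1}}\cdots f_{j_{k-1}}(b)$ of $W_{k-1}$ with $b\in A$, commutativity of the arrows and $A$-linearity of each $f_i$ give
\[
x \cdot f_{j_{1}}\cdots f_{j_{k-1}}(b) \;=\; f_{j_{1}}\cdots f_{j_{k-1}}\bigl(b\cdot\phi(x)\bigr) \;=\; 0.
\]
The induced surjection $\Theta_{k}:\Sym^{k}_{A}W_{1}\twoheadrightarrow W_{k}$ is between two rank-one projective $A$-modules; localizing at every prime reduces it to a surjection $A_{\mathfrak{p}}\twoheadrightarrow A_{\mathfrak{p}}$, which is multiplication by a unit, so $\Theta_k$ is an isomorphism. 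By construction $\Theta_{\bullet}$ intertwines the arrow actions, giving the desired $W_{\alpha}\simeq W$.

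The main obstacle is the descent step: one must know that $\ker\phi$ acts as zero on \emph{all} of $W$, not just on $W_{0}$. The computation above is precisely where the three hypotheses — commutativity of the arrows, $A$-linearity of each $f_{i}$, and cyclic generation by $W_{0}$ — combine essentially.
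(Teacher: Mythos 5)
Your proof is correct and follows essentially the same route as the paper: uniqueness by recovering $P=W_1^{\vee}$ and $\alpha=\pi^{\vee}$ from the split surjection $V^*\otimes_{\mathbb{C}}A\twoheadrightarrow W_1$, and existence by exhibiting $W$ as a quotient of the symmetric powers of $W_1$ and concluding via a rank comparison of projective modules. The only difference is that you spell out the descent step (that $\Ker\pi$ annihilates all of $W$, so the surjection factors through $\Sym^k_A W_1$) and the compatibility with the arrow actions, which the paper leaves implicit.
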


\begin{proof}
Let $W \in \mathcal{R}(A)$. Since $W$ is generated by the first component $W_0 = A$, we have a surjective morphism
\[ \pi : V^* \otimes_{\mathbb{C}} A \to W_1. \]
Since $W_1$ is a projective $A$-module, the morphism $\pi$ is split surjection.
If $W = W_{\alpha}$ for some $(P, \alpha)$, then we have $P = W_1^{\vee} = \Hom_A(W_1, A)$ and $\alpha = \pi^{\vee} = \Hom_A(\alpha, -)$.
This shows the uniqueness of $(P, \alpha)$.

For arbitrary $W$, since $W$ is generated by $W_0$, $W$ is a quotient of a $A$-module
\[ \bigoplus_{i=0}^{N-1} \Sym_A^i (V^* \otimes_{\mathbb{C}} A/\Ker \pi) \simeq \bigoplus_{i=0}^{N-1} \Sym_A^i P^{\vee}. \]
However, $W$ and $\bigoplus_{i=0}^{N-1} \Sym_A^i P^{\vee}$ have the same $A$-rank $N-1$, we have
\[ W \simeq \bigoplus_{i=0}^{N-1} \Sym_A^i P^{\vee}. \]
This shows the lemma.
\end{proof}

Thus, we have the next result.

\begin{cor}
The functor $\mathcal{R}$ is represented by the projective space $\mathbb{P}(V)$ and the universal sheaf is $\bigoplus_{a=0}^{N-1} \stsh(a)$.
\end{cor}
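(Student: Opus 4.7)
The plan is to read the corollary directly off Proposition \ref{easy prop}. That proposition tells us that for a commutative $\mathbb{C}$-algebra $A$ the set $\mathcal{R}(A)$ is in bijection with isomorphism classes of pairs $(P,\alpha)$, where $P$ is a rank-one projective $A$-module and $\alpha : P \hookrightarrow V \otimes_{\mathbb{C}} A$ is a split injection. So the task reduces to identifying the functor $A \mapsto \{(P,\alpha)\}/{\simeq}$ with the functor of points $A \mapsto \Hom(\Spec A, \mathbb{P}(V))$, in a way that is natural in $A$, and then computing what the universal representation is.

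First I would record the standard fact that, under the convention $\mathbb{P}(V) = (V\setminus\{0\})/\mathbb{C}^{\times}$ used in this paper, giving a morphism $\Spec A \to \mathbb{P}(V)$ is the same as giving a locally-free-of-rank-one sub-$A$-module $P \subset V \otimes_{\mathbb{C}} A$ whose inclusion is Zariski-locally a direct summand; the correspondence sends the tautological sub-bundle $\stsh_{\mathbb{P}(V)}(-1) \hookrightarrow V \otimes \stsh_{\mathbb{P}(V)}$ to the identity morphism. Naturality of this bijection in $A$ is immediate from pullback. Composing with Proposition \ref{easy prop} gives a natural isomorphism $\mathcal{R}(A) \simeq \Hom(\Spec A, \mathbb{P}(V))$, which by Yoneda means that $\mathbb{P}(V)$ represents $\mathcal{R}$.

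Next I would identify the universal representation. Under the correspondence above, the identity morphism $\id : \mathbb{P}(V) \to \mathbb{P}(V)$ corresponds to the pair $(P, \alpha) = (\stsh_{\mathbb{P}(V)}(-1), \text{tautological inclusion})$. Feeding this pair into the construction of $W_{\alpha}$ defined just before Proposition \ref{easy prop} gives the representation whose $k$-th component is
\[ \Sym^k_{\stsh_{\mathbb{P}(V)}} P^{\vee} \;=\; \Sym^k_{\stsh_{\mathbb{P}(V)}} \stsh_{\mathbb{P}(V)}(1) \;\simeq\; \stsh_{\mathbb{P}(V)}(k), \]
with arrows $f_i$ acting by the pulled-back tautological map $V^* \otimes \stsh_{\mathbb{P}(V)} \to \stsh_{\mathbb{P}(V)}(1)$ followed by multiplication in the symmetric algebra. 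Hence the universal bundle is $\bigoplus_{a=0}^{N-1}\stsh_{\mathbb{P}(V)}(a)$, as claimed.

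I do not expect any real obstacle here: everything is a formal consequence of Proposition \ref{easy prop} plus the standard description of the functor of points of projective space. The only point that requires a moment of care is matching the convention — since the paper's $\mathbb{P}(V)$ parametrizes lines in $V$ (not hyperplanes), the tautological object is the sub-line-bundle $\stsh(-1)\subset V\otimes\stsh$, and one has to dualize to recover the positive twists $\stsh(a)$ in the universal sheaf. Once that is in place, the corollary follows.
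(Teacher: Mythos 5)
Your proposal is correct and matches the paper's own (essentially implicit) argument: the paper deduces this corollary directly from Proposition \ref{easy prop} together with the standard functor-of-points description of $\mathbb{P}(V)$, exactly as you do, and your identification of the universal object via the tautological pair $(\stsh_{\mathbb{P}(V)}(-1),\ \text{tautological inclusion})$, giving components $\Sym^a\stsh_{\mathbb{P}(V)}(1)\simeq\stsh_{\mathbb{P}(V)}(a)$ for $0\le a\le N-1$, is precisely the intended one. Your remark on the line-versus-hyperplane convention and the resulting dualization is the right point of care, and it is handled correctly.
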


In the next subsection, we prove Theorem \ref{NCCR to CR} by using Proposition \ref{easy prop}.

\subsection{Proof of Theorem \ref{NCCR to CR}} \label{subsect NCCR to CR proof}

Let us consider a projective $A$-module $P$ of rank $1$ and a pair of morphisms $(\alpha, \beta)$, where
\begin{align*}
\alpha : P  \hookrightarrow V \otimes_{\mathbb{C}} A \\
\beta : P^{\vee} \to V^* \otimes_{\mathbb{C}} A
\end{align*}
that satisfies $\beta^{\vee} \circ \alpha = 0$ (equivalently, $\alpha^{\vee} \circ \beta = 0$) and $\alpha$ is injective and split.
We note that the triple $(P, \alpha, \beta^{\vee})$ is a (stable) representation of Nakajima's quiver of type A over the commutative algebra $A$.
Via the basis $v_1, \dots, v_N$ of $V$, we set the matrix
\[ (a_{ij}) := \alpha \circ \beta^{\vee} : V \otimes_{\mathbb{C}} A \to V \otimes_{\mathbb{C}} A. \]

For a triple $(P, \alpha, \beta)$ as above, we define a representation $W_{\alpha\beta}$ as follows.
We set $(W_{\alpha\beta})_a := \Sym_A^a P^{\vee}$.
The action of $f \in V^*$ is given by
\[ f : \Sym_A^a P^{\vee} \to \Sym_A^{a+1} P^{\vee}, ~~ u^1 \cdots u^a \mapsto \alpha^{\vee}(f)u^1 \cdots u^a. \]
The action of $v \in V$ is given by
\[ v : \Sym_A^a P^{\vee} \to \Sym_A^{a-1} P^{\vee}, ~~ u^1 \cdots u^a \mapsto u^j(\beta^{\vee}(v)) \cdot u^1 \cdots \widehat{u^j} \cdots u^a. \]
This map is well-defined and does not depends the choice of $j$ by Corollary \ref{cor proj mod}.

First, we need to check the following

\begin{lem}
For a triple $(P, \alpha, \beta)$ as above, we have $W_{\alpha\beta} \in \Rep(A)$.
\end{lem}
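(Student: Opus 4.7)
The plan is to verify, one at a time, the three defining conditions of $\Rep(A)$: that each $(W_{\alpha\beta})_a$ is a rank-$1$ projective $A$-module, that $W_{\alpha\beta}$ is generated by $(W_{\alpha\beta})_0=A$, and that the defining relations of $\widetilde{\Gamma}$ are satisfied.

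The first condition is immediate: since $P$ is rank-$1$ projective, so is $P^{\vee}$, and the preceding lemma identifies $\Sym_A^a P^{\vee}$ with $(P^{\vee})^{\otimes a}$, which is again rank-$1$ projective. For the generation statement, the key point is that $\alpha$ is split injective, hence $\alpha^{\vee}: V^*\otimes_{\mathbb{C}} A \twoheadrightarrow P^{\vee}$ is surjective; applying the $f_i$'s to $1 \in A = (W_{\alpha\beta})_0$ produces all of $\alpha^{\vee}(V^*)\subset P^{\vee}$, and iterating yields $\Sym_A^a P^{\vee}$ at level $a$.

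The heart of the argument is checking the relations, all of which flow from \textbf{Corollary \ref{cor proj mod}} (the permutation-invariance for rank-$1$ projectives). The commutativity $f_i f_j = f_j f_i$ is tautological from the definition of the symmetric product, while $v_i v_j = v_j v_i$ follows because each $v$-action contributes a scalar in $A$ (drawn from some evaluation $u^k(\beta^{\vee}v_{\ast})$) and an element of $\Sym_A^{a-2}P^{\vee}$, and swapping which $u^k,u^l$ get evaluated gives the same element after relabeling. For the crucial relation $v_j f_i = f_i v_j = x_{ij}$, I would apply $v_j$ to $\alpha^{\vee}(f_i)\,u^1\cdots u^a \in \Sym_A^{a+1}P^{\vee}$ and use Corollary \ref{cor proj mod} to evaluate at two different factors: evaluating at $u^k$ reproduces the defining formula for $f_i v_j$, while evaluating at $\alpha^{\vee}(f_i)$ produces $\alpha^{\vee}(f_i)(\beta^{\vee}(v_j))\cdot u^1\cdots u^a$. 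A direct computation identifies
\[ \alpha^{\vee}(f_i)(\beta^{\vee}(v_j)) = f_i\bigl(\alpha\beta^{\vee}(v_j)\bigr) = a_{ij} \]
using the definition of the matrix $(a_{ij})$. So both $v_j f_i$ and $f_i v_j$ act as multiplication by $a_{ij}=x_{ij}$.

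The main (and really only) obstacle is the trace relation $\sum_{i=1}^N f_i v_i = 0 = \sum_{i=1}^N v_i f_i$. By the $v_j f_i = x_{ij}$ computation above, both sums act as multiplication by $\sum_{i=1}^N a_{ii} = \operatorname{tr}(\alpha\circ\beta^{\vee})$ on each $\Sym_A^a P^{\vee}$. The identity $\operatorname{tr}(\alpha\beta^{\vee})=\operatorname{tr}(\beta^{\vee}\alpha)$ (for a composition of maps between $V\otimes A$ and the rank-$1$ projective $P$) together with the standing hypothesis $\beta^{\vee}\circ\alpha=0$ gives $\operatorname{tr}(\alpha\beta^{\vee})=0$, closing the argument. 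This final step is the delicate one because one must be careful in defining/interpreting the trace of an endomorphism that factors through a rank-$1$ projective module over a general commutative $A$; the cleanest justification is to localize so that $P$ becomes free, reducing to the ordinary trace-of-commutator identity.
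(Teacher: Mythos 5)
Your proof is correct, and most of it runs along the same lines as the paper: conditions (1), (4) and the key identity $v_jf_i=f_iv_j=a_{ij}$ are verified exactly as in the paper, by applying the evaluation map of Corollary \ref{cor proj mod} to the pure product $\alpha^{\vee}(f_i)u^1\cdots u^a$ at two different factors. Where you genuinely diverge is the trace relation $\sum_i f_iv_i=0=\sum_i v_if_i$: you first use the identification of $f_iv_i$ and $v_if_i$ with multiplication by $a_{ii}$ and then invoke the scalar identity $\sum_i a_{ii}=\operatorname{tr}(\alpha\circ\beta^{\vee})=\operatorname{tr}(\beta^{\vee}\circ\alpha)=0$, justified by localizing so that $P$ becomes free. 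The paper instead argues element-wise, without any trace formalism: writing $\beta(u^l)=\sum_i u^l(\beta^{\vee}(v_i))\cdot f_i$, it observes that the coefficient appearing in $\bigl(\sum_i f_iv_i\bigr)(u^1\cdots u^k)$ is exactly $(\alpha^{\vee}\circ\beta)(u^l)=0$, which is the same hypothesis $\beta^{\vee}\circ\alpha=0$ (equivalently $\alpha^{\vee}\circ\beta=0$) that you use. Both arguments are short and valid; the paper's is more elementary in that it needs no discussion of traces of maps between projective modules, while yours is arguably more conceptual, since it exhibits the relation $\sum f_iv_i=0=\sum v_if_i$ as the statement that the moment-map-type composition $\alpha\circ\beta^{\vee}$ is traceless, which is precisely how the preprojective relation of the Nakajima quiver datum $(P,\alpha,\beta^{\vee})$ enters. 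Just make sure, as you indicate, that the trace identity is checked after localization (equality of two elements of $A$ may be verified at every prime), since $P$ is only locally free.
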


\begin{proof}
We need to check the following.
\begin{enumerate}
\item[(1)] $v_i v_j = v_j v_i$ and $f_i f_j = f_j f_i$.
\item[(2)] $v_jf_i = f_i v_ j = a_{ij}$.
\item[(3)] $\sum_{i=1}^N f_i v_i = 0 = \sum_{i=1}^N v_i f_i$.
\item[(4)] $W_{\alpha\beta}$ is generated by $(W_{\alpha\beta})_0$.
\end{enumerate}
(1) and (4) trivially follows from the construction of $W_{\alpha\beta}$.
We need to check (2) and (3).
First, we check (2).
The action on $f_i v_j$ on $(W_{\alpha\beta})_k = \Sym_A^k P^{\vee}$ is given by
\begin{align*}
f_iv_j(u^1 \cdots u^k) = u^l(\beta^{\vee}(v_j)) \cdot \alpha^{\vee}(f_i)u^1 \cdots \widehat{u^l} \cdots u^k,
\end{align*}
for some $l$.
On the other hand, $v_j f_i$ acts on $(W_{\alpha\beta})_k$ by
\begin{align*}
& v_j f_i (u^1 \cdots u^k) \\
= & v_j(\alpha^{\vee}(f_i)u^1 \dots u^k) \\
= & u^l(\beta^{\vee}(v_j)) \cdot \alpha^{\vee}(f_i)u^1 \cdots \widehat{u^l} \cdots u^k \\
= &f_iv_i(u^1 \cdots u^k).
\end{align*}
We note that we also have
\[ v_j f_i (u^1 \cdots u^k) = \alpha^{\vee}(f_i)(\beta^{\vee}(v_j)) \cdot u^1 \cdots u^k \]
and $\alpha^{\vee}(f_i)(\beta^{\vee}(v_j)) = f_i((\alpha \circ \beta^{\vee})(v_j)) = a_{ij} \in A$.
Hence we have
\[ (v_jf_i)(u^1 \cdots u^k) = (f_iv_j)(u^1 \cdots u^k) = a_{ij} \cdot u^1 \cdots u^k. \]
This shows (2).
Next, we check (3).
From the above computation, we have
\[ (\sum_{i = 1}^N f_iv_i)(u^1 \cdots u^k) =  \left( \sum_{i=1}^N u^l(\beta^{\vee}(v_i)) \cdot \alpha^{\vee}(f_i) \right) \cdot u^1 \cdots \widehat{u^l} \cdots u^k. \]
Thus, we have to show that
\[ \sum_{i=1}^N u^l(\beta^{\vee}(v_i)) \cdot \alpha^{\vee}(f_i) = 0. \]
Let us consider the composition
\[ P^{\vee} \xrightarrow{\beta}  V^* \otimes_{\mathbb{C}} A \xrightarrow{\alpha^{\vee}} P^{\vee}. \]
Note that $\beta(u^l) = \sum_{i=1}^N (\beta(u^l))(v_i) \cdot f_i = \sum_{i=1}^N u^l(\beta^{\vee}(v_i)) \cdot f_i$.
Hence we have $\sum_{i=1}^N u^l(\beta^{\vee}(v_i)) \cdot \alpha^{\vee}(f_i) = (\alpha^{\vee} \circ \beta)(u^l) = 0$.
The same argument shows that we have
\[ \sum_{i=1}^N v_if_i = 0. \]
This shows (3).
\end{proof}

Next, we show the next proposition.

\begin{prop}
For any $W \in \Rep(A)$, there exists a unique $(P, \alpha, \beta)$ as above such that $W \simeq W_{\alpha\beta}$.
\end{prop}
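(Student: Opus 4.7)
The plan is to reconstruct the triple $(P, \alpha, \beta)$ from the quiver data of $W$, and then verify that the representation $W_{\alpha\beta}$ built from it recovers $W$. First I set $P := W_1^{\vee} = \Hom_A(W_1, A)$, which is a rank-one projective $A$-module since $W_1$ is. The $N$ arrows $f_1, \ldots, f_N$ leaving vertex $0$ assemble into an $A$-linear map $V^* \otimes_{\mathbb{C}} A \to W_1$, which is surjective because $W$ is generated by $W_0 = A$, and split because $W_1$ is projective of rank one. Dualizing yields a split injection $\alpha : P \hookrightarrow V \otimes_{\mathbb{C}} A$. In parallel, the arrows $v_1, \ldots, v_N$ at vertex $1$ assemble into the $A$-linear map $\beta : P^{\vee} = W_1 \to V^* \otimes_{\mathbb{C}} A$, $w \mapsto \sum_i v_i(w) f_i$.

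Next I will verify that $(P, \alpha, \beta)$ is admissible, namely $\alpha^{\vee} \circ \beta = 0$. Unwinding definitions gives $\alpha^{\vee}(f_i) = f_i(1) \in W_1$, hence
\[
(\alpha^{\vee} \circ \beta)(w) \;=\; \sum_{i=1}^N v_i(w)\, f_i(1) \;=\; \sum_{i=1}^N f_i(v_i(w)) \;=\; \Big(\sum_{i=1}^N f_i v_i\Big)(w),
\]
which vanishes by the vertex-one relation $\sum_i f_i v_i = 0$. A short computation also identifies the matrix $(a_{ij}) = \alpha \circ \beta^{\vee}$ with the scalar action of $v_j f_i = f_i v_j$ on $W_0 = A$, so that the implicit $S$-algebra structure on $W$ matches that of $W_{\alpha\beta}$.

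To construct the isomorphism $W \simeq W_{\alpha\beta}$ I will first forget the $v$-arrows and apply Proposition \ref{easy prop} to the resulting $\Gamma$-representation; this produces canonical $A$-module isomorphisms $W_a \simeq \Sym^a_A P^{\vee}$ that intertwine the $f$-actions. It then remains to check that the $v$-actions agree under these identifications. Since $W$ is generated by $W_0$, every element of $W_a$ is a combination of elements of the form $f_{i_a}\cdots f_{i_1}(w_0)$ with $w_0 \in W_0$, so I only need to compute $v_j$ on these. Iterating the relation $v_j f_{i_k} = f_{i_k} v_j$ at the interior vertices $1, \ldots, a-1$ slides $v_j$ past $f_{i_a}, \ldots, f_{i_2}$ to obtain $(v_j f_{i_1})(f_{i_a}\cdots f_{i_2}(w_0))$, and the vertex-zero relation $v_j f_{i_1} = a_{i_1 j}$ then gives $a_{i_1 j}\cdot f_{i_a}\cdots f_{i_2}(w_0)$. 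The defining formula on the $W_{\alpha\beta}$-side yields precisely the same element because $\alpha^{\vee}(f_{i_1})(\beta^{\vee}(v_j)) = a_{i_1 j}$; independence from the choice of which index to pull out is guaranteed by Corollary \ref{cor proj mod}.

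Uniqueness of $(P, \alpha, \beta)$ is then automatic: any admissible triple producing $W$ must have $P = W_1^{\vee}$, and the maps $\alpha$ and $\beta$ are forced to be the dual of the $f$-action $V^* \otimes A \twoheadrightarrow W_1$ and the $v$-action $W_1 \to V^* \otimes A$, respectively. The only non-formal step is the inductive matching of $v$-actions in the previous paragraph; I expect this to be the main technical point, though it reduces to a direct consequence of the commutation relations together with the fact that $W$ is generated in degree zero.
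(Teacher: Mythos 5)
Your proof is correct and follows essentially the same route as the paper: you recover $(P,\alpha)$ via Proposition \ref{easy prop} after forgetting the $v$-arrows, define $\beta$ from the $v$-action on $W_1$, check $\alpha^{\vee}\circ\beta=0$ using the relation $\sum_i f_iv_i=0$, and match the $v$-actions by pushing $v_j$ down through the $f$'s, which is just an unrolled form of the paper's induction on the grading. The uniqueness argument also coincides with the paper's.
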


\begin{proof}
By forgetting the action of $V$, we can regard $W$ as an object in $\mathcal{R}$.
Thus, by Proposition \ref{easy prop}, there exist a projective $A$-module $P$ and a split injective morphism $\alpha : P \to V \otimes_{\mathbb{C}} A$
such that $W \simeq W_{\alpha}$.
We want to construct the morphism $\beta : P^{\vee} \to V^* \otimes_{\mathbb{C}} A$.

The action of $v_i \in V$ on $W_1 = P^{\vee}$
\[ v_i : P^{\vee} \to W_0 = A \]
is an element in $\Hom_A(P^{\vee}, A) \simeq P$.
Let $p_i \in P$ an element in $P$ that corresponds to $v_i \in V$ via the above isomorphism.
By using this, we set a morphism
\[ \gamma : V \otimes_{\mathbb{C}} A \to P \]
by
\[ v_i \otimes 1 \mapsto p_i, \]
and we set $\beta := \gamma^{\vee}$.
In order to complete the proof, we need to check the next two properties.
\begin{enumerate}
\item[(1)] $\alpha^{\vee} \circ \beta = 0$,
\item[(2)] The given action of $V$ on $W$ coincides with the one that is determined by $\beta$.
\end{enumerate}
First, we check (1).
For $u \in P^{\vee}$, we have
\[ \beta(u) = \sum_{i=1}^N (\beta(u))(v_i) \cdot f_i. \]
Therefore, we have
\begin{align*}
(\alpha^{\vee} \circ \beta)(u) = \sum_{i=1}^N (\beta(u))(v_i) \cdot \alpha^{\vee}(f_i)
= \sum_{i=1}^N f_i((\beta(u))(v_i))
= \sum_{i=1}^N (f_iv_i)(u) 
= 0.
\end{align*}
The last equality follows from the relation $\sum_{i=1}^N f_iv_i = 0$.
This shows (1).
Next, we check (2).
We show that the action
\[ v_i : W_k \to W_{k-1} \]
coincides with the desired one by induction on $k$.
For $k = 1$, this is true by the construction of $\beta$.
Let us assume $k > 1$.
By definition, we have
\[ (v_if_j)(u^1 \cdots u^k) = v_j(\alpha^{\vee}(f_j)u^1 \cdots u^k). \]
On the other hand, by the relation and the induction hypothesis, we have
\begin{align*}
(v_if_j)(u^1 \cdots u^k)
= & a_{ji} \cdot u^1 \cdots u^k  \\
= & \alpha^{\vee}(f_j)(\beta^{\vee}(v_i)) \cdot u^1 \cdots u^k
\end{align*}
Since $\alpha^{\vee} : V^* \otimes_{\mathbb{C}} A \to P^{\vee}$ is surjective, we can replace $\alpha^{\vee}(f_j)$ in the above equation by arbitrary $u \in P^{\vee}$, 
and hence we have
\[ v_j(uu^1 \cdots u^k) = u(\beta^{\vee}(v_i)) \cdot u^1 \cdots u^k. \]
This shows (2) and the proof is completed.
\end{proof}

The triple $(P, \alpha, \beta^{\vee})$ gives a representation of the Nakajima's quiver $\overline{Q^{\heartsuit}}$ over $A$
\[ \begin{tikzpicture}[
 description/.style={midway,
                     fill=white,
                     font=\scriptsize,
                     inner sep=2pt,
                     minimum height=.8em
 }]
 \matrix[matrix of math nodes, column sep=1cm]{
     |(n1)| P & |(n2)| V \otimes_{\mathbb{C}} A \\
 };
 \foreach \f/\t in {n1/n2}{ 
  \begin{scope}
   \path[->,every node/.style={description}] 
       (\f.south east) edge[bend right=30] node (f1) {$\alpha$} (\f.south east -| \t.west)
    ; 
   \end{scope}
  };
  \foreach \f/\t in {n2/n1}{ 
   \begin{scope}
   \path[->,every node/.style={description}]
    (\f.north west) edge[bend right=30] node (vN) {$\beta^{\vee}$} (\f.north west -| \t.east)
    ;
  \end{scope}
 };
\end{tikzpicture} \]
of dimension vector $(1, N)$, where $Q$ is the $\mathrm{A}_1$ quiver (i.e. a point).
As it was explained above, the variety $Y$ is given by
\[ Y = \{ (L, X) \in \mathbb{P}(V) \times \End_{\mathbb{C}}(V) \mid X(V) \subset L, X(L) = 0 \}. \]
This is a description of $Y$ as the Nakajima's quiver variety of type $\mathrm{A}_1$ with dimension vector $(1, N)$.
From this presentation of $Y$, we find that $Y$ represents the functor $\Rep$.
Moreover, since Nakajima's quiver varieties admit a natural symplectic structure,
we can say that a symplectic structure of $Y$ can be recovered from the NCCR as well.

For the details of Nakajima's quiver variety and the notation that we used above, see \cite{Gi09}.

\begin{rem} \label{rem NCCR to CR} \rm
Let $\Rep^+(A)$ be a set consists of the representations of $\widetilde{\Gamma}$ with the relations in Theorem \ref{NCCR quiver} of dimension vector $(1, 1, \dots, 1)$ and generated by the last component $W_{N-1}$.
Then, the dual argument shows that the functor $\Rep^+$ represented by the variety $Y^+$.
\end{rem}

\subsection{Simple representations}

In the rest of this section, we determine simple representations that are contained in $\Rep(\mathbb{C})$.

\begin{lem} \label{lem simple}
A representation $W = (W_k)_k \in \Rep(\mathbb{C})$ is simple if and only if it is generated by the last component $W_{N-1}$.
\end{lem}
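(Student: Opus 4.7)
The plan is to exploit the fact that every component $W_k$ of a representation in $\Rep(\mathbb{C})$ is a one-dimensional $\mathbb{C}$-vector space. Under this constraint, any subrepresentation $W'\subset W$ satisfies $W'_k\in\{0,W_k\}$ for every $k$, so subrepresentations correspond bijectively to those subsets $I\subset\{0,1,\dots,N-1\}$ closed under the arrows that act by a nonzero scalar. Thus $W$ is simple iff the only such arrow-closed subsets are $\emptyset$ and $\{0,\dots,N-1\}$.

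The key combinatorial lemma I would prove first is the following: since $W$ is, by the definition of $\Rep(\mathbb{C})$, generated by $W_0$, for each $0\le k\le N-2$ at least one of the arrows $f_1,\dots,f_N\colon W_k\to W_{k+1}$ acts as a nonzero scalar. Indeed, if all of these arrows vanished, then $\{0,1,\dots,k\}$ would be an arrow-closed subset (any path out of it via $v_i$ stays inside, and every $f_i$ out of $W_k$ is by assumption zero), so the subrepresentation generated by $W_0$ would be contained in this proper subset, contradicting the assumption that $W_0$ generates $W$.

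The $(\Rightarrow)$ direction is then immediate: if $W$ is simple, the subrepresentation generated by $W_{N-1}$ is nonzero because $W_{N-1}\cong\mathbb{C}\ne 0$, so by simplicity it equals $W$. For the converse, assume $W$ is generated by $W_{N-1}$ and let $W'\ne 0$ be any subrepresentation. Choose $k$ with $W'_k=W_k$. By the key lemma, there is a nonzero $f_i\colon W_k\to W_{k+1}$, which forces $W'_{k+1}=W_{k+1}$; iterating yields $W'_{N-1}=W_{N-1}$. Since $W$ is generated by $W_{N-1}$, we conclude $W'=W$, so $W$ is simple.

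I do not foresee any serious obstacle: the argument is essentially a small piece of combinatorics on the quiver. The only step that needs care is the translation of the assumption ``generated by $W_0$'' into the existence of a nonzero forward arrow between each pair of consecutive vertices, and it is precisely this fact that powers the nontrivial $(\Leftarrow)$ direction.
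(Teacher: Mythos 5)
Your proof is correct, and it takes a mildly different route from the paper's. Both arguments share the same skeleton: the implication (simple $\Rightarrow$ generated by $W_{N-1}$) is immediate from considering the subrepresentation generated by $W_{N-1}$, and the converse is proved by showing that any nonzero subrepresentation $W'$ satisfies $W'_{N-1}=W_{N-1}$ and then invoking generation by $W_{N-1}$. The difference is in how the nonzero forward maps are produced. The paper uses the classification of $\Rep(\mathbb{C})$ established earlier in the section: every $W$ is of the form $W_{\alpha\beta}$ with $W_k=\Sym^k_{\mathbb{C}}P^{\vee}$ and $\alpha$ split injective, so $\alpha^{\vee}\colon V^*\to P^{\vee}$ is surjective and a single $f$ with $\alpha^{\vee}(f)\neq 0$ acts by isomorphisms at every level, carrying $W'_k$ to degree $N-1$ in one stroke. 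You instead extract, vertex by vertex, a nonzero arrow $f_i\colon W_k\to W_{k+1}$ directly from the defining condition that $W_0$ generates $W$: if all forward arrows out of vertex $k$ vanished, then $\bigoplus_{j\le k}W_j$ would be a proper subrepresentation containing $W_0$ (the $v_i$ only lower the index), a contradiction. Your version is more self-contained, using only that each $W_k$ is one-dimensional and the generation axiom, and never the $(P,\alpha,\beta)$ description; the paper's version leverages structure already in hand. Both are complete and valid.
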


\begin{proof}
If $W$ is not generated by $W_{N-1}$, 
the subrepresentation $W'$ that is generated by $W_{N-1}$ defines a non-trivial subrepresentation of $W$, and hence $W$ is not simple.

On the other hand, let $W' = (W'_k)_k$ be a non-zero subrepresentation of $W$.
Then, the last part of subrepresentation $W'_{N-1}$ coincides with the one $W_{N-1}$ of $W$.
Indeed, since $W'$ is non-zero, there exists $k$ such that $W'_k = \Sym_{\mathbb{C}}^k P$, where $P$ is a one-dimensional vector space over $\mathbb{C}$.
As the map $\alpha^{\vee} : V^* \to P$ is surjective, there exists $f \in V$ such that the image of the map
\[ \alpha^{\vee}(f)^{N-k-1} : \Sym_{\mathbb{C}}^k P \to \Sym_{\mathbb{C}}^{N-1} P \]
is non-zero.
Therefore, we have $W'_{N-1} \neq 0$ and hence we have $W'_{N-1} = W_{N-1}$.
Thus, if $W$ is generated by the last component $W_{N-1}$, the subrepresentation $W'$ should be $W$ itself.
\end{proof}

\begin{cor}
A representation $W = (W_k)_k \in \Rep(\mathbb{C})$ is simple if and only if the map $\beta : P^{\vee} \to V$ is injective.
\end{cor}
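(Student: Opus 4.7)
The plan is to deduce this corollary directly from Lemma \ref{lem simple} by computing, in the one-dimensional setting, exactly what the generation condition says in terms of $\beta$. First I invoke Lemma \ref{lem simple} to restate simplicity of $W$ as: $W$ is generated by its top component $W_{N-1}$.

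Next, since $W \in \Rep(\mathbb{C})$, the rank-one projective module $P$ is just a one-dimensional $\mathbb{C}$-vector space, so every $W_k = \Sym_{\mathbb{C}}^k P^{\vee}$ is one-dimensional. Fixing a basis $u \in P^{\vee}$, one reads off from the explicit formula for $W_{\alpha\beta}$ that the action of $v \in V$ on $W_k$ sends $u^k$ to $u(\beta^{\vee}(v)) \cdot u^{k-1}$; that is, the downward map $v \colon W_k \to W_{k-1}$ is multiplication by the single scalar $u(\beta^{\vee}(v)) \in \mathbb{C}$, and the corresponding linear functional $V \to \mathbb{C}$ is (after identifying $P \simeq \mathbb{C}$ via $u$) the transpose of $\beta$.

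Now because $P^{\vee}$ is one-dimensional, $\beta$ is injective if and only if $\beta \neq 0$, equivalently $\beta^{\vee} \colon V \to P$ is non-zero. If $\beta \neq 0$, pick $v \in V$ with $\beta^{\vee}(v) \neq 0$; then $v$ acts as a non-zero scalar on each one-dimensional $W_k$, so $v^{N-1-k}$ sends $W_{N-1}$ isomorphically onto $W_k$ for every $0 \leq k \leq N-1$. Hence $W$ is generated by $W_{N-1}$, so simple by Lemma \ref{lem simple}. Conversely, if $\beta = 0$, then every downward arrow $v \colon W_k \to W_{k-1}$ is zero, while the upward arrows $f \colon W_{N-1} \to W_N$ have zero target. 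Therefore the subrepresentation generated by $W_{N-1}$ is $W_{N-1}$ itself, which is proper and non-zero (as $N \geq 2$), so $W$ is not simple.

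There is no real obstacle here: the corollary is essentially a bookkeeping translation of Lemma \ref{lem simple} into the linear-algebraic data $(P,\alpha,\beta)$, and the only point worth stressing is that one-dimensionality of $P^{\vee}$ over $\mathbb{C}$ collapses ``injective'' to ``non-zero,'' which is the same as ``$\beta^{\vee}$ surjective.''
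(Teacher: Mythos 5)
Your proof is correct and follows essentially the same route as the paper: both reduce the statement to Lemma \ref{lem simple} and translate ``generated by $W_{N-1}$'' into non-vanishing of $\beta$ (equivalently, surjectivity of $\beta^{\vee}$), using that $P^{\vee}$ is one-dimensional over $\mathbb{C}$. Your version merely spells out the converse direction (the scalar action of $v$ giving isomorphisms $W_{N-1}\xrightarrow{\sim} W_k$), which the paper dismisses as ``from the construction.''
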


\begin{proof}
Let $W$ be a simple representation.
Then, by Lemma \ref{lem simple}, $W$ is generated by the last part $W_{N-1}$.
Thus, for at least one $i$, the map $v_i : W_1 = P^{\vee} \to W_0 = \mathbb{C}$ is non-zero.
Therefore, if we set an element $p_i \in P$ that corresponds $v_i$ via the identification $P \simeq \Hom_{\mathbb{C}}(P^{\vee}, \mathbb{C})$, the map $\gamma : V \to P, ~~ v_i \mapsto p_i$ is non-zero and hence surjective.
Recall that the morphism $\beta : P \to V$ is defined as the dual map of $\gamma$.
Thus, we have that the map $\beta$ is injective.

On the other hand, if $\beta$ is injective, we have that the representation $W$ is generated by $W_{N-1}$ from the construction.
\end{proof}

Let $W \in \Rep(\mathbb{C})$ and $(P, \alpha, \beta)$ a triple that defines $W$.
Then, $\alpha(P) \subset V$ defines a line in $V$ and the composition $\alpha \circ \beta^{\vee}$ defines an element in $\End_{\mathbb{C}}(V)$.
Moreover, a pair $(\alpha(P), \alpha \circ \beta^{\vee}) \in \mathbb{P}(V) \times \End_{\mathbb{C}}(V)$ defines a point of $Y$ that corresponds to $W$ via the identification $\Rep(\mathbb{C}) \simeq Y(\mathbb{C})$.

If $\beta$ is not injective, $\beta$ must be zero and hence the corresponding point of  $Y$ belongs to the zero section 
\[ j(\PP(V)) = \{(L, 0) \in \PP(V) \times \End_{\mathbb{C}}(V)  \}. \]
Conversely, if the point $(\alpha(P), \alpha \circ \beta^{\vee}) \in Y$ lies on the zero section, the map $\beta$ must be zero and hence not injective.

By summarizing the above discussion, we have the following theorem.

\begin{thm} \label{thm simple}
Let $W$ be a representation that belongs to the set $\Rep(\mathbb{C})$.
Then, the following are equivalent.
\begin{enumerate}
\item[(1)] $W$ is simple.
\item[(2)] $W$ is generated by the last component $W_{N-1}$.
\item[(3)] $W$ is corresponds to a point of $Y$ that lies over the non-singular part of $\overline{B(1)}$ via the identification $\Rep(\mathbb{C}) \simeq Y(\mathbb{C})$.
\end{enumerate}
\end{thm}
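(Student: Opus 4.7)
The plan is to assemble the equivalences from the preceding building blocks in this section. Since $A = \mathbb{C}$, the one-dimensional projective module $P$ (and its dual) in any triple $(P,\alpha,\beta)$ representing $W$ is just a one-dimensional $\mathbb{C}$-vector space, which trivializes several of the steps: in particular any $\mathbb{C}$-linear map $\beta : P^{\vee} \to V^*$ is either zero or injective.

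First, (1) $\Leftrightarrow$ (2) is precisely the content of Lemma \ref{lem simple}, so nothing new needs to be said there. For (1) $\Leftrightarrow$ (3), I would start from the Corollary following Lemma \ref{lem simple}, which asserts that $W$ is simple if and only if $\beta$ is injective, then translate the condition ``$\beta$ is injective'' into a geometric condition via the identification $\Rep(\mathbb{C}) \simeq Y(\mathbb{C})$ established in Section \ref{subsect NCCR to CR proof}.

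Concretely, under this identification a representation $W$ corresponds to the point $(L,X) := (\alpha(P),\, \alpha \circ \beta^{\vee}) \in \PP(V) \times \End_{\mathbb{C}}(V)$ of $Y$. Since $\alpha$ is an injection of a one-dimensional space into $V$, the endomorphism $X = \alpha \circ \beta^{\vee}$ vanishes if and only if $\beta^{\vee} = 0$, equivalently $\beta = 0$. Combined with the dichotomy that $\beta$ is either zero or injective, we obtain: $\beta$ is injective $\Leftrightarrow$ $X \neq 0$ $\Leftrightarrow$ $(L,X) \notin j(\PP(V))$. Finally, I would invoke the description of $\phi : Y \to \overline{B(1)}$ given in Section~2.3: the morphism $\phi$ contracts $j(\PP(V))$ to the origin, the unique singular point of $\overline{B(1)}$, and is an isomorphism on the complement $U = Y \setminus j(\PP(V))$ onto the smooth locus $B(1) \subset \overline{B(1)}$. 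Therefore $(L,X) \notin j(\PP(V))$ is exactly the condition that the corresponding point of $Y$ lies over the non-singular part of $\overline{B(1)}$, giving (3).

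There is essentially no technical obstacle here, since the hard work---namely, extracting the triple $(P,\alpha,\beta)$ from $W$ and identifying the moduli functor with $Y$ via the Nakajima quiver presentation---has already been done in Theorem \ref{NCCR to CR} and the Corollary preceding the statement. The only point that requires a little care is making the dictionary between the algebraic datum $\beta$ (whose vanishing detects simplicity) and the geometric datum $X$ (whose vanishing detects the exceptional locus) fully explicit, which is immediate from the formula $X = \alpha \circ \beta^{\vee}$ and the injectivity of $\alpha$.
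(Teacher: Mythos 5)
Your proposal is correct and follows essentially the same route as the paper: it reduces (1)$\Leftrightarrow$(2) to Lemma \ref{lem simple}, uses the corollary that simplicity is equivalent to injectivity of $\beta$, and then translates this via $X = \alpha\circ\beta^{\vee}$ (together with the zero-or-injective dichotomy for $\beta$ and the fact that $\phi$ contracts $j(\PP(V))$ to the unique singular point) into the condition that the corresponding point of $Y$ avoids the zero section. Nothing essential is missing.
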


Of course, the corresponding argument holds for $Y^+$ and $\Rep^+$.

\section{Kawamata-Namikawa's equivalence for Mukai flops and P-twists} \label{section, KN}

In this section, we always assume $N \geq 3$.

\subsection{Kawamata-Namikawa's equivalence} \label{subsect KN}



Recall that the map $\phi : Y \to \overline{B(1)}$ contracts the zero section $j : {\mathbb{P}(V)} \hookrightarrow Y$ to $0 \in \overline{B(1)}$.
This is a flopping contraction and the flop is $Y^+ =  \lvert \Omega_{\mathbb{P}(V^*)} \rvert \xrightarrow{\phi^+} \overline{B(1)}$, where $\mathbb{P}(V^*)$ is the dual projective space of $\mathbb{P}(V)$.
In the following, we write $\PP := \mathbb{P}(V)$ and $\mathbb{P}^{\vee} := \PP(V^*)$ for short.
\[ \begin{tikzcd}
\PP \arrow[r, hook, "j"] & Y \arrow[rd, "\phi"] & & Y^+ \arrow[ld, "\phi^+"'] & \PP^{\vee} \arrow[l, hook', "j'"'] \\
 & & \overline{B(1)} & &
 \end{tikzcd} \]
As in the above sections, let $\pi : Y \to \mathbb{P}$ and $\pi' : Y^+ \to \mathbb{P}^{\vee}$ be the projections, 
and we set $\stsh_Y(1) := \pi^*\stsh_{\PP}(1)$ and $\stsh_{Y^+}(1) := (\pi')^*\stsh_{\PP^{\vee}}(1)$.
Then, the vector bundles
\begin{align*}
\Tilt_k := \bigoplus_{a=-N+k+1}^k \stsh_Y(a), \\
\Tilt^+_k := \bigoplus_{a = - N + k + 1}^k \stsh_{Y^+}(a)
\end{align*}
on $Y$, $Y^+$, respectively, are tilting bundles.
Moreover, we have an $R$-algebra isomorphism
\[ \Lambda_k := \End_Y(\Tilt_k) \simeq \End_{Y^+}(\Tilt^+_{N-k-1}), \]
by Theorem \ref{main thm flop side} (4).

By using the above tilting bundles, we have equivalences of categories
\begin{align*}
\Psi_k &:= \RHom_Y(\Tilt_k, - ) : \D(Y) \xrightarrow{\sim} \D(\modu(\Lambda_k)), \\
(\Psi_{N-k-1}^+)^{-1} &:= - \otimes_{\Lambda_k}^L \Tilt^+_{N-k-1} : \D(\modu(\Lambda_k)) \xrightarrow{\sim} \D(Y^+),
\end{align*}
and by compositing these equivalences, we have an equivalence
\[ \NKN_k := \RHom_Y(\Tilt_k, -) \otimes_{\Lambda_k}^L \Tilt^+_{N-k-1} : \D(Y) \xrightarrow{\sim} \D(Y^+). \]
By construction, the inverse of the equivalence $\NKN_k$ is given by
\[ (\NKN_k)^{-1} \simeq \NKN_{N-k-1}' := \RHom_{Y^+}(\Tilt^+_{N-k-1}, -) \otimes_{\Lambda_k}^L \Tilt_k. \]

On the other hand, the equivalence between $\D(Y)$ and $\D(Y^+)$ is first given by Kawamata and Namikawa in terms of the Fourier-Makai transform.
We recall their construction of Fourier-Makai type equivalences.
Let $\widetilde{Y}$ be a blowing-up of $Y$ at the zero section $\PP$.
Then, $\widetilde{Y}$ is also a blowing-up of $Y^+$ at $\PP^{\vee}$.
Since the normal bundle of $j : \PP \hookrightarrow Y$ is isomorphic to $\Omega_{\PP}^1$,
the exceptional divisor $E = \mathbb{P}_{\PP}(\Omega_{\PP}^1) \subset \widetilde{Y}$ can be embedded in the fiber product $\PP \times \PP^{\vee}$ by the Euler sequence.
We set $\hat{Y} := \widetilde{Y} \cup_E (\PP \times \PP^{\vee})$, and let $\hat{q} : \hat{Y} \to Y$ and $\hat{p} : \hat{Y} \to Y^+$ be projections.
\[ \begin{tikzcd}
 & \hat{Y} \arrow[rd, "\hat{p}"] \arrow[ld, "\hat{q}"'] & \\
 Y & & Y^+
 \end{tikzcd} \]
Let $\mathcal{L}_k$ be a line bundle on $\hat{Y}$ such that $\mathcal{L}_k|_{\widetilde{Y}} = \stsh_{\widetilde{Y}}(kE)$ and $\mathcal{L}_k|_{\PP \times \PP^{\vee}} = \stsh(-k, -k)$.
The Kawamata-Namikawa's functors are given by
\begin{align*}
\KN_k := R\hat{p}_*(L\hat{q}^*(-) \otimes \mathcal{L}_k) : \D(Y) \to \D(Y^+), \\
\KN'_k := R\hat{q}_*(L\hat{p}^*(-) \otimes \mathcal{L}_k) : \D(Y^+) \to \D(Y).
\end{align*}

The following result is due to Kawamata and Namikawa.

\begin{thm}[\cite{Ka02, Na03}] \label{KN}
The functors $\KN_k$ and $\KN'_k$ are equivalences.
\end{thm}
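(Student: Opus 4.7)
The plan is to deduce Theorem~\ref{KN} from the tilting-theoretic framework constructed in Section~\ref{section, NCCR}, rather than by the direct Fourier--Mukai analysis of \cite{Ka02, Na03}. The key observation is that the functor
\[ \NKN_k = (-\otimes^L_{\Lambda_k} \Tilt^+_{N-k-1}) \circ \RHom_Y(\Tilt_k,-) \]
is already an equivalence by construction, since both $\Psi_k$ and $(\Psi^+_{N-k-1})^{-1}$ are equivalences by Theorem~\ref{equiv tilting}. Hence it suffices to produce a functor isomorphism $\KN_k \simeq \NKN_k$; the corresponding statement for $\KN'_k$ then follows either by a symmetric argument on $Y^+$, or from the fact that $\KN'_k$ should coincide with an inverse of the type $\NKN_{N-k-1}$.

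To compare $\KN_k$ and $\NKN_k$, I would test them on the generator $\Tilt_k$ of $\D(Y)$. A tilting equivalence is determined up to isomorphism by where it sends the tilting object together with the induced action on its endomorphism algebra, so the comparison reduces to two assertions. \emph{First}, establish an isomorphism $\KN_k(\Tilt_k) \simeq \Tilt^+_{N-k-1}$ in $\D(Y^+)$. \emph{Second}, verify that the map $\End_Y(\Tilt_k) \to \End_{Y^+}(\KN_k(\Tilt_k))$ induced by $\KN_k$ agrees with the canonical $R$-algebra isomorphism $\End_Y(\Tilt_k) \simeq \End_{Y^+}(\Tilt^+_{N-k-1})$ of Theorem~\ref{main thm flop side}(4).

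The first assertion decomposes into computations $\KN_k(\stsh_Y(a)) \simeq \stsh_{Y^+}(b(a))$ for $-N+k+1 \leq a \leq k$, where $b(a)$ runs through the indices $-k \leq b \leq N-k-1$ of the summands of $\Tilt^+_{N-k-1}$. To carry these out, I would exploit the gluing description $\hat{Y} = \widetilde{Y} \cup_E (\mathbb{P}\times\mathbb{P}^{\vee})$: pull back $\stsh_Y(a)$ to $\widetilde{Y}$, twist by $\stsh_{\widetilde{Y}}(kE)$, and assemble the answer via a Mayer--Vietoris style comparison with the contribution coming from $(\mathbb{P}\times\mathbb{P}^{\vee},\stsh(-k,-k))$. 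The identification $E \simeq \mathbb{P}(\Omega^1_\mathbb{P}) \subset \mathbb{P}\times\mathbb{P}^{\vee}$ via the Euler sequence makes the gluing along $E$ tractable, and the higher direct image vanishing needed to land in a single degree reduces to standard blowup and projective bundle cohomology. For the endomorphism compatibility, I would use naturality of the Fourier--Mukai construction: the generators $v_i,f_j$ of $\Lambda_k$ from Theorem~\ref{NCCR quiver} have intrinsic descriptions in terms of the cotangent bundle geometry of $Y$ and $Y^+$, so they are transported consistently under both $\KN_k$ and the canonical algebra isomorphism.

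The main obstacle will be the first assertion: precisely computing $R\hat{p}_*(L\hat{q}^*\stsh_Y(a)\otimes\mathcal{L}_k)$ and checking that, for $a$ in the tilting range, the output is a genuine line bundle on $Y^+$ rather than a higher complex. This requires careful bookkeeping of $\mathcal{L}_k$ across the gluing locus $E$, and in particular exploits the constraint $-N+k+1\leq a \leq k$; outside that range one should get an honest derived object, consistent with $\KN_k$ being a non-trivial equivalence. Once both assertions are settled, the universal property of tilting equivalences (Theorem~\ref{equiv tilting}) gives $\KN_k \simeq \NKN_k$, and Theorem~\ref{KN} follows since $\NKN_k$ is manifestly an equivalence.
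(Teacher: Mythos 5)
Your core computation is exactly the paper's: the heart of its argument is Lemma \ref{lem 4-3}, which computes $\KN_k(\stsh_Y(a)) \simeq \stsh_{Y^+}(-a)$ for $-N+k+1 \leq a \leq k$ using the sequence $0 \to \stsh_{\widehat{Y}} \to \stsh_{\widetilde{Y}} \oplus \stsh_{\PP \times \PP^{\vee}} \to \stsh_E \to 0$ on the reducible correspondence, i.e.\ precisely your ``first assertion'' and your Mayer--Vietoris style bookkeeping. The gap is in your concluding step. You invoke a ``universal property of tilting equivalences'' to pass from $\KN_k(\Tilt_k) \simeq \Tilt^+_{N-k-1}$ (plus compatibility on endomorphism algebras) to a functor isomorphism $\KN_k \simeq \NKN_k$. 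Theorem \ref{equiv tilting} contains no such uniqueness statement: it only produces the equivalences $\Psi_k$, $\Psi^+_{N-k-1}$. More seriously, for a triangulated functor that is \emph{not yet known to be fully faithful} --- and full faithfulness of $\KN_k$ is exactly what is at stake --- knowing its value on a tilting generator together with the induced map on $\End$ does not determine it up to isomorphism; to turn the object-level isomorphism into a natural isomorphism of functors you would need either full faithfulness, a dg-enhancement/bimodule argument, or an adjunction argument. As written, the final deduction is unjustified.

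The paper closes the argument with the adjunction trick, which you never use: $\KN'_{N-k-1}$ is both the left and the right adjoint of $\KN_k$ (this is the ``easy to see'' step, using that the kernel is symmetric and both sides have trivial canonical bundle), so one has a \emph{natural} isomorphism $\Psi_k \circ \KN'_{N-k-1} \simeq \RHom_{Y^+}(\KN_k(\Tilt_k), -)$, valid before anything is known about $\KN_k$ being an equivalence. Substituting Lemma \ref{lem 4-3} gives $\Psi_k \circ \KN'_{N-k-1} \simeq \Psi^+_{N-k-1}$, hence $\KN'_{N-k-1} \simeq \Psi_k^{-1} \circ \Psi^+_{N-k-1}$ is an equivalence, and $\KN_k$, being its two-sided adjoint, is its quasi-inverse and hence an equivalence isomorphic to $\NKN_k$. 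If you add this adjunction step your plan becomes essentially the paper's proof; note also that even there one should check (as you correctly sensed with your ``second assertion'') that the isomorphism $\KN_k(\Tilt_k) \simeq \Tilt^+_{N-k-1}$ is compatible with the $\Lambda_k$-module structures coming from the canonical identification $\End_Y(\Tilt_k) \simeq \End_{Y^+}(\Tilt^+_{N-k-1})$ of Theorem \ref{main thm flop side}(4), a point the paper treats only implicitly.
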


\begin{rem} \label{rem 3-1} \rm
By the definition of the functor $\KN_k$, the following diagram commutes
\[ \begin{tikzcd}
\D(Y) \arrow{r}{\KN_k} \arrow[d, "- \otimes \stsh_{Y}(1)"']& \D(Y^+) \arrow{d}{ - \otimes \stsh_{Y^+}(-1)} \\
\D(Y) \arrow[r, "\KN_{k+1}"'] & \D(Y^+).
\end{tikzcd} \]
The same holds for our equivalence $\NKN_k$ :
\[ \begin{tikzcd}
\D(Y) \arrow{r}{\NKN_k} \arrow[d, "- \otimes \stsh_{Y}(1)"']& \D(Y^+) \arrow{d}{ - \otimes \stsh_{Y^+}(-1)} \\
\D(Y) \arrow[r, "\NKN_{k+1}"'] & \D(Y^+).
\end{tikzcd} \]
\end{rem}

\begin{thm} \label{thm kn=nkn}
Our functor $\NKN_k$ (resp. $\NKN_k'$) coincides with the Kawamata-Namikawa's functor $\KN_k$ (resp. $\KN'_k$).
\end{thm}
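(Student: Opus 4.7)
The plan is to compare the two functors on the tilting bundle $\Tilt_k$ and invoke the universal property of the tilting equivalence. Since $\Tilt_k$ classically generates $\D(Y)$ by Theorem \ref{NCCR1}(1), any exact functor $\D(Y) \to \D(Y^+)$ is determined up to natural isomorphism by the pair consisting of its value on $\Tilt_k$ and the induced action of $\Lambda_k = \End_Y(\Tilt_k)$. For $\NKN_k$ this datum is, by construction,
\[ \NKN_k(\Tilt_k) = \Lambda_k \otimes_{\Lambda_k}^L \Tilt^+_{N-k-1} = \Tilt^+_{N-k-1}, \]
with the $\Lambda_k$-action transferred from the canonical isomorphism $\Lambda_k \simeq \End_{Y^+}(\Tilt^+_{N-k-1})$ of Theorem \ref{main thm flop side}(4). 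It therefore suffices to show that $\KN_k(\Tilt_k) \simeq \Tilt^+_{N-k-1}$ carrying the same $\Lambda_k$-module structure.

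The first step is the computation of $\KN_k(\stsh_Y(a))$ for $-N+k+1 \leq a \leq k$. Using the decomposition $\hat{Y} = \widetilde{Y} \cup_E (\PP \times \PP^\vee)$, the standard Mayer--Vietoris distinguished triangle
\[ \stsh_{\hat{Y}} \to \stsh_{\widetilde{Y}} \oplus \stsh_{\PP \times \PP^\vee} \to \stsh_E \]
splits the pushforward into two pieces. The contribution from $\widetilde{Y}$ reduces to computing the pushforward of $L\hat{q}^* \stsh_Y(a) \otimes \stsh_{\widetilde{Y}}(kE)$ along the blowup $\hat{p}|_{\widetilde{Y}} : \widetilde{Y} \to Y^+$ of the zero section $j'(\PP^\vee) \subset Y^+$, while the contribution from $\PP \times \PP^\vee$ reduces to cohomology of $\stsh_{\PP}(a-k)$ along the first projection. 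The constraint $-N+k+1 \leq a \leq k$ is precisely what ensures the vanishing of all the higher direct images appearing in both pieces, and a direct matching then identifies the result with $\stsh_{Y^+}(-a)$.

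The second step verifies compatibility with morphisms: the algebra homomorphism
\[ \End_Y(\Tilt_k) \to \End_{Y^+}(\KN_k(\Tilt_k)) \simeq \End_{Y^+}(\Tilt^+_{N-k-1}) \]
induced by the isomorphisms above must agree with the canonical one from Theorem \ref{main thm flop side}(4). Over the open set $U = Y \setminus j(\PP) \simeq B(1) \simeq Y^+ \setminus j'(\PP^\vee) = U^+$, both $\hat{q}$ and $\hat{p}$ restrict to isomorphisms and $\KN_k|_U$ reduces to the identity up to the trivial twist by $\mathcal{L}_k|_U = \stsh_U$, so the two algebra homomorphisms visibly agree on $U$. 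Since both $\End_Y(\Tilt_k)$ and $\End_{Y^+}(\Tilt^+_{N-k-1})$ are maximal Cohen-Macaulay and hence reflexive $R$-modules (Propositions \ref{CM ref 1}, \ref{CM ref 2}), and the complement of $U$ has codimension at least $2$, the two maps coincide globally, exactly as in the reflexive-extension argument in the proof of Theorem \ref{NCCR1}(3).

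Combining the two steps yields $\KN_k(\Tilt_k) \simeq \Tilt^+_{N-k-1}$ as $\Lambda_k$-modules, and hence $\KN_k \simeq \NKN_k$ as functors. The dual statement $\KN'_k \simeq \NKN'_k$ follows from the symmetric argument with the roles of $Y$ and $Y^+$ exchanged, applied to the tilting bundle $\Tilt^+_k$ on $Y^+$. The main technical obstacle is the Mayer--Vietoris pushforward computation on the reducible scheme $\hat{Y}$: bookkeeping the two components and the gluing along $E$, and pinning down the vanishings which make $\KN_k(\stsh_Y(a)) \simeq \stsh_{Y^+}(-a)$ hold on the nose rather than only up to an auxiliary line-bundle twist.
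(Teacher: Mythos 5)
Your computational core is the paper's: reduce everything to showing $\KN_k(\Tilt_k)\simeq\Tilt^+_{N-k-1}$, and prove this by computing $\KN_k(\stsh_Y(a))$ for $-N+k+1\leq a\leq k$ via the triangle $\stsh_{\hat{Y}}\to\stsh_{\widetilde{Y}}\oplus\stsh_{\PP\times\PP^{\vee}}\to\stsh_E$ (this is exactly Lemma \ref{lem 4-3}, which the paper proves after reducing to $k=0$ by Remark \ref{rem 3-1}). Your second step, checking that the algebra map $\Lambda_k\to\End_{Y^+}(\KN_k(\Tilt_k))$ agrees with the canonical isomorphism of Theorem \ref{main thm flop side}(4) by restricting to $U\simeq B(1)\simeq U^+$ and using reflexivity, is a sensible and in fact welcome addition: the paper leaves this compatibility implicit, and your argument is in the same spirit as the reflexive-extension arguments used for Theorem \ref{NCCR1}(3).

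The genuine gap is the opening reduction. The claim that an exact functor $\D(Y)\to\D(Y^+)$ is determined up to isomorphism by its value on the classical generator $\Tilt_k$ together with the induced $\Lambda_k$-action is false for arbitrary exact functors between triangulated categories: cones are not functorial, and a classical generator only detects vanishing, so knowing $F(\Tilt_k)$ with its algebra action does not by itself pin down $F$ (one needs an enhancement or a Fourier--Mukai-type rigidity statement, which you neither state nor prove). The paper avoids this exact issue by a small but essential manoeuvre: it observes that $\KN'_{N-k-1}$ is left and right adjoint to $\KN_k$, so that $\Psi_k\circ\KN'_{N-k-1}\simeq\RHom_{Y^+}(\KN_k(\Tilt_k),-)$, and then the functor-level statement genuinely follows from the object-level computation $\KN_k(\Tilt_k)\simeq\Tilt^+_{N-k-1}$ (together with the algebra compatibility). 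You should replace your determination principle by this adjunction argument, or prove a precise rigidity lemma for the Fourier--Mukai functors at hand. Separately, note that the real work in the Mayer--Vietoris computation is the boundary value $a=-N+k+1$, where $\Phi_{\stsh_{\widetilde{Y}}}(\stsh_Y(-N+k+1))$ is only an extension and one must see that the correction term cancels against the $\stsh_E$-contribution; your sketch acknowledges but does not carry out this cancellation.
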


Note that in the proof of Theorem \ref{thm kn=nkn}, we does not use the fact that the functors $\KN_k$ and $\KN_k'$ are equivalences.
Thus, our proof of Theorem \ref{thm kn=nkn} gives an alternative proof for Theorem \ref{KN} in this local model of the Mukai flop.

\begin{proof}
It is easy to see that $\KN'_{N-k-1}$ is the left and right adjoint of $\KN_k$.
Thus, it is enough to show that the following diagram commutes.
\[ \begin{tikzcd}
\D(Y) \arrow[rd, "\Psi_k"'] & & \D(Y^+) \arrow[ld, "\Psi_{N-k-1}^+"] \arrow[ll, "\KN'_{N-k-1}"'] \\
 & \D(\Lambda_k) &
\end{tikzcd} \]
We note that the composition $\Psi_k \circ \KN'_{N-k-1}$ is given by
\[ \RHom_{Y^+}(\KN_k(\Tilt_k), -) : \D(Y^+) \to \D(\Lambda_k). \]
Now, Theorem \ref{thm kn=nkn} follows from Lemma \ref{lem 4-3}.
\end{proof}

\begin{lem} \label{lem 4-3}
Let $k \in \mathbb{Z}$ a fixed integer. Then we have
\[ \KN_k(\stsh_{Y}(a)) \simeq \stsh_{Y^+}(-a) \]
for all $-N+k+1 \leq a \leq k$ and hence we have an isomorphism
\[ \KN_k(\Tilt_k) \simeq \Tilt^+_{N-k-1}. \]
\end{lem}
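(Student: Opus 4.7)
The plan is to reduce to a single case via the shift property of Remark \ref{rem 3-1} and then to carry out a direct pushforward computation using Mayer-Vietoris on $\hat{Y} = \tilde{Y} \cup_E (\PP \times \mathbb{P}^{\vee})$.

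First, I would invoke Remark \ref{rem 3-1}, which gives the relation
\[ \KN_{k+1}(\stsh_Y(a+1)) \simeq \KN_k(\stsh_Y(a)) \otimes \stsh_{Y^+}(-1). \]
Thus the assertion at $(k, a)$ is equivalent to the assertion at $(k+1, a+1)$, and the interval $-N+k+1 \le a \le k$ shifts accordingly. It therefore suffices to establish the claim for a single value of $k$; choosing $k=0$, and noting that $\mathcal{L}_0 = \stsh_{\hat{Y}}$, the assertion becomes $R\hat{p}_*\hat{q}^*\stsh_Y(a) \simeq \stsh_{Y^+}(-a)$ for $-N+1 \le a \le 0$.

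Next, writing $W := \PP \times \mathbb{P}^{\vee}$, I would apply the Mayer-Vietoris short exact sequence
\[ 0 \to \stsh_{\hat{Y}} \to \stsh_{\tilde{Y}} \oplus \stsh_W \to \stsh_E \to 0 \]
(with pushforwards from the closed immersions $\tilde{Y}, W, E \hookrightarrow \hat{Y}$ suppressed). Tensoring with the line bundle $\hat{q}^*\stsh_Y(a)$ and applying $R\hat{p}_*$ produces a distinguished triangle whose three terms I would compute separately: (i) on $W$, the map $\hat{p}|_W$ factors as $j' \circ \mathrm{pr}_2$ and $\hat{q}^*\stsh_Y(a)|_W = \stsh_W(a,0)$, so the contribution is $j'_*(H^\bullet(\PP, \stsh_\PP(a)) \otimes \stsh_{\mathbb{P}^{\vee}})$, which vanishes for $-N+1 \le a < 0$ and equals $j'_*\stsh_{\mathbb{P}^{\vee}}$ for $a=0$; (ii) on $E$, a parallel projection-formula computation using the identification $E \simeq \PP(\Omega_\PP)$ as the universal hyperplane in $\PP \times \mathbb{P}^{\vee}$ gives the $E$-contribution; (iii) on $\tilde{Y}$, $\hat{p}|_{\tilde{Y}}$ is the blow-down to $Y^+$ and the contribution is $R(\hat{p}|_{\tilde{Y}})_*\tilde{\pi}^*\stsh_\PP(a)$, where $\tilde{\pi}: \tilde{Y} \to \PP$ is the composition with $\pi$. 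For $a=0$ the $W$- and $E$-contributions are each $j'_*\stsh_{\mathbb{P}^{\vee}}$ and cancel via the connecting map, leaving $R\hat{p}_*\stsh_{\hat{Y}} \simeq R(\hat{p}|_{\tilde{Y}})_*\stsh_{\tilde{Y}} \simeq \stsh_{Y^+}$ (by the standard blowup formula); for $-N+1 \le a < 0$ the $W$-contribution vanishes, so the answer is determined by the $\tilde{Y}$- and $E$-terms alone.

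The main obstacle will be the precise identification of the $\tilde{Y}$-contribution $R(\hat{p}|_{\tilde{Y}})_*\tilde{\pi}^*\stsh_\PP(a)$ with $\stsh_{Y^+}(-a)$. This requires tracking both projections $\tilde{\pi}: \tilde{Y} \to \PP$ and $\tilde{\pi}': \tilde{Y} \to \mathbb{P}^{\vee}$ on the common blowup, together with the twist $\stsh_{\tilde{Y}}(E)|_E$ under the identification $E \simeq \PP(\Omega_\PP)$. A convenient tool is the description of $\tilde{Y}$ as the total space of the tautological line bundle over $E$, combined with the standard vanishing $H^i(\PP, \stsh_\PP(a)) = 0$ for $-N+1 \le a \le -1$ and all $i$ (and the dual vanishing on $\mathbb{P}^{\vee}$), which ensures that only the desired contribution survives after pushforward.
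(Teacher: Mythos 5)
Your overall strategy is the same as the paper's: reduce to $k=0$ via Remark \ref{rem 3-1}, note $\mathcal{L}_0=\stsh_{\hat{Y}}$, and run the Mayer--Vietoris sequence $0 \to \stsh_{\hat{Y}} \to \stsh_{\widetilde{Y}}\oplus\stsh_{\PP\times\PP^{\vee}} \to \stsh_E \to 0$ through the Fourier--Mukai transform; your treatment of the $\PP\times\PP^{\vee}$-term and of the case $a=0$ matches the paper. The genuine gap is at the endpoint $a=-N+1$. You assert that the vanishing $H^i(\PP,\stsh_{\PP}(a))=0$ for $-N+1\le a\le -1$ ensures that ``only the desired contribution survives,'' and you frame the remaining work as identifying the $\widetilde{Y}$-term $R(\hat{p}|_{\widetilde{Y}})_*\tilde{\pi}^*\stsh_{\PP}(a)$ with $\stsh_{Y^+}(-a)$. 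That identification is correct for $-N+1<a\le 0$ but fails at $a=-N+1$: the $E$-term is computed via the $(1,1)$-divisor sequence $0\to\stsh_{\PP\times\PP^{\vee}}(-1,-1)\to\stsh_{\PP\times\PP^{\vee}}\to\stsh_E\to 0$, so it involves $\stsh_{\PP}(a-1)=\stsh_{\PP}(-N)$, whose $H^{N-1}$ is nonzero; one gets $\Phi_{\stsh_E}(\stsh_Y(-N+1))\simeq j'_*\stsh_{\PP^{\vee}}(-1)[-N+2]\neq 0$. Correspondingly, using $\stsh_{\widetilde{Y}}(E)\simeq\tilde{q}^*\stsh_Y(-1)\otimes\tilde{p}^*\stsh_{Y^+}(-1)$ and the fact that $R\tilde{p}_*\stsh_E(kE)$ vanishes only for $1\le k\le N-2$ (not for $k=N-1$), the $\widetilde{Y}$-term is not $\stsh_{Y^+}(N-1)$ but sits in a nontrivial triangle $\stsh_{Y^+}(N-1)\to\Phi_{\stsh_{\widetilde{Y}}}(\stsh_Y(-N+1))\to j'_*\stsh_{\PP^{\vee}}(-1)[-N+2]$.

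So at $a=-N+1$ neither of your two surviving terms is what you need individually; the missing idea is the cancellation between them. One must check that the map $\Phi_{\stsh_{\widetilde{Y}}}(\stsh_Y(-N+1))\to\Phi_{\stsh_E}(\stsh_Y(-N+1))$ occurring in the Mayer--Vietoris triangle is the same as the map in the triangle just displayed, so that taking fibres yields $\KN_0(\stsh_Y(-N+1))\simeq\stsh_{Y^+}(N-1)$. This comparison is exactly how the paper concludes and is the only delicate point of the proof; without it your argument covers $-N+2\le a\le 0$ but not the full range needed to get $\KN_0(\Tilt_0)\simeq\Tilt^+_{N-1}$.
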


\begin{proof}
By Remark \ref{rem 3-1}, it is enough to show the isomorphism of functors for $k = 0$.
Recall that the correspondence $\widehat{Y}$ is given by $\widehat{Y} = \widetilde{Y} \cup_E \PP \times \PP^{\vee}$.
Hence, we have an exact sequence on $Y \times Y^+$
\[ 0 \to \stsh_{\widehat{Y}} \to \stsh_{\widetilde{Y}} \oplus \stsh_{\PP \times \PP^{\vee}} \to \stsh_E \to 0. \]
We use this sequence to compute the Fourier-Mukai functor $\KN_0 := \Phi_{\stsh_{\widehat{Y}}}$.
First, we have
\begin{align*}
\Phi_{\stsh_{\PP \times \PP^{\vee}}}(\stsh_{Y}(a)) &= R\Gamma(\PP, \stsh_{\PP}(a)) \otimes j'_*\stsh_{\PP^{\vee}} \\
&= \begin{cases}
j'_*\stsh_{\PP^{\vee}} &(\text{if $a = 0$}) \\
0 &(\text{if $-N + 1 \leq a < 0$}).
\end{cases} 
\end{align*}
The exceptional divisor $E \subset \widetilde{Y}$ is a universal hyperplane section over $P$ and hence a divisor on $P \times P^{\vee}$ of bi-degree $(1, 1)$.
Thus, we have an exact sequence
\[ 0 \to \stsh_{\PP \times \PP^{\vee}}(-1, -1) \to \stsh_{\PP \times \PP^{\vee}} \to \stsh_E \to 0. \]
From the same computation as above, we have
\begin{align*}
\Phi_{\stsh_{\PP \times \PP^{\vee}}(-1,-1)}(\stsh_{Y}(a)) &= R\Gamma(\PP, \stsh_{\PP}(a-1)) \otimes j'_*\stsh_{\PP^{\vee}}(-1) \\
&= \begin{cases}
0 &(\text{if $-N + 1 < a \leq 0$}) \\
j'_*\stsh_{\PP^{\vee}}(-1)[-N+1] &(\text{if $a = -N+1$}),
\end{cases} 
\end{align*}
and hence we have
\begin{align*}
\Phi_{\stsh_{E}}(\stsh_{Y}(a)) = \begin{cases}
j'_*\stsh_{\PP^{\vee}} &(\text{if $a = 0$}) \\
0 &(\text{if $-N + 1 < a < 0$}) \\
j'_*\stsh_{\PP^{\vee}}(-1)[-N+2] &(\text{if $a = -N+1$}).
\end{cases} 
\end{align*}
Furthermore, since we have
\[ \stsh_{\widetilde{Y}}(E) \simeq \tilde{q}^*\stsh_{Y}(-1) \otimes \tilde{p}^*\stsh_{Y^+}(-1), \]
and 
\begin{align*}
R\tilde{p}_*\stsh_E(kE) = \begin{cases}
0 & \text{for all $k = 1, \dots, N-2$}, \\
j'_*\stsh_{\PP^{\vee}}(-N)[-N+2] & \text{for $k= N - 1$},
\end{cases}
\end{align*}
we have
\begin{align*}
\Phi_{\stsh_{\widetilde{Y}}}(\stsh_Y(a)) &= R\tilde{p}_*(\stsh_{\widetilde{Y}}(-aE)) \otimes \stsh_{Y^+}(-a) \\
& = \stsh_{Y^+}(-a)
\end{align*}
for $-N+1 < a  \leq 0$, and $\Phi_{\stsh_{\widetilde{Y}}}(\stsh_Y(-N+1))$ lies on the exact triangle
\[ \stsh_{Y^+}(N-1) \to \Phi_{\stsh_{\widetilde{Y}}}(\stsh_Y(-N+1)) \to \stsh_{P^{\vee}}(-1)[-N+2]. \]
From the above, we can compute $\KN_0(\stsh_{Y}(a))$ for $-N+1 \leq a \leq 0$.
If $a = 0$, $\KN_0(\stsh_Y)$ lies on the exact triangle
\[ \KN_0(\stsh_Y) \to \stsh_{Y^+} \oplus j'_*\stsh_{\PP^{\vee}} \to  j'_*\stsh_{\PP^{\vee}}, \]
and hence we have
\[ \KN_0(\stsh_Y) \simeq \stsh_{Y^+}. \]
If $-N+1 < a < 0$, we have
\[ \KN_0(\stsh_Y(a)) \simeq \stsh_{Y^+}(-a). \]
Finally, if $a = -N+1$, $\KN_0(\stsh_Y(-N+1))$ lies on the exact triangle
\[ \KN_0(\stsh_Y(-N+1)) \to \Phi_{\stsh_{\widetilde{Y}}}(\stsh_Y(-N+1)) \to j'_*\stsh_{\PP^{\vee}}(-1)[-N+2]. \]
This triangle coincides with the above one that gives the object $\Phi_{\stsh_{\widetilde{Y}}}(\stsh_Y(-N+1))$ and hence we have
\[ \KN_0(\stsh_Y(-N+1)) \simeq \stsh_{Y^+}(N-1). \]
Thus, we have the isomorphism $\KN_0(\Tilt_0) \simeq \Tilt^+_N$ that we want.
\end{proof}

\subsection{P-twists and Mutations} \label{subsect mutation}
In this section, we introduce equivalences $\nu_{N+k}^-$ and $\nu_{N+k-1}^+$ between the derived categories of non-commutative algebras $\Lambda_{N+k}$ and $\Lambda_{N+k-1}$.
We show that a composition of multi-mutation functors $\nu_{N+k-1}^+ \circ \nu_{N+k}^-$ corresponds to an autoequivalence $P_k$ of $\D(Y)$ that is a \textit{P-twist} defined by a $\mathbb{P}^{N-1}$-object $j_*\stsh_{\PP}(k)$.

\subsubsection{Definition of multi-mutation}
First, we define a multi-mutation functor $\nu_{N-1}^- : \D(\modu(\Lambda_{N-1})) \to \D(\modu(\Lambda_{N-2}))$.
Recall that the algebras $\Lambda_{N-1}$ is given by
\[ \Lambda_{N-1} = \End_R \left(\bigoplus_{a=0}^{N-1} M_a \right). \]
Let us consider the canonical surjective morphism $R^{\oplus N} \twoheadrightarrow M_{-1}$. 
Note that this morphism is given by the push-forward of the canonical surjection $V \otimes_{\mathbb{C}} \stsh_{Y^+} \twoheadrightarrow \stsh_{Y^+}(1)$ by $\phi^+$.
Then, we define a $\Lambda_{N-1}$-module $C$ as
\[ C := \mathrm{Image}\left(\Hom_R(\bigoplus_{a=0}^{N-1} M_a, R^{\oplus N}) \to \Hom_R(\bigoplus_{a=0}^{N-1} M_a, M_{-1})\right), \]
and set a $\Lambda_{N-1}$-module $S$ as
\[ S := \Hom_{\Lambda_{N-1}}(\bigoplus_{a=0}^{N-1} M_a, \bigoplus_{a=0}^{N-2} M_a) \oplus C. \]

\begin{lem} \label{lem 3-1}
The following hold.
\begin{enumerate}
\item[(i)] There exists an isomorphism of $\Lambda_{N-1}$-modules
\[ S \simeq \RHom_{Y^+}(\Tilt_0^+, \Tilt_1^+). \]
\item[(ii)] The $\Lambda_{N-1}$-module $S$ defined above is a tilting generator of  the category $\D(\modu(\Lambda_{N-1}))$.
\item[(iii)] We have an isomorphism between $R$-algebras
\[ \End_{\Lambda_{N-1}}(S) \simeq \Lambda_{N-2}. \]
\end{enumerate}
\end{lem}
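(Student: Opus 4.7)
The plan is to dispatch (ii) and (iii) first, as formal consequences of the Morita-type equivalence, and then to attack (i) by explicit computation.

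By Theorem~\ref{main thm flop side}(5), $\Psi_0^+ := \RHom_{Y^+}(\Tilt_0^+, -)$ is an equivalence $\D(Y^+) \xrightarrow{\sim} \D(\modu(\Lambda_{N-1}))$, and by Theorem~\ref{main thm flop side}(1), $\Tilt_1^+$ is a tilting bundle on $Y^+$. Hence $\Psi_0^+(\Tilt_1^+) = \RHom_{Y^+}(\Tilt_0^+, \Tilt_1^+)$ is automatically a tilting generator of $\D(\modu(\Lambda_{N-1}))$ with endomorphism algebra $\End_{Y^+}(\Tilt_1^+) \simeq \Lambda_{N-2}$, where the last isomorphism is Theorem~\ref{main thm flop side}(4) with $k=1$. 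Granting (i), both (ii) and (iii) follow.

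The content is in (i): to identify $\RHom_{Y^+}(\Tilt_0^+, \Tilt_1^+)$ with $S$. I would decompose $\Tilt_1^+ = T' \oplus \stsh_{Y^+}(1)$, where $T' := \bigoplus_{a=-N+2}^0 \stsh_{Y^+}(a)$ is a common direct summand of both $\Tilt_0^+$ and $\Tilt_1^+$. For $T'$, the tilting property of $\Tilt_0^+$ makes $\RHom_{Y^+}(\Tilt_0^+, T') = \Hom_{Y^+}(\Tilt_0^+, T')$ concentrated in degree $0$, and $\phi^+_*$ together with $\phi^+_*\stsh_{Y^+}(a) = M_{-a}$ from Theorem~\ref{main thm flop side}(2) identifies this $\Lambda_{N-1}$-module with $\Hom_R(\bigoplus_{b=0}^{N-1} M_b, \bigoplus_{a=0}^{N-2} M_a)$, matching the first summand of $S$. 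For the second summand, I would apply $\RHom_{Y^+}(\Tilt_0^+, -)$ to the pullback of the Euler sequence on $\PP^{\vee}$,
\[ 0 \to \pi'^*\Omega_{\PP^\vee}(1) \to V \otimes_{\mathbb{C}} \stsh_{Y^+} \to \stsh_{Y^+}(1) \to 0. \]
After verifying the cohomology vanishing $\Ext^{>0}_{Y^+}(\stsh_{Y^+}(a), \pi'^*\Omega_{\PP^\vee}(1)) = 0$ for $-N+1 \leq a \leq 0$, the long exact sequence of Ext-groups collapses to a short exact sequence of right $\Lambda_{N-1}$-modules whose middle term, using $\phi^+_*\stsh_{Y^+} = R$, is $\Hom_R(\bigoplus_b M_b, R^{\oplus N})$ and whose right-hand map is induced by the canonical $V \otimes R \to M_{-1}$ obtained by $\phi^+_*$-pushforward.

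To finish, I need $\Hom_{Y^+}(\Tilt_0^+, \stsh_{Y^+}(1))$ to coincide with $C$ not merely as some quotient of $\Hom_R(\bigoplus_b M_b, R^{\oplus N})$, but precisely as the image sitting inside $\Hom_R(\bigoplus_b M_b, M_{-1})$. This I would handle by a reflexivity and codimension-two argument of the sort used in the proofs of Theorem~\ref{NCCR1} and Theorem~\ref{main thm flop side}: both $\Hom_{Y^+}(\Tilt_0^+, \stsh_{Y^+}(1))$ and $\Hom_R(\bigoplus_b M_b, M_{-1})$ are reflexive $R$-modules (by Propositions~\ref{CM ref 1}, \ref{CM ref 2}, together with the tilting/pushforward estimates used for Lemma~\ref{lem 3-A}) that agree on the smooth locus $B(1) \simeq U^+$ via Lemma~\ref{thm1 lem2}, so the natural map between them is injective and the image is exactly $C$. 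I expect the main obstacle to be the $\Ext$-vanishing for $\pi'^*\Omega_{\PP^\vee}(1)$, which I would prove by embedding $Y^+$ in the total space of $V \otimes_{\mathbb{C}} \stsh_{\PP^\vee}(-1)$ as in Lemma~\ref{lem 3-B} and reducing to a Bott-type vanishing for $\Omega_{\PP^\vee}(1-a+j) \otimes \Sym^j V$ on $\PP^\vee$; granted that, the remainder is bookkeeping around the Euler sequence and reflexivity.
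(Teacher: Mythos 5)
Your proposal is correct and follows essentially the same route as the paper: split off the common summand $\bigoplus_{a=-N+2}^{0}\stsh_{Y^+}(a)$, compute $\RHom_{Y^+}(\Tilt_0^+,\stsh_{Y^+}(1))$ via the pulled-back Euler sequence together with the vanishing $\Ext^1_{Y^+}(\Tilt_0^+,\pi'^*\Omega_{\PP^{\vee}}(1))=0$, identify the result with $C$ by injectivity of the natural map into $\Hom_R(\bigoplus_a M_a, M_{-1})$ (the paper gets injectivity from torsion-freeness of the pushforward plus agreement in codimension one, i.e.\ slightly less than the reflexivity you invoke, and records the final identification as a commutative diagram), and deduce (ii) and (iii) from (i) via the tilting equivalence and Theorem \ref{main thm flop side}(4).
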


\begin{proof}
The (ii) and (iii) follow from (i).
First, we have
\[ \RHom_{Y^+}(\Tilt_0^+, \Tilt_1^+) = \RHom_{Y^+}(\Tilt_0^+, \bigoplus_{a=-N+2}^0\stsh_{Y^+}(a)) \oplus \RHom_{Y^+}(\Tilt_0^+, \stsh_{Y^+}(1)). \]
As explained above, we have
\[ M_{-a} = \phi^+_* \stsh_{Y^+}(a) \]
for all $-N+1 \leq a \leq N-1$, and we have
\begin{align*}
\RHom_{Y^+}(\Tilt_0^+, \bigoplus_{a=-N+2}^0 \stsh_{Y^+}(a)) &= \Hom_{Y^+}(\Tilt_0^+, \bigoplus_{a=-N+2}^0\stsh_{Y^+}(a)) \\
&= \Hom_{R}(\bigoplus_{a=0}^{N-1} M_a, \bigoplus_{a=0}^{N-2} M_a).
\end{align*}
Next, since the sheaf $\lhom_{Y^+}(\Tilt_0^+, \stsh_{Y^+}(1))$ on $Y^+$ is a vector bundle and hence is torsion free,
the $R$-module $\Hom_{Y^+}(\Tilt_0^+, \stsh_{Y^+}(1)) = \phi^+_*\lhom_{Y^+}(\Tilt_0^+, \stsh_{Y^+}(1))$ is also torsion free.
Since two $R$-modules $\Hom_R(\bigoplus_{a=0}^{N-1} M_a, M_{-1})$ and $\Hom_{Y^+}(\Tilt_0^+, \stsh_{Y^+}(1))$ are isomorphic in codimension one,
the natural map
\[ \Hom_{Y^+}(\Tilt_0^+, \stsh_{Y^+}(1)) \to \Hom_R(\bigoplus_{a=0}^{N-1} M_a, M_{-1}) \]
is injective.
Let us consider the surjective morphism $V \otimes_{\mathbb{C}} \stsh_{Y^+} \to \stsh_{Y^+}(1)$.
We note that the map
\[ \Hom_{Y^+}(\Tilt_0^+, V \otimes_{\mathbb{C}} \stsh_{Y^+}) \to \Hom_{Y^+}(\Tilt_0^+, \stsh_{Y^+}(1)) \]
is surjective because we have a vanishing of an extension
\[ \Ext^1_{Y^+}(\Tilt_0^+, \pi'^*\Omega_{\PP^{\vee}}(1)) = H^1(Y^+, \bigoplus_{a=0}^{N-1}\pi'^*\Omega_{\PP^{\vee}}(a+1)) = 0 \]
from the same argument as in the proof of Corollary \ref{cor alt NCCR MCM}.
Thus, we have the following commutative diagram
\[ \begin{tikzcd}
\Hom_{Y^+}(\Tilt_0^+, V \otimes_{\mathbb{C}} \stsh_{Y^+}) \arrow{r}{\simeq} \arrow[d, twoheadrightarrow] & \Hom_{R}(\bigoplus_{a=0}^{N-1} M_a, V \otimes_{\mathbb{C}} M_0) \arrow{d} \\
\Hom_{Y^+}(\Tilt_0^+, \stsh_{Y^+}(1)) \arrow[r, hook] & \Hom_{R}(\bigoplus_{a=0}^{N-1} M_a, M_{-1})
\end{tikzcd} \]
and hence we have $\RHom_{Y^+}(\Tilt_0^+, \stsh_{Y^+}(1)) = \Hom_{Y^+}(\Tilt_0^+, \stsh_{Y^+}(1)) = C$.
\end{proof}

From the above lemma, we can define the equivalence:
\begin{defi} \rm \label{defi mutation} We set
\[ \nu_{N-1}^- := \RHom_{\Lambda_{N-1}}(S, -) : \D(\modu(\Lambda_{N-1})) \xrightarrow{\sim} \D(\modu(\Lambda_{N-2})). \]
We call this functor $\nu_{N-1}^-$ the \textit{multi-mutation functor}.
By Lemma \ref{lem 3-1}, multi-mutation $\nu_{N-1}^-$ coincides with the functor
\[ \RHom_{\Lambda_{N-1}}(\RHom_{Y^+}(\Tilt_{0}^+, \Tilt_{1}^+), -) : \D(\modu(\Lambda_{N-1})) \xrightarrow{\sim} \D(\modu(\Lambda_{N-2})), \]
and hence the following diagram commutes
\[ \begin{tikzcd}
\D(Y^+) \arrow{r}{\Psi^+_{N-1}} \arrow[rd, "\Psi^+_{N-2}"'] & \D(\modu(\Lambda_{k})) \arrow{d}{\nu_{N-1}^-} \\
 & \D(\modu(\Lambda_{N-2})).
\end{tikzcd} \]
We also define a multi-mutation functor $\nu_k^- :  \D(\modu(\Lambda_{k})) \xrightarrow{\sim} \D(\modu(\Lambda_{k-1}))$ by using the following commutative diagram.
\[ \begin{tikzcd}
\D(Y^+) \arrow[r, "\otimes \stsh(-N+k+1)"'] \arrow[d, "\Psi^+_{N-k-1}"] \arrow[rrr, bend left=15, "\id"'] & \D(Y^+) \arrow[r, equal] \arrow[d, "\Psi^+_{0}"] & \D(Y^+) \arrow[r, "\otimes \stsh(N-k-1)"'] \arrow[d, "\Psi^+_{1}"] & \D(Y^+) \arrow[d, "\Psi^+_{N-k}"] \\
\D(\modu(\Lambda_k)) \arrow[r, "F_{N-1}^k"] \arrow[rrr, bend right=15, "\nu_k^-"] & \D(\modu(\Lambda_{N-1})) \arrow[r, "\nu_{N-1}^-"] & \D(\modu(\Lambda_{N-2})) \arrow[r, "F_{k-1}^{N-2}"] & \D(\modu(\Lambda_{k-1})),
\end{tikzcd} \]
where the functor $F^i_j : \D(\modu(\Lambda_{i})) \to \D(\modu(\Lambda_{j}))$ is given by the composition
\small{\[ F^i_j : \D(\modu(\Lambda_{i})) \xrightarrow{-\otimes_{\Lambda_i} \Tilt^+_{N-i-1}} \D(Y^+) \xrightarrow{-\otimes \stsh_Y(i-j)} \D(Y^+) \xrightarrow{\RHom_Y(\Tilt^+_{N-j-1},-)} \D(\modu(\Lambda_{j})). \]}
\normalsize
\end{defi}

Applying the same argument for the side of $Y$,
we can define a multi-mutation functor $\nu_{k}^+ : \D(\modu(\Lambda_{k})) \to \D(\modu(\Lambda_{k+1}))$.
Again, by construction, we can show that there is a commutative diagram of functors
\[ \begin{tikzcd}
\D(Y) \arrow{r}{\Psi_{k}} \arrow[rd, "\Psi_{k+1}"'] & \D(\modu(\Lambda_{k})) \arrow{d}{\nu_{k}^+} \\
 & \D(\modu(\Lambda_{k+1})).
\end{tikzcd} \]

\subsubsection{Connection between multi-mutations and IW mutations} \label{subsubsect IW-multi}
In the following, 
we explain the multi-mutation functor $\nu_{N-1}^-$ is given by a composition of IW mutations.
For definitions and basic properties of IW mutations, see Section \ref{subsect IW}.
Let us consider the long Euler sequence on $\PP^{\vee} = \mathbb{P}(V^*)$
\small{
\begin{align*}
0 \to \stsh_{\PP^{\vee}}(-N+1) \to V^* \otimes_{\mathbb{C}} \stsh_{\PP^{\vee}}(-N+2) \to \bigwedge^{N-2}V \otimes_{\mathbb{C}} \stsh_{\PP^{\vee}}(-N+3) \to \cdots \\
 \to \bigwedge^2 V \otimes_{\mathbb{C}} \stsh_{\PP^{\vee}}(-1) \to V \otimes_{\mathbb{C}} \stsh_{\PP^{\vee}} \to \stsh_{\PP^{\vee}}(1) \to 0. 
\end{align*}}
\normalsize By applying the functor $(\phi^+)_* \circ (\pi')^*$ to the above sequence, we have a resolution of the module $M_{-1}$ by other modules $M_0, \dots, M_{N-1}$:
\small{ \[ 0 \to M_{N-1} \to V^* \otimes_{\mathbb{C}} M_{N-2} \to \bigwedge^{N-2}V \otimes_{\mathbb{C}} M_{N-3} \to \cdots \to \bigwedge^2 V \otimes_{\mathbb{C}} M_1 \to V \otimes_{\mathbb{C}} M_0 \to M_{-1} \to 0. \]}
\normalsize We splice this sequence into short exact sequences
\begin{align*}
0 \to M_{N-1} \to V^* &\otimes_{\mathbb{C}} M_{N-2} \to L_{N-2} \to 0 \\
0 \to L_{N-2} \to \bigwedge^{N-2} V &\otimes_{\mathbb{C}} M_{N-3} \to L_{N-3} \to 0 \\
\vdots \\
0 \to L_k \to \bigwedge^k V &\otimes_{\mathbb{C}} M_{k-1} \to L_{k-1} \to 0 \\
&\vdots \\
0 \to L_1 \to V &\otimes_{\mathbb{C}} M_0 \to M_{-1} \to 0 
\end{align*}
and set $L_{N-1} := M_{N-1}$, $L_0 := M_{-1}$, $W := \bigoplus_{a = 0}^{N-2} M_a$, and $E_k := W \oplus L_k$.
By dualizing above morphisms, we have a map $\bigwedge^k V^* \otimes_{\mathbb{C}} M_{k-1}^* \to L_k^*$.
Since the module $M_a$ is reflexive, the above map is surjective.
Then, applying the functor $- \oplus W^*$, we have a surjective map
\[ \left( \bigwedge^k V^* \otimes_{\mathbb{C}} M_{k-1}^* \right) \oplus W^* \twoheadrightarrow E_k^*. \]
First, we prove the following

\begin{lem} \label{lem approximation}
The map $\left(\bigwedge^k V^* \otimes_{\mathbb{C}} M_{k-1}^* \right) \oplus W^* \to E_k^*$ is a right $(\add W^*)$-approximation.
\end{lem}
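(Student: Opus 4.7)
The plan is as follows. Since $M_{k-1}^* \in \add W^*$ (because $W^* = \bigoplus_{a=0}^{N-2}M_a^*$ and $1 \le k \le N-1$), the source $(\bigwedge^k V^* \otimes_{\mathbb{C}} M_{k-1}^*) \oplus W^*$ does lie in $\add W^*$, so the statement makes sense. To verify the approximation property it suffices to show that every morphism from a summand $M_a^*$ ($0 \le a \le N-2$) of $W^*$ into the $L_k^*$-component of $E_k^* = L_k^* \oplus W^*$ lifts through the surjection $\bigwedge^k V^* \otimes M_{k-1}^* \twoheadrightarrow L_k^*$; the $W^*$-component is handled by the identity.

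First I would establish the dual short exact sequence
\[
0 \to L_{k-1}^* \to \bigwedge\nolimits^k V^* \otimes_{\mathbb{C}} M_{k-1}^* \to L_k^* \to 0.
\]
This is the $\Hom_R(-,R)$-dual of $0 \to L_k \to \bigwedge^k V \otimes M_{k-1} \to L_{k-1} \to 0$, and remains exact once one knows $\Ext^1_R(L_{k-1},R)=0$. The latter follows from an induction showing each $L_j$ is MCM: identifying $L_j$ as the pushforward $\phi^+_*\widetilde{L}_j$ of the corresponding syzygy sheaf in the splicing of the Euler resolution on $Y^+$ (cf.\ the sequence used in Section \ref{subsubsect IW-multi}), the cohomological vanishings guaranteed by Lemma \ref{lem 3-B} combined with Lemma \ref{lem 3-A} yield that $\phi^+_*\widetilde{L}_j$ is MCM, and on a Gorenstein ring MCM modules satisfy $\Ext^{>0}_R(-,R)=0$.

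Granting the dual sequence, the obstruction to lifting a morphism $M_a^* \to L_k^*$ lies in $\Ext^1_R(M_a^*, L_{k-1}^*)$, so it remains to prove this $\Ext$ group vanishes for $0 \le a \le N-2$. For this I would transport the computation to $Y^+$ via the identification $M_a^* \simeq M_{-a} \simeq \phi^+_*\stsh_{Y^+}(a)$ and $L_{k-1}^* \simeq \phi^+_*\widetilde{L}_{k-1}^*$; since $\widetilde{L}_{k-1}^*$ is (the pullback of) a twist of a bundle of differentials on $\PP^{\vee}$, the Ext is expressed as a sum of groups of the form $H^1(Y^+, \widetilde{L}_{k-1}^* \otimes \stsh_{Y^+}(-a))$, which by Lemma \ref{lem 3-B} reduces further to $H^1$ on $\PP^{\vee}$ of twists $\mathcal{M}^b_{\,c}(-d)$ of the form appearing in Lemma \ref{BLV10 lem}; the indices fall in the vanishing range there.

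The main obstacle is matching the module-theoretic sequences defining the $L_j$ with their geometric counterparts $\widetilde{L}_j$ on $Y^+$ and verifying the MCM property of each $L_j$ along the way, which then makes the dualized sequence exact. Once this dictionary is in place, the required $\Ext^1$-vanishing is a direct consequence of Lemmas \ref{lem 3-B} and \ref{BLV10 lem}, and the approximation property follows.
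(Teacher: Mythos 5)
Your reduction and the first half of your plan track the paper closely: the dual sequence $0 \to L_{k-1}^* \to \bigwedge^k V^*\otimes_{\mathbb{C}} M_{k-1}^* \to L_k^* \to 0$, the identification $L_k^* \simeq \phi^+_*\bigl((\pi')^*\bigwedge^k T_{\PP^{\vee}}\otimes\stsh_{Y^+}(-1)\bigr)$ by induction, and the Cohen-Macaulayness of the $L_j^*$ are exactly what the paper establishes. The gap is in the final step. You replace the required surjectivity of $\Hom_R(W^*, \bigwedge^k V^*\otimes_{\mathbb{C}} M^*_{k-1}) \to \Hom_R(W^*, L_k^*)$ by the stronger claim $\Ext^1_R(M_a^*, L_{k-1}^*) = 0$, and you propose to verify this by transporting the $\Ext^1$ to $Y^+$, i.e.\ by identifying it with sums of groups $H^1(Y^+, \widetilde{L}^*_{k-1}\otimes\stsh_{Y^+}(-a))$. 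That identification is unjustified and in general false: pushing forward along the crepant resolution only lets you compare $\Hom$ groups (these are reflexive modules, hence determined in codimension one), whereas higher Ext groups of maximal Cohen-Macaulay modules over the singular ring $R$ typically contain finite-length pieces supported at $0\in\overline{B(1)}$ which are invisible on $Y^+$. For instance, for the $A_1$-surface singularity (the $N=2$ case of this paper) the nontrivial rank-one MCM module $M$ satisfies $\Ext^1_R(M,M)\neq 0$ (the Auslander-Reiten sequence), while the corresponding line bundle on the resolution has $\Ext^1_Y(\stsh_Y(1),\stsh_Y(1)) = H^1(Y,\stsh_Y) = 0$. So vanishing of $H^1$ upstairs does not give vanishing of $\Ext^1_R$ downstairs, and your lifting argument does not close.

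The paper avoids computing any $\Ext^1$ over $R$: it shows that $\Hom_R(W^*, L_k^*)$ and $\Hom_{Y^+}\bigl(\bigoplus_{a=0}^{N-2}\stsh_{Y^+}(a), \mathcal{H}_k^*\bigr)$ are both reflexive and agree in codimension one, hence are isomorphic, proves surjectivity of the corresponding map of geometric Hom groups from the vanishing $\Ext^1_{Y^+}\bigl(\bigoplus_{a=0}^{N-2}\stsh_{Y^+}(a), \mathcal{H}^*_{k-1}\bigr)=0$ supplied by Proposition \ref{tilting 3} (ultimately Lemma \ref{BLV10 lem}), and then transports the surjectivity statement, not an Ext-vanishing, back to $R$. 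To salvage your formulation you would need to prove the vanishing of the module-theoretic $\Ext^1$ (or at least that the relevant connecting map is zero) by an argument that genuinely takes place over $R$; as written, the passage from $\Ext^1_R$ to cohomology on $Y^+$ is the missing ingredient.
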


\begin{proof}
Let us consider the exact sequence
\[ 0 \to L^*_{k-1} \to \bigwedge^k V^* \otimes_{\mathbb{C}} M^*_{k-1} \to L_k^* \to 0. \]
We have to show that the map
\[ \Hom_R(W^*, \bigwedge^k V^* \otimes_{\mathbb{C}} M^*_{k-1}) \to \Hom_R(W^*, L_k^*) \]
is surjective.
First, by definition, we have $M^*_{k-1} \simeq M_{-k+1} \simeq (\phi^+)_*\stsh_{Y^+}(k-1)$.
On $Y^+$, there is a canonical short exact sequence
\[ 0 \to (\pi')^*\bigwedge^{k-1}T_{\PP^{\vee}} \otimes \stsh_{Y^+}(-1) \to \bigwedge^k V^* \otimes_{\mathbb{C}} \stsh_{Y^+}(k-1) \to (\pi')^*\bigwedge^{k} T_{\PP^{\vee}} \otimes \stsh_{Y^+}(-1) \to 0, \]
where $T_{\PP^{\vee}}$  is the tangent bundle on $Y^+$.
Put $\mathcal{H}_k^* :=  (\pi')^*\bigwedge^{k} T_{\PP^{\vee}} \otimes \stsh_{Y^+}(-1)$ and $\mathcal{H}_k := \mathcal{H}_k^{**}$.
Since the first non-trivial term of the above exact sequence does not have higher cohomology, we have an isomorphism
\[ L_k^* \simeq (\phi^+)_* \left( \mathcal{H}_k^* \right) \]
by induction on $k$.
Furthermore, since the third non-trivial term of the above exact sequence and its dual have no higher cohomology,
it follows from Lemma \ref{lem 3-A} that the module $L_k^*$ is (maximal) Cohen-Macaulay,
and hence, the module $\Hom_R(W^*, L_k^*)$ is reflexive by Proposition \ref{CM ref 2}.
In addition, by Proposition \ref{CM ref 1}, Lemma \ref{lem 3-A}, and Proposition \ref{tilting 3}, the module
\[ \Hom_{Y^+}\left(\bigoplus_{a = 0}^{N-2} \stsh_{Y^+}(a),\mathcal{H}_k^* \right) \]
is also reflexive.
Therefore, we have an isomorphism
\[ \Hom_{Y^+}\left(\bigoplus_{a = 0}^{N-2} \stsh_{Y^+}(a), \mathcal{H}_k^* \right) \simeq \Hom_R(W^*, L_k^*). \]
On the other hand, again by Proposition \ref{tilting 3}, we have the vanishing of an extension group
\[ \Ext^1_{Y^+}\left(\bigoplus_{a = 0}^{N-2} \stsh_{Y^+}(a), \mathcal{H}_{k-1}^* \right) = 0. \]
This vanishing says that the map
\small{\[ \Hom_{Y^+}\left(\bigoplus_{a = 0}^{N-2} \stsh_{Y^+}(a), \bigwedge^{k} V^* \otimes_{\mathbb{C}} \stsh_{Y^+}(k-1) \right) \to
\Hom_{Y^+}\left(\bigoplus_{a = 0}^{N-2} \stsh_{Y^+}(a), \mathcal{H}_k^* \right) \]}
\normalsize
is surjective.
Thus, we have the morphism
\[ \Hom_R(W^*, \bigwedge^k V^* \otimes_{\mathbb{C}} M_{k-1}^*) \to \Hom_R(W^*, L_k^*) \]
is also surjective.
\end{proof}

Since the kernel of the approximation $\left(\bigwedge^k V^* \otimes_{\mathbb{C}} M_{k-1}^* \right) \oplus W^* \to E_k^*$ is isomorphic to $L_{k-1}^*$,
the $R$-module $E_{k-1}$ is isomorphic to a (left) IW mutation $\mu_W^L(E_k)$ of $E_k$ at $W$.
Thus, by Theorem \ref{IW mutation equiv}, we have a derived equivalence
\[ \IW_W : \D(\modu(\End_{R}(E_k))) \xrightarrow{\sim} \D(\modu(\End_R(E_{k-1}))). \]
However, in this case, we can show directly that the functor $\IW_W$ actually gives an equivalence of categories.
As in the proof of Lemma \ref{lem approximation}, put $\mathcal{H}_k := (\pi')^*\Omega_{\PP^{\vee}}^k \otimes \stsh_{Y^+}(1)$.

\begin{lem} \label{lem IWmu}
\begin{enumerate}
\item[(1)] We have an isomorphism of $R$-algebras $\End_R(E_k) \simeq \End_{Y^+}(\mathcal{S}^+_k)$,
where $\mathcal{S}^+_k := \bigoplus_{-N+2}^0 \stsh_{Y^+}(a) \oplus \mathcal{H}_k$ is a tilting bundle on $Y^+$ that is given in Proposition \ref{tilting 3}.
\item[(2)] We have an isomorphism of functors
\[ \IW_W \simeq \RHom_{\End_R(E_k)}(\RHom_{Y^+}(\mathcal{S}_k^+, \mathcal{S}_{k-1}^+), -). \]
\item[(3)] In particular, IW mutation functor $\IW_W$ gives an equivalence of categories,
and the following diagram of functors commutes
\[ \begin{tikzcd}
\D(Y^+) \arrow[r, "S_k"] \arrow[rd, "S_{k-1}"'] & \D(\modu(\End_R(E_k))) \arrow{d}{\IW_W} \\
 & \D(\modu(\End_R(E_{k-1}))),
\end{tikzcd} \]
where $S_k :=  \RHom_{Y^+}(\mathcal{S}^+_k, -) : \D(Y^+) \to \D(\modu(\End_R(E_k)))$.
\end{enumerate}
\end{lem}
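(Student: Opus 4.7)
The plan is to transfer every computation between the module side (where IW mutation lives) and the sheaf side on $Y^+$ via reflexivity, exploiting that $\phi^+ : Y^+ \to \overline{B(1)}$ is an isomorphism over the smooth locus $B(1)$ whose complement has codimension $\geq 2$. The starting input is the identity $L_k^* \simeq \phi^+_*\mathcal{H}_k^*$ established inside the proof of Lemma \ref{lem approximation}. Running the analogous Euler-type vanishing argument for $\mathcal{H}_k$ (rather than $\mathcal{H}_k^*$), Lemma \ref{lem 3-A} gives that $\phi^+_*\mathcal{H}_k$ is maximal Cohen-Macaulay, hence reflexive by Proposition \ref{CM ref 1}; since it agrees with the reflexive $L_k$ on $B(1)$, we get $L_k \simeq \phi^+_*\mathcal{H}_k$ globally, so that $E_k = W \oplus L_k$ is canonically the push-forward of $\mathcal{S}_k^+$.

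For part (1), both $\End_R(E_k)$ (by Proposition \ref{CM ref 2}) and $\End_{Y^+}(\mathcal{S}_k^+) = \phi^+_*\lhom_{Y^+}(\mathcal{S}_k^+, \mathcal{S}_k^+)$ (maximal Cohen-Macaulay by Lemma \ref{lem 3-A} combined with the tilting property in Proposition \ref{tilting 3}, hence reflexive by Proposition \ref{CM ref 1}) are reflexive $R$-modules that agree canonically over $B(1)$ under the identification above; the isomorphism extends to all of $\overline{B(1)}$.

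For part (2), I aim to identify the Iyama-Wemyss tilting $\End_R(E_k)$-module $V \oplus Q$ with $\Hom_{Y^+}(\mathcal{S}_k^+, \mathcal{S}_{k-1}^+)$, where the higher Ext groups vanish by the tilting property of $\mathcal{S}_k^+$ and $\mathcal{S}_{k-1}^+$ so that $\RHom = \Hom$ here. The summand $Q = \Hom_R(E_k, W)$ is identified with $\Hom_{Y^+}(\mathcal{S}_k^+, \bigoplus_{a=-N+2}^0 \stsh_{Y^+}(a))$ by the reflexivity argument of (1). For $V$, apply $\Hom_{Y^+}(\mathcal{S}_k^+, -)$ to the dualized Euler-type short exact sequence
\[
0 \to \mathcal{H}_k \to \bigwedge^k V \otimes_{\mathbb{C}} \stsh_{Y^+}(1-k) \to \mathcal{H}_{k-1} \to 0;
\]
the vanishing $\Ext^1_{Y^+}(\mathcal{S}_k^+, \mathcal{H}_k) = 0$ from Proposition \ref{tilting 3} yields a surjection $\Hom_{Y^+}(\mathcal{S}_k^+, \bigwedge^k V \otimes \stsh_{Y^+}(1-k)) \twoheadrightarrow \Hom_{Y^+}(\mathcal{S}_k^+, \mathcal{H}_{k-1})$. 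Pushing down by $\phi^+$ and invoking reflexivity of both sides, this matches the surjection $\Hom_R(E_k, \bigwedge^k V \otimes M_{k-1}) \twoheadrightarrow V$ coming from the dualized sequence $0 \to E_k \to N_1 \to L_{k-1} \to 0$, since the $W$-summand of $N_1$ maps trivially to $L_{k-1}$. Hence $V \simeq \Hom_{Y^+}(\mathcal{S}_k^+, \mathcal{H}_{k-1})$, and summing gives $V \oplus Q \simeq \Hom_{Y^+}(\mathcal{S}_k^+, \mathcal{S}_{k-1}^+)$.

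Part (3) is then essentially formal: since $\mathcal{S}_{k-1}^+$ is tilting by Proposition \ref{tilting 3}, Theorem \ref{equiv tilting} produces the equivalence $S_{k-1}$, and combining with (2) makes $\IW_W$ an equivalence. The displayed commutative diagram follows from the standard tensor-Hom identity $\RHom_{Y^+}(\mathcal{S}_{k-1}^+, -) \simeq \RHom_{\End_R(E_k)}(\RHom_{Y^+}(\mathcal{S}_k^+, \mathcal{S}_{k-1}^+), \RHom_{Y^+}(\mathcal{S}_k^+, -))$ derived from the tilting equivalence $S_k$. The main obstacle is the identification in (2): one must confirm $\Ext^1_{Y^+}(\mathcal{S}_k^+, \mathcal{H}_k) = 0$ and then carefully match the module-level image $V$ with its sheaf-level counterpart while tracking the $\End_R(E_k)$-module structure through the identification of (1); everything else reduces to routine reflexivity and push-forward bookkeeping.
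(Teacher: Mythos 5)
Your overall route is the same as the paper's: part (1) by the reflexive-extension argument over the codimension~$\geq 2$ singular locus (as in Lemma \ref{lem 3-1} and Theorem \ref{main thm flop side}(4)), part (2) by matching the Iyama--Wemyss tilting module $V \oplus Q$ with $\Hom_{Y^+}(\mathcal{S}_k^+,\mathcal{S}_{k-1}^+)$ via the dualized Euler-type short exact sequence, and part (3) formally from the tilting equivalences. However, there is one genuine gap, and it is exactly the point the paper singles out as the only non-trivial new ingredient: you assert that ``the higher Ext groups vanish by the tilting property of $\mathcal{S}_k^+$ and $\mathcal{S}_{k-1}^+$ so that $\RHom=\Hom$.'' The tilting property of each bundle only gives the \emph{self}-Ext vanishings $\Ext^{>0}_{Y^+}(\mathcal{S}_k^+,\mathcal{S}_k^+)=0$ and $\Ext^{>0}_{Y^+}(\mathcal{S}_{k-1}^+,\mathcal{S}_{k-1}^+)=0$; it does not give the cross vanishing $\Ext^{>0}_{Y^+}(\mathcal{S}_k^+,\mathcal{S}_{k-1}^+)=0$, which involves terms such as $\Ext^i(\mathcal{H}_k,\mathcal{H}_{k-1})$ with $\mathcal{H}_{k-1}$ not a summand of $\mathcal{S}_k^+$. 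Without this vanishing, $\RHom_{Y^+}(\mathcal{S}_k^+,\mathcal{S}_{k-1}^+)$ need not be concentrated in degree $0$, so the statement of (2) and the commutativity $S_{k-1}\simeq \IW_W\circ S_k$ in (3) would not follow from your Hom-level identification. The paper proves this vanishing by direct cohomology computations using Lemma \ref{BLV10 lem} (together with Lemma \ref{lem 3-B}).

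The gap is repairable with tools you already have on the table, but the repair has to be made explicit. Either redo the Bott-type computations as in the paper, or argue via your short exact sequence $0 \to \mathcal{H}_k \to \bigwedge^k V \otimes_{\mathbb{C}} \stsh_{Y^+}(1-k) \to \mathcal{H}_{k-1} \to 0$: for $1 \leq k \leq N-1$ the line bundle $\stsh_{Y^+}(1-k)$ is itself a direct summand of $\mathcal{S}_k^+$, so the tilting property of $\mathcal{S}_k^+$ alone gives $\Ext^{>0}_{Y^+}(\mathcal{S}_k^+,\stsh_{Y^+}(1-k))=0$ and $\Ext^{>0}_{Y^+}(\mathcal{S}_k^+,\mathcal{H}_k)=0$, whence the long exact sequence yields $\Ext^{>0}_{Y^+}(\mathcal{S}_k^+,\mathcal{H}_{k-1})=0$ and therefore $\Ext^{>0}_{Y^+}(\mathcal{S}_k^+,\mathcal{S}_{k-1}^+)=0$. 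A similar caveat applies to your opening step: the claim that $\phi^+_*\mathcal{H}_k$ is maximal Cohen--Macaulay via Lemma \ref{lem 3-A} also rests on cohomology vanishings of BLV type (as in Corollary \ref{cor alt NCCR MCM} and Proposition \ref{tilting 3}), not on formal reflexivity bookkeeping, so those vanishings should be cited or checked rather than taken for granted.
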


\begin{proof}
We can prove this lemma by using almost same arguments as in Lemma \ref{lem 3-1}.
The different point from Lemma \ref{lem 3-1} is that
the vanishing of $\Ext_{Y^+}^i(\mathcal{S}^+_k, \mathcal{S}^+_{k-1})$ for $i > 0$ is non-trivial.
However, this vanishing follows from direct computations using Proposition \ref{BLV10 lem}.
\end{proof}

Now we ready to prove the following result that gives a correspondence between multi-mutations and IW mutations.

\begin{thm} \label{thank Wemyss}
An equivalence obtained by composing $N-1$ IW mutation functors
\[ \IW_W \circ \IW_W \circ \cdots \circ \IW_W : \D(\modu(\Lambda_{N-1})) \to \D(\modu(\Lambda_{N-2})) \]
is isomorphic to a multi-mutation functor $\nu_{N-1}^-$.
\end{thm}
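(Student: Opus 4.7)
The plan is to iterate the commutative diagram of Lemma \ref{lem IWmu} (3). Since the $N-1$ IW mutation functors in the statement go
\[ \D(\modu(\End_R(E_{N-1}))) \xrightarrow{\IW_W} \D(\modu(\End_R(E_{N-2}))) \xrightarrow{\IW_W} \cdots \xrightarrow{\IW_W} \D(\modu(\End_R(E_0))), \]
stacking the commutative squares from Lemma \ref{lem IWmu} (3) vertically immediately gives
\[ \IW_W^{\circ (N-1)} \circ S_{N-1} \simeq S_0, \]
where $S_k = \RHom_{Y^+}(\mathcal{S}^+_k,-)$ with $\mathcal{S}^+_k = \bigoplus_{a=-N+2}^0\stsh_{Y^+}(a) \oplus \mathcal{H}_k$ and $\mathcal{H}_k = (\pi')^*\Omega_{\PP^{\vee}}^k\otimes \stsh_{Y^+}(1)$. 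So the whole proof reduces to matching the two endpoints with the bundles that define $\Psi_{N-1}^+$ and $\Psi_{N-2}^+$.

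First I will identify $\mathcal{S}^+_{N-1}$ and $\mathcal{S}^+_0$ explicitly. For $k=N-1$, one has $\Omega_{\PP^{\vee}}^{N-1}\simeq \stsh_{\PP^{\vee}}(-N)$, so $\mathcal{H}_{N-1} \simeq \stsh_{Y^+}(-N+1)$, which yields
\[ \mathcal{S}^+_{N-1} \simeq \bigoplus_{a=-N+2}^0 \stsh_{Y^+}(a) \oplus \stsh_{Y^+}(-N+1) = \Tilt^+_0. \]
For $k=0$, $\Omega_{\PP^{\vee}}^0\simeq \stsh_{\PP^{\vee}}$, so $\mathcal{H}_0\simeq \stsh_{Y^+}(1)$, and
\[ \mathcal{S}^+_0 \simeq \bigoplus_{a=-N+2}^0 \stsh_{Y^+}(a) \oplus \stsh_{Y^+}(1) = \Tilt^+_1. \]
Hence $S_{N-1} = \Psi^+_{N-1}$ and $S_0 = \Psi^+_{N-2}$ under the canonical identifications $\End_R(E_{N-1})\simeq \Lambda_{N-1}$ and $\End_R(E_0)\simeq \Lambda_{N-2}$ coming from Lemma \ref{lem IWmu} (1).

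Combining these identifications with the chained Lemma \ref{lem IWmu} (3) gives the commutative triangle
\[ \begin{tikzcd}
\D(Y^+) \arrow[r, "\Psi^+_{N-1}"] \arrow[rd, "\Psi^+_{N-2}"'] & \D(\modu(\Lambda_{N-1})) \arrow{d}{\IW_W^{\circ(N-1)}} \\
 & \D(\modu(\Lambda_{N-2})).
\end{tikzcd} \]
On the other hand, Definition \ref{defi mutation} exhibits the multi-mutation $\nu^-_{N-1}$ as the unique functor making the analogous triangle (with $\nu^-_{N-1}$ replacing $\IW_W^{\circ(N-1)}$) commute. Since $\Psi^+_{N-1}$ is an equivalence, this forces the isomorphism $\nu^-_{N-1}\simeq \IW_W^{\circ(N-1)}$.

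The only nontrivial input we are invoking is Lemma \ref{lem IWmu}, which itself rests on Lemma \ref{lem approximation} and on the tiltingness of the $\mathcal{S}^+_k$'s (Proposition \ref{tilting 3}); the main obstacle in that step is the vanishing $\Ext^{>0}_{Y^+}(\mathcal{S}^+_k,\mathcal{S}^+_{k-1})=0$, which is checked by direct cohomological computation using Lemma \ref{BLV10 lem}. Once those ingredients are in place, the proof of Theorem \ref{thank Wemyss} is purely a matter of bookkeeping the endpoints of the chain, as described above.
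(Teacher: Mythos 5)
Your argument is correct and is essentially the paper's own proof: the paper likewise chains the commutative squares of Lemma \ref{lem IWmu} (3) to get $\IW_W^{\circ(N-1)} \simeq S_0 \circ S_{N-1}^{-1}$ and then identifies the endpoints with the functors defined by $\Tilt^+_0$ and $\Tilt^+_1$ to conclude $S_0 \circ S_{N-1}^{-1} \simeq \nu^-_{N-1}$. Your explicit computations $\mathcal{H}_{N-1}\simeq \stsh_{Y^+}(-N+1)$ and $\mathcal{H}_0\simeq \stsh_{Y^+}(1)$, giving $\mathcal{S}^+_{N-1}\simeq \Tilt^+_0$ and $\mathcal{S}^+_0\simeq \Tilt^+_1$, simply make explicit the endpoint identifications the paper uses implicitly.
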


Here, we note that $\End_{R}(E_{N-1}) = \Lambda_{N-1}$ and $\End_{R}(E_0) = \Lambda_{N-2}$.

\begin{proof}
By Lemma \ref{lem 3-1} (3), we have a commutative diagram
\small{\[ \begin{tikzcd}
\D(Y^+) \arrow[d, "S_{N-1}"] \arrow[rd, "S_{N-2}"] \arrow[rrrd, bend left=10, "S_0"] & & &  \\
\D(\modu(\End_R(E_{N-1}))) \arrow[r, "\IW_W"'] & \D(\modu(\End_R(E_{N-2}))) \arrow[r, "\IW_W"'] & \cdots \arrow[r, "\IW_W"'] & \D(\modu(\End_R(E_{0}))),
\end{tikzcd} \]}
\normalsize Hence, we have $\IW_W \circ \IW_W \circ \cdots \circ \IW_W \simeq S_0 \circ S_{N-1}^{-1} = \Psi^+_{1} \circ (\Psi^+_{0})^{-1} \simeq \nu^-_{N-1}$.
\end{proof}

\begin{rem} \label{rem IW=multi} \rm
Applying the same argument, we can prove that a multi-mutation functor
\[ \nu_k^- : \D(\modu(\Lambda_k)) \to \D(\modu(\Lambda_{k-1})) \]
is written as a composition of IW mutation functors if
$1 \leq k \leq N-1$.
In other cases, the above argument cannot be applied because we only know that the module $\bigoplus_{a = -N+k+1}^k M_a$ gives an NCCR if $0 \leq k \leq N-1$ (see Theorem \ref{NCCR1}).
\end{rem}

Next, we discuss the case of multi-mutations $\nu_k^+$.

\begin{thm}
A multi-mutation functor
\[ \nu_{N-2}^+ : \D(\modu(\Lambda_{N-2})) \to \D(\modu(\Lambda_{N-1})) \]
can be written as a composition of $N-1$ IW mutation functors.
\end{thm}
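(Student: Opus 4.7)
The plan is to mirror the proof of Theorem \ref{thank Wemyss} on the $Y$-side, using the family of tilting bundles $\mathcal{S}_k^*$ from Proposition \ref{tilting 3}. Because $\Omega^0_{\PP(V)} \simeq \stsh_{\PP(V)}$ and $(\Omega^{N-1}_{\PP(V)})^* \simeq \stsh_{\PP(V)}(N)$, a direct computation shows that the endpoints of this family satisfy $\mathcal{S}_0^* \simeq \Tilt_{N-2}$ and $\mathcal{S}_{N-1}^* \simeq \Tilt_{N-1}$, so that $\End_Y(\mathcal{S}_0^*) \simeq \Lambda_{N-2}$ and $\End_Y(\mathcal{S}_{N-1}^*) \simeq \Lambda_{N-1}$.

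First I would take the $k$-th wedge of the Euler sequence $0 \to \stsh_{\PP(V)} \to V^* \otimes \stsh_{\PP(V)}(1) \to T_{\PP(V)} \to 0$, twist by $\stsh_{\PP(V)}(-1)$, and pull back by $\pi$, obtaining short exact sequences on $Y$
\[ 0 \to \pi^*T_{\PP(V)}^{k-1} \otimes \stsh_Y(-1) \to \textstyle\bigwedge^k V^* \otimes \stsh_Y(k-1) \to \pi^*T_{\PP(V)}^k \otimes \stsh_Y(-1) \to 0 \]
for $k = 1, \ldots, N-1$. Setting $N'_k := \phi_*(\pi^*T_{\PP(V)}^k \otimes \stsh_Y(-1))$, the cohomology vanishing provided by Lemma \ref{lem 3-B} and Lemma \ref{BLV10 lem} ensures that $\phi_*$ preserves exactness, producing
\[ 0 \to N'_{k-1} \to \textstyle\bigwedge^k V^* \otimes M_{k-1} \to N'_k \to 0 \]
with $N'_0 = M_{-1}$ and $N'_{N-1} = M_{N-1}$. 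Putting $W := \bigoplus_{a=0}^{N-2} M_a$ and $E''_k := W \oplus N'_k$, one has $\End_R(E''_0) \simeq \Lambda_{N-2}$, $\End_R(E''_{N-1}) \simeq \Lambda_{N-1}$, and (by the reflexive argument used in the proof of Lemma \ref{lem IWmu}(1)) $\End_R(E''_k) \simeq \End_Y(\mathcal{S}_k^*)$.

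Next, dualising and using $M_{k-1}^* \simeq M_{-k+1} \in \add W^*$ for $1 \leq k \leq N-1$ gives
\[ 0 \to N'_k{}^* \to \textstyle\bigwedge^k V \otimes M_{k-1}^* \to N'_{k-1}{}^* \to 0, \]
and the argument of Lemma \ref{lem approximation}, transcribed to $Y$ using that $\mathcal{S}_k^*$ is tilting (Proposition \ref{tilting 3}), shows that the surjection on the right is a right $(\add W^*)$-approximation of $N'_{k-1}{}^*$. Thus the left IW mutation at $W$ sends $E''_{k-1}$ to $E''_k$, and the corresponding IW mutation functor coincides with $\RHom_{\End_Y(\mathcal{S}_{k-1}^*)}(\RHom_Y(\mathcal{S}_{k-1}^*, \mathcal{S}_k^*), -)$, fitting into the commutative diagram of the $Y$-side analog of Lemma \ref{lem IWmu}(3). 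Composing these $N-1$ IW mutations yields $\Psi_{N-1} \circ \Psi_{N-2}^{-1}$, which is $\nu_{N-2}^+$ by Definition \ref{defi mutation}.

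The main obstacle is the approximation step: verifying the $\Ext$-vanishings on $Y$ that guarantee a right $(\add W^*)$-approximation. This is a routine application of Lemma \ref{BLV10 lem} to the bundles $\pi^* \Omega_{\PP(V)}^? \otimes \stsh_Y(?)$ appearing in the dualised sequence, but the bookkeeping is non-trivial and constitutes the bulk of the work, exactly as in Lemma \ref{lem approximation} on the $Y^+$-side.
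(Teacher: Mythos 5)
Your proposal is correct and is essentially the paper's own proof: the paper likewise applies $\phi_* \circ \pi^*$ to the long Euler (Koszul) sequence on $\PP(V)$ and runs the argument of Theorem \ref{thank Wemyss} verbatim on the $Y$-side, with the tilting bundles $\mathcal{S}_k^*$ of Proposition \ref{tilting 3} in the role of $\mathcal{S}_k^+$, so you are merely spelling out the details the paper delegates to ``the completely same argument''. (One cosmetic remark: with the paper's convention $\PP(V) = (V\setminus\{0\})/\mathbb{C}^{\times}$ the Euler sequence is $0 \to \stsh \to V \otimes \stsh(1) \to T_{\PP(V)} \to 0$, so the multiplicity spaces should be $\bigwedge^k V$ rather than $\bigwedge^k V^*$; this labeling does not affect the argument.)
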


\begin{proof}
Let us consider the long Euler sequence on $\PP$
\small{
\begin{align*}
&0 \to \stsh_{\PP}(-1) \to V \otimes_{\mathbb{C}} \stsh_{\PP} \to \bigwedge^{N-2}V^* \otimes_{\mathbb{C}} \stsh_{\PP}(1) \to \cdots \\
&\to \bigwedge^2 V^* \otimes_{\mathbb{C}} \stsh_{\PP}(N-3) \to V^* \otimes_{\mathbb{C}} \stsh_{\PP}(N-2) \to \stsh_{\PP}(N-1) \to 0. 
\end{align*}}
\normalsize Applying a functor $\phi_* \circ \pi^*$, we have a long exact sequence
\small{ \begin{align*} 0 \to M_{-1} \to V \otimes_{\mathbb{C}} R &\to \bigwedge^{N-2}V^* \otimes_{\mathbb{C}} M_{1} \to \cdots  \\
&\to \bigwedge^2 V^* \otimes_{\mathbb{C}} M_{N-3} \to V^* \otimes_{\mathbb{C}} M_{N-2} \to M_{N-1} \to 0. \end{align*}}
\normalsize
Using completely same argument as in the proof of Theorem \ref{thank Wemyss},
we have an equivalence of categories
\[ \IW_{W} \circ \IW_{W} \circ \cdots \circ \IW_{W} : \D(\modu(\Lambda_{N-2})) \to \D(\modu(\Lambda_{N-1})) \]
and this functor is isomorphic to the functor $\Psi_{N-1} \circ \Psi_{N-2}^{-1} \simeq \nu_{N-2}^+$ under the above identification of algebras.
\end{proof}

\begin{rem} \rm
As in Remark \ref{rem IW=multi}, we can show that a multi-mutation functor $\nu_k^+ : \D(\modu(\Lambda_k)) \to \D(\modu(\Lambda_{k+1}))$ can be described as a composition of IW mutation functors if $0 \leq k \leq N-2$.
\end{rem}

\begin{rem} \rm
From the proof of theorems, we notice that the object
\[ \mu_W^L(\mu^L_W( \cdots (\mu_W^L(\bigoplus_{a=0}^{N-1} M_a)) \cdots )), \]
which obtained from $\bigoplus_{a=0}^{N-1} M_a$ after taking IW mutations at $W$ $(2N-2)$-times, coincides with the original module $\bigoplus_{a=0}^{N-1} M_a$:
\[ \mu_W^L(\mu^L_W( \cdots (\mu_W^L(\bigoplus_{a=0}^{N-1} M_a)) \cdots )) = \bigoplus_{a=0}^{N-1} M_a. \]

If the ring $R$ is complete normal $3$-sCY and $M$ is a maximal modifying module\footnote{For the definition of maximal modifying $R$-modules, see \cite[Definition 4.1]{IW14}. We note that a module that gives an NCCR is a maximal modifying module if $R$ is a normal $d$-sCY ring \cite[Proposition 4.5]{IW14}.},
Iyama and Wemyss proved that two times mutation $\mu^L_N\mu^L_N(M)$ of $M$ at an indecomposable summand $N$ coincides with $M$ \cite[Summary 6.25]{IW14}:
\[ \mu^L_N\mu^L_N(M) = M. \]
Although the module $W$ that we used for mutations is not indecomposable, I think we can regard our equality of modules as a generalization of Iyama-Wemyss's one.
The number of mutations we need seems to be related to the dimension of a fiber of a crepant resolution (or $\mathbb{Q}$-factorial terminalization).
\end{rem}

\begin{cor}
The equivalence from $\D(Y)$ to $\D(Y^+)$ obtained by the composition
\[ \D(Y) \xrightarrow{\Psi_0} \D(\modu \Lambda_0) \xrightarrow{\nu^+_{N-2} \circ \cdots \circ \nu^+_0} \D(\modu \Lambda_{N-1}) \xrightarrow{(\Psi^+_0)^{-1}} \D(Y^+) \]
is the inverse of the (original) Kawamata-Namikawa's functor $\KN'_0$.
\end{cor}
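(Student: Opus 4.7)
My plan is to reduce the corollary to a telescoping of the commutative triangles built into the definition of the multi-mutation functors, followed by an application of Theorem \ref{thm kn=nkn}. By the commutative diagram accompanying Definition \ref{defi mutation} (for the $+$-variant), there is a natural isomorphism $\nu_k^+ \circ \Psi_k \simeq \Psi_{k+1}$ for every $k$. Iterating this identity from $k=0$ up to $k=N-2$ yields the telescoping identity
$$ \nu_{N-2}^+ \circ \cdots \circ \nu_1^+ \circ \nu_0^+ \circ \Psi_0 \simeq \Psi_{N-1}. $$

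Substituting this into the stated composition and using the defining formula $\NKN_k = (\Psi^+_{N-k-1})^{-1} \circ \Psi_k$ from Section \ref{subsect KN}, specialised at $k = N-1$ (so that $N-k-1 = 0$ and the inverse tilting equivalence on the $Y^+$ side is indeed $(\Psi^+_0)^{-1}$ corresponding to $\Tilt^+_0$), we obtain
$$ (\Psi^+_0)^{-1} \circ \nu_{N-2}^+ \circ \cdots \circ \nu_0^+ \circ \Psi_0 \;\simeq\; (\Psi^+_0)^{-1} \circ \Psi_{N-1} \;=\; \NKN_{N-1}. $$
Theorem \ref{thm kn=nkn} identifies $\NKN_{N-1}$ with the Kawamata-Namikawa functor $\KN_{N-1}$. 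Finally, the general adjoint relation $(\NKN_k)^{-1} \simeq \NKN'_{N-k-1}$ recorded in Section \ref{subsect KN}, combined once more with Theorem \ref{thm kn=nkn}, becomes $(\KN_k)^{-1} \simeq \KN'_{N-k-1}$; at $k = N-1$ this says that $\KN_{N-1}$ is precisely $(\KN'_0)^{-1}$, which gives the corollary.

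There is essentially no obstacle to this argument: its entire content is already packaged into two ingredients proven earlier in the paper, namely the commutative triangles $\nu_k^+ \circ \Psi_k \simeq \Psi_{k+1}$ that make the composition of multi-mutations telescope, and the coincidence $\NKN_k \simeq \KN_k$ of Theorem \ref{thm kn=nkn}. The only point requiring care is bookkeeping of indices, to ensure that the last multi-mutation lands in $\D(\modu \Lambda_{N-1})$ where $(\Psi^+_0)^{-1}$ (via $\Tilt^+_0$) is the correct quasi-inverse, matching the bundle $\Tilt^+_{N-(N-1)-1} = \Tilt^+_0$ appearing in $\NKN_{N-1}$.
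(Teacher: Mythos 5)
Your argument is correct and is exactly the reasoning the paper leaves implicit for this corollary: the commutative triangles $\nu_k^+ \circ \Psi_k \simeq \Psi_{k+1}$ telescope to give $\Psi_{N-1}$, so the stated composition is $(\Psi^+_0)^{-1} \circ \Psi_{N-1} = \NKN_{N-1}$, which by Theorem \ref{thm kn=nkn} and the relation $(\NKN_k)^{-1} \simeq \NKN'_{N-k-1}$ equals $\KN_{N-1} \simeq (\KN'_0)^{-1}$. The index bookkeeping (the last mutation landing in $\D(\modu(\Lambda_{N-1}))$, matched with $\Tilt^+_0$) is handled correctly.
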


By the above remark, the functor $\nu^+_{N-2} \circ \cdots \circ \nu^+_0$ can be written as the composition of $(N-1)^{N-1}$ IW mutation functors.
On the other hand, two tilting bundles $\Tilt_0$ and $\Tilt_0^+$ provide projective generators of the perverse hearts ${}^0\mathrm{Per}(Y/A_{N-2})$ and ${}^0\mathrm{Per}(Y^+/A^o_{N-2})$ respectively (see \cite[Example 5.3]{TU10}).
Please compare this corollary with \cite[Theorem 4.2]{We14}.

\subsubsection{Multi-mutations and P-twists}
Next, we explain that a composition of two multi-mutation functors corresponds to a P-twist on $\D(Y)$.
First, we recall that the object $j_*\stsh_{\PP}(k)$ is a $\mathbb{P}^{N-1}$-object in $\D(Y)$.
This fact is well-known but I give the proof here for reader's convenience.

\begin{lem} \label{P-obj lem}
$j_*\stsh_{\PP}(k)$ is a $\mathbb{P}^{N-1}$-object in $\D(Y)$.
\end{lem}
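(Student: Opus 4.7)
The plan is to verify the two defining properties of a $\PP^{N-1}$-object for $E := j_*\stsh_\PP(k) \in \D(Y)$, noting that $\dim Y = 2(N-1)$.

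The Calabi-Yau condition $E \otimes \omega_Y \simeq E$ is immediate: since $Y = |\Omega_{\PP}|$ is the total space of a cotangent bundle (equivalently, $\phi : Y \to \overline{B(1)}$ is a crepant resolution of a symplectic variety), one has $\omega_Y \simeq \stsh_Y$.

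For the $\Ext$-computation, I would invoke the Koszul resolution of the zero section. The embedding $j : \PP \hookrightarrow Y = |\Omega_\PP|$ is a regular closed immersion with normal bundle $N_{\PP/Y} \simeq \Omega_\PP$ (this Lagrangian identification is the key geometric input that ultimately produces the cohomology of $\PP^{N-1}$), and the tautological section $s$ of $\pi^*\Omega_\PP$ on $Y$ provides the Koszul resolution
\[ 0 \to \pi^*\bigwedge^{N-1} T_\PP \to \cdots \to \pi^* T_\PP \to \stsh_Y \to j_*\stsh_\PP \to 0. \]
Restricting to the zero section kills every differential (each is a contraction with $s|_\PP = 0$), so the resolution splits after pullback:
\[ Lj^* j_* \stsh_\PP(k) \simeq \bigoplus_{p=0}^{N-1} \bigwedge^p T_\PP \otimes \stsh_\PP(k)[p]. \]
Feeding this into the adjunction $Lj^* \dashv j_*$ gives
\[ \Ext^i_Y(E, E) \simeq \Hom_\PP\bigl(Lj^*j_*\stsh_\PP(k),\, \stsh_\PP(k)[i]\bigr) \simeq \bigoplus_{p=0}^{N-1} H^{i-p}(\PP, \Omega_\PP^p), \]
and by the Hodge decomposition for projective space, $H^q(\PP^{N-1}, \Omega^p) \simeq \mathbb{C}$ iff $p = q$ with $0 \leq p \leq N-1$ and vanishes otherwise. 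Hence $\Ext^i_Y(E,E) \simeq \mathbb{C}$ precisely for $i = 0, 2, \dots, 2(N-1)$, reproducing $H^i(\PP^{N-1}; \mathbb{C})$.

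The bulk of the argument is routine; the only delicate point is the direct-sum splitting of $Lj^*j_*\stsh_\PP$, which rests on the Koszul differentials being contractions with the tautological section of $\pi^*\Omega_\PP$ and therefore vanishing identically on the zero section. Everything else is a standard application of adjunction together with Bott vanishing on $\PP^{N-1}$.
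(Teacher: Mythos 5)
Your proof is correct, but it takes a genuinely different route from the paper. The paper computes $\Ext^\bullet_Y(j_*\stsh_{\PP},j_*\stsh_{\PP})$ via the local-to-global Ext spectral sequence $E_2^{p,q}=H^p(Y,\lext^q_Y(j_*\stsh_{\PP},j_*\stsh_{\PP}))$, together with the identification $\lext^q_Y(j_*\stsh_{\PP},j_*\stsh_{\PP})\simeq j_*\bigwedge^q\mathcal{N}_{\PP/Y}\simeq j_*\Omega^q_{\PP}$ (itself a local Koszul computation), and then reads off the answer because all nonzero $E_2$-terms sit on the diagonal $p=q$, which forces degeneration. You instead work globally with the Koszul resolution of the zero section, observe that the differentials are contractions with the tautological section of $\pi^*\Omega_{\PP}$ and hence vanish after restricting to $\PP$, so that $Lj^*j_*\stsh_{\PP}(k)\simeq\bigoplus_p\bigwedge^pT_{\PP}\otimes\stsh_{\PP}(k)[p]$, and conclude by the adjunction $Lj^*\dashv j_*$ and Hodge theory on $\PP^{N-1}$. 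Both arguments hinge on the same geometric input, the Lagrangian identification $\mathcal{N}_{\PP/Y}\simeq\Omega_{\PP}$; what your version buys is that no spectral-sequence degeneration needs to be invoked (you get the full direct-sum decomposition of $\RHom_Y(E,E)$, which is slightly stronger information), at the cost of justifying the splitting of $Lj^*j_*$, which you do correctly. You also verify the condition $E\otimes\omega_Y\simeq E$ explicitly via triviality of $\omega_Y$, which the paper leaves implicit.
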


\begin{proof}
It is enough to show the case if $k=0$.
Let us consider the spectral sequence
\[ E^{p,q}_2 = H^p(Y, \lext_Y^q(j_*\stsh_{\PP}, j_*\stsh_{\PP})) \Rightarrow \Ext_Y^{p+q}(j_*\stsh_{\PP}, j_*\stsh_{\PP}). \]
Since there is an isomorphism
\[ \lext_Y^q(j_*\stsh_{\PP}, j_*\stsh_{\PP}) \simeq j_*\bigwedge^q \mathcal{N}_{\PP/Y} \simeq j_*\Omega^q_{\PP}, \]
we have
\begin{align*}
E_2^{p, q} &= H^p(\PP, \Omega^q_{\PP}) \\
&= \begin{cases}
\mathbb{C} & \text{if $0 \leq p = q \leq N -1$,} \\
0 & \text{otherwise.}
\end{cases}
\end{align*}
Therefore, we have
\begin{align*}
\Ext_Y^{i}(j_*\stsh_{\PP}, j_*\stsh_{\PP}) = \begin{cases}
\mathbb{C} & \text{if $i = 2k$ and $k = 0 , \dots, N-1$}, \\
0 & \text{otherwise}.
\end{cases}
\end{align*}
This shows the lemma.
\end{proof}

\begin{defi} \rm
Let $P_k$ be a P-twist that is defied by the $\mathbb{P}^{N-1}$-object $j_*\stsh_P(k)$.
More explicitly, the functor $P_k$ is given by
\[ P_k(E) = \Cone\left( \Cone\left(j_*\stsh_{\PP}(k)[-2] \to j_*\stsh_{\PP}(k)\right) \otimes_{\mathbb{C}} \RHom_Y(j_*\stsh_{\PP}(k), E) \xrightarrow{\mathrm{ev}} E \right). \]
\end{defi}

\begin{rem} \rm
By the definition of the functor $P_k$, the following diagram commutes.
\[ \begin{tikzcd}
\D(Y) \arrow[r, "P_k"] \arrow[d, "- \otimes \stsh_{Y}(-1)"'] & \D(Y) \arrow[d, "- \otimes \stsh_{Y}(-1)"] \\
\D(Y) \arrow[r, "P_{k-1}"] & \D(Y)
\end{tikzcd} \]
\end{rem}

The following  is one of main results in this paper.

\begin{thm} \label{thm mutation-Ptwist}
The following diagram of equivalence functors commutes
\[ \begin{tikzcd}
\D(Y) \arrow{d}{P_k} \arrow{r}{\Psi_{N+k}} & \D(\modu(\Lambda_{N+k})) \arrow{d}{\nu_{N+k}^-}\\
\D(Y) \arrow[r, "\Psi_{N+k-1}"] \arrow[d, equal] & \D(\modu(\Lambda_{N+k-1})) \arrow[d, "\nu_{N+k-1}^+"] \\
\D(Y) \arrow[r, "\Psi_{N+k}"] & \D(\modu(\Lambda_{N+k})).
\end{tikzcd} \]
In particular, if we fix the identification $\Psi_{N+k} : \D(Y) \to \D(\modu(\Lambda_{N+k}))$, 
a composition of two multi-mutation functors
\[ \nu_{N+k-1}^+ \circ \nu_{N+k}^- \in \Auteq(\D(\modu(\Lambda_{N+k}))) \]
corresponds to a P-twist $P_k \in \Auteq(\D(Y))$.
\end{thm}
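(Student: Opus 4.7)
The plan is to trace the composition of multi-mutations through its defining commutative diagrams, reduce the statement to an isomorphism of Kawamata--Namikawa functors via Theorem \ref{thm kn=nkn}, and then verify that the resulting autoequivalence of $\D(Y)$ agrees with $P_k$ by evaluating on a tilting generator. By the defining diagrams of the multi-mutations one has $\nu^-_{N+k} \circ \Psi^+_{-k-1} \simeq \Psi^+_{-k}$ and $\nu^+_{N+k-1} \circ \Psi_{N+k-1} \simeq \Psi_{N+k}$; combining these with the tautological factorizations $\Psi_{N+k} = \Psi^+_{-k-1} \circ \NKN_{N+k}$ and $\Psi^+_{-k} = \Psi_{N+k-1} \circ \NKN'_{-k}$, which follow from the very definition of $\NKN_k$ and $\NKN'_k$, yields
\[
\Psi_{N+k}^{-1} \circ \nu^+_{N+k-1} \circ \nu^-_{N+k} \circ \Psi_{N+k} \;\simeq\; \NKN'_{-k} \circ \NKN_{N+k}.
\]
By Theorem \ref{thm kn=nkn} the right-hand side equals $\KN'_{-k} \circ \KN_{N+k}$, so the theorem reduces to producing a natural isomorphism $\KN'_{-k} \circ \KN_{N+k} \simeq P_k$ of autoequivalences of $\D(Y)$.

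The idea is to verify this isomorphism on the classical generator $\Tilt_{N+k} = \bigoplus_{a=k+1}^{N+k} \stsh_Y(a)$. Writing $E := j_*\stsh_{\PP(V)}(k)$, Grothendieck duality gives $j^!\stsh_Y(a) \simeq \stsh_{\PP(V)}(a-N)[-N+1]$, and hence
\[
\RHom_Y(E, \stsh_Y(a)) \;\simeq\; R\Gamma(\PP(V), \stsh_{\PP(V)}(a-N-k))[-N+1].
\]
For $k+1 \leq a \leq N+k-1$ this vanishes, so the double cone defining $P_k(\stsh_Y(a))$ collapses to $\stsh_Y(a)$; and on the same range, Lemma \ref{lem 4-3} applied first to $Y$ and then dually to $Y^+$ yields $\KN'_{-k}(\KN_{N+k}(\stsh_Y(a))) \simeq \KN'_{-k}(\stsh_{Y^+}(-a)) \simeq \stsh_Y(a)$. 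The only nontrivial summand is the boundary $a = N+k$: the same cohomology computation gives $\RHom_Y(E, \stsh_Y(N+k)) \simeq \mathbb{C}[-(N-1)]$, so $P_k(\stsh_Y(N+k))$ is a genuine two-step cone involving $E[-N+1]$ and $E[-N-1]$; meanwhile $\KN_{N+k}(\stsh_Y(N+k)) \simeq \stsh_{Y^+}(-N-k)$ is still covered by Lemma \ref{lem 4-3}, but $\stsh_{Y^+}(-N-k)$ lies just outside the nice range of $\KN'_{-k}$, so its image must be computed directly from the Fourier--Mukai definition, using the correspondence $\widehat{Y} = \widetilde{Y} \cup_E (\PP(V) \times \PP(V^*))$ and the short exact sequences already used in the proof of Lemma \ref{lem 4-3}.

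The main obstacle is this boundary match at $a = N+k$: the connecting morphism in the triangle for $P_k(\stsh_Y(N+k))$ is induced by the degree-$2$ generator of the $\mathbb{P}^{N-1}$-algebra $\Ext^{\bullet}_Y(E,E)$, whereas on the Kawamata--Namikawa side it arises from $R\hat{q}_*$ of $\mathcal{L}_{-k}$ restricted to the $\PP(V) \times \PP(V^*)$ component of $\widehat{Y}$, and identifying these two connecting maps is the essential content of the local ``flop--flop equals P-twist'' computation of Cautis \cite{C12} and Addington--Donovan--Meachan \cite{ADM15}. Once this object-level isomorphism is established and shown to intertwine the induced actions on $\Hom_Y(\stsh_Y(a), \stsh_Y(b))$ between summands of $\Tilt_{N+k}$, the classical-generation property of $\Tilt_{N+k}$ promotes the identification to a natural isomorphism of functors, completing the proof.
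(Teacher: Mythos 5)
Your reduction is correct and is essentially the same skeleton as the paper's: the second square is definitional, and combining the defining diagrams of $\nu^{\pm}$ with Theorem \ref{thm kn=nkn} reduces the whole theorem to the ``flop--flop equals P-twist'' identity $\KN'_{-k}\circ\KN_{N+k}\simeq P_k$ (equivalently, in the paper's transposed form, to $P_{-1}\bigl(\KN'_0(\Tilt^+_1)\bigr)\simeq\Tilt_{N-2}$). Your verification on the easy summands $\stsh_Y(a)$, $k+1\leq a\leq N+k-1$, is also fine. The genuine gap is at the boundary summand $a=N+k$: you correctly isolate it, but you do not prove it --- you only remark that the image of the out-of-range line bundle must be computed from the Fourier--Mukai kernel and that the two connecting morphisms must be identified, and you defer that identification to Cautis and Addington--Donovan--Meachan. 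That identification \emph{is} the content of the theorem; the paper's proof consists almost entirely of carrying it out (for $k=-1$): one computes $\mathcal{F}:=\KN'_0(\stsh_{Y^+}(1))$ from the sequence $0\to\stsh_{\widehat{Y}}\to\stsh_{\widetilde{Y}}\oplus\stsh_{\PP\times\PP^{\vee}}\to\stsh_E\to 0$, obtaining a sheaf sitting in a triangle $\stsh_Y(-1)[-1]\to C(h)\xrightarrow{e}\mathcal{F}$, where $C(h)$ is the cone of the degree-two generator $h$ of $\Ext^{\bullet}_Y\bigl(j_*\stsh_{\PP}(-1),j_*\stsh_{\PP}(-1)\bigr)$; then $\RHom_Y\bigl(j_*\stsh_{\PP}(-1),\mathcal{F}\bigr)\simeq\mathbb{C}$ and $\Hom_Y\bigl(C(h),\mathcal{F}\bigr)\simeq\mathbb{C}$ force the evaluation map in the definition of $P_{-1}$ to coincide with $e$ up to nonzero scalar, whence $P_{-1}(\mathcal{F})\simeq\stsh_Y(-1)$. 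Without an argument of this kind your proof is incomplete.

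Note also that if you instead close the gap by invoking \cite{C12, ADM15} as a black box, the argument becomes logically admissible but inverts the paper's purpose: Corollary \ref{cor CADM} is \emph{deduced} from this theorem, the point being a new NCCR-based proof of that result, and you would in any case have to match your $\KN$-kernels and twist conventions with theirs. A secondary loose end: to promote the object-level identity on $\Tilt_{N+k}$ to an isomorphism of functors, classical generation alone is not quite enough; as in the paper, one should express both composites as $\RHom_Y(\mathcal{G},-)$ landing in $\D(\modu(\Lambda_{N+k-1}))$ and check that the identification of the two objects $\mathcal{G}$ is compatible with the induced module structures over the endomorphism algebra.
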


\begin{rem} \rm
If $1 \leq N + k \leq N-1$ (i.e. if $- N + 1 \leq k \leq -1$), 
multi-mutation functors $\nu_{N+k}^-$ and $\nu_{N+k-1}^+$ are can be written as compositions of IW mutation functors.
Thus, in the case of Mukai flops, we can interpret a P-twist on $Y$ as a composition of many IW mutation functors.
This is a higher dimensional generalization of the result of Donovan and Wemyss \cite{DW16}.
\end{rem}

\begin{proof}[Proof of Theorem \ref{thm mutation-Ptwist}]
It is enough to show the theorem for one $k$.
Here, we prove the case if $k = -1$.
Recall that the composition
\[ \D(Y) \xrightarrow{\Psi_{N-1}} \D(\modu(\Lambda_{N-1})) \xrightarrow{\nu^-_{N-1}} \D(\modu(\Lambda_{N-2})) \]
coincides with the functor
\[ \RHom_Y(S \otimes_{\Lambda_{N-1}} \Tilt_{N-1},-) : \D(Y) \to \D(\modu(\Lambda_{N-2})). \]
By Theorem \ref{thm kn=nkn} and Lemma \ref{lem 3-1}, we have $S \otimes_{\Lambda_{N-1}} \Tilt_{N-1} \simeq \KN'_0(\Tilt^+_1)$.
On the other hand, the equivalence that is given by the composition of functors
\[ \D(Y) \xrightarrow{P_{-1}} \D(Y) \xrightarrow{\Psi_{N-2}} \D(\modu(\Lambda_{N-2})) \]
coincides with the functor that is given by
\[ \RHom_Y\left((P_{-1})^{-1}(\Tilt_{N-2}),-\right) : \D(Y) \to \D(\modu(\Lambda_{N-2})). \]
Thus, we have to show that 
\[ P_{-1}(\KN'_0(\Tilt^+_1)) \simeq \Tilt_{N-2}. \]
Recall that the tilting bundles are given by
\[ \Tilt_{N-2} = \bigoplus_{a=-1}^{N-2} \stsh_Y(a), ~~~ \Tilt^+_{1} = \bigoplus_{a= - N + 2}^1 \stsh_{Y^+}(a). \]
By Lemma \ref{lem 4-3}, we have
\[ \KN'_0(\stsh_{Y^+}(a)) \simeq \stsh_Y(-a) \]
for $-N+2 \leq a \leq 0$.
Therefore, we have to compute the object $\KN'_0(\stsh_{Y^+}(1))$.
As in Lemma \ref{lem 4-3}, we use the exact sequence
\[ 0 \to \stsh_{\hat{Y}} \to \stsh_{\widetilde{Y}} \oplus \stsh_{\PP \times \PP^{\vee}} \to \stsh_E \to 0. \]
An easy computation shows that we have
\begin{align*}
\Phi_{\stsh_{\PP \times \PP^{\vee}}}^{Y^+ \to Y}(\stsh_{Y^+}(1)) &\simeq V \otimes_{\mathbb{C}} j_*\stsh_{\PP}, \\
\Phi_{\stsh_E}^{Y^+ \to Y}(\stsh_{Y^+}(1)) &\simeq  j_*T_{\PP}(-1), \\
\Phi_{\stsh_{\widetilde{Y}}}^{Y^+ \to Y}(\stsh_{Y^+}(1)) &\simeq I_{\PP/Y}(-1),
\end{align*}
where $T_{\PP}$ is the tangent bundle on $\PP = \mathbb{P}(V)$ and $I_{
\PP/Y}$ the ideal sheaf of $j : \PP \subset Y$.
Thus, we have the following exact triangle
\[ \KN'_0(\stsh_{Y^+}(1)) \to I_{\PP/Y}(-1) \oplus (V \otimes_{\mathbb{C}} j_*\stsh_{\PP}) \to   j_*T_{\PP}(-1). \]
By combining this triangle with the split triangle
\[ V \otimes_{\mathbb{C}} j_*\stsh_{\PP} \to I_{\PP/Y}(-1) \oplus (V \otimes_{\mathbb{C}} j_*\stsh_{\PP}) \to I_{\PP/Y}(-1), \]
we have the following diagram
\[ \begin{tikzcd}
j_*\stsh_{\PP}(-1) \arrow{d} \arrow{r} & V \otimes_{\mathbb{C}} j_*\stsh_{\PP} \arrow{d} \arrow{r} & j_*T_{\PP}(-1) \arrow[d, equal] \\
\KN'_0(\stsh_{Y^+}(1)) \arrow{d} \arrow{r} & I_{\PP/Y}(-1) \oplus (V \otimes_{\mathbb{C}} j_*\stsh_{\PP}) \arrow{d} \arrow{r} & j_*T_{\PP}(-1) \\
I_{\PP/Y}(-1) \arrow[r, equal] & I_{\PP/Y}(-1) & 
\end{tikzcd} \]
Hence, the object $\KN'_0(\stsh_{Y^+}(1)) \in \D(Y)$ is a sheaf,
and if we set $\mathcal{F} = \KN'_0(\stsh_{Y^+}(1))$,
the sheaf $\mathcal{F}$ lies on the exact sequence
\[ 0 \to j_*\stsh_{\PP}(-1) \to \mathcal{F} \to \stsh_Y(-1) \to j_*\stsh_{\PP}(-1) \to 0. \]
Recall that $j_*\stsh_{\PP}(-1)$ is a $\mathbb{P}^{N-1}$ object that defines the P-twist $P_{-1}$.
In particular, $\Ext^2_Y(j_*\stsh_{\PP}(-1), j_*\stsh_{\PP}(-1)) = \mathbb{C} \cdot h$.
Let $C(h)$ be an object in $\D(Y)$ that lies on the exact triangle
\[ j_*\stsh_{\PP}(-1)[-2] \xrightarrow{h} j_*\stsh_{\PP}(-1) \to C(h). \]
Then, we have an exact triangle
\[ \stsh_Y(-1)[-1] \to C(h)  \to \mathcal{F}. \]
Let $e : C(h) \to \mathcal{F}$ be the morphism that appears in the above triangle.

Next, we compute the objects $P_{-1}(\KN'_0(\stsh_{Y^+}(a)))$ for $-N+2 \leq a \leq 1$.
Recall that the P-twist $P_{-1}$ is given by
\[ P_{-1}(E) := \Cone(C(h) \otimes_{\mathbb{C}} \RHom_Y(j_*\stsh_{\PP}(-1), E) \to E). \]
Since we have
\begin{align*}
\RHom_Y(j_*\stsh_{\PP}(-1), \stsh_Y(b)) &\simeq \RHom_{\PP}(\stsh_{\PP}(-1), j^!\stsh_Y(b)) \\
&\simeq R\Gamma(\PP, \stsh_{\PP}(-N+b+1))[-N+1]
\end{align*}
by adjunction, we have
\[ \RHom_Y(j_*\stsh_{\PP}(-1), \stsh_Y(b)) = 0 \]
for $0 \leq b \leq N-2$, and hence we have
\[ P_{-1}(\KN'_0(\stsh_{Y^+}(a))) \simeq P_{-1}(\stsh_Y(-a)) = \stsh_Y(-a) \]
for $-N+2 \leq a \leq 0$.
It is remaining to compute the object $P_{-1}(\KN'_0(\stsh_{Y^+}(1))) = P_{-1}(\mathcal{F})$.
From the above computation, we have
\[ \RHom_Y(j_*\stsh_{\PP}(-1), \stsh_Y(-1)) \simeq R\Gamma(\PP, \stsh_{\PP}(-N))[-N+1] \simeq \mathbb{C}[-2N+2]. \]
On the other hand, by the exact triangle
\[ j_*\stsh_{\PP}(-1)[-2] \xrightarrow{h} j_*\stsh_{\PP}(-1) \to C(h) \]
that defines $C(h)$ and the computation
\[ \RHom_Y(j_*\stsh_{\PP}(-1), j_*\stsh_{\PP}(-1)) = \bigoplus_{i=0}^{N-1} \mathbb{C}[-2i], \]
we have
\[ \RHom_Y(j_*\stsh_{\PP}(-1), C(h)) = \mathbb{C} \oplus \mathbb{C}[-2N+1]. \]
Hence, by the exact triangle
\[ \stsh_Y(-1)[-1] \to C(h)  \xrightarrow{e} \mathcal{F} \]
that we obtained above, we have
\[ \RHom_Y(j_*\stsh_{\PP}(-1), \mathcal{F}) = \mathbb{C}, \]
and thus, the object $P_{-1}(\mathcal{F})$ lies on the exact triangle
\[ C(h) \xrightarrow{\mathrm{ev}} \mathcal{F} \to P_{-1}(\mathcal{F}). \]
Since we have
\[ \Hom_Y(C(h), \mathcal{F}) \simeq \mathbb{C} \]
from the above, we have the map $\mathrm{ev} : C(h) \to \mathcal{F}$ coincides with the map $e : C(h) \to \mathcal{F}$ up to non-zero scaler.
Therefore, we have
\[ P_{-1}(\mathcal{F}) \simeq \stsh_Y(-1) \]
and hence $P_{-1}(\KN'_0(\Tilt^+_1)) \simeq \Tilt_{N-2}$. This is what we want.
\end{proof}

Theorem \ref{thm mutation-Ptwist} recovers the following result that was first proved by Cautis, and later Addington-Donovan-Meachan in different ways.
Our approach that uses non-commutative crepant resolutions and their mutations gives a new alternative proof for their result.

\begin{cor}[\cite{C12, ADM15}] \label{cor CADM}
We have a functor isomorphism
\[ \KN'_{-k} \circ \KN_{N+k} \simeq P_k \]
for all $k \in \mathbb{Z}$.
\end{cor}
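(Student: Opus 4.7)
The plan is to derive the corollary as an essentially formal consequence of Theorems \ref{thm kn=nkn} and \ref{thm mutation-Ptwist}. First I would use Theorem \ref{thm kn=nkn} to translate the Kawamata--Namikawa functors into the tilting equivalences $\Psi$ and $\Psi^+$. Using $\NKN_k = (\Psi^+_{N-k-1})^{-1} \circ \Psi_k$ together with the dual identity for $\NKN'$, specializing the index to $N+k$ and $-k$ respectively, I obtain
\[ \KN_{N+k} = (\Psi^+_{-k-1})^{-1} \circ \Psi_{N+k}, \qquad \KN'_{-k} = \Psi^{-1}_{N+k-1} \circ \Psi^+_{-k}. \]
Composing these two yields
\[ \KN'_{-k} \circ \KN_{N+k} = \Psi^{-1}_{N+k-1} \circ \bigl(\Psi^+_{-k} \circ (\Psi^+_{-k-1})^{-1}\bigr) \circ \Psi_{N+k}. \]

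Next I would recognize the bracketed middle factor, and the outer $\Psi^{-1}_{N+k-1}$, as multi-mutation functors. From the defining commutative diagrams in Definition \ref{defi mutation} one reads off
\[ \nu^-_{N+k} = \Psi^+_{-k} \circ (\Psi^+_{-k-1})^{-1}, \qquad \nu^+_{N+k-1} = \Psi_{N+k} \circ \Psi^{-1}_{N+k-1}, \]
so inserting $\Psi^{-1}_{N+k} \circ \Psi_{N+k} = \mathrm{id}$ at the appropriate position rewrites the composition as
\[ \KN'_{-k} \circ \KN_{N+k} = \Psi^{-1}_{N+k} \circ \nu^+_{N+k-1} \circ \nu^-_{N+k} \circ \Psi_{N+k}. \]
By Theorem \ref{thm mutation-Ptwist} the right-hand side is precisely $P_k$, which completes the proof.

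All the substantive content has already been established, in Theorem \ref{thm kn=nkn} (identifying the Kawamata--Namikawa functors with the tilting equivalences) and Theorem \ref{thm mutation-Ptwist} (identifying the P-twist with a composition of multi-mutations); what remains is a diagram chase. The only real obstacle is keeping the various index shifts straight, in particular the identification $\Lambda_k \simeq \End_{Y^+}(\Tilt^+_{N-k-1})$, which makes $\Psi^+_j$ land in $\D(\modu(\Lambda_{N-j-1}))$ rather than in $\D(\modu(\Lambda_j))$, and hence shifts the index on $\nu^{\pm}$ relative to the index on $\Psi^+$.
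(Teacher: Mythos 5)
Your proposal is correct and takes essentially the same route as the paper: the paper's proof is the same diagram chase, combining Theorem \ref{thm kn=nkn} with Theorem \ref{thm mutation-Ptwist} and the defining relations $\nu^-_{N+k}\simeq\Psi^+_{-k}\circ(\Psi^+_{-k-1})^{-1}$ and $\nu^+_{N+k-1}\simeq\Psi_{N+k}\circ\Psi^{-1}_{N+k-1}$. The only cosmetic difference is that the paper concludes directly from the top square $\Psi_{N+k-1}\circ P_k\simeq\nu^-_{N+k}\circ\Psi_{N+k}$, whereas you insert $\Psi^{-1}_{N+k}\circ\Psi_{N+k}$ and pass through the composite $\nu^+_{N+k-1}\circ\nu^-_{N+k}$; since the lower square of Theorem \ref{thm mutation-Ptwist} commutes by construction, the two arguments are equivalent.
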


\begin{proof}
Let us consider the next diagram
\[ \begin{tikzcd}
\D(Y) \arrow{d}{P_k} \arrow{r}{\Psi_{N+k}} & \D(\modu(\Lambda_{N+k})) \arrow{d}{\nu_{N+k}^-} & \D(Y^+) \ar[l, "\Psi^+_{-k-1}"'] \ar{ld}{\Psi^+_{-k}} \\
\D(Y) \arrow[r, "\Psi_{N+k-1}"'] & \D(\modu(\Lambda_{N+k-1})). &
\end{tikzcd} \]
Since $(\Psi^+_{-k-1})^{-1} \circ \Psi_{N+k} \simeq \KN_{N+k}$ and $(\Psi_{N+k-1})^{-1} \circ \Psi^+_{-k} \simeq \KN'_{-k}$ by Theorem \ref{thm kn=nkn},
we have $\KN'_{-k} \circ \KN_{N+k} \simeq P_k$.
\end{proof}

We note that, in order to prove this corollary, Cautis used an elaborate framework ``categorical $\mathfrak{sl}_2$-action" that is established by Cautis, Kamnitzer, and Licata \cite{CKL10, CKL13}.
Addington, Donovan, and Meachan provided two different proofs.
The first one uses a technique of semi-orthogonal decomposition, and the second one uses the variation of GIT quotients and ``window shifts".


\end{document}